\newtheorem{lemma}{Lemma}[section]
\newtheorem{teo}[lemma]{Theorem}
\newtheorem{prop}[lemma]{Proposition}
\newtheorem{cor}[lemma]{Corollary} 
\newtheorem{conj}[lemma]{Conjecture}
\theoremstyle{definition}
\newtheorem{defn}[lemma]{Definition}
\newtheorem{quest}[lemma]{Question}
\newtheorem{example}[lemma]{Example}
\theoremstyle{remark}
\newtheorem{rem}[lemma]{Remark}
\newcommand{\matZ} {\ensuremath {\mathbb{Z}}}
\newcommand{\matCP} {\ensuremath {\mathbb{CP}}}
\newcommand{\calS} {\ensuremath {\mathcal{S}}}
\newcommand{\calM} {\ensuremath {\mathcal{M}}}
\newcommand{\calC} {\ensuremath {\mathcal{C}}}
\newcommand{\calD} {\ensuremath {\mathcal{D}}}
\newcommand{\calP} {\ensuremath {\mathcal{P}}}
\newcommand{\calG}{\ensuremath {\mathcal{G}}}
\newcommand{\nota} [1] {\caption{\footnotesize{#1}}}
\newcommand{\interior}[1]{{\rm int}(#1)}
\newfont{\Got}{eufm10 scaled 1200}
\newcommand{\dimo}[1]{\vspace{2pt}\noindent\textit{Proof of \ref{#1}}.\ }
\newcommand{\finedimo}{{\hfill\hbox{$\square$}\vspace{2pt}}}
\newcommand{\cref}{c^{\rm ref}}
\newcommand{\isom}{\cong}
\newcommand{\timtil}{\begin{picture}(12,12)
\put(2,0){$\times$}\put(2,4.5){$\sim$}\end{picture}}
\newcommand{\piu}{
\begin{picture}(8,8)
\put(4,0){\line(0,1){8}}
\put(0,4){\line(1,0){8}}
\end{picture}
}
\newcommand{\hacca}{
\begin{picture}(8,8)
\put(3,0){\line(0,1){4}}
\put(5,8){\line(0,-1){4}}
\put(0,4){\line(1,0){8}}
\end{picture}
}
\author{Bruno Martelli}
\address{Dipartimento di Matematica ``Tonelli'', Largo Pontecorvo 5, 56127 Pisa, Italy}
\email{martelli at dm dot unipi dot it}
\title[Complexity of PL-manifolds]{Complexity of PL-manifolds}
\title{Four-manifolds with shadow-complexity zero}
\begin{document}

\begin{abstract}
We prove that a closed 4-manifold has shadow-complexity zero if and only if it is a kind of 4-dimensional graph manifold, which decomposes into some particular blocks along embedded copies of $S^2\times S^1$, plus some complex projective spaces. We deduce a classification of all 4-manifolds with finite fundamental group and shadow-complexity zero.
\end{abstract}

\maketitle

\section{Introduction}

Piecewise-linear (equivalently, smooth) closed four-manifolds form an enormous set which is still poorly understood. In contrast with dimensions 2 and 3, even a conjectural picture which aims to describe this set globally is missing. Restricting to simply connected manifolds does not help much: Donaldson and Seiberg-Witten invariants have revealed the existence of infinitely many distinct simply-connected manifolds sharing the same topological structure; these \emph{exotic} 4-manifolds have been constructed using various techniques, but a general procedure for constructing (and classifying) \emph{all} simply connected 4-manifolds sharing the same topological structure is still not available. For an overview on this topic, see for instance \cite{Ste}. For an introduction to 4-manifolds see the books \cite{GoSti, Sco}.

We would like to study the set of all closed oriented 4-manifolds globally, by means of a suitable \emph{complexity}. A complexity is a function which assigns to every compact manifold a non-negative integer that measures in some sense how ``complicate'' the manifold is. A complexity induces a \emph{filtration} of the set of all 4-manifolds into subsets $\calM_0 \subset \calM_1 \subset \calM_2 \subset \ldots$ where $\calM_c$ is the set of all manifolds having complexity at most $c$. In such a setting, we would like to construct (and hopefully classify) all 4-manifolds lying in $\calM_c$, starting from $c= 0,1, \ldots $

There are of course various types of reasonable complexities, and different choices may lead to completely different filtrations. However, the problem of constructing and listing all the manifolds in $\calM_0$, $\calM_1, \ldots$ is hard for most of these choices. For instance, a natural complexity might be the minimum number of 4-simplexes in a simplicial (or semisimplicial?) triangulation: with this choice, it may be encouraging to know that $\calM_c$ is finite for all $c$. However, as far as we know, noone has attempted to classify 4-manifolds that can be triangulated with $2, 4, \ldots$ simplexes. 

In fact, triangulations seem too rigid and complicate for our purposes. In dimension 3, Matveev \cite{Mat} has used the somewhat dual notion of \emph{simple spine} to define a complexity for all compact 3-manifolds which satisfies various nice properties: for instance, it is additive on connected sums. A two-dimensional polyhedron is \emph{simple} when it has generic singularities, as in Fig.~\ref{models:fig}. Matveev defines the complexity $c(M)$ of a 3-manifold $M$ as the minimum number of vertices in a simple spine. The price to pay for using spines instead of triangulations is that we get infinitely many manifolds in each $\calM_c$. However, each set $\calM_c$ contains only finitely many ``interesting'' 3-manifolds (say, closed irreducible or bounded hyperbolic), which have been listed for low values of $c=0,1,2,3,\ldots $ by various authors, see \cite{survey, Mat:book, Mat11} and the references therein.

\begin{figure}
 \begin{center}
  \includegraphics[width = 9cm]{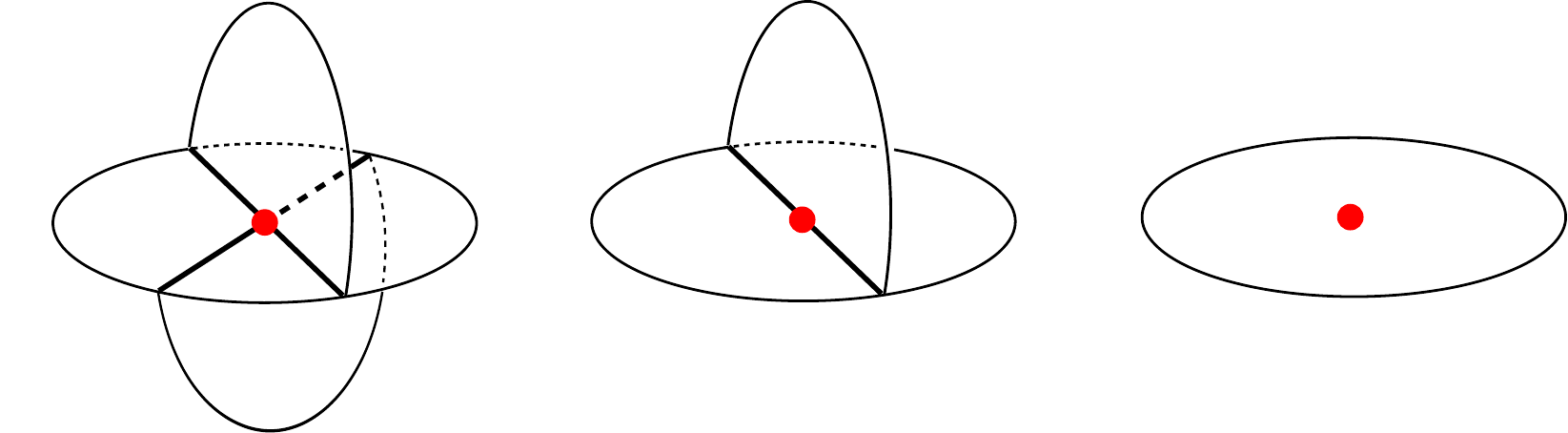}
 \end{center}
 \nota{Neighborhoods of points in a simple polyhedron. A point as in the left picture is a \emph{vertex}.}
 \label{models:fig}
\end{figure}

Most 4-manifolds do not have two-dimensional spines, so Matveev's definition cannot be extended \emph{as is} to dimension 4. There are however two natural variations, which lead to two distinct complexities for compact (piecewise-linear) 4-manifolds. 

One natural variation is obtained by taking three-dimensional simple spines. This extension works in fact for piecewise-linear manifolds of arbitrary dimension $n$ (by taking simple spines of dimension $n-1$): the resulting complexity is introduced and studied in \cite{Ma:PL}. Let $\calM_0 \subset \calM_1 \subset \ldots$ be the induced filtration in dimension 4: as shown in \cite{Ma:PL}, the set $\calM_0$ contains closed 4-manifolds with arbitrary fundamental group, and thus cannot be classified completely. Moreover, many (possibly all) simply-connected 4-manifolds lie in $\calM_0$, so even restricting to simply connected manifolds does not help much. The set $\calM_0$ is interesting, but is too big to be classified.

Another variation consists of using 2-dimensional simple polyhedra not as spines but as more general objects, called \emph{shadows}: following Turaev \cite{Tu0, Tu}, a shadow is a (locally flat) simple polyhedron $X$ in the interior of a compact 4-manifold $M$ such that $M$ is obtained from a regular neighborhood of $X$ by adding 3- and 4-handles. Every compact 4-manifold has a shadow, so it makes sense to define the complexity of a compact 4-manifold as the minimum number of vertices of a shadow. This notion has been recently introduced and studied by Costantino \cite{Co}. 

To avoid confusion, the two notions just introduced in dimension 4 may be called respectively \emph{spine-complexity} and \emph{shadow-complexity}. Spine-complexity was studied in \cite{Ma:PL}. We study here the shadow-complexity (which we call complexity for short) and its induced filtration, which we still denote by $\calM_0 \subset \calM_1 \subset \ldots$

In this paper we give a characterization of the set $\calM_0$ of all closed 4-manifolds having shadow-complexity zero. As we will see, such a set is considerably smaller than the one we obtain from spine-complexity. In particular, we can classify completely the manifolds in $\calM_0$ having finite fundamental group. The set $\calM_0$ is big enough to contain various interesting manifolds, and small enough to allow classifications. Shadow-complexity thus seems to be particularly well-behaved and it seems both feasable and interesting to pursue our program with $\calM_1, \calM_2, \ldots $

The most important discovery is that the set $\calM_0$ looks very much like the set of Waldhausen's 3-dimensional graph manifolds \cite{Wa}. Recall that a Waldhausen graph manifold is any 3-manifold which decomposes into blocks homeomorphic to $D^2\times S^1$ or $P^2\times S^1$, where $P^2$ is the pair-of-pants. The manifold can indeed be described via a graph, with vertices of valence 1 and 3 encoding the blocks, and some data on the edges telling us how they are glued. 

There are many ways to extend this notion to higher dimensions. To preserve generality, we may take a fixed set of oriented $n$-manifolds $\calS = \{M_1,\ldots, M_k,\ldots\}$ and say that an oriented $n$-manifold $M$ is a \emph{graph manifold generated by $\calS$} if $M$ decomposes (along codimension-1 submanifolds) into pieces (orientation-preservingly) PL-homeomorphic to these manifolds. The manifold $M$ can thus be described appropriately by a graph, with vertices of different types corresponding to the elements of $\calS$, and some information on the edges encoding the way they are glued. 

In dimension 4 there are various interesting choices for $\calS$, which lead to quite different notions of graph manifolds. For instance, Mozgova defined in \cite{Moz} a 4-dimensional graph manifold as a manifold generated by torus bundles over compact surfaces of negative Euler characteristic. The blocks are glued along torus bundles over $S^1$, such as the 3-torus.

The generalization we propose here of Waldhausen's graph manifolds is of different kind. 
Each block has some boundary components, all homeomorphic to $S^2\times S^1$. The pieces are thus glued along copies of $S^2\times S^1$. A simple way to get 4-manifolds with such boundary consists of drilling a closed manifold along closed curves (thus removing a $D^3\times S^1)$ or along spheres with Euler number zero (thus removing a $D^2\times S^2$). Consider the following blocks.
\begin{itemize}
\item $M_{i_1\cdots i_k}$ is obtained from $S^3 \times S^1$ by drilling a closed braid as in Fig~\ref{blocchi:fig}.
\item $N_i$ is obtained from $S^2\times S^2$ by drilling $i$ parallel spheres of type $\{pt\}\times S^2$.
\end{itemize}

The graph manifolds we consider here are generated by the following set: 
$$\calS_0 = \big\{M_1, M_{11}, M_2, M_{111}, M_{12}, M_3, N_1, N_2, N_3\big\}.$$

\begin{figure}
\begin{center}
\includegraphics[width = 12.5 cm]{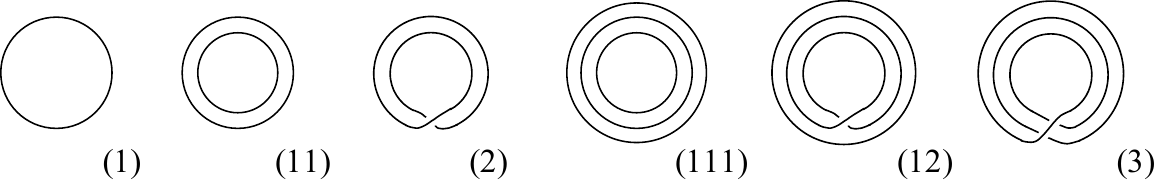}
\nota{Let $L= K_1\cup \ldots \cup K_k$ be a link in $S^3\times S^1$. In dimension 4 homotopy of links implies isotopy, and thus $L$ is determined by the natural numbers $i_j = |[K_j]|$ from $[K_j] \in \pi_1(S^3\times S^1) = \matZ$. Let $M_{i_1\cdots i_k}$ be the manifold obtained by drilling $S^3\times S^1$ along $L$. We show here the links that are important to define $\calS_0$. }
\label{blocchi:fig}
\end{center}
\end{figure}

We can now state the main result proved in this paper. For any integer $h > 0$ and any oriented $n$-manifold $N$, we denote by $\#_h N$ the connected sum of $h$ copies of $N$. When $h=0$ we set $\#_0 N = S^n$ and when $h<0$ we set $\#_h N = \#_{-h}\overline N$. 

\begin{teo} \label{main:teo}
A closed oriented 4-manifold $M$ has complexity zero if and only if $M= M'\#_h \matCP^2$ for some
integer $h$ and some graph manifold $M'$ generated by $\calS_0$.
\end{teo}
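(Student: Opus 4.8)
The plan is to set up a precise dictionary between zero-vertex shadows and block decompositions of the ambient $4$-manifold, and to run it in both directions. Throughout I use that a simple polyhedron without vertices has singular set a disjoint union of circles, so it is a union of compact surfaces --- the \emph{regions}, possibly with boundary and possibly closed --- glued three-to-one along these \emph{triple circles}, each region carrying a gleam; and I use that for a shadow $X$ the boundary $\partial N(X)$ is a connected sum of copies of $S^2\times S^1$, so that (by Laudenbach--Po\'enaru) $M$ is recovered from $N(X)$ by capping with $3$- and $4$-handles in an essentially unique way. Hence the polyhedron together with its gleams determines $M$. For the ``if'' direction I would first exhibit explicit zero-vertex shadows: a disk with gleam $\pm1$ for $\matCP^2$ and $\overline{\matCP^2}$, a $2$-sphere with $i$ holes and gleam $0$ for $N_i$, and a genus-zero surface with $k$ boundary circles carrying gleams that encode the closed braid for $M_{i_1\cdots i_k}$. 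Then I would check that gluing two shadows along boundary circles bounding $S^2\times S^1$-collars, and forming the internal connected sum with the $\matCP^2$-disk (a disk sprouting with gleam $\pm1$ along a circle bounding a disk in the rest of the polyhedron), introduce no vertices; the only delicate point is the mapping-class ambiguity of $S^2\times S^1$ (the Gluck twist and the reflections), which must be matched gluing by gluing.

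For the ``only if'' direction, take a shadow $X$ of $M$ with zero vertices. The strategy is to bring $X$ to a normal form. First, repeatedly blow down every disk region with gleam $\pm1$ that is attached along a circle bounding a disk in the rest of $X$; this splits off the $\#_h\matCP^2$ summand without creating vertices. Second, bound the gleams: since $\partial N(X)$ must be a connected sum of $S^2\times S^1$'s, the reconstruction forces each surviving gleam into a small set, after possibly trading a large gleam for further $\matCP^2$ summands by the same local move run in reverse. Third, and this is the combinatorial core, use the valence-three nature of triple circles to bound the topology of the regions: a neighborhood of a triple circle meets only three region-germs, and, combined with the $S^2\times S^1$-boundary constraint, this forces each region to be planar with at most three boundary circles, and the gleam data along a region's boundary to realize exactly a partition of $1$, $2$ or $3$ --- which is precisely the indexing set of $\calS_0$.

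Once $X$ is in normal form, I would cut $M$ along the copies of $S^2\times S^1$ that bound the $D^2$-bundle neighborhood of each region (equivalently, along the boundary of a collar of each triple circle) and identify each resulting piece: a planar region with its braid-encoding gleams thickens to $S^3\times S^1$ drilled along the corresponding closed braid, hence to an $M_{i_1\cdots i_k}\in\calS_0$; a holed sphere with gleam $0$ thickens to $S^2\times S^2$ drilled along parallel copies of $\{pt\}\times S^2$, hence to an $N_i$; and the blown-down disks account for $\#_h\matCP^2$. Conversely the gluing identifications are exactly $S^2\times S^1$-gluings of these blocks, so the complement of the $\matCP^2$'s is a graph manifold generated by $\calS_0$. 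I expect the main obstacle to be the normal-form step of the second paragraph: controlling the gleams and, above all, proving that higher-genus regions and planar regions with more than three boundary components cannot occur in a zero-vertex shadow (or can be traded, without adding vertices, for configurations already in the list). This is where the global $S^2\times S^1$-boundary condition and the local valence-three constraint must be exploited together; a secondary difficulty is the careful bookkeeping of the $3$- and $4$-handles and of the $S^2\times S^1$ mapping-class ambiguity needed to make sure the block decomposition obtained is genuinely of the asserted form.
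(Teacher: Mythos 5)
Your overall framing of the problem is sound, and you correctly isolate the key global constraint: for a shadow $X$ of a closed $M$, the $3$-manifold $\partial N(X)$ must be $\#_h(S^2\times S^1)$, and this rigidity is what must be exploited. You also correctly identify the Costantino--Thurston-style fact that the decomposition of $X$ into pieces induces a graph-manifold decomposition of $\partial N(X)$. The ``if'' direction is essentially as in the paper, though your dictionary is slightly off: the $M_{i_1\cdots i_k}$ blocks correspond to the \emph{singular} pieces (the $Y$-bundles over $S^1$, i.e.~regular neighborhoods of triple circles), while the $N_i$ correspond to the nonsingular pieces ($D^2$, annulus, pair-of-pants); a planar region with $k$ boundary circles does not encode $M_{i_1\cdots i_k}$.

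The genuine gap is in the ``only if'' direction, and it is not a small one. You acknowledge that the normal-form step is ``the main obstacle,'' but the proposal offers no mechanism for getting through it, and moreover the target normal form you state (regions planar with $\leqslant 3$ boundary circles, gleams confined to a small set by inspection) is not what actually gets proved and would not follow by any straightforward argument from the $\#_h(S^2\times S^1)$ condition alone. The paper's route is substantially more elaborate and is where almost all of the work lies: first one eliminates Möbius-strip regions and triple circles of type $Y_{12}$, $Y_3$ by finding vertical compressing discs in the boundary graph manifold (Section~\ref{reduction:section}); then one shows the decorated graph encoding $X$ can be replaced by a \emph{tree} carrying a \emph{level function} whose levels reflect meridian/longitude data of the solid tori in $\partial N(X)$ (Section~\ref{trees:section}); then one classifies ``leaves,'' ``fruits,'' and ``branches,'' tracking a torsion invariant on bad branches (Section~\ref{leaves:section}); and finally the decisive step is a generalization of the Neumann--Weintraub plumbing-line lemma (Lemma~\ref{plumbing:lemma}): if a plumbing line has boundary $S^3$ or $S^2\times S^1$, then up to reversal/sign one of a short list of local configurations must occur, and each of those configurations, transported back to the decorated tree, admits a move decreasing complexity (Sections~\ref{plumbing:section}--\ref{proof:section}). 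Without this chain --- or some replacement for it --- there is no way to ``bound the gleams'' or to rule out arbitrarily complicated graphs: the $\#_h(S^2\times S^1)$ condition is a very weak global hypothesis, and turning it into a local simplification is exactly the technical content of the theorem. Your proposal names the right constraint and the right goal but leaves the entire engine unbuilt.
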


We now investigate these graph manifolds: we would like to show that they indeed lie among ``the simplest 4-manifolds'' also from other viewpoints.

A simple method for constructing non-trivial closed 4-manifolds consists of taking the double of a 4-dimensional 2-handlebody, \emph{i.e.}~a compact 4-manifold made of 0-, 1-, and 2-handles. The resulting manifolds may have arbitrary (finitely presented) fundamental group. 

The graph manifolds generated by $\calS_0$ belong to this set. Actually, they are doubles of the ``simplest'' types of 2-handlebodies: those which collapse to simple polyhedra without vertices, as the following shows.
\begin{prop} \label{graph:prop}
Let $M$ be a closed oriented 4-manifold different from $\#_h (S^3\times S^1)$. The following conditions are equivalent.
\begin{enumerate}
\item $M$ is a graph manifold generated by $\calS_0$. 
\item $M$ is the boundary of a compact oriented 5-manifold which collapses onto a simple polyhedron without vertices.
\item $M$ is the double of a compact oriented 4-manifold which collapses onto a simple polyhedron without vertices.
\end{enumerate}
\end{prop}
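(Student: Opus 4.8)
The plan is to establish the three equivalences by exhibiting a concrete dictionary between the combinatorial data of a vertex-free simple polyhedron, the 2-handlebodies built from it, and the graph-manifold structure on the boundary. The key observation is that a \emph{connected} simple polyhedron $P$ without vertices has a very rigid structure: its singular set is a disjoint union of circles (the triple-point locus), and removing a regular neighbourhood of this singular set decomposes $P$ into surfaces-with-boundary, while along each singular circle exactly three sheets come together. Thus $P$ is built by gluing together ``elementary'' pieces, each of which is a regular neighbourhood in $P$ of either a boundary-free surface, a surface meeting the singular set, or a singular circle. I would first catalogue these local models and compute, for a thickening of each, the corresponding 4- or 5-dimensional handle picture; this is exactly the place where the blocks $M_{i_1\cdots i_k}$ and $N_i$ will appear, because a regular neighbourhood of a disc bounding the core of a handle, doubled, produces $S^3\times S^1$ drilled along a braid, and a neighbourhood of an $S^2$ with trivial normal bundle produces the $N_i$'s.

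For the implication $(2)\Rightarrow(3)$: if $W^5$ collapses onto a vertex-free simple polyhedron $P$, then $W$ is a regular neighbourhood of $P$ in some PL 5-manifold, hence $W\cong P\tilde\times [-1,1]$ in the appropriate thickened-polyhedron sense, and $\partial W$ is naturally the double of the ``upper half'' $W_+$, which is a regular neighbourhood of $P$ inside a 4-manifold and therefore collapses onto $P$ as well. Conversely $(3)\Rightarrow(2)$ is immediate: the double of a 4-manifold $V$ collapsing onto $P$ bounds $V\times[0,1]$, which collapses onto $V$ and hence onto $P$. So the real content is the equivalence of either of these with $(1)$, the graph-manifold condition. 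Here I would argue $(3)\Rightarrow(1)$ by taking a vertex-free $P$, decomposing it along its singular circles into the elementary pieces above, thickening to a 4-manifold $V$ and doubling: the decomposition of $P$ induces a decomposition of $\partial(DV)$ along 3-manifolds, and one checks piece-by-piece that these 3-manifolds are copies of $S^2\times S^1$ and that the pieces are exactly the doubles of the thickened elementary polyhedra, which are precisely the blocks in $\calS_0$ (a finite verification, using that the braids with at most $3$ strands and the $\{pt\}\times S^2$-drillings with at most $3$ spheres exhaust what can occur locally). The hypothesis $M\neq\#_h(S^3\times S^1)$ enters to rule out the degenerate case where $P$ is a surface with no singular set at all, whose double is $\#_h(S^3\times S^1)$ and which one does not want to count as a graph manifold (or which lives in the boundary of the combinatorics in a trivial way).

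For the converse $(1)\Rightarrow(3)$, given a graph manifold $M'$ generated by $\calS_0$, each block is by construction a drilling of $S^3\times S^1$ or $S^2\times S^2$ and hence is itself the double of an obvious 4-manifold collapsing onto one of the elementary vertex-free polyhedra; one then has to check that the gluings along $S^2\times S^1$ used to assemble $M'$ can be matched up with gluings of the corresponding polyhedra so that the doubles glue to $M'$. This amounts to understanding the mapping class group of $S^2\times S^1$ and verifying that the relevant gluing maps are realised — this is the technical heart of the argument and the step I expect to be the main obstacle, since one must be careful that the self-homeomorphism of $S^2\times S^1$ that exchanges the two factors' roles, and the ones acting nontrivially on $\pi_2$, are all accounted for by reglueing choices on the polyhedral side. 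The rest is bookkeeping: assembling the local polyhedra into a global connected vertex-free $P$ whose thickened double is $M'$, which gives $(3)$, and then $(3)\Rightarrow(2)$ as above closes the cycle.
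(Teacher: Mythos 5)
Your outline captures the right decomposition (vertex‑free polyhedra decompose along singular circles into surfaces and $Y$‑bundles, and these thicken to the blocks $M_{i_1\cdots i_k}$, $N_i$), but two of your steps have genuine problems.

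First, $(2)\Rightarrow(3)$ is not the formality you make it out to be. A compact $5$‑manifold $W$ that collapses onto a $2$‑polyhedron $P$ is a $5$‑dimensional thickening of $P$, but it does \emph{not} automatically split as a product $N^4\times[-1,1]$ with $N^4$ a $4$‑dimensional thickening of $P$: there is no canonical ``upper half''. The obstruction lives in the framings of the $2$‑handles. Over $P$ the $0$‑ and $1$‑handles thicken uniquely in either dimension, but a $2$‑handle in dimension~$5$ is attached according to $\pi_1(\mathrm{SO}(3))=\matZ_2$, whereas in dimension~$4$ the attaching data lives in $\pi_1(\mathrm{SO}(2))=\matZ$. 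What is true — and this is exactly the content of the paper's Lemma~\ref{23:lemma} — is that one can \emph{choose} the $4$‑dimensional framings so that their images under the surjection $\pi_1(\mathrm{SO}(2))\twoheadrightarrow\pi_1(\mathrm{SO}(3))$ match those of $W$, after which $W\isom N\times[0,1]$. You need to say this; as written your claim that $W\cong P\tilde\times[-1,1]$ ``in the appropriate sense'' begs the question.

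Second, you have built your equivalence cycle around $(1)\Leftrightarrow(3)$ and correctly identify the regluing step (matching doubles of $4$‑dimensional blocks across $S^2\times S^1$) as the ``main obstacle''. This is not an accident: it is genuinely the hard way around. The paper instead proves $(1)\Leftrightarrow(2)$, working entirely in dimension~$5$, precisely because there the gluing problem disappears: every self‑homeomorphism of $S^2\times S^1$ is isotopic to one preserving the $S^2$‑foliation and hence extends over $D^3\times S^1$ (Laudenbach--Poenaru), so one can glue the $5$‑dimensional blocks (each fibering over an elementary polyhedron piece) without tracking any framing. The decomposition of $\partial N(X)$ into horizontal boundaries of the thickened pieces then gives the graph structure on the nose; conversely one uses Proposition~\ref{smaller:prop} to pass to $\calS_0'$ and boundary‑connected‑sum the $5$‑dimensional pieces together. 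Your route is salvageable, but you would essentially have to redo Lemma~\ref{23:lemma} block by block, which is more work for no gain. Finally, the role of the hypothesis $M\neq\#_h(S^3\times S^1)$ is not to exclude nonsingular surfaces (a closed surface thickens to a disc bundle whose double is an $S^2$‑bundle, which \emph{is} a graph manifold); it is to exclude the case where the polyhedron collapses onto something $1$‑dimensional, i.e.\ where the construction has no surviving $2$‑dimensional pieces at all.
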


Graph manifolds generated by $\calS_0$ bound 5-manifolds and have thus signature zero. Therefore the integer $h$ in the statement of Theorem \ref{main:teo} equals the signature of $M$.

We mention that most doubles of 2-handlebodies are \emph{not} graph manifolds: the hypothesis that the collapsed 2-polyhedron has no vertices is quite strong. In some sense, graph manifolds are the ``simplest'' such doubles. In particular, graph manifolds generated by $\calS_0$ do not realize every possible fundamental group, see Proposition \ref{finite:teo} below.

There are various analogies between Waldhausen's graph manifolds and those generated by $\calS_0$. Compare Proposition \ref{graph:prop} to the following.
\begin{teo}[Costantino-Thurston \cite{CoThu}] Let $M$ be a closed oriented 3-manifold. The following conditions are equivalent.
\begin{enumerate}
\item $M$ is a graph manifold.
\item $M$ is the boundary of a compact oriented 4-manifold which collapses onto a locally flat simple polyhedron without vertices.
\end{enumerate}
\end{teo}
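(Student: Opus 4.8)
The plan is to establish the cyclic chain of implications $(3)\Rightarrow(2)\Rightarrow(1)\Rightarrow(3)$; throughout, write $X\searrow Y$ for ``$X$ collapses onto $Y$''. The implication $(3)\Rightarrow(2)$ is immediate: if $M$ is the double $W\cup_{\partial W}\overline W$ of a compact oriented $4$-manifold $W$ with $W\searrow P$, $P$ a simple polyhedron without vertices, then $M=\partial(W\times[0,1])$ and $W\times[0,1]\searrow W\searrow P$. The remaining two implications rest on analysing a \emph{thickening} of a vertex-free simple polyhedron $P$. Recall that such a $P$ decomposes canonically into its \emph{regions} (the components of $P\setminus S(P)$, each the image of a compact surface with boundary) and a regular neighbourhood in $P$ of the singular set $S(P)$; since $P$ has no vertices, $S(P)$ is a disjoint union of circles, each carrying a monodromy in the group of permutations of its three local sheets, and the regions are glued to the singular part along circles. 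The set $\calS_0$ is, in effect, reverse-engineered from the boundaries of the thickenings of these local pieces.

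For $(2)\Rightarrow(1)$ --- the heart of the argument --- let $V$ be a compact oriented $5$-manifold with $\partial V=M$ and $V\searrow P$; after the collapse we may assume $V$ is a regular neighbourhood of $P$. Then $V$ is assembled from the $5$-dimensional thickenings of the pieces of $P$ above, glued along thickenings of circles, which are copies of $S^1\times D^3$; correspondingly $M=\partial V$ is glued along copies of $S^1\times S^2=S^2\times S^1$ out of the pieces lying over the regions and over the components of $S(P)$. Over a region $R$, the relevant piece of $M$ is an $S^2$-bundle over $R$; when $R$ is orientable this bundle is the product $R\times S^2$ (indeed $H^2(R;\matZ/2)=0$ and $V$ is oriented), and a decomposition of $R$ into disks, annuli and pairs of pants writes this piece as a union of copies of $N_1$, $N_2$ and $N_3$. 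Over a component $C$ of $S(P)$, the relevant piece of $M$ is the boundary of the $5$-dimensional thickening of a mapping torus of a self-homeomorphism of a triod; a finite case analysis on the monodromy (the identity, a transposition, or a $3$-cycle) and on the incidence and wrapping pattern of the regions abutting $C$ --- which also absorbs the non-orientable regions left aside above --- identifies this piece, possibly after a further subdivision along copies of $S^2\times S^1$, with one of $M_{111}$, $M_{12}$, $M_3$ and the remaining blocks $M_1$, $M_{11}$, $M_2$. Reassembling, $M$ is a graph manifold generated by $\calS_0$; the degenerate cases, in which $P$ may be taken especially simple, yield the excluded family $\#_h(S^3\times S^1)$ --- in particular $S^4$ for $h=0$ --- and are treated separately.

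For $(1)\Rightarrow(3)$ one runs this construction backwards. One first checks that each block of $\calS_0$ is a \emph{relative double}: there are a compact oriented $4$-manifold with corners $W_B$ collapsing onto one of the local pieces above (a disk, an annulus, a pair of pants, or a mapping torus of a triod self-homeomorphism, all vertex-free simple polyhedra), a codimension-zero submanifold $\Sigma_B\subset\partial W_B$, and an identification $B=W_B\cup_{\Sigma_B}\overline{W_B}$ carrying the $S^2\times S^1$ boundary components of $B$ to the doubles of the solid tori $S^1\times D^2$ making up $\partial W_B\setminus\Sigma_B$; for instance $N_i=(\Sigma_i\times D^2)\cup_{\Sigma_i\times S^1}(\Sigma_i\times D^2)$ with $\Sigma_i$ a disk, annulus or pair of pants and $\Sigma_i\times D^2\searrow\Sigma_i$, while the $M$-blocks arise in the same way from the $4$-dimensional thickenings of triod mapping tori. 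Given $M$ a graph manifold generated by $\calS_0$, one glues the pieces $W_B$ along solid tori following the gluing data of $M$ --- those twists of the $S^2\times S^1$'s which are not doubles of solid-torus gluings being absorbed into the Euler numbers (``gleams'') of the normal bundles of the $4$-dimensional thickenings, possibly after re-choosing $P$ --- obtaining a compact oriented $4$-manifold $W$ whose collapses glue to a collapse $W\searrow P$, with $DW=M$ by construction.

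I expect the main obstacle to be the local case analysis at the singular circles in $(2)\Rightarrow(1)$ (and its mirror in $(1)\Rightarrow(3)$): one must enumerate all germs of a vertex-free simple polyhedron along a triple circle together with all incidence and wrapping patterns of the abutting regions, verify that the diffeomorphism type of the resulting $5$-dimensional thickening --- hence of the corresponding piece of $M$ --- depends only on this combinatorial data (and in particular that normal-bundle twisting contributes nothing beyond a change of the $S^2\times S^1$ gluings), and match each of the finitely many $4$-manifolds so obtained, after a suitable subdivision along copies of $S^2\times S^1$, with a gluing of the nine blocks of $\calS_0$. Checking that these nine blocks already suffice, and correctly isolating the degenerate family $\#_h(S^3\times S^1)$ for which the correspondence must be established by hand, is where the real work lies.
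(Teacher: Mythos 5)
You have proved the wrong statement. The theorem you were asked to prove is the Costantino--Thurston result quoted at the top of this item: $M$ is a closed oriented \emph{3}-manifold, the thickening is a \emph{4}-dimensional manifold collapsing onto a locally flat vertex-free simple polyhedron, and ``graph manifold'' means Waldhausen's original notion, with blocks $L_1,L_2,L_3$ glued along tori. Your argument is instead an outline of the \emph{4}-dimensional analogue appearing earlier in the paper (Proposition~\ref{graph2:prop}): $M$ a closed 4-manifold, $V$ a 5-manifold collapsing onto a vertex-free polyhedron, pieces of $\partial V$ being $S^2$-bundles over regions, blocks $N_1,N_2,N_3,M_{111},M_{12},M_3,\ldots\in\calS_0$, gluings along $S^2\times S^1$, and the excluded family $\#_h(S^3\times S^1)$. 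Note already that you announce a three-step cycle $(3)\Rightarrow(2)\Rightarrow(1)\Rightarrow(3)$, but the statement has only two items.

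The two theorems are genuinely parallel in spirit (which is precisely why the paper juxtaposes them), but the proofs do not coincide and the dimension drop matters. In the 3-dimensional version the piece of $\partial W^4$ lying over a region $R$ is an $S^1$-bundle, not an $S^2$-bundle, and its isomorphism type is \emph{not} determined by $R$ alone: a $4$-dimensional thickening of a vertex-free simple polyhedron carries a half-integer gleam on each region, recording how the normal $D^2$-bundle twists, and this gleam changes the boundary 3-manifold (it is the Euler number of the resulting Seifert piece). This is a much larger ambiguity than the single $w_2\in H^2(X;\matZ_2)$ that governs 5-dimensional thickenings, and it cannot be waved away as ``contributing nothing beyond a change of the $S^2\times S^1$ gluings.'' Dually, in the direction from a graph 3-manifold to a collapsible 4-manifold, the Seifert-fibred data of each block (Euler numbers, orientability of fibrations) must be encoded into the gleams of the polyhedron, which has no analogue in your construction of the 5-manifold $W$. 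Also, locally flatness is automatic in codimension 3 (this is exactly why the paper's Proposition~\ref{graph2:prop} states (3) and (4) as separate equivalent conditions), but in the 4-dimensional thickening it is a genuine hypothesis that your argument never engages.

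In short: the architecture of your proof (decompose $P$ into discs, annuli, pants, and circle-bundles over $S(P)$, thicken each piece, match the boundary pieces to the prescribed blocks, glue back) is the right template, and it is essentially the one Costantino and Thurston use. But you have instantiated it one dimension too high, so none of the local identifications you claim are the ones required by the statement, and the central subtlety of the 3-dimensional case --- the gleams and the resulting Seifert structure on $\partial N(X)$ --- is missing entirely.
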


Note also that Waldhausen's graph manifolds are generated by the set
$$\calS^{\rm Wald} = \big\{L_1, L_2, L_3 \big\}$$
where $L_i$ is obtained from $S^2\times S^1$ by drilling along $i$ parallel curves of type $\{pt\}\times S^1$. This set has some resemblances with $\calS_0$. The following proposition holds also for Waldhausen's manifolds.
\begin{prop} \label{G:prop}
The set $\calG_0$ of all 4-dimensional graph manifolds generated by $\calS_0$ is closed under connected sum and finite coverings. That is, 
\begin{enumerate}
\item if $M, M' \in \calG_0$ then $M\# M' \in \calG_0$;
\item if $M\in \calG_0$ and $\widetilde M \to M$ is a finite covering, then $\widetilde M \in \calG_0$.
\end{enumerate}
\end{prop}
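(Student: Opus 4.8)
The plan is to prove both closure properties by giving explicit constructions at the level of the graph-manifold decompositions, exploiting the fact that the gluing loci are all copies of $S^2\times S^1$. For part (1), I would start from graph decompositions of $M$ and $M'$ into blocks from $\calS_0$, realized combinatorially by two graphs $\Gamma$ and $\Gamma'$ with data on the edges. The key observation is that a connected sum with $S^4$ changes nothing, so it suffices to produce the connected sum $M\# M'$ by combining the two graphs into a single connected graph. Since every block in $\calS_0$ obtained by drilling has at least one $S^2\times S^1$ boundary component and the blocks $M_1, N_1$ are ``the simplest'' with a single boundary, I would look for a way to merge $\Gamma$ and $\Gamma'$ by inserting a block that has the effect of forming a connected sum along the gluing sphere $S^2\subset S^2\times S^1$. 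Concretely, I expect that $M_1$ (equivalently, $S^3\times S^1$ drilled along one curve, which is $D^3\times S^1$ with an $S^2\times S^1$ on each side after a suitable handle description) or a block with two boundary components can serve as a ``connector'': gluing it between a boundary component of a block of $M$ and a boundary component of a block of $M'$ realizes the operation of summing the two pieces, hence $M\# M'$. One must then check that the resulting glued-up 4-manifold is indeed $M\# M'$ rather than some other sum; this is a Kirby-calculus / handle-cancellation verification using the explicit link pictures of Fig.~\ref{blocchi:fig}, together with the fact (already used in the paper) that in dimension 4 homotopy of links implies isotopy, so the drilling data is controlled by integers only.

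For part (2), the strategy is to use the characterization of Proposition \ref{graph:prop}: $M\in\calG_0$ (when $M\neq \#_h(S^3\times S^1)$) iff $M$ bounds a compact oriented 5-manifold $W$ collapsing onto a vertexless simple polyhedron $P$, and iff $M$ is the double of a 4-manifold collapsing onto such a $P$. Given a finite covering $\widetilde M\to M$, I would lift the double structure: if $M=\mathrm{double}(X)$ with $X$ collapsing onto a vertexless simple polyhedron, then the covering $\widetilde M\to M$, being a covering of a double, restricts to a covering $\widetilde X\to X$ (the covering is ``symmetric'' with respect to the doubling involution, or can be made so), and $\widetilde M=\mathrm{double}(\widetilde X)$. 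The crucial point is that a covering of a simple polyhedron without vertices is again a simple polyhedron without vertices: the local models in Fig.~\ref{models:fig} other than the vertex are preserved under covering maps, and a covering cannot create a vertex where there was none. A collapse lifts to a collapse, so $\widetilde X$ still collapses onto a vertexless simple polyhedron, and Proposition \ref{graph:prop} gives $\widetilde M\in\calG_0$. One must separately handle the excluded family $\#_h(S^3\times S^1)$ and check it is closed under coverings as well, which is classical: any finite covering of $\#_h(S^3\times S^1)$ is again of this form.

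I would be slightly careful about two technical points. First, in part (2), the covering of $M=\mathrm{double}(X)$ need not a priori respect the mirror involution; one fixes this by passing to the double cover associated with the index-two subgroup or by noting that the collapsing spine can be chosen inside one of the two halves, so its preimage already sits in the preimage of that half and the complementary handles are attached symmetrically by naturality. Second, for part (1) the honest content is not the existence of \emph{some} closed manifold built from $\calS_0$-blocks, but that one can arrange the gluings so the result is exactly $M\# M'$; the main obstacle is therefore this bookkeeping with the edge data and verifying via handle moves that the connector block implements connected sum and not, say, a sum with an extra $S^2\times S^2$ or $S^3\times S^1$ summand. This is where I expect the bulk of the work to lie, and it should reduce to a finite check on the nine blocks of $\calS_0$ using their explicit handle/link descriptions.
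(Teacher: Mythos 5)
Your approach is genuinely different from the paper's, and both parts as you state them have gaps that the paper's argument avoids by working one dimension up. The paper deduces both statements directly from the characterization of Proposition~\ref{graph:prop}: $M\in\calG_0$ if and only if $M=\partial W^5$ for a compact oriented $5$-manifold $W^5$ collapsing onto a simple polyhedron $P$ without vertices. Part~(1) then follows from the boundary-connected sum $W\sharp W'$, which collapses onto the vertexless polyhedron $R$ of Fig.~\ref{sum_no_gleam:fig} (a vertexless model $3$-deformation equivalent to $P\vee Q$), and one just notes $\partial(W\sharp W')=\partial W\#\partial W'$. Part~(2) follows from the observation that $W^5$ has only $0$-, $1$-, and $2$-handles, so dually it is built from $\partial W^5$ by adding $3$-, $4$-, and $5$-handles, none of which affects $\pi_1$ in dimension $5$; hence $\pi_1(\partial W^5)\to\pi_1(W^5)\cong\pi_1(P)$ is an isomorphism, every finite covering of $\partial W^5$ extends to a covering $\widetilde{W}^5\to W^5$, and $\widetilde W^5$ collapses onto the induced covering of $P$, which is again a simple polyhedron without vertices.

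The concrete gap in your part~(1): gluing along the boundary spheres $S^2\times S^1$ is not a connected sum (that would be a gluing along $S^3$). If you drill a $D^3\times S^1$ out of a ball in $M$ and in $M'$ and glue the resulting $S^2\times S^1$-boundaries, van Kampen gives $\pi_1=\pi_1(M)*\pi_1(M')*\matZ$, i.e.\ you produce $M\#M'\#(S^3\times S^1)$ rather than $M\#M'$, and $\calG_0$ is not visibly closed under removing connected summands. No single block of $\calS_0$ serves as a ``connector'' fixing this: the one-boundary blocks $M_1,N_1$ merely cap off a boundary, and the two-boundary blocks $M_{11},M_{12},N_2$ are cylinders or drillings, none of which implements a connected sum. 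You correctly anticipate that ``the bulk of the work'' lies here; it is not just bookkeeping, and I do not see how to close it at the level of graphs without essentially reconstructing the $5$-dimensional argument.

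The gap in your part~(2): you acknowledge that the covering $\widetilde M\to M=DX$ need not respect the doubling involution, but neither of your fixes is justified as stated. In particular, ``the complementary handles are attached symmetrically by naturality'' is not automatic: the inclusion $X\hookrightarrow DX$ is only a $\pi_1$-surjection (the dual $2$-handles of $\bar X$ can kill $\pi_1$-elements), so the two halves $p^{-1}(X)$ and $p^{-1}(\bar X)$ of $\widetilde M$ need not be isomorphic as coverings of $X$ unless the corresponding subgroup is $\tau_*$-invariant, which you have not checked. Since you already quote Proposition~\ref{graph:prop}, I recommend using the $5$-manifold bounding characterization rather than the double characterization: the $5$-dimensional picture is precisely what makes $\pi_1(\partial W^5)\to\pi_1(W^5)$ an isomorphism and lets the covering lift cleanly.
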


In a weak sense, complexity in dimension 4 is similar to Gromov norm in dimension 3: Waldhausen's graph manifolds are precisely the closed 3-manifolds having Gromov norm zero (thanks to geometrization!), while the graph manifolds generated by $\calS_0$ plus projective planes are precisely the closed 4-manifolds having complexity zero.

Waldhausen introduced and also classified his graph manifolds in \cite{Wa}. We classify here the graph manifolds generated by $\calS_0$ having finite fundamental group. These manifolds are easily described as boundaries of some 5-manifolds, as follows. 

A finite presentation $\calP$ of a group defines a 2-dimensional polyhedron $X^2$ with one vertex, one edge for each generator, one disc for each relator. Let $\calS(\calP)$ denote the set of all closed oriented 4-manifolds that are boundaries of some oriented 5-manifold that collapses onto $X^2$. The following is easily proved. Recall that an oriented 4-manifold is \emph{spin} when its second Stiefel-Whitney calss $w_2$ vanishes.

\begin{prop} \label{presentation:prop}
The following holds.
\begin{enumerate}
\item The set $\calS(\calP)$ contains finitely many 4-manifolds, precisely one of which is spin. 
\item The manifolds in $\calS(\calP)$ share the same cellular 3-skeleton: therefore all their homology groups and the homotopy groups $\pi_1$ and $\pi_2$ depend only on $\calP$. 
\item If $\calP$ and $\calP'$ are related by Andrew-Curtis moves \cite{AnCu}, then $\calS(\calP) = \calS(\calP')$.
\end{enumerate}
\end{prop}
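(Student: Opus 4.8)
The plan is to analyze directly the 5-manifolds collapsing onto $X^2$, using handle theory. A collapse of an oriented 5-manifold $W$ onto $X^2$ exhibits $W$ as built from a regular neighborhood of $X^2$ by adding handles of index $\le 2$; equivalently, $W$ is an oriented 5-dimensional 2-handlebody whose handle decomposition has exactly one 0-handle, one 1-handle per generator of $\calP$, and one 2-handle per relator. The boundary $M=\partial W$ is then a closed oriented 4-manifold, and the key point is that the only freedom in reconstructing $W$ from $X^2$ lies in the \emph{framings} of the 2-handles: the attaching circles are determined up to isotopy by the relators (in dimension $\ge 5$ homotopy implies isotopy for circles, and the circles lie in $\#_{g}(S^3\times S^1)=\partial(\natural_g\, S^2\times D^3)$ where $g$ is the number of generators), while each framing is an element of $\pi_1(\SO(3))\cong\matZ/2\matZ$. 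Hence there are at most $2^r$ such 5-manifolds $W$, where $r$ is the number of relators, and correspondingly at most finitely many boundaries $M\in\calS(\calP)$; item (1)'s finiteness follows. For the refinement of (1), one shows $w_2(M)$ is detected by the parity of the framings (a 2-handle attached with the ``wrong'' framing introduces nontrivial $w_2$), and that flipping two framings simultaneously yields orientation-preservingly diffeomorphic boundaries — so exactly one choice of framings, up to this equivalence, gives a spin $M$, and all spin members of $\calS(\calP)$ coincide. The parity count here is the step I expect to require the most care.

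For item (2), observe that the cellular chain complex of $W$ in degrees $0,1,2$ is completely determined by $\calP$ (it is the standard presentation complex chain complex, with the 2-handles contributing the Fox-derivative boundary matrix), and that $\partial W=M$ inherits a CW structure whose 3-skeleton is obtained by the standard ``doubling in the middle'' description of the boundary of a handlebody: the $k$-handles of $W$ contribute $k$-cells and dual $(5-k)$-cells to $M$, so the cells of $M$ of dimension $\le 3$ are exactly the $0$-, $1$-, $2$-cells of $W$ together with the cocells dual to the $2$-handles (dimension $3$). Since the attaching maps of these low-dimensional cells depend only on the circles and their framings' \emph{reductions} — and the reductions that matter for the $3$-skeleton are precisely the homological data, which is framing-independent — the cellular $3$-skeleton of $M$ is the same for all members of $\calS(\calP)$. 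Then $H_*(M)$, $\pi_1(M)$ and (since $\pi_2$ of a $4$-complex is computed from the $3$-skeleton and the universal cover) $\pi_2(M)$ depend only on $\calP$.

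For item (3), recall that an Andrews--Curtis move on $\calP$ (a Nielsen transformation on relators, a conjugation of a relator, or a stabilization adding a generator together with a relator killing it) corresponds precisely to a handle move on the $5$-manifold: sliding one $2$-handle over another, isotoping a $2$-handle's attaching circle, or introducing a cancelling $1$-$2$ handle pair. Each such move changes $W$ by a diffeomorphism and hence does not change $\partial W$; conversely every $W$ built from $X^2$ is carried to every $W'$ built from $X'^2$ by the corresponding sequence of handle moves, together with the framing ambiguity already accounted for. Therefore $\calS(\calP)=\calS(\calP')$. The one subtlety is to check that the framing choices are matched up correctly under stabilization — a cancelling pair can be introduced with either framing on the new $2$-handle, but both choices cancel, so no new boundary manifolds appear — which is routine. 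The main obstacle overall is thus the bookkeeping of framings in item (1): identifying $w_2$ with framing parity and verifying that simultaneous framing flips give diffeomorphic boundaries.
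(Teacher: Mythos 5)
Your item (3) and, in spirit, item (2) track the paper's reasoning, but item (1) has a genuine gap. The paper does not count framings directly; it cites a classification result (Proposition \ref{w2:prop}, due to Hambleton--Kreck--Teichner) asserting that the 5-dimensional thickenings of $X^2$ are in bijection with $H^2(X^2,\matZ_2)$ via $W^5\mapsto w_2(W^5)$. Finiteness and uniqueness of the spin thickening $W^5$ are immediate. It then separately proves that $W^5$ is spin if and only if $\partial W^5$ is spin: the restriction $i^*:H^2(W^5,\matZ_2)\to H^2(\partial W^5,\matZ_2)$ is injective because $H^1(W^5,\partial W^5)\isom H_4(W^5)=0$, and $i^*(w_2(W^5))=w_2(\partial W^5)$ by naturality.

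Your version of item (1) breaks down in two places. First, the equivalence ``flipping two framings simultaneously yields diffeomorphic boundaries'' is not correct: the framing assignments live in $C^2(X,\matZ_2)\cong\matZ_2^r$, and two assignments produce diffeomorphic thickenings precisely when they differ by an element of the image of the cellular coboundary $\delta:C^1(X,\matZ_2)\to C^2(X,\matZ_2)$, i.e.\ when they represent the same class in $H^2(X,\matZ_2)$. Only the total parity being equivalent would give at most two thickenings, contradicting cases like $\calD_4$ (eight framing choices, $H^2=\matZ_2^3$, four manifolds up to symmetry). Second, you conflate $w_2$ of the $5$-manifold $W$ with $w_2$ of the boundary $4$-manifold $M$: the framing parity controls $w_2(W)$, not a priori $w_2(M)$, and deducing ``$W$ spin $\Leftrightarrow$ $M$ spin'' requires the injectivity argument above (or some substitute). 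Without both of these points your conclusion ``precisely one is spin'' is unsupported. The rest of the proposal — that AC moves correspond to handle slides, isotopies, and cancelling $1$-$2$ pairs, and that the $3$-skeleton of $\partial W$ is built from the core discs and is thus framing-independent — agrees with the paper's argument (the paper cites a theorem of Andrews and Curtis for (3) rather than re-deriving it, but the content is the same).
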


For instance, the trivial (empty) presentation $\calP = \langle\, |\, \rangle$ yields $\calS(\calP) = \{S^4\}$. A balanced presentation (\emph{i.e.}~having the same number of generators and relators) of the trivial group always yields a unique homotopy 4-sphere. The Andrew-Curtis conjecture states that every such presentation is related to the trivial one by AC-moves \cite{AnCu}. If this holds, then such a homotopy 4-sphere is always $S^4$. However, such a conjecture is commonly believed to be false: one way to disprove it could be to constuct a fake $S^4$ in this way.

Consider the standard presentations 
$$\calC_n = \langle a | a^n \rangle, \quad \calD_{2n} = \langle a,b | a^2, b^2, (ab)^n \rangle$$ 
of the cyclic and dihedral groups. We classify the manifolds in $\calS(\calC_n)$ and $\calS(\calD_{2n})$ and assign them some names.

\begin{prop} \label{finite:prop}
We have the following.
\begin{eqnarray*}
\calS(\calC_n) & = & \left\{\begin{array}{ll} \left\{C_n^0, C_n^1 \right\} & {\rm \ if\ } n {\rm \ is \ even,}\\
\left\{C_n^0\right\} & {\rm \ if\ } n {\rm \ is \ odd.} \end{array}\right. \\
\calS(\calD_{2n}) & = & \left\{\begin{array}{ll} \left\{D_n^0, D_n^1, D_n^2, D_n^3\right\}& {\rm \ if\ } n=2 \\
\left\{D_n^{00}, D_n^{10}, D_n^{20}, D_n^{01}, D_n^{11}, D_n^{21} \right\}  & {\rm \ if\ } n>2 {\rm \ is \ even.} \\ 
\left\{D_n^{0}, D_n^{1}, D_n^{2}\right\}  & {\rm \ if\ } n>2 {\rm \ is \ odd.} \\ 
\end{array}\right.
\end{eqnarray*}
The manifolds $C^0_n, D^0_n, D^{00}_n$ are spin, the others are not. The manifolds $C_n^0$, $C_n^1$, $D^0_n$, $D^2_n$, $D^{00}_n$, $D^{10}_n$, $D^{20}_n$ are even, the others are odd. The universal covering of every manifold in the list is $\#_k (S^2\times S^2)$, for some $k$.
\end{prop}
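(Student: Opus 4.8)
The plan is to analyze the set $\calS(\calP)$ directly from its definition, for the two families of presentations $\calC_n = \langle a\mid a^n\rangle$ and $\calD_{2n}=\langle a,b\mid a^2,b^2,(ab)^n\rangle$. First I would recall, from Proposition \ref{presentation:prop}, that the elements of $\calS(\calP)$ are the boundaries $\partial W$ of the oriented $5$-manifolds $W$ collapsing onto the presentation polyhedron $X^2=X^2(\calP)$, that they all share the same cellular $3$-skeleton (so their $\pi_1$, $\pi_2$, and homology are fixed by $\calP$), and that exactly one of them is spin. The concrete picture is that $W$ is a $5$-dimensional handlebody built from a $0$-handle, one $1$-handle per generator, and one $2$-handle per relator, whose attaching maps are read off the relators; the freedom producing several non-diffeomorphic boundaries comes from the framings of these $2$-handles along their normal $2$-plane fields, i.e.\ from a choice of element in $\pi_1(\SO(3))=\matZ/2$ for each $2$-handle (equivalently, the self-intersection data of the corresponding surface in the double). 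Since a relator that is not a proper power of length-one word can have its framing changed by handle slides, the honest count of distinct boundaries is governed by which relators are "even" (squares of simple loops) — $a^n$ for $n$ even, and $a^2$, $b^2$, $(ab)^n$ for $n$ even — up to the identifications coming from Andrew–Curtis moves and from symmetries of the presentation.

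Next I would carry out the enumeration. For $\calC_n$: the double of the corresponding $4$-dimensional $2$-handlebody is a $D^2$-bundle-type construction over the lens-space-like quotient, and a framing computation shows there are two choices when $n$ is even (giving $C_n^0$ spin, $C_n^1$ not) and the two choices coincide when $n$ is odd (giving only $C_n^0$, which is forced spin), matching the asserted list. For $\calD_{2n}$: now there are three $2$-handles, hence a priori $2^3$ framing vectors, but the relators $a^2$ and $b^2$ are interchanged by the evident symmetry $a\leftrightarrow b$ of $\calD_{2n}$, which is realized by a self-diffeomorphism of $X^2$ and hence identifies the corresponding manifolds; moreover when $n$ is odd the relator $(ab)^n$ is not even, killing the third coordinate, and various framing changes on $a^2,b^2$ are absorbed by handle slides across $(ab)^n$. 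Pushing these identifications through gives $4$ manifolds for $n=2$ (here $\calD_4=\matZ/2\times\matZ/2$, a genuinely special small case where all three relators are squares and no further collapsing of the count occurs beyond the $a\leftrightarrow b$ symmetry, so one gets $D_2^0,D_2^1,D_2^2,D_2^3$), $6$ for $n>2$ even (two independent binary choices surviving: $D_n^{i0},D_n^{i1}$ with $i\in\{0,1,2\}$), and $3$ for $n>2$ odd ($D_n^0,D_n^1,D_n^2$). In each case I would single out the spin member as the one with all framings "even" — $C_n^0$, $D_2^0$, $D_n^{00}$, or $D_n^0$ — consistent with the uniqueness clause of Proposition \ref{presentation:prop}, and compute the intersection form mod $2$ on $\pi_2$ to decide evenness versus oddness: the listed even manifolds are exactly those whose $2$-handle framings produce an even intersection form on the relevant $H_2$.

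Finally I would identify the universal covers. Since $\pi_1$ is finite in both families, the universal cover $\widetilde M\to M$ is a finite cover, and from Proposition \ref{G:prop}(2) (or directly: $M$ is a double of a $4$-manifold collapsing to a vertex-free simple polyhedron, so $\widetilde M$ inherits such a structure by pulling back the polyhedron) together with Proposition \ref{graph:prop}, $\widetilde M$ is a simply connected graph manifold generated by $\calS_0$. A simply connected closed $4$-manifold of complexity zero is, by Theorem \ref{main:teo} and Proposition \ref{graph:prop} applied in the simply connected case, built only from the $N_i$-blocks glued along $S^2\times S^1$'s and closed up with $3$- and $4$-handles — such a manifold has a handle decomposition with no $1$-handles and an even intersection form of the shape $\oplus(\text{hyperbolic})$ (the odd $\matCP^2$ summands are excluded because $\widetilde M$ is spin, being a double hence of signature zero and, after passing to the cover, the framing obstruction dies), so by Freedman it is homeomorphic to $\#_k(S^2\times S^2)$; and since everything here is built explicitly from handlebodies one in fact gets a diffeomorphism. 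The main obstacle I anticipate is the bookkeeping in the $\calD_{2n}$ case: correctly tracking which framing vectors are identified by Andrew–Curtis moves, by the $a\leftrightarrow b$ symmetry, and by handle slides across the non-even relator $(ab)^n$, so that the counts $4$, $6$, $3$ come out exactly and the spin/even labels land on the claimed manifolds — the small cases $n=2$ and the parity of $n$ interact delicately and must be checked by hand.
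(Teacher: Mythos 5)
Your framework is morally the paper's: the $5$-dimensional thickenings of $X_\calP$ are parametrized by $w_2\in H^2(X_\calP,\matZ_2)$ (one $\matZ_2$ per $2$-cell modulo coboundaries coming from the $1$-cells — this is exactly your "framings modulo handle slides," and the paper cites it as Proposition~\ref{w2:prop}, after Hambleton--Kreck--Teichner), and one must then count these up to symmetries of $X_\calP$ and separate the resulting boundaries by invariants. Two of the steps, however, have genuine gaps.

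First, the $n=2$ count is reached by the wrong mechanism. Here $H^2(X_{\calD_4},\matZ_2)\cong\matZ_2^3$, so there are $8$ raw choices, and you propose to quotient only by $a\leftrightarrow b$, asserting "no further collapsing occurs." But the $a\leftrightarrow b$ quotient has $6$ orbits ($(0,0,0)$, $(1,0,0)$, $(1,1,0)$, $(0,0,1)$, $(1,0,1)$, $(1,1,1)$), not $4$. What actually produces $4$ is the larger symmetry that is special to $n=2$: since $a$, $b$ and $ab$ all have order $2$ in $\calD_4\cong\matZ_2\times\matZ_2$, the polyhedron $X_{\calD_4}$ is a pair-of-pants with three projective planes attached and carries the full $S_3$ permuting the three $2$-cells, whose orbits in $\matZ_2^3$ number exactly $4$. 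So your claim points the wrong way: $n=2$ has \emph{more} collapsing than the generic even case, not less.

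Second, you never establish that the manifolds on your lists are pairwise distinct, which is half the content of the proposition. Two choices of $w_2$ give non-diffeomorphic $W^5$, but that does not by itself imply non-diffeomorphic $\partial W^5$. Spin and evenness together are not enough: $D_2^1$ and $D_2^3$ are both non-spin, odd, and share homology, $\pi_1$, $\pi_2$ and the intersection form. The paper introduces a further invariant — the number of spin coverings of an appropriate type (the three index-two covers for $n=2$; for $n>2$ even, the covers associated to the two non-normal order-two subgroups $\langle a\rangle,\langle b\rangle$) — and shows $D_n^i$ has $3-i$ (resp.\ $D_n^{ij}$ has $2-i$) spin covers. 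Without such an argument your lists are only upper bounds on $|\calS(\calP)|$. On the universal cover, your route through Theorem~\ref{main:teo} is usable but heavier than needed; the paper argues internally to Section~2, observing that $H_2(\widetilde X,\matZ_2)$ is generated by spheres covering the mod-$2$ generators of $H_2(X,\matZ_2)$ an even number of times, so $\widetilde M$ is spin, and since $\widetilde M$ is still a graph manifold (Proposition~\ref{G:prop}(2)) with simply connected vertex-free polyhedron, $X\sim\vee_h S^2$ and $\widetilde M\cong\#_k(S^2\times S^2)$.
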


Recall that a spin 4-manifold is always even, while the converse is true for simply connected manifolds, but not in general. Some non-spin manifolds in the list, like  $D_2^1$ and $D_2^3$, have the same homotopy and homology groups, and intersection forms. We have distinguished them by counting the number of spin coverings.

We may now deduce from Theorem \ref{main:teo} a classification of all 4-manifolds with complexity zero and finite fundamental group.

\begin{teo} \label{finite:teo}
A closed 4-manifold $M$ with finite fundamental group has complexity zero if and only if 
$$M = N \#_h(S^2\times S^2) \#_k \matCP^2 \#_l \overline\matCP^2$$
for some
$$N \in \calS(\calC_{2^n}) \cup \calS(\calC_{3\cdot 2^n}) \cup \calS(\calD_{2\cdot 2^n})$$ 
and $h,k,l,n \geqslant 0$.
\end{teo}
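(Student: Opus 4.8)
The plan is to deduce Theorem~\ref{finite:teo} from Theorem~\ref{main:teo} together with Propositions~\ref{graph:prop}, \ref{G:prop}, \ref{presentation:prop} and~\ref{finite:prop}, the real work being the classification of the graph manifolds generated by $\calS_0$ that have finite fundamental group. Since $\matCP^2$ is simply connected, if $M = M'\#_h\matCP^2$ as in Theorem~\ref{main:teo} then $\pi_1(M)=\pi_1(M')$; hence a closed $M$ with finite $\pi_1$ has complexity zero if and only if $M = M'\#_h\matCP^2$ for some integer $h$ and some graph manifold $M'$ generated by $\calS_0$ with $\pi_1(M')$ finite. (The exceptional manifolds $\#_h(S^3\times S^1)$ appearing in Proposition~\ref{graph:prop} have infinite $\pi_1$ and play no role.) The problem thus splits into: (a)~which finite groups arise as $\pi_1$ of a graph manifold generated by $\calS_0$; (b)~for each, which graph manifolds realise it; (c)~reintroduce the $\matCP^2$ summands and rewrite in the stated normal form.

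For (a) and (b) I would use Proposition~\ref{graph:prop}(2): $M'$ is generated by $\calS_0$ exactly when $M'=\partial W$ for a compact oriented $5$-manifold $W$ that collapses onto a vertex-free simple polyhedron $P$. Because $W\searrow P$ with $\dim P = 2$, the dual handle decomposition builds $W$ from a collar of $\partial W = M'$ using only handles of index $\geqslant 3$, whose attaching spheres are simply connected; so $\pi_1(M')=\pi_1(W)=\pi_1(P)$. Hence I must classify the finite groups that occur as $\pi_1(P)$ for $P$ a vertex-free simple polyhedron. Such a $P$ is obtained from a collection of compact surfaces (the closures of its regions) by gluing their boundary circles three at a time along a family of disjoint circles, each gluing either untwisted or with a nontrivial rotation of the normal triod, i.e.\ $P$ is the total space of a graph of spaces whose pieces are surfaces and (possibly twisted) triod-bundles over circles. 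Reading off $\pi_1$ by van Kampen and Bass--Serre, finiteness forces strong constraints: every surface region must be planar (otherwise $\pi_1$ contains a nonabelian free group), the underlying graph must be a tree (otherwise $\pi_1$ surjects onto $\matZ$), and torsion can only be produced at the twisted triod-circles, a $3$-fold rotation multiplying by $3$ the order of the relevant cyclic quotient and a transposition-type twist contributing a factor $2$. Assembling these along a tree one gets exactly the cyclic groups $\matZ/2^n$ and $\matZ/(3\cdot 2^n)$, together with the dihedral groups of order $2^{n+1}$ in the two-generator case. I would also record here that the simply connected closed graph manifolds generated by $\calS_0$ are the connected sums of copies of $S^2\times S^2$ and of $\matCP^2\#\overline\matCP^2$ --- doubles built from the blocks $N_i$ --- and that, since $\calG_0$ is closed under connected sum (Proposition~\ref{G:prop}), every graph manifold with finite $\pi_1$ is the connected sum of a ``$\pi_1$-carrying'' piece with such a simply connected manifold.

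Next I would identify the $\pi_1$-carrying pieces with the manifolds of Proposition~\ref{finite:prop}. For $G=\matZ/2^n$, $\matZ/(3\cdot 2^n)$ or dihedral of order $2^{n+1}$, the vertex-free polyhedron $P$ produced by the construction above is related by expansions and collapses of $2$-complexes to the presentation complex $X^2$ of $\calC_{2^n}$, $\calC_{3\cdot 2^n}$ or $\calD_{2\cdot 2^n}$ respectively; hence any $5$-manifold collapsing onto $P$ also collapses onto $X^2$, and by Proposition~\ref{presentation:prop}(3) (together with the fact that $\calS(\cdot)$ depends only on this deformation type) the $\pi_1$-carrying graph manifolds with fundamental group $G$ are precisely the elements of $\calS(\calC_{2^n})$, $\calS(\calC_{3\cdot 2^n})$, $\calS(\calD_{2\cdot 2^n})$; conversely every manifold in these sets is a graph manifold generated by $\calS_0$, because $X^2$ is deformation-equivalent to a vertex-free simple $P$ and one applies Proposition~\ref{graph:prop}(2) again. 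So a graph manifold generated by $\calS_0$ with finite $\pi_1$ is $N\#_h(S^2\times S^2)\#_l(\matCP^2\#\overline\matCP^2)$ for some $N$ in the list of Proposition~\ref{finite:prop} and $h,l\geqslant 0$. Finally, reintroducing the $\#_h\matCP^2$ of Theorem~\ref{main:teo} and using the standard diffeomorphisms $\matCP^2\#\overline\matCP^2\cong S^2\widetilde\times S^2$ and $\matCP^2\#(S^2\times S^2)\cong \#_2\matCP^2\#\overline\matCP^2$, everything can be collected as $N\#_h(S^2\times S^2)\#_k\matCP^2\#_l\overline\matCP^2$, with $k-l$ recording the signature; running the identities backwards shows conversely that any manifold of this shape is of the form $M'\#_{h'}\matCP^2$ with $M'$ generated by $\calS_0$, hence has complexity zero by Theorem~\ref{main:teo}. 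This proves both implications.

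The main obstacle is step (a)--(b): the combinatorial classification of the finite fundamental groups of vertex-free simple polyhedra. One must not only exhibit explicit polyhedra realising $\matZ/2^n$, $\matZ/(3\cdot 2^n)$ and the dihedral groups of order $2^{n+1}$, but also prove that no other finite group occurs --- in particular excluding things like $\matZ/5$, $\matZ/15$, or the quaternion groups --- which requires controlling exactly how the two kinds of twisted triod-circles can be stacked along a tree and which cyclic (or dihedral) quotients result, and then, for a fixed group, determining how many distinct $4$-manifolds arise (this is where the spin/non-spin and even/odd invariants of Proposition~\ref{finite:prop} are needed to tell apart manifolds with identical homotopy type). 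A secondary, more routine, difficulty is the bookkeeping of the last paragraph: checking that the normal form with summands $S^2\times S^2$, $\matCP^2$, $\overline\matCP^2$ captures every manifold up to the blow-up relations and that the signature $h$ of Theorem~\ref{main:teo} is correctly absorbed into $k-l$.
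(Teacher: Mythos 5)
Your plan follows essentially the same route the paper takes: combine Theorem~\ref{main:teo} with Proposition~\ref{graph:prop}(2) to reduce to graph manifolds $M'=\partial W$ with $W$ collapsing onto a vertex-free simple polyhedron $P$, observe $\pi_1(M')=\pi_1(P)$, classify such $P$ with finite $\pi_1$ up to $3$-deformation, and then read off $\calS(P)$ from Propositions~\ref{presentation:prop} and~\ref{finite:prop}, absorbing the extra $\matCP^2$ summands at the end. The step you rightly single out as ``the main obstacle'' is exactly the paper's Proposition~\ref{AC:prop}, which shows that a vertex-free simple polyhedron with finite fundamental group is $3$-deformation equivalent to $X_\calP\vee_h S^2$ with $\calP$ one of $\calC_{2^n}$, $\calC_{3\cdot 2^n}$, $\calD_{2\cdot 2^n}$; the paper proves this by an induction on the decorated graph encoding $P$, using the specific moves of Fig.~\ref{AC_mosse:fig}--\ref{AC_mosse2:fig}. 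Your informal sketch of this classification (planar regions, tree-shaped graph, torsion only at twisted triod-circles) points in the right direction but is looser than what is needed: the torsion comes from several sources (Möbius-band regions $Y_2$ and the $Y_{12}$, $Y_3$ vertex types all contribute, and a pair-of-pants region is what produces the dihedral relation $(ab)^n$), and --- more importantly --- the assertion that a finite-$\pi_1$ graph manifold splits as a connected sum of a ``$\pi_1$-carrying'' piece with a simply connected one does not follow from the closure of $\calG_0$ under connected sum; it is a genuine output of the wedge decomposition $P\sim X_\calP\vee_h S^2$, i.e.\ of Proposition~\ref{AC:prop} itself. So the architecture of the argument is the same as the paper's, with the honest caveat that the one nontrivial lemma you defer is where essentially all of the work of Section~\ref{simple:section} is concentrated.
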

\begin{cor} \label{finite:cor}
A simply connected closed 4-manifold $M$ has complexity zero if and only if $M$ is a connected sum of copies of $S^4$, $S^2\times S^2$, and $\matCP^2$ (with both orientations). That is,
$$ M = \#_h (S^2\times S^2) \qquad {\rm or } \qquad M = \#_h \matCP^2 \#_k \overline\matCP^2$$
for some $h,k\geqslant 0$.
\end{cor}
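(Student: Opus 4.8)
The plan is to obtain Corollary~\ref{finite:cor} as a short consequence of Theorem~\ref{finite:teo}, which already classifies all closed $4$-manifolds of complexity zero with finite fundamental group. Since the trivial group is finite, Theorem~\ref{finite:teo} applies verbatim to a simply connected $M$, writing it as $M = N\#_h(S^2\times S^2)\#_k\matCP^2\#_l\overline\matCP^2$ with $N$ in one of the sets $\calS(\calC_{2^n})$, $\calS(\calC_{3\cdot 2^n})$, $\calS(\calD_{2\cdot 2^n})$, so that the corollary reduces to a bookkeeping task: recognising which such $N$ are simply connected, and rewriting the resulting connected sum. For the easy ``if'' direction I would note that $\calC_1=\calC_{2^0}=\langle a\,|\,a\rangle$ is Andrew--Curtis equivalent to the empty presentation (cancel the generator $a$ against the relator $a$), so $\calS(\calC_1)=\{S^4\}$ by Proposition~\ref{presentation:prop}(3) and the example following it; hence every connected sum of copies of $S^4$, $S^2\times S^2$ and $\matCP^2$ (either orientation), and in particular each manifold in the two families of the statement, is of the form permitted by Theorem~\ref{finite:teo} with $N=S^4$, and therefore has complexity zero.

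For the ``only if'' direction, start from the decomposition $M=N\#_h(S^2\times S^2)\#_k\matCP^2\#_l\overline\matCP^2$ furnished by Theorem~\ref{finite:teo}. Connected sum with the simply connected manifolds $S^2\times S^2,\matCP^2,\overline\matCP^2$ leaves $\pi_1$ unchanged (van Kampen, as $S^3$ is simply connected), so $\pi_1(N)=\pi_1(M)=1$. Now $N$ is one of the manifolds classified in Proposition~\ref{finite:prop}, and its last assertion says that the universal cover of $N$ is $\#_j(S^2\times S^2)$ for some $j\geqslant 0$; since $N$ is simply connected it coincides with its own universal cover, so $N=\#_j(S^2\times S^2)$ and $M=\#_{h+j}(S^2\times S^2)\#_k\matCP^2\#_l\overline\matCP^2$.

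To finish, I would normalise this connected sum using the classical blow-up diffeomorphism $(S^2\times S^2)\#\matCP^2\cong \#_2\matCP^2\#\overline\matCP^2$ together with its orientation reversal $(S^2\times S^2)\#\overline\matCP^2\cong\matCP^2\#_2\overline\matCP^2$: if $k+l>0$ each $S^2\times S^2$ summand can be absorbed into a $\matCP^2$ or $\overline\matCP^2$ summand, leaving $\#_{k'}\matCP^2\#_{l'}\overline\matCP^2$; if $k=l=0$ one already has $\#_{h+j}(S^2\times S^2)$, which includes $S^4=\#_0(S^2\times S^2)$. Either way $M$ lies in one of the two announced families. The entire substance of the statement is carried by Theorem~\ref{finite:teo}, so I do not expect any real obstacle; the only points requiring a little care are the identification of the simply connected members of the relevant sets $\calS(\calP)$ — equivalently, the observation that $\matZ/2^0\matZ$ is the unique trivial group among $\matZ/2^n\matZ$, $\matZ/3\cdot 2^n\matZ$ and the dihedral groups of order $2^{n+1}$, which is why one may dispense with the group-theoretic check in favour of the universal-cover clause of Proposition~\ref{finite:prop} — and the use of the blow-up identity to pass to the stated normal form.
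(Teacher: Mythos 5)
Your proposal is correct, and since the paper leaves Corollary~\ref{finite:cor} without an explicit written proof, it takes what is clearly the intended route: apply Theorem~\ref{finite:teo} in the special case $\pi_1(M)=1$, observe via van Kampen that $N$ is then simply connected and hence equals its own universal cover, invoke the last clause of Proposition~\ref{finite:prop} to get $N=\#_j(S^2\times S^2)$, and normalise with the blow-up identity $(S^2\times S^2)\#\matCP^2\cong\#_2\matCP^2\#\overline\matCP^2$. The ``if'' direction via $\calS(\calC_{2^0})=\{S^4\}$ and Proposition~\ref{presentation:prop}(3) is also sound, though one could equally just cite the subadditivity $c(M\#N)\leqslant c(M)+c(N)$ together with the explicit genus-zero shadows of $S^2\times S^2$ and $\matCP^{2}$ from Section~\ref{shadows:section}.
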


It is worth emphasizing that Corollary \ref{finite:cor} needs the whole proof of Theorem \ref{main:teo}, which is the core result of this paper. As far as we know, restricting to simply connected 4-manifolds (and thus shadows) does not help much: the whole machinery described in this paper is needed. 

We can easily calculate the classical topological invariants of the manifolds found.
\begin{cor}
For every pair of integers $(\chi, \sigma)$ with $\chi+\sigma$ even there is a closed 4-manifold having complexity zero, signature $\sigma$, and Euler number $\chi$. 
\end{cor}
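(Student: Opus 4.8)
The plan is to realize every admissible pair $(\chi,\sigma)$ by an explicit connected sum of complexity-zero pieces, using only the ``if'' (construction) direction of Theorem \ref{main:teo}. Observe first that the parity hypothesis is automatic: a closed oriented $4$-manifold has $\chi+\sigma = 2-2b_1+2b_2^+$, which is even. So the statement amounts to: every $(\chi,\sigma)\in\matZ^2$ with $\chi\equiv\sigma\pmod 2$ occurs.

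The building blocks I would use are $\matCP^2$ and $\overline{\matCP}^2$ (complexity zero by Corollary \ref{finite:cor}, with invariants $(3,\pm 1)$) together with members of $\calG_0$ of signature zero that control $\chi$. For the latter: $S^4$ and $S^3\times S^1$ lie in $\calG_0$, since $S^4=N_1\cup_{S^2\times S^1}M_1$ and $S^3\times S^1=M_1\cup_{S^2\times S^1}M_1$ are decompositions into the blocks $N_1\cong S^2\times D^2$ and $M_1\cong D^3\times S^1$ of $\calS_0$; and $S^2\times S^2$ lies in $\calG_0$ because it is the double of $S^2\times D^2$, which collapses onto the vertex-free polyhedron $S^2$, so Proposition \ref{graph:prop}(3) applies. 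By Proposition \ref{G:prop} the class $\calG_0$ is closed under connected sum, so $\#_k(S^2\times S^2)\in\calG_0$ (with $\chi=2+2k$, $\sigma=0$) and $\#_c(S^3\times S^1)\in\calG_0$ (with $\chi=2-2c$, $\sigma=0$) for all $k,c\ge 0$. Hence for every even integer $e$ there is $X_e\in\calG_0$ with $\sigma(X_e)=0$ and $\chi(X_e)=e$: take $X_e=\#_{(e-2)/2}(S^2\times S^2)$ if $e\ge 2$ and $X_e=\#_{(2-e)/2}(S^3\times S^1)$ if $e\le 2$.

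Given $(\chi,\sigma)$ with $\chi\equiv\sigma\pmod 2$, assume $\sigma\ge 0$ (otherwise reverse orientation, which preserves complexity and flips the sign of $\sigma$). Set $e=\chi-\sigma$, an even integer, and put $M=X_e\#_\sigma\matCP^2$. By the ``if'' direction of Theorem \ref{main:teo} (with $X_e\in\calG_0$ playing the role of $M'$ and $h=\sigma$), $M$ has complexity zero; and since $\sigma$ is additive while $\chi(A\#B)=\chi(A)+\chi(B)-2$ under connected sum, one computes $\sigma(M)=\sigma$ and $\chi(M)=\chi(X_e)+\sigma=e+\sigma=\chi$, which realizes the pair. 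The argument is essentially bookkeeping, so I foresee no genuine obstacle; the point that most warrants care is the source of complexity-zero manifolds of arbitrarily negative Euler number — namely the sums $\#_c(S^3\times S^1)$ — for which one must confirm that $S^3\times S^1$ itself lies in $\calG_0$ (via $S^3\times S^1=M_1\cup_{S^2\times S^1}M_1$), notwithstanding that Proposition \ref{graph:prop} deliberately excludes the family $\#_h(S^3\times S^1)$ from its three-way equivalence. Beyond that, only the elementary $\chi$- and $\sigma$-bookkeeping for iterated connected sums remains to be checked.
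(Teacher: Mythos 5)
Your proof is correct and takes essentially the same route as the paper: realize $(\chi,0)$ by a signature-zero graph manifold generated by $\calS_0$ and then sum on copies of $\matCP^2$ (with either orientation) to reach the desired signature. The only cosmetic difference is that you obtain arbitrary even Euler characteristic from connected sums of $S^2\times S^2$ and $S^3\times S^1$, whereas the paper simply observes that the blocks themselves have $\chi\in\{0,2,-2\}$ (with $\chi(N_1)=2$, $\chi(N_3)=-2$) and so any even $\chi$ is achievable directly at the block level.
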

\begin{proof}
The Euler characteristic of a graph manifold generated by $\calS_0$ is the sum of the characteristics of the blocks. All blocks have $\chi = 0$, except $\chi (N_1) = 2$ and $\chi (N_3) = -2$. Therefore the Euler characteristic of a graph manifold may be any even integer. Its signature is zero since it bounds a 5-manifold. Via connected sums with $\matCP^2$ we get manifolds with arbitrary $\sigma$.
\end{proof}
Note that $\chi + \sigma$ is even for every closed oriented 4-manifold. Concerning intersection forms, we get the following. Let $H$ denote the form $\begin{pmatrix} 0 & 1 \\ 1 & 0 \end{pmatrix}$.
\begin{cor}
The intersection form of a closed 4-manifold having complexity zero is either $n[-1]\oplus m[+1]$ or $kH$.
\end{cor}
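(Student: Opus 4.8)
The plan is to deduce the statement from Theorem~\ref{main:teo} together with the classical classification of indefinite unimodular symmetric bilinear forms over $\matZ$. First I would invoke Theorem~\ref{main:teo} to write $M = M'\#_h\matCP^2$ with $M'$ a graph manifold generated by $\calS_0$; the intersection form of a connected sum being the orthogonal sum of the summands' forms, this gives
$$
Q_M \;\cong\; Q_{M'}\oplus h[+1]\ \ (h\geqslant 0),
\qquad
Q_M \;\cong\; Q_{M'}\oplus |h|\,[-1]\ \ (h<0).
$$
Both $Q_M$ and $Q_{M'}$ are unimodular by Poincaré duality, and $Q_{M'}$ has signature zero because $M'$ bounds a compact oriented $5$-manifold (this is Proposition~\ref{graph:prop} when $M'\neq\#_h(S^3\times S^1)$, and is immediate in the exceptional case). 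The elementary observation that powers the whole argument is that a \emph{nonzero} unimodular form of signature zero is nondegenerate of rank $>0$ with signature $0$, hence indefinite; so $Q_{M'}$ is either the zero form or indefinite.

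The next step is a short case analysis. If $h=0$ and $Q_{M'}$ is even, then $Q_M=Q_{M'}$ is a (zero or indefinite) even unimodular form of signature zero, hence $Q_M\cong kH$ for some $k\geqslant 0$ by the classification of indefinite even unimodular forms; this is the second alternative. In every other case $Q_M$ is odd --- either because $Q_{M'}$ already is, or because the summand $h[\pm1]$ contributes a vector of odd square. If $Q_{M'}=0$ then $Q_M=|h|[\pm1]$ is diagonal by inspection; otherwise $Q_{M'}$ is indefinite, hence so is $Q_M=Q_{M'}\oplus h[\pm1]$, and an indefinite odd unimodular form is isomorphic to $n[-1]\oplus m[+1]$ by the classification. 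Either way we land in the first alternative, and the two dichotomies (even vs.\ odd, zero vs.\ indefinite) jointly exhaust all possibilities.

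Since the whole proof is bookkeeping sitting on top of Theorem~\ref{main:teo} plus a classical lattice-theoretic input (Serre, or Milnor--Husemoller), I do not expect any genuine obstacle; the only point requiring a little care is checking that the case split above is exhaustive, which it is. One could alternatively try to read off $Q_{M'}$ directly from the graph structure by adding up the contributions of the blocks of $\calS_0$, in the spirit of the proof of the previous corollary, but the route through Theorem~\ref{main:teo} and the classification of forms is cleaner and avoids handle calculus.
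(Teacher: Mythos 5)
Your argument is correct and follows essentially the same route as the paper's proof: decompose $M=M'\#_h\matCP^2$ via Theorem~\ref{main:teo}, observe that $Q_{M'}$ has zero signature because $M'$ bounds, invoke the classification of indefinite unimodular forms, and add the $\matCP^2$ summands. The paper states this in two terse lines; your version merely spells out the (harmless) degenerate case $Q_{M'}=0$, which the paper elides by calling the form ``indefinite''.
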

\begin{proof}
Graph manifolds have zero signature and thus an indefinite form which is either $n[-1]\oplus n[+1]$ or $kH$. By summing projective planes we get the result.
\end{proof}
The only intersection form admitted for 4-manifolds which has not yet been encountered is $2mH\oplus nE_8$. We thus ask the following.

\begin{quest}
What are the manifolds of lowest complexity having intersection form $2mE_8\oplus nH$? Is the $K3$ among them? Which pairs $(m,n)$ do we get?
\end{quest}

As we said above, Matveev's complexity induces a filtration $\calM_0\subset \calM_1\subset \ldots$ where each $\calM_c$ contains infinitely many 3-manifolds, but only finitely many interesting ones. This also holds for our $\calM_0$, if we decide that doubles of 2-handlebodies and non-irreducible 4-manifolds are not interesting. We conjecture that this holds for all values of $c$.

\begin{conj} \label{finite:conj}
For every natural number $c$ there are only finitely many irreducible 4-manifolds of complexity $c$ that are not doubles of 2-handlebodies.
\end{conj}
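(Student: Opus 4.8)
The plan is to adapt to dimension four the argument that proves the analogous finiteness statement for Matveev complexity of irreducible $3$-manifolds. Fix $c$ and let $M$ be a closed irreducible $4$-manifold with $c(M)=c$ that is not a double of a $2$-handlebody. Choose a shadow $X\subset M$ with exactly $c$ vertices realizing the complexity, and recall that $M$ is reconstructed from $X$ out of three pieces of data: the gleams (the half-integers on the regions of $X$, which determine the regular neighbourhood $N(X)$), and the $3$- and $4$-handles glued to $\partial N(X)$. The aim is to bound each of these pieces by a function of $c$ alone, so that only finitely many $M$ can occur. For $c=0$ this is exactly what Theorem~\ref{main:teo} gives: every complexity-zero manifold is $M'\#_h\matCP^2$ with $M'$ a double of a $2$-handlebody (Proposition~\ref{graph:prop}), so an irreducible one that is not itself such a double must be $\matCP^2$ or $\overline\matCP^2$ --- a finite list.

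The first step is to bound the underlying polyhedron. The singular set of a simple polyhedron with $c$ vertices is a $4$-valent graph with at most $2c$ edges, so the only source of unboundedness is the number and topology of the regions. I would prove a sequence of reduction lemmas in the spirit of \cite{Co, CoThu}, showing that a complexity-realizing shadow of an irreducible, non-double $M$ may be chosen \emph{reduced}: no region is an open disc, there are no ``bubbles'' (sphere regions attached along a circle) and no closed region disjoint from the singular set, and every vertex-free component glued to the rest along a copy of $S^2\times S^1$ has been normalized. Each forbidden configuration can be used either to simplify the shadow, contradicting minimality, or to split off a connected summand $\matCP^2$, $\overline\matCP^2$, $S^2\times S^2$ or $S^1\times S^2$, contradicting irreducibility, or to exhibit $M$ as a double of a $2$-handlebody. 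A reduced simple polyhedron with $c$ vertices has a number of regions and a total combinatorial size bounded by a function of $c$, so only finitely many of them occur.

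For each fixed reduced polyhedron $X$ the handle data is comparatively easy to control: the $4$-handles fill the $S^3$ components of $\partial N(X)$, which are boundedly many and fill in an essentially unique way, while the $3$-handles cap off the remaining $S^1\times S^2$-summands of $\partial N(X)$, and up to handle slides and cancellations the number of choices is bounded in terms of $c$ (roughly by the first Betti number of $\partial N(X)$). The genuinely delicate remaining parameter is the gleam on each region. Changing the gleam on a region $R$ by $\pm 1$ is a surgery on $M$: when $R$ is a disc it is a connected sum with $\pm\matCP^2$, and when $R$ has boundary or is a sphere it is a blow-up or a logarithmic-transformation-type operation. On such ``small'' regions one concludes as above that gleams of large absolute value force $M$ to be reducible, hence only finitely many gleams there are compatible with the hypotheses.

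The main obstacle is the last case: a region $R$ of negative Euler characteristic --- a pair of pants or worse --- sitting in a vertex-free, ``graph-manifold-like'' part of $X$, where increasing the gleam need not make $M$ reducible or decomposable, exactly as the edge data of the graph manifolds generated by $\calS_0$ runs through an infinite set without ever leaving the world of doubles of $2$-handlebodies. What one would really need here is a lower bound for the shadow-complexity that grows when such a gleam grows --- an estimate of the same flavour as the (still largely unavailable) nontrivial lower bounds for Matveev complexity in dimension $3$, or as the vanishing-versus-positivity dichotomy for the Gromov norm. Establishing such an estimate, even when restricted to reduced shadows with a bounded number of vertices, is the heart of the conjecture; the two finiteness steps above are, by contrast, routine bookkeeping once the reduction lemmas are in place.
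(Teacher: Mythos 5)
This statement is Conjecture~\ref{finite:conj}: the paper states it as an open conjecture and gives no proof, so there is nothing in the text for your proposal to be measured against. You have in fact recognised this yourself --- your write-up is an honest programme, not a proof, and it correctly locates the central missing ingredient: a lower bound on shadow-complexity that forces $c(M)$ to grow when the gleam on a negative-Euler-characteristic region in a vertex-free, ``graph-manifold-like'' part of the shadow grows without producing a connected-sum decomposition or exhibiting $M$ as a double. No such estimate is available, and without it the conjecture remains open.

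Two cautions on the parts you call ``routine bookkeeping.'' First, bounding the underlying polyhedron by $c$ is not automatic even after removing discs, bubbles, and closed regions: with $c$ vertices the singular graph has at most $2c$ edges and hence at most $6c$ region-boundary circles, so the \emph{number} of regions can be bounded once small regions are excluded, but the \emph{genus} of a region is a priori unbounded and contributes neither vertices nor boundary circles. You would need a further reduction lemma --- in the spirit of the paper's Propositions~\ref{no:2:prop} and~\ref{no:36:prop}, which for $c=0$ eliminate the $Y_2$, $Y_3$, and pair-of-pants pieces --- showing that high-genus regions can always be destroyed or split off; this is exactly the kind of nontrivial analysis of $\partial N(X)\cong\#_h(S^2\times S^1)$ that occupies Sections~\ref{reduction:section}--\ref{proof:section}. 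Second, the uniqueness of the $3$- and $4$-handle attachment that you invoke is Laudenbach--Poenaru and is genuinely unproblematic, but it only kicks in once the thickening $N(X)$ is fixed, i.e.~once the gleams are bounded --- so it does not help to short-circuit the gleam difficulty. Your overall assessment is right: the reduction-lemma machinery is a plausible template imported from the $c=0$ proof, and the gleam estimate is the real open problem.
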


In fact, constructing shadows with few vertices of doubles of 2-handlebodies is pretty easy and we expect that there are infinitely many of them for all $c$. 

One may reasonably argue that doubles of 2-handlebodies are interesting, since they might contain for instance fake copies of $S^4$, see \cite{AnCu}. We thus propose an alternative conjecture.

\begin{conj} \label{simply:conj}
For every natural number $c$ there are finitely many irreducible simply connected 4-manifolds of complexity $c$.
\end{conj}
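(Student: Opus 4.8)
The plan is to push a minimal shadow of $M$ into a normal form of size controlled by $c$, read off a bound on the classical invariants of $M$ from it, and then reduce the statement to a (genuinely hard) finiteness question about smooth structures. Fix a closed simply connected oriented $M$ of complexity $c$ and a shadow $X\subset M$ with exactly $c$ vertices, so that $M$ is built from a regular neighborhood $N(X)$ by attaching $3$- and $4$-handles. Attaching handles of index $\geqslant 3$ never increases the second Betti number, so $b_2(M)\leqslant b_2(N(X))=b_2(X)$; likewise such handles are attached along simply connected pieces, so $\pi_1(M)=\pi_1(N(X))=\pi_1(X)$, whence $X$ is itself simply connected. After discarding inessential features (closed-surface components would disconnect $X$, and the case of empty singular set gives $X=S^2$ and $M$ one of a handful of manifolds), one computes $\chi(X)$ additively over the strata: the singular graph is $4$-valent with $c$ vertices, at most $O(c)$ regions meet it, and each region is a surface whose boundary runs along $O(c)$ singular arcs, so $\chi(X)$ is bounded by a linear function of $c$. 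Since $X$ is simply connected, $b_2(X)=\chi(X)-1=O(c)$, hence $b_2(M)=O(c)$ and in particular $|\sigma(M)|\leqslant b_2(M)=O(c)$. This part runs along the lines of the shadow manipulations used in the proof of Theorem~\ref{main:teo}.

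By Freedman's classification there are then only finitely many homeomorphism types of closed simply connected $4$-manifolds with $b_2$ and $|\sigma|$ in these ranges and prescribed parity, hence only finitely many candidate homeomorphism types among the $M$ of complexity $c$. The statement is thus equivalent to the assertion that each such homeomorphism type is realized by only finitely many PL (equivalently, smooth) manifolds of complexity $c$. Reducibility enters only at this stage, and it is genuinely needed: by Corollary~\ref{finite:cor} already $\calM_0$ contains the infinite families $\#_h(S^2\times S^2)$ and $\#_h\matCP^2\#_k\overline{\matCP^2}$, so without the irreducibility hypothesis the conjecture is false; those connected sums are understood, and all the content lies in the irreducible summands.

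The remaining step is the main obstacle, and it is the reason the statement is only conjectural. It amounts to showing that shadow-complexity cannot stay bounded along an infinite family of mutually \emph{exotic} smooth structures on a fixed topological $4$-manifold -- for instance those produced by Fintushel--Stern knot surgery or by logarithmic transforms on elliptic surfaces. A plausible line of attack is to bound a gauge-theoretic quantity of $M$ (the number of Seiberg--Witten basic classes, or the absolute values of their pairings with $H_2$, or the $c_1^2$ of the associated $\Spin^c$ structures) in terms of $c$ -- for example by proving that knot surgery along a genus-$h$ surface forces on the order of $h$ new vertices in any shadow -- and then to invoke the fact that, for each homeomorphism type, only finitely many smooth structures have all these quantities bounded. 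No inequality of this kind relating shadow-complexity to Seiberg--Witten invariants is presently known, and establishing one is precisely where the difficulty lies.
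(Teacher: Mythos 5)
The statement labeled \ref{simply:conj} is a \emph{conjecture} in the paper, and the paper offers no proof of it; there is consequently no argument of the author's to compare yours against. You are right to treat it as open and to identify the final step (a lower bound on shadow-complexity in terms of some gauge-theoretic quantity that could rule out infinite families of exotic structures) as the essential difficulty. However, the part of your sketch that you present as established -- the claim that $\chi(X)$, and hence $b_2(M)$, is $O(c)$ -- is false, and the paper itself supplies the counterexamples.

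For a shadow without vertices ($c=0$) the singular set $SX$ is a disjoint union of circles, and nothing bounds the number of those circles, nor the number of regions, nor their genera. By Theorem~\ref{easy:teo} (equivalently Corollary~\ref{finite:cor}) every $\#_h(S^2\times S^2)$ and every $\#_h\matCP^2\#_k\overline\matCP^2$ has complexity zero; their shadows are simply connected polyhedra with no vertices whatsoever, yet $b_2(X)\geqslant b_2(M)$ is unbounded over $h$ and $k$. Your assertion that ``the singular graph is $4$-valent with $c$ vertices, at most $O(c)$ regions meet it'' silently assumes the number of edges (in particular, circle components) of $SX$ is controlled by $c$, which already fails at $c=0$. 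You in fact cite these very families three sentences later to motivate the irreducibility hypothesis, so the proposal is internally inconsistent: the intermediate bound $b_2(M)=O(c)$ is contradicted by the fact you then invoke. Irreducibility would have to enter \emph{before} any bound on $b_2$, not after, and it is far from clear how to do this -- irreducibility of $M$ is not a visible combinatorial property of a minimal shadow. Thus the ``topological'' half of your reduction (finitely many homeomorphism types of irreducible simply connected $M$ at each $c$) is itself an unproven assertion, not a lemma; the conjecture is open already at that stage, and your concluding remarks about Seiberg--Witten invariants correctly describe why the smooth half is also open.
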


Inside $\calM_0$ we found only $\matCP^2$ and $S^2\times S^2$. Note that, by a result of Auckly, the number of such manifolds (if finite) grows faster than polinomially.

\begin{teo}[Auckly \cite{Au}] The number $n_c$ of distinct manifolds lying in $\calM_c$ that are topologically homeomorphic to $K3$ is bigger than $c^{k\sqrt[3] c}$ for some constant $k$ and sufficiently big $c$.
\end{teo}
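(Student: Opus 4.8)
The plan is to exhibit super-polynomially many pairwise non-diffeomorphic smooth structures on $K3$ for which an upper bound on the shadow-complexity can be proved, and then to count them. For the construction I would use Fintushel--Stern knot surgery: $K3=E(2)$ contains a $c$-embedded torus $T$ of square zero, and for a knot $K\subset S^3$ one forms $E(2)_K$ by replacing a neighbourhood $\nu(T)\cong T^2\times D^2$ with $(S^3\setminus\nu(K))\times S^1$, glued along the common $3$-torus boundary. Since $E(2)\setminus\nu(T)$ is simply connected and the regluing kills the fundamental group of the new piece, $E(2)_K$ is simply connected with the same intersection form as $E(2)$, hence homeomorphic to $K3$ by Freedman's theorem. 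The Fintushel--Stern surgery formula identifies the Seiberg--Witten invariant of $E(2)_K$ with the symmetrized Alexander polynomial of $K$, viewed as a Laurent polynomial in a variable attached to (a multiple of) the class $[T]\in H_2$; as $[T]$ has infinite order, knots with different symmetrized Alexander polynomials yield non-diffeomorphic manifolds.

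Next I would fix an explicit infinite family of knots with small diagrams and multiplicatively independent Alexander polynomials. A suitable family of twist knots works: one may take $K_n$ with $\Delta_{K_n}(t)=nt-(2n-1)+nt^{-1}$ — the integer polynomial $nt^2-(2n-1)t+n$ is primitive with negative discriminant, hence irreducible over $\matZ$, and these polynomials are pairwise non-associate — and $K_n$ has a diagram with $O(n)$ crossings. For a finite multiset $I=\{n_1,\dots,n_r\}$ of positive integers set $K_I=K_{n_1}\#\cdots\#K_{n_r}$, so that $\Delta_{K_I}=\prod_i\Delta_{K_{n_i}}$; by unique factorization in $\matZ[t,t^{-1}]$ the multiset $I$ is recovered from $\Delta_{K_I}$, so the manifolds $E(2)_{K_I}$ are pairwise non-diffeomorphic as $I$ varies over all such multisets.

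The technical core, and the step I expect to be the main obstacle, is the complexity estimate: I would aim to prove that there is a constant $C$, depending only on $E(2)$ and $T$, such that the shadow-complexity of $E(2)_K$ is at most $C+C\cdot c(K)$, where $c(K)$ denotes the crossing number. The strategy is to fix once and for all an economical shadow of the fixed piece $E(2)\setminus\nu(T)$, and to bound the complexity of the glued-in piece $(S^3\setminus\nu(K))\times S^1$: a knot diagram with $m$ crossings produces a simple spine of the knot exterior with $O(m)$ vertices, which one must promote to a shadow of the product with $S^1$ and then combine with the fixed shadow using an additivity-under-gluing estimate for shadows along a common boundary $3$-manifold, in the spirit of Costantino's and Costantino--Thurston's work. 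The delicate points are positioning the regluing compatibly with an efficient shadow and controlling the vertices created by the $S^1$-direction and by the identification of the two boundary $3$-tori; a weaker polynomial bound here would still be enough.

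Finally I would count. For large $c$ put $m=\lfloor (c-C)/(2C)\rfloor$; the obvious connected-sum diagram gives $c(K_I)\leqslant C'\sum_i n_i$, so every multiset $I$ with $\sum_i n_i\leqslant m$ yields a manifold $E(2)_{K_I}\in\calM_c$ homeomorphic to $K3$, and distinct $I$ yield non-diffeomorphic manifolds. The number of such multisets with all $n_i\geqslant 1$ is at least the partition number $p(m)$, and since $p(m)\geqslant e^{k_0\sqrt m}$ for an absolute constant $k_0>0$ while $m\geqslant c/(4C)$ for $c$ large, we get $n_c\geqslant e^{k_1\sqrt c}$, which exceeds $c^{k\sqrt[3]{c}}$ for every fixed $k$ once $c$ is sufficiently large. (The argument actually yields the stronger bound $e^{\Theta(\sqrt c)}$; the cube-root exponent in the statement leaves room for a less efficient complexity estimate in the third step.)
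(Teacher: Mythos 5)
The paper does not prove this statement: it is a verbatim citation of Auckly's 2007 IMRN paper, so there is no in-paper argument to compare against. Your reconstruction of the overall architecture (Fintushel--Stern knot surgery on $E(2)$, distinguishing the results by the knot-surgery formula for Seiberg--Witten invariants in terms of the Alexander polynomial, a complexity upper bound in terms of a knot invariant, then a count) is the right one and matches the strategy Auckly actually follows. The partition count via connected sums of twist knots, distinguished by unique factorization of Alexander polynomials in $\matZ[t,t^{-1}]$, is clean and valid as a counting device.

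The genuine gap is in the complexity estimate, and you yourself correctly flag it as the technical core. The route you sketch does not work as stated: a simple spine $X$ of $S^3\setminus\nu(K)$ with $O(c(K))$ vertices is a $2$-polyhedron inside a $3$-manifold, and there is no ``promotion'' that turns it into a shadow of the $4$-manifold $(S^3\setminus\nu(K))\times S^1$. Crossing with $S^1$ gives a $3$-polyhedron (wrong dimension), and leaving $X$ in a $3$-dimensional slice makes the complement a collar of the boundary rather than a union of $3$- and $4$-handles, so it is not a shadow in Turaev's sense. Also, the ``additivity-under-gluing along a common boundary $3$-manifold'' you invoke is not available in the generality you need: the gluing here is along $T^3$ with a prescribed identification, and the assembling moves established in this paper and in Costantino--Thurston are for gluings along $S^2\times S^1$ (or for closed fillings), not $T^3$. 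The way one actually controls the complexity of $E(2)_K$ is through the Kirby diagram: knot surgery changes only a local piece of a fixed diagram of $E(2)$, adding $O(c(K))$ crossings, and one passes from the diagram to a shadow via Turaev's cone construction over the attaching link. Auckly carries this out for \emph{special} shadows, where the conversion incurs a polynomial (not linear) blowup; combined with his count of realizable Alexander polynomials this is precisely what produces the exponent $\sqrt[3]{c}$ in the statement. If you could actually prove the linear bound $c(E(2)_K)\leqslant C + C\,c(K)$ for the (non-special) shadow complexity used here, your partition count would give $n_c\geqslant e^{k'\sqrt{c}}$, strictly better than the cited bound -- but that estimate is exactly the content of Auckly's theorem and cannot be waved through.
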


Note that a fixed simple polyhedron may give rise only to finitely many closed manifolds \cite{Ma:link}, but there are infinitely many simple polyhedra with a given number of vertices. One may try to attack the conjectures by proving that only finitely many simple polyhedra may yield ``interesting'' 4-manifolds. 

Finiteness may also be obtained \emph{a priori} by defining a complexity which uses a much more restricted class of simple polyhedra, \emph{i.e.}~the \emph{special} ones: see \cite{Au, Co, Ma:link}. This \emph{special complexity} $c^{\rm spec}$ is only related to the complexity $c$ we use here via the obvious inequality $c(M^4)\leqslant c^{\rm spec}(M^4)$. The closed 4-manifolds $M^4$ having $c^{\rm spec}(M^4)\leqslant 1$ are $S^4$, $\matCP^2$, $\matCP^2\#\overline \matCP^2$, $\matCP^2\#\matCP^2$, and $S^2\times S^2$, see \cite{Co}.

Finally, we show how complexity allows to state three well-known conjectures in a similar form. We denote by P, AC, P4 respectively the (now proven) Poincar\'e conjecture, the Andrew-Curtis conjecture \cite{AnCu}, and the (piecewise-linear) 4-dimensional Poincar\'e conjecture. We denote by $\sim$ the homotopy equivalence between manifolds.

\begin{teo} \label{analogies:teo}
The following holds.
\begin{enumerate}
\item P holds $\Longleftrightarrow$ $c(M^3)=0$ for every 3-manifold $M^3\sim S^3$;
\item P4 holds $\Longleftrightarrow$ $c(M^4)=0$ for every 4-manifold $M^4\sim S^4$;
\item AC holds $\Longleftrightarrow$ $c(\calP)=0$ for every presentation $\calP$ of the trivial group.
\end{enumerate}
\end{teo}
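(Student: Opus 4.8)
The plan is to prove the three biconditionals in parallel, exploiting the fact that all three share the same underlying mechanism: complexity zero is a strong structural constraint, and when the manifold (or presentation) is already homotopy-trivial, that constraint forces triviality outright.

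\emph{Part (2), which is the heart of the matter.} For the implication ``P4 $\Rightarrow$ $c(M^4)=0$ for $M^4\sim S^4$'', I would argue that if P4 holds then $M^4\sim S^4$ forces $M^4$ PL-homeomorphic to $S^4$, and $S^4$ obviously has complexity zero (its empty shadow, or equivalently the shadow coming from the trivial presentation $\calP=\langle\,|\,\rangle$ as noted after Proposition \ref{presentation:prop}, has no vertices). For the converse, suppose $c(M^4)=0$ for every $M^4\sim S^4$. Let $M^4$ be a homotopy 4-sphere. Since $c(M^4)=0$, Theorem \ref{main:teo} applies: $M^4 = M'\#_h\matCP^2$ for a graph manifold $M'$ generated by $\calS_0$. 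But $M^4\sim S^4$ is simply connected and has trivial intersection form, so the signature is $0$, forcing $h=0$; hence $M^4=M'$ is itself a simply connected graph manifold generated by $\calS_0$. By Corollary \ref{finite:cor} a simply connected manifold of complexity zero is $\#_h(S^2\times S^2)$ or $\#_h\matCP^2\#_k\overline{\matCP^2}$; among these, the only one with trivial homology (equivalently, homotopy equivalent to $S^4$) is $\#_0(S^2\times S^2)=S^4$. Thus $M^4$ is PL-homeomorphic to $S^4$, which is precisely P4.

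\emph{Part (1).} The argument is the degenerate, already-known 3-dimensional shadow of Part (2). One direction is trivial: if P holds, then $M^3\sim S^3$ implies $M^3\cong S^3$, and $c(S^3)=0$ (it has a vertex-free simple spine). For the converse, if $c(M^3)=0$ for all $M^3\sim S^3$, then by the Costantino--Thurston theorem quoted in the excerpt a complexity-zero 3-manifold is a Waldhausen graph manifold; but a graph manifold homotopy equivalent to $S^3$ must be $S^3$ (graph manifolds with trivial fundamental group are $S^3$, since every nontrivial prime graph manifold has infinite $\pi_1$ or is a lens space $\neq S^3$, both of which have nontrivial $\pi_1$ or $H_1$). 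Hence P holds. Here I should be a little careful to cite the relevant structure theory of Waldhausen graph manifolds to justify the last step.

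\emph{Part (3).} Here $c(\calP)$ is the shadow-complexity of the polyhedron $X^2$ (equivalently the minimal number of vertices of a shadow of the associated 2-handlebody, or of the 5-manifold in $\calS(\calP)$). If AC holds, any presentation $\calP$ of the trivial group is AC-equivalent to the empty one, so by Proposition \ref{presentation:prop}(3) $\calS(\calP)=\calS(\langle\,|\,\rangle)=\{S^4\}$, and the associated polyhedron collapses (after the AC-moves) to a vertex-free one, giving $c(\calP)=0$. Conversely, if $c(\calP)=0$ for every presentation of the trivial group, then the spin manifold in $\calS(\calP)$ has complexity zero and trivial $\pi_1$, hence by Corollary \ref{finite:cor} and Proposition \ref{finite:prop} (its universal cover being $\#_k(S^2\times S^2)$ with $k=0$ forced by $\pi_2=0$) it is $S^4$; running this through the correspondence between AC-equivalence classes and the invariants controlled by Proposition \ref{presentation:prop}, one concludes $\calP$ is AC-equivalent to the trivial presentation.

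The main obstacle is Part (3): unlike (1) and (2), where ``complexity zero forces the trivial manifold'' immediately yields the geometric conclusion, here one must genuinely extract an \emph{Andrew--Curtis} statement from a statement about 4-manifolds. This requires that the passage $\calP\mapsto\calS(\calP)$ be faithful enough on AC-equivalence classes — i.e. that a vertex-free shadow of the 5-manifold built from $X^2$ can be converted back into a sequence of AC-moves trivializing $\calP$. Establishing this dictionary precisely (presumably by tracking how collapsing a vertex-free simple polyhedron onto $X^2$ corresponds to the allowed handle moves) is the delicate point, and I expect it to consume most of the proof; parts (1) and (2) are then essentially immediate from the structure theorems already in hand.
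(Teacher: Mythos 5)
Parts (1) and (2) are essentially right, and your argument for (2) is exactly the paper's: the forward direction is trivial, and the reverse direction uses Corollary \ref{finite:cor} to pin a homotopy 4-sphere of complexity zero down to $S^4$. For (1), though, your citation is off: the Costantino--Thurston theorem you invoke characterizes 3-manifolds that bound 4-manifolds collapsing onto vertex-free polyhedra, which is not the same as having a vertex-free spine (Matveev's complexity). What the paper actually cites, and what you need, is Matveev's direct classification of closed orientable 3-manifolds of complexity zero -- connected sums of $S^3$, $\matRP^3$, $L(3,1)$, $S^2\times S^1$ -- from which $S^3$ is the only homotopy 3-sphere.

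Part (3) is where the genuine gap is, and while your own diagnosis locates the obstacle, the route you propose would not close it. You try to pass through $\calS(\calP)$ and 4-manifolds, but that passage is lossy: concluding $\calS(\calP)=\{S^4\}$ does \emph{not} recover that $\calP$ is AC-trivial. Indeed the paper remarks that finding a \emph{fake} $S^4$ among the doubles $\calS(\calP)$ would be a way to \emph{disprove} AC, which tacitly grants that obtaining the genuine $S^4$ tells you nothing about the AC class of $\calP$. Moreover, $c(\calP)$ is not defined as the shadow-complexity of a 4- or 5-manifold; it is defined directly via 3-deformation equivalence of 2-dimensional polyhedra (Section \ref{presentations:subsection}), so routing through $\calS(\calP)$ is both unnecessary and insufficient. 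The proof the paper intends is immediate from Proposition \ref{AC:prop}: that proposition classifies, up to AC-equivalence, the presentations of finite groups having $c(\calP)=0$, and for the trivial group the answer is $\calC_1\vee_h S^2$. Since $\calC_1=\langle a\,|\,a\rangle$ is AC-trivializable and balancedness forces $h=0$ by Euler characteristic, a balanced $\calP$ with $c(\calP)=0$ is AC-trivial. Conversely, if AC holds, $X_\calP$ is 3-deformation equivalent to a point (or to $\vee_h S^2$), hence to a vertex-free simple polyhedron, giving $c(\calP)=0$. The ``dictionary'' you flag as the delicate, proof-consuming step is precisely Proposition \ref{AC:prop}, which the paper has already established, and it works entirely at the level of 2-polyhedra and 3-deformation with no 4-manifolds involved.
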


Complexity of presentations is defined in Section \ref{presentations:subsection}. The three types of complexities mentioned in Theorem~\ref{analogies:teo} are all defined as the minimum number of vertices of some simple polyhedron. The equivalence (1) follows from Matveev's seminal paper \cite{Mat}, 
(2) follows from Corollary \ref{finite:cor} and
(3) is easily proved in Section \ref{presentations:subsection}.

\subsection*{Structure of the paper}
All the results stated in the introduction except Theorem \ref{main:teo} are proved in Section \ref{simple:section}. The rest of the paper is devoted to proving Theorem \ref{main:teo}. An outline of the proof is present in Section \ref{outline:subsection}.

In Section \ref{shadows:section} we recall (a version of) the definition of Turaev's shadows. We construct shadows (with boundary) without vertices of all the blocks in $\calS_0$ and of $\matCP^2$. In Section \ref{operations:section} we prove that blocks can be assembled along their $(S^2\times S^1)$-boundaries and can be summed (via an internal connected sum) without increasing the complexity. This approach is very similar to the \emph{bricks} construction used in \cite{MaPe} for 3-manifolds.

Section \ref{moves:section} collects some moves that relate two shadows of the same 4-manifold. We introduce there various new moves that are particularly useful when there are no vertices. In Section \ref{without:section} we study simple shadows without vertices, their 4-dimensional thickening, and their 3-dimensional boundary. Sections \ref{reduction:section} to \ref{proof:section} contain the core of the proof of Theorem \ref{main:teo}.

We will always work in the piecewise-linear category. Every manifold and map is tacitly assumed to be PL. 

\subsection*{Acknowledgements}
The author would like to thank Francois Costantino for the many discussions on this topic, and the Maths Department of Austin for its hospitality.

\section{Simple polyhedra} \label{simple:section}
We prove here all the assertions made in the introduction except Theorem \ref{main:teo}. 
We introduce a graph notation to encode simple polyhedra without vertices which will also be used in the subsequent sections.

\subsection{Simple polyhedra with boundary}
\begin{figure}
\begin{center}
\includegraphics{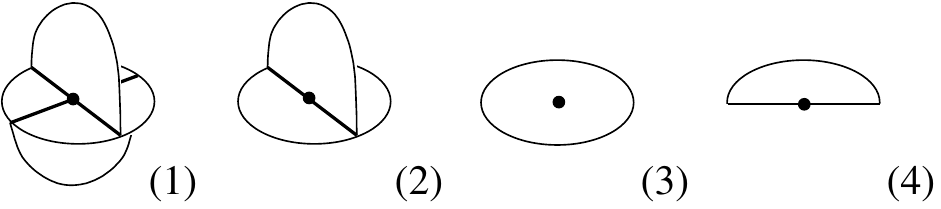}
\nota{Neighborhoods of points in a simple polyhedron with boundary.}
\label{shadow:fig}
\end{center}
\end{figure}

A \emph{simple polyhedron with boundary} is a compact polyhedron $X$ where every point has a link homeomorphic to a circle with three radii, a circle with a diameter, a circle, or a segment. Star neighborhoods are shown in Fig.~\ref{shadow:fig}. 

The \emph{boundary} $\partial X$ is the union of all points of type (4). Points of type (1) are called \emph{vertices}. The points of type (2) and (3) form respectively some manifolds of dimension 1 and 2: their connected components are called respectively \emph{edges} and \emph{regions}. The \emph{singular part} $SX$ of $X$ is the union of all points of type (1), (2), and (4). For simplicity, we will often employ the term \emph{simple polyhedron} to denote a simple polyhedron with boundary.

\subsection{Simple polyhedra without vertices} \label{simple:subsection}
In this paper we are concerned only with simple polyhedra $X$ without vertices. Consider one such polyhedron $X$. Each component of $SX$ is a circle. Its regular neighborhood $N$ has the structure of a $Y$-bundle over $S^1$, where $Y$ denotes the cone over 3 points. There are three topological types for $N$: its  boundary may have 3, 2, or 1 components, and look like respectively as $(111)$, $(12)$, and $(3)$ from Fig.~\ref{sum_no_gleam:fig}. We use the names $Y_{111}$, $Y_{12}$, and $Y_3$ to denote these three objects. Of course we have $Y_{111} \isom Y\times S^1$.

After removing regular neighborhoods of the circles in $SX$ we are left with regions. These in turn decompose, as every surface, into discs, M\"obius strips, and pair-of-pants. We denote such objects by $D^2$, $Y_2$, and $P^2$. The name $Y_2$ follows from analogy with Fig.~\ref{sum_no_gleam:fig}. We have proved the following.

\begin{prop} \label{simple:prop}
Every simple polyhedron without vertices decomposes along simple closed curves into pieces homeomorphic to $D^2$, $P^2$, $Y_2$, $Y_{111}$, $Y_{12}$, and $Y_3$.
\end{prop}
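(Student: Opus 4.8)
The plan is to analyze an arbitrary simple polyhedron $X$ without vertices by first separating the singular one-dimensional locus from the two-dimensional part, and then applying the classical decomposition of surfaces. Since $X$ has no vertices, every point of the singular part $SX$ has a link of type (2) (a circle with a diameter), so $SX$ is a closed one-manifold, i.e.\ a disjoint union of circles. First I would fix a small regular neighborhood $N$ of $SX$ inside $X$. Restricted to a single circle component $C$ of $SX$, the neighborhood $N_C$ is a bundle over $C\cong S^1$ whose fiber is a neighborhood of the triple point in the local model, namely the cone $Y$ over three points; thus $N_C$ is a $Y$-bundle over $S^1$.

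Next I would classify such $Y$-bundles over $S^1$ up to homeomorphism. The structure group is the mapping class group of $Y$ fixing the cone point, which permutes the three endpoints, so it is the symmetric group $\permu_3$; the bundle is determined by the monodromy permutation up to conjugacy. The conjugacy classes in $\permu_3$ are the identity, a transposition, and a $3$-cycle, yielding exactly three bundles, which are $Y_{111}\cong Y\times S^1$, $Y_{12}$, and $Y_3$. One checks directly that the number of boundary circles of $N_C$ (equivalently, the number of orbits of the monodromy on the three endpoints) is $3$, $2$, and $1$ respectively, matching the pictures $(111)$, $(12)$, $(3)$ in Fig.~\ref{sum_no_gleam:fig}. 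This records how $N$ sits in $X$ along the curves where the two-dimensional part is attached.

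Then I would look at $X \setminus \interior{N}$. This is a compact surface $F$ (possibly disconnected, possibly with boundary coming from $\partial X$ and from $\partial N$), since away from $N$ every point of $X$ has a surface link of type (3) or a boundary link of type (4). By the classification of compact surfaces, $F$ decomposes along disjoint simple closed curves into pieces each homeomorphic to a disc $D^2$, a M\"obius strip (which I call $Y_2$, by analogy with the $Y$-bundle notation, a twisted $I$-bundle over $S^1$), or a pair-of-pants $P^2$ --- this is just the standard fact that these three surfaces generate all compact surfaces under boundary connected sum / cutting along curves (a pants decomposition together with handle/crosscap normalization). Cutting $F$ this way, and keeping the already-cut pieces $Y_{111}, Y_{12}, Y_3$ of $N$, exhibits $X$ as a union, glued along simple closed curves, of pieces homeomorphic to $D^2$, $P^2$, $Y_2$, $Y_{111}$, $Y_{12}$, $Y_3$, which is exactly the statement.

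The main obstacle is being careful about the interface between $N$ and $F$: one must verify that the regular neighborhood $N$ can be chosen so that $\partial N \setminus \partial X$ consists of circles transverse to the region structure, and that cutting along these circles together with a pants decomposition of $F$ genuinely realizes $X$ as glued \emph{along simple closed curves} rather than along more complicated one-complexes --- in particular that no gluing curve runs into $\partial X$ in a bad way. This is routine PL topology (general position for regular neighborhoods in simple polyhedra, as in Matveev's book \cite{Mat:book}), but it is the only point requiring genuine verification; the surface classification and the $\permu_3$ bundle count are entirely standard.
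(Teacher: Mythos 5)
Your proof is correct and follows essentially the same route as the paper: isolate the singular circles of $SX$, observe that their regular neighborhoods are $Y$-bundles over $S^1$ giving $Y_{111}$, $Y_{12}$, $Y_3$, and decompose the complementary surface into discs, M\"obius strips, and pairs-of-pants. The paper simply asserts the three topological types of $Y$-bundle rather than spelling out the $\permu_3$ monodromy count, and it passes over the interface details you flag, but the argument is the same.
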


\begin{figure}
\begin{center}
\includegraphics[width = 11 cm]{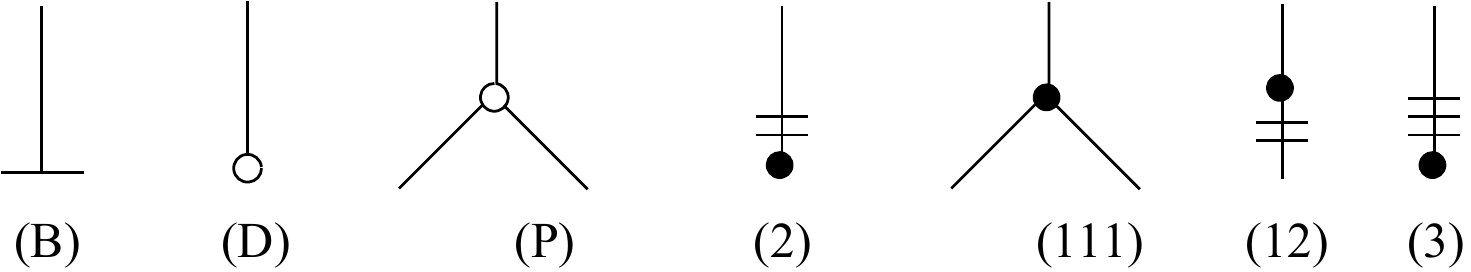}
\nota{A graph with these types of vertices encodes a simple polyhedron without vertices.}
\label{vertices:fig}
\end{center}
\end{figure}
A simple polyhedron $X$ without vertices $X$ is easily encoded by a graph $G$ with vertices as in Fig.~\ref{vertices:fig}. Vertices of type (D), (P), (2), (111), (12), (3) denote respectively pieces homeomorphic to $D^2$, $P^2$, $Y_2$ $Y_{111}$, $Y_{12}$, and $Y_3$. A vertex of type (B) encodes a boundary component of $X$. Note that the vertex of type (12) is not symmetric: the edge marked with two lines should correspond to the region winding twice over the singular circle in $SX$. 

Every edge of $G$ denotes a gluing of two such pieces. There are two possible gluings, since there are two self-homeomorphisms of $S^1$ up to isotopy, one orientation-preserving and one reversing. This gives a map $\beta:H_1(G,\matZ_2)\to\matZ_2$. Each piece admits a self-homeomorphism that reverses the orientation of the boundary circles. Therefore the graph $G$ and $\beta$ together encode the simple polyhedron $X$.

Since a surface can split along pants, discs, and M\"obius strips in multiple ways, there are some moves that modify the graph while leaving the associated polyhedron unchanged. Some of these are shown in Fig.~\ref{mosse_innocue_ungleamed:fig}.

\begin{figure}
\begin{center}
\includegraphics[width = 10 cm]{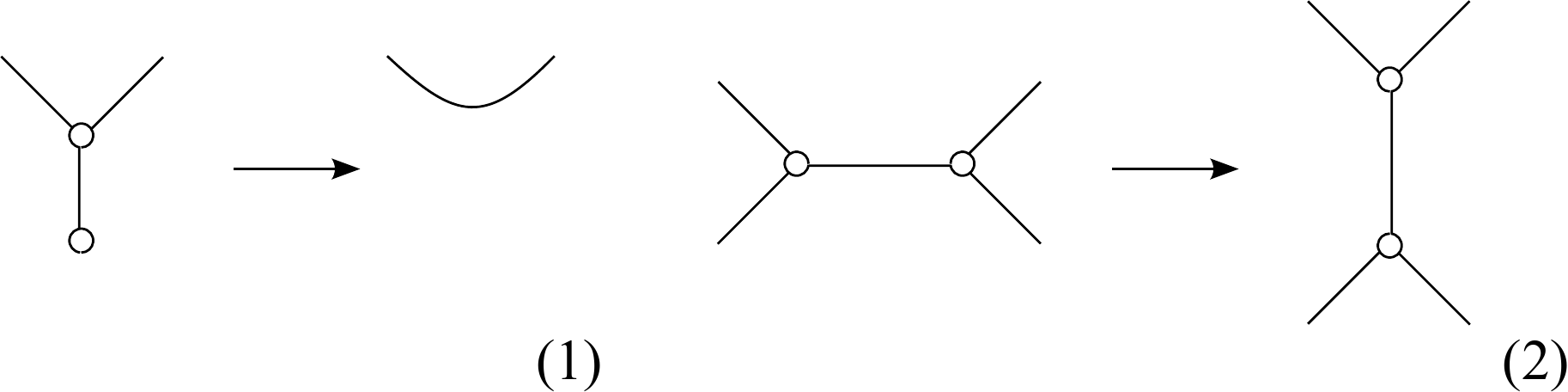}
\nota{These moves do not modify the polyhedron $X$.}
\label{mosse_innocue_ungleamed:fig}
\end{center}
\end{figure}

\subsection{Simple homotopy and presentations} \label{presentations:subsection}

A \emph{simple homotopy} between two polyhedra $X, X'$ of dimension 2 is a composition of simplicial collapses and expansions that transform $X$ into $X'$. Two polyhedra $X$ and $X'$ are \emph{3-deformation equivalent} if there is a simple homotopy between them which involves only collapses and expansions of simplexes of dimension $\leqslant 3$. Recall from the introduction that every presentation $\calP$ defines a 2-dimensional polyhedron $X_\calP$.

\begin{teo} \label{bijection:teo}
The map $\calP \mapsto X_\calP$ defines a bijection between Andrew-Curtis classes of presentations and 3-deformation classes of 2-dimensional polyhedra.
\end{teo}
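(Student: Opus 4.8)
The plan is to establish the bijection by showing the map is well-defined, surjective, and injective, relying on the classical correspondence between Andrew--Curtis moves and $3$-deformations due to Wright and others, restated in our polyhedral language. First I would recall that a presentation $\calP = \langle x_1,\dots,x_n \mid r_1,\dots,r_m\rangle$ determines $X_\calP$ as the standard $2$-complex with a single $0$-cell, one $1$-cell per generator, and one $2$-cell per relator, attached along the word $r_j$; this is the presentation complex. The first step is to check that the four Andrew--Curtis moves on $\calP$ (the two Tietze-like moves that add/remove a generator together with a relator, conjugation of a relator, and multiplication of one relator by another $\pm 1$, together with cyclic permutation and inversion of relators) each correspond to a $3$-deformation of $X_\calP$. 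The generator/relator stabilization move corresponds to an elementary expansion: attaching a new $1$-cell and a new $2$-cell so that the new $2$-cell collapses across the new $1$-cell. The relator-modification moves (conjugation, multiplication) correspond to $3$-deformations realized by sliding the attaching map of a $2$-cell across the other $2$-cells and the $1$-skeleton, an operation that is a composition of an expansion into dimension $3$ (pushing a handle) and a collapse; these are the standard ``$2$-cell sliding'' moves, each of which only involves simplexes of dimension $\leqslant 3$. This shows $\calP \mapsto X_\calP$ descends to a well-defined map on Andrew--Curtis classes.

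Next I would prove surjectivity: every $2$-dimensional polyhedron $X$ is $3$-deformation equivalent to $X_\calP$ for some presentation $\calP$. Here one may first collapse $X$ as far as possible, then expand to obtain a CW-structure with a single vertex (collapse a spanning tree of the $1$-skeleton after subdividing if necessary, which is a $3$-deformation in fact a $2$-deformation), giving a CW-complex with one $0$-cell, some $1$-cells, and some $2$-cells; reading off the attaching maps of the $2$-cells as words in the $1$-cells yields a presentation $\calP$ with $X_\calP$ $3$-deformation equivalent to $X$. Some care is needed because an arbitrary $2$-polyhedron is not a priori a CW-complex of this shape, but a simplicial $2$-complex can always be brought to such ``standard'' form by elementary expansions and collapses of dimension $\leqslant 2$; I would cite the standard fact (e.g. from Hog-Angeloni--Metzler--Sieradski, \emph{Two-dimensional Homotopy and Combinatorial Group Theory}) rather than reprove it.

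Finally, injectivity: if $X_\calP$ and $X_{\calP'}$ are $3$-deformation equivalent, then $\calP$ and $\calP'$ are Andrew--Curtis equivalent. This is the substantive direction and is exactly the content of the classical theorem identifying the Andrew--Curtis equivalence relation on balanced (or arbitrary) presentations with $3$-deformation equivalence of the associated $2$-complexes; the key point is that any $3$-deformation between two standard presentation complexes can be factored into elementary expansions and collapses, each of which, when it keeps us within the class of standard complexes, is mirrored by one of the four Andrew--Curtis moves, and the intermediate (possibly non-standard) complexes can be re-standardized without leaving the $3$-deformation class. I expect this step to be the main obstacle to write out honestly, since it requires tracking how a general sequence of cellular expansions and collapses can be normalized; however, it is entirely classical, and for the purposes of this paper it suffices to invoke it (see \cite{AnCu} and the references in the two-dimensional homotopy theory literature). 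Thus the map is a bijection.
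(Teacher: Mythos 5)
Your proposal is correct and matches the paper's treatment: the paper does not reprove this theorem but simply cites Hog-Angeloni--Metzler--Sieradski \cite{HoMe} for a careful proof, and you likewise outline the standard three-part argument (well-definedness, surjectivity, injectivity) while deferring the substantive injectivity direction to the same classical literature. The outline you give of how each Andrew--Curtis move is realized by a $3$-deformation, and how an arbitrary compact $2$-polyhedron is brought to a one-vertex presentation complex, is the standard one found in \cite{HoMe}.
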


See \cite{HoMe} for a careful proof of this theorem and a nice introduction to the subject. We introduce the following definition.

\begin{defn} The \emph{complexity} $c(\calP)$ of a presentation $\calP$ is the minimum number of vertices of a simple polyhedron $X$ with boundary which is 3-deformation equivalent to $X_\calP$. 
\end{defn}

This number is always finite, since every 2-dimensional polyhedron is easily seen to be 3-deformation equivalent to a simple one. By Theorem \ref{bijection:teo}, the number $c(\calP)$ depends only on the Andrew-Curtis class of $\calP$ and may also be interpreted as a complexity on 3-deformation classes of polyhedra. 

Thanks to Theorem \ref{bijection:teo} we can safely shift from presentations (up to AC-equivalence) to 2-dimensional polyhedra (up to 3-deformation). Free products of presentations correspond to wedge products of polyhedra, and we denote both these operations by $\vee$. For the sake of clearness, we denote by $S^2$ the presentation $\langle a| a,a\rangle$ which indeed corresponds to $S^2$. Here we will need the following.

\begin{prop} \label{AC:prop}
The presentations (up to AC-equivalence) of finite groups having complexity zero are precisely those of the form $\calP \vee_h S^2$ for some $h\geqslant 0$ and some $\calP = \calC_{2^n}$, $\calC_{3\cdot 2^n}$, or $\calD_{2\cdot 2^n}$ with $n\geqslant 0$.
\end{prop}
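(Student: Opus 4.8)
The plan is to translate the problem entirely into the language of 2-dimensional polyhedra (using Theorem \ref{bijection:teo}) and then to prove two things: first, that every presentation of the form $\calP \vee_h S^2$ with $\calP \in \{\calC_{2^n}, \calC_{3\cdot 2^n}, \calD_{2\cdot 2^n}\}$ has complexity zero, and second, that these are the \emph{only} finite groups whose presentations can have a vertex-free simple polyhedron in their 3-deformation class. For the first direction I would exhibit, for each of the model presentations $\calC_n$ and $\calD_{2n}$, an explicit simple polyhedron without vertices that is 3-deformation equivalent to $X_\calP$; this amounts to building the right graph $G$ (with vertices of the types in Fig.~\ref{vertices:fig}) together with the map $\beta$, and checking that it collapses/expands to the standard complex of the presentation. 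Wedging on copies of $S^2 = \langle a\mid a,a\rangle$ only adds a sphere, which is itself a vertex-free simple polyhedron, so the $\vee_h S^2$ part is free. The spine-building here should essentially recover the 5-manifolds $\calS(\calC_n)$, $\calS(\calD_{2n})$ described in Proposition \ref{finite:prop}; I would lean on that description rather than redo it.

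The harder direction is the converse: if $X$ is a vertex-free simple polyhedron whose fundamental group $\pi_1(X)$ is finite, then (up to 3-deformation) the presentation it carries must be one of the listed ones. Here I would use the structural decomposition of Proposition \ref{simple:prop}: $X$ is built from $D^2$, $P^2$, $Y_2$, $Y_{111}$, $Y_{12}$, $Y_3$ glued along circles, encoded by $(G,\beta)$. The fundamental group of such an $X$ can be read off $G$ as a graph of groups: the vertex groups are trivial (for $D^2$), $\matZ$ (for $P^2$, with its boundary circles), $\matZ$ (for $Y_2$, the core of the Möbius strip, meeting the boundary in the square), and cyclic/free groups coming from the $Y$-bundle pieces. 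Finiteness of $\pi_1(X)$ forces $G$ to be extremely constrained — roughly, it must be a tree with very few non-disc vertices, since any embedded loop in $G$ not killed by a disc contributes an infinite-order element. A careful case analysis on the possible configurations of a few $P^2$, $Y_2$, and $Y$-pieces, using the moves of Fig.~\ref{mosse_innocue_ungleamed:fig} to normalize $G$, should leave only the presentations $\calC_{2^n}$, $\calC_{3\cdot 2^n}$, $\calD_{2\cdot 2^n}$ (possibly wedged with spheres). The appearance of the powers of $2$ (and the single factor of $3$) is the arithmetic fingerprint of which cyclic quotients arise when a Möbius-type region winds twice, or a $Y_{12}$-region winds twice, over a singular circle — each such winding doubles an exponent — so I would track the relator exponents through the gluing data $\beta$ very carefully.

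The main obstacle I anticipate is precisely this converse case analysis: enumerating, up to the polyhedron-preserving moves, all small graphs $(G,\beta)$ for which the associated graph-of-groups presentation is finite, and verifying that each collapses to (an AC-representative of) one of the three families. The bookkeeping is delicate because different $(G,\beta)$ can give 3-deformation-equivalent polyhedra, so one must argue at the level of AC-classes, not raw graphs, and because the non-symmetric $Y_{12}$ vertex and the orientation data in $\beta$ interact to produce exactly the dihedral relator $(ab)^n$ with $n = 2^m$. I would organize this by first bounding the number of ``expensive'' vertices (everything except $D^2$) using the finiteness of $H_1$, then running through the finitely many resulting combinatorial types. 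Once the list is pinned down, matching each to $\calC_{2^n}$, $\calC_{3\cdot 2^n}$, or $\calD_{2\cdot 2^n}$ is a short direct computation.
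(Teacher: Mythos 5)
Your overall plan — translate to $2$-polyhedra via Theorem~\ref{bijection:teo}, exhibit vertex-free models for the stated presentations, and then argue the converse by analyzing the graph encoding a vertex-free polyhedron with finite $\pi_1$ — is sensible, and the easy direction is essentially right. But the converse as you sketch it has a concrete gap: you propose to ``first bound the number of expensive vertices (everything except $D^2$) using the finiteness of $H_1$, then run through the finitely many resulting combinatorial types.'' No such bound exists. The vertex-free simple polyhedra realizing the presentations $\calC_{2^n}$ are exactly the polyhedra $Y_n$ of Fig.~\ref{gira:fig}, whose encoding graph has $n$ non-disc vertices; as $n\to\infty$ the groups $\matZ_{2^n}$ stay finite but the graphs grow without bound. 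So finiteness of $H_1$ (or of $\pi_1$) does not reduce you to finitely many combinatorial types, and the case analysis you are planning never terminates.

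The paper avoids this by a different inductive scheme. It does not try to bound the graph $G$; instead it proves a recursive \emph{Claim}: any vertex-free simple polyhedron $X$ with $\partial X\cong S^1$ whose capping-off $\hat X$ is simply connected is $3$-deformation equivalent, relative to $\partial X$, to $Y_i\vee_h S^2$. The induction is on the number of vertices of $G$, and at each step it peels off a vertex adjacent to the boundary using specific moves (Fig.~\ref{AC_mosse:fig} and \ref{AC_mosse2:fig}) that are genuine $3$-deformations, not merely graph moves preserving $X$; the key control comes from the shrink-a-circle trick producing a surjection $\pi_1(X)\to\pi_1(X_1)*\pi_1(X_2)$, which forces one side to be simply connected and keeps the induction alive. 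Then the full proposition follows by choosing one central vertex of $G$, applying the Claim to all subtrees hanging off it, and reading off the presentation according to the type of that vertex. Note also that the moves of Fig.~\ref{mosse_innocue_ungleamed:fig} you invoke preserve the polyhedron itself and so cannot produce the needed simplifications; you must work with moves that change the polyhedron within its $3$-deformation class. Without the Claim (or some equivalent reduction) and without those $3$-deformation moves, your argument cannot close.

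Finally, a smaller inaccuracy: $\pi_1(P^2)$ for the pair-of-pants is free of rank two, not $\matZ$; and the description of $\pi_1(X)$ as a ``graph of groups'' over $G$ needs care since the edge groups are infinite cyclic and the vertex groups are not finite. This matters because the surjectivity of $\pi_1(X)\to\pi_1(X_1)*\pi_1(X_2)$ under the shrinking move (not an injectivity/graph-of-groups statement) is what actually drives the paper's proof.
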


\begin{proof}
We will use at various points the following trick. Let $X$ be a simple polyhedron without vertices. It is described by a graph $G$ with vertices as in Fig.~\ref{vertices:fig}. Consider the move in Fig.~\ref{wedge:fig}. An edge of the graph determines a circle in a region of $X$. If we shrink the circle to a point (and $G$ is a tree), the resulting polyhedron is a wedge $X_1\vee X_2$ of two simple polyhedra, as described by the move. 

\begin{figure}
\begin{center}
\includegraphics[width = 5 cm]{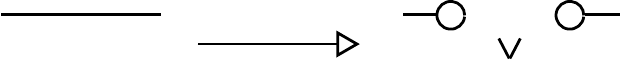}
\nota{This move shrinks a circle contained in a region of the polyhedron (determined by an edge of the graph) to a point. If the original graph is a tree, the move disconnects the graph and the resulting polyhedron is the wedge product $\vee$ of two simple polyhedra.}
\label{wedge:fig}
\end{center}
\end{figure}

There is an obvious map $X\to X_1\vee X_2$ which induces a surjective map
$$\pi_1(X) \to \pi_1(X_1) * \pi_1(X_2).$$
If $X$ is simply connected then both $X_1$ and $X_2$ also are, and if $\pi_1(X)$ is finite then either $\pi_1(X_1)$ or $\pi_1(X_2)$ is trivial (and the other is finite).

\begin{figure}
\begin{center}
\includegraphics[width = 11 cm]{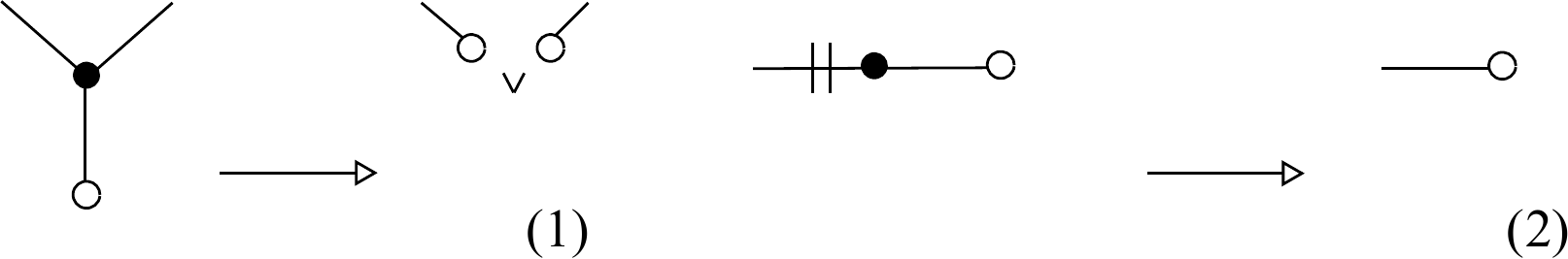}
\nota{Each of these moves may be realized as a 3-deformation. We apply (1) only when the graph is a tree: it transforms $X$ into two polyhedra $X_1$ and $X_2$, and we have $X \sim X_1\vee X_2$.}
\label{AC_mosse:fig}
\end{center}
\end{figure}

Another fact that we will use is that both moves in Fig.~\ref{AC_mosse:fig} can be realized via 3-deformations (this can be seen easily). We will denote 3-deformation equivalence via the symbol $\sim$.

We will now prove a general claim. Let $Y_i$ be the simple polyhedron drawn in Fig.~\ref{gira:fig}-(1). 
Let $X$ be any simple polyhedron without vertices and with one boundary component (\emph{i.e.}, we have $\partial X\isom S^1$). Let $\hat X$ be obtained from $X$ by capping the boundary with a disc. 

\begin{figure}
\begin{center}
\includegraphics[width = 9 cm]{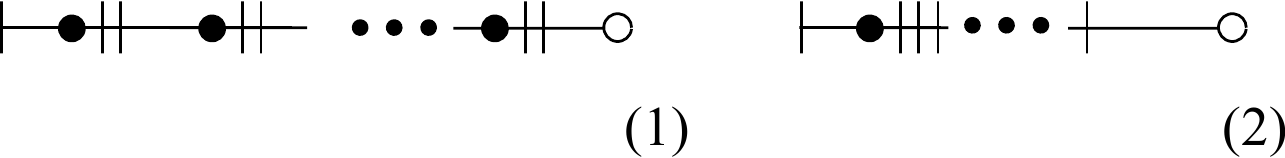}
\nota{The simple polyhedron with boundary $Y_i$, with $i\geqslant 0$ black vertices and one white one (1). Such a polyhedron is 3-deformation equivalent (relative to $\partial Y_i$) to a polyhedron made of an annulus $A$ and a disc winding $2^i$ times around one component of $\partial A$. We might denote this polyhedron as in (2), with $2^i$ small vertical lines.}
\label{gira:fig}
\end{center}
\end{figure}

\emph{Claim. If $\pi_1(\hat X) = \{e\}$ then $X$ is 3-deformation equivalent (relative to $\partial X$) to $X' = Y_i\vee_h S^2$ for some $i,h\geqslant 0$.}

By a 3-deformation equivalence relative to $\partial X$ we mean that collapses and expansions take place away from $\partial X$. Note that the claim easily implies the following.

\emph{Corollary. A simply connected simple polyhedron without boundary and without vertices is 3-deformation equivalent to $\vee_h S^2$.}

We prove the claim. The polyhedron $X$ is described by a graph $G$ with vertices as in Fig.~\ref{vertices:fig}. There is precisely one vertex of type \includegraphics[width = 0.6 cm]{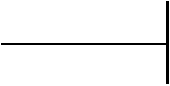}, corresponding to $\partial X$. A graph $\hat G$ for $\hat X$ is obtained simply by substituting this vertex with a \includegraphics[width = 0.6 cm]{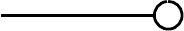}. Both graphs are trees since $H_1(\hat X,\matZ)$ is trivial.

We prove the claim by induction on the number of vertices of $G$. The vertex
\includegraphics[width = 0.6 cm]{0.pdf} cannot be incident to one vertex of type 
\includegraphics[width= 0.6 cm]{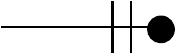} or \includegraphics[width = 0.6 cm]{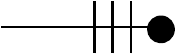} because
\includegraphics[width= 0.6 cm]{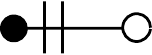} and \includegraphics[width = 0.6 cm]{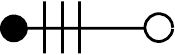} are not simply connected. Therefore the vertex \includegraphics[width = 0.6 cm]{0.pdf} is incident to one vertex of type 
\includegraphics[width = 0.6 cm]{1.pdf}, \includegraphics[width = 0.6 cm]{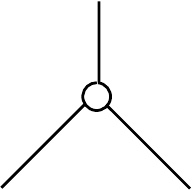}, \includegraphics[width = 0.6 cm]{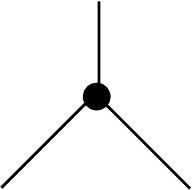}, or \includegraphics[width = 0.6 cm]{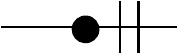}. In the first case $X$ is a disc, \emph{i.e.} $X = Y_0$ and we are done. In all other cases we conclude by induction, as follows.

\begin{figure}
\begin{center}
\includegraphics[width = 10 cm]{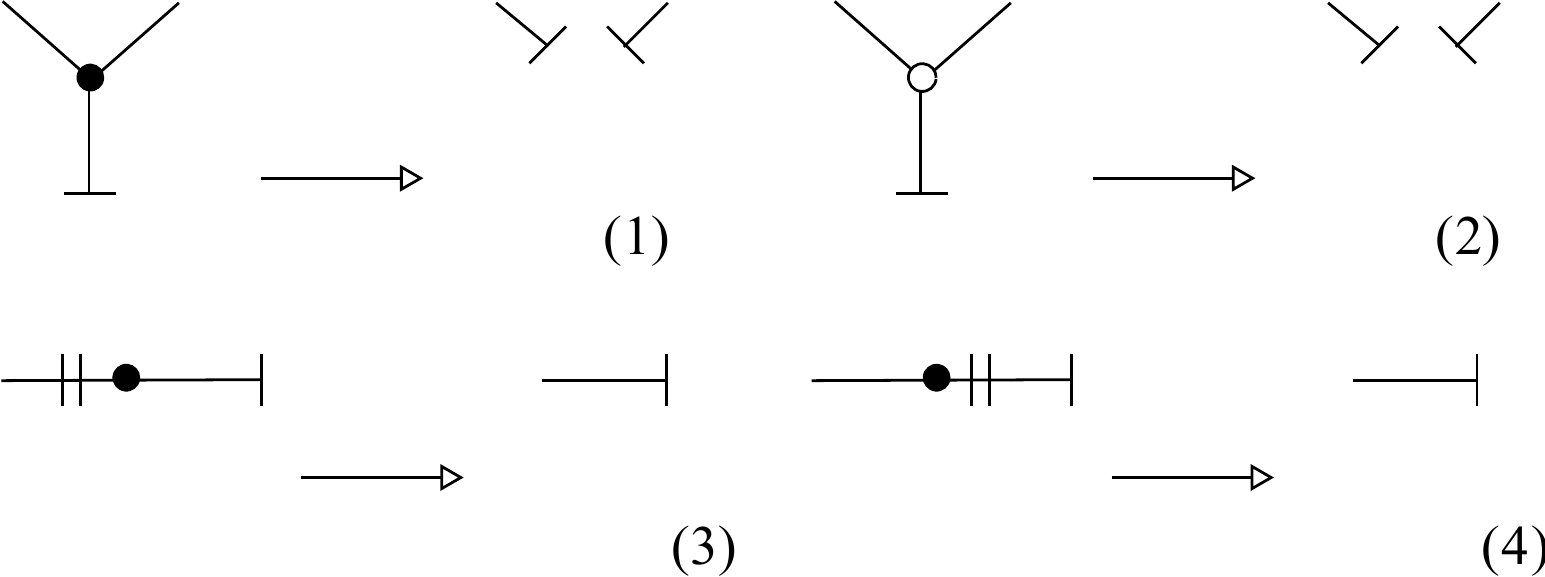}
\nota{Proof of the claim in Proposition~\ref{AC:prop}.}
\label{AC_mosse2:fig}
\end{center}
\end{figure}

Each of the moves in Fig.~\ref{AC_mosse2:fig} transforms $X$ into one or two polyhedra which satisfy our induction hypothesis: we can easily conclude in each case. More precisely, move (1) transforms $X$ into two polyhedra $X_1$ and $X_2$. Consider the capped polyhedra $\hat X$, $\hat X_1$, and $\hat X_2$: we have $\hat X \sim \hat X_1 \vee \hat X_2$. Therefore $\{e\} = \pi_1(\hat X) = \pi_1(\hat X_1)* \pi_1(\hat X_2)$. Thus $\pi(\hat X_1) = \pi(\hat X_2) = \{e\}$ and our induction hypothesis apply to both $X_1$ and $X_2$. Therefore
\begin{eqnarray*}
X_1 & \sim & Y_i\vee_h S^2 \\
X_2 & \sim & Y_j \vee_k S^2 
\end{eqnarray*}
and we easily deduce that
\begin{eqnarray*}
X & \sim & Y_{\min\{i,j\}} \vee_{h+k+1} S^2.
\end{eqnarray*}
Note that all the 3-deformations are performed away from $\partial X_1$ and $\partial X_2$ and therefore survive in $X$.

We turn to move (2). The first trick described above gives a map $\hat X \to \hat X_1\vee\hat X_2$ which is surjective on fundamental groups, thus we conclude again that $X_1$ and $X_2$ fulfill the induction hypothesis. Again we get
\begin{eqnarray*}
X_1 & \sim & Y_i \vee_h S^2 \\
X_2 & \sim & Y_j\vee_k S^2 
\end{eqnarray*}
which implies that
\begin{eqnarray*}
\hat X & \sim & X_{\langle a | a^{2^i}, a^{2^j} \rangle} \vee_{h+k} S^2. 
\end{eqnarray*}
Since $\hat X$ is simply connected, either $i=0$ or $j=0$. Suppose $i=0$: we then get 
\begin{eqnarray*}
X & \sim & Y_j\vee_{h+k} S^2. 
\end{eqnarray*}
In move (3) the polyhedron $X$ is transformed into a polyhedron $X'$ such that $\hat X \sim \hat X'$, see Fig.~\ref{AC_mosse:fig}-(2). Therefore $X'$ fulfills the hypothesis and we get
\begin{eqnarray*}
X' & \sim & Y_i\vee_h S^2
\end{eqnarray*}
which implies that
\begin{eqnarray*}
X & \sim & Y_{i+1}\vee_h S^2.
\end{eqnarray*}
Finally, in move (4) we have a map $\hat X \to \hat X'$ which is surjective on fundamental groups. Therefore $X'$ fulfills the hypothesis. We get 
\begin{eqnarray*}
X'  & \sim & Y_i\vee_h S^2
\end{eqnarray*}
which implies that
\begin{eqnarray*}
\hat X & \sim & \langle a | a^{2^i}, a^2 \rangle \vee_h S^2.
\end{eqnarray*}
Since $\pi_1(\hat X)=\{e\}$, we deduce that $i=0$. This implies that $X \sim Y_0\vee_h S^2$.

We have proved the claim. It is now easy to deduce the proposition. Let $X$ be a simple polyhedron without vertices. It always collapses onto the union of a simple polyhedron without boundary and some 1-dimensional polyhedron. Since $\pi_1(X)$ is finite, the 1-dimensional polyhedron also collapses and we are left either with a simple polyhedron without boundary, which we still call $X$, or with a point. In the latter case we are done.

Represent $X$ via a graph $G$. Take an edge of $G$. It determines a loop $\gamma$ in a region of $X$, which separates $X$ into two polyhedra $X_1$, $X_2$ with $\partial X_1 = \partial X_2 = \gamma$. We apply the usual trick by shrinking $\gamma$ to a point. We get a surjective map from $\pi_1(X)$ to $\pi_1(\hat X_1)*\pi_1(\hat X_2)$. Since $\pi_1(X)$ is finite, either $\pi_1(\hat X_1)$ or $\pi_1(\hat X_2)$ is trivial. Suppose that $\pi_1(\hat X_1)$ is trivial. Then we apply the claim to $X_1$.
We get $X_1 \sim Y_i\vee S^2$ relative to $\gamma$. 

We can apply this to every edge of $G$. It is easy to conclude that $X$ is 3-deformation equivalent to a polyhedron which may be represented via one single vertex $v$ from Fig.~\ref{vertices:fig} and a polyhedron of type $Y_i\vee_h S^2$ attached to each of the incident edges. We conclude as follows:
\begin{itemize}
\item if $v$ is of type (D) then $X \sim X_{\calC_{2^i}}\vee_h S^2$;
\item if $v$ is of type (P) then $X \sim X_{\langle a,b | a^{2^i}, b^{2^j}, (ab)^{2^k} \rangle}\vee_h S^2$;
\item if $v$ is of type (2) then $X \sim X_{\calC_{2\cdot 2^i}}\vee_h S^2$;
\item if $v$ is of type (111) then $X \sim X_{\langle a | a^{2^i}, a^{2^j}, a^{2^k} \rangle}\vee_h S^2 \sim X_{\calC_{2^{\min \{i,j,k\}}}} \vee_{h+2} S^2$;
\item if $v$ is of type (12) then $X \sim X_{\langle a | a^{2\cdot 2^i}, a^{2^j} \rangle}\vee_h S^2 \sim X_{\calC_{2^{\min \{i+1,j\}}}} \vee_{h+1} S^2$;
\item if $v$ is of type (3) then $X \sim X_{\calC_{3\cdot 2^i}}\vee_h S^2$.
\end{itemize}
In all cases we are done except when $v$ is of type (P). Recall that a group presented as
$$\langle a,b \ | \ a^p, b^q, (ab)^r \rangle$$
is finite precisely when $1/p+1/q+1/r<1$. Thus when $v$ is of type (P) and we take $i\leqslant j \leqslant k$ we get:
\begin{itemize}
\item $(i,j,k) = (0,j,k)$, and $X \sim X_{\calC_{2^{\min\{j,k\}}}} \vee_{h+1} S^2$,
\item $(i,j,k) = (1,1,k)$, and $X \sim X_{\calD_{2\cdot 2^k}} \vee_h S^2$
\end{itemize}
as required.
\end{proof}

\subsection{Five-dimensional thickenings}
We now study 5-dimensional thickenings of simple polyhedra, and their 4-dimensional boundaries.
Five-dimensional thickenings are easier to study than four-dimensional ones: this may explain why Proposition \ref{graph:prop} is much easier to prove than Theorem \ref{main:teo}. To prove the proposition we start with a general lemma (which is well-known to experts).

\begin{lemma} \label{23:lemma}
Let $X$ be a compact 2-dimensional polyhedron. Let $M$ be a closed oriented 4-manifold. The following conditions are equivalent.
\begin{enumerate}
\item $M$ is the boundary of a compact oriented 5-manifold which collapses on $X$;
\item $M$ is the double of a compact 4-manifold which collapses on $X$.
\end{enumerate}
\end{lemma}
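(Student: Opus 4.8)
The plan is to prove the two implications separately, with the implication (2) $\Rightarrow$ (1) being essentially formal and (1) $\Rightarrow$ (2) being the substantive direction. For (2) $\Rightarrow$ (1): suppose $M = \partial W = W \cup_{\partial W} W$ is the double of a compact $4$-manifold $W$ that collapses onto $X$. Form $W \times [0,1]$; this is a $5$-manifold with boundary $(W \times \{0\}) \cup (\partial W \times [0,1]) \cup (W \times \{1\})$, which is exactly the double of $W$, i.e.\ $M$. Moreover $W \times [0,1]$ collapses onto $W \times \{0\}$, which collapses onto $X$; composing collapses, $W\times[0,1]$ collapses onto $X$. Orientations are handled by choosing a consistent orientation on $W$ and the product orientation. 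So the double of $W$ bounds a $5$-manifold collapsing onto $X$.

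For (1) $\Rightarrow$ (2): suppose $M = \partial Z$ where $Z$ is a compact oriented $5$-manifold collapsing onto the $2$-polyhedron $X$. The idea is that collapsing $Z$ onto $X$ means $Z$ is a regular neighborhood of $X$ in $Z$ (this is the standard fact that a collapse $Z \searrow X$ exhibits $Z$ as a regular neighborhood of $X$). A regular neighborhood of a $2$-complex in a $5$-manifold is obtained by thickening: pick a generic PL embedding / the given embedding of $X$, and a regular neighborhood $N(X) = Z$ carries the structure of a mapping cylinder / block bundle over $X$ with fibers of dimension $3$. The point is that the normal ``fiber'' of $X$ inside the $5$-manifold $Z$ has dimension $5 - 2 = 3$, and $3 = 1 + 1 + 1$ splits as $[0,1] \times (\text{disk bundle of dim }2)$ — more precisely, one expects to split off a trivial interval factor because the $5$-thickening is the "double" direction. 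Concretely I would try to realize $Z$ as $Y \times [-1,1]$ for some compact oriented $4$-manifold $Y$ that is itself a regular neighborhood of $X$, hence collapses onto $X$; then $\partial Z = (Y \times \{-1\}) \cup (\partial Y \times [-1,1]) \cup (Y \times \{1\})$ is precisely the double of $Y$. The existence of such an interval splitting should follow because the $3$-dimensional block bundle over $X$ extending a $2$-dimensional one (coming from a $4$-thickening $Y$ of $X$) is, up to the relevant equivalence, a product with $[-1,1]$: thickenings of a $k$-complex stabilize, and a $3$-thickening of a $2$-complex is stably (hence, here, actually) a $2$-thickening times an interval. This is the classical stable-thickening / Mazur-type argument, and it is "well-known to experts" as the paper says.

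The main obstacle is making precise and justifying the interval-splitting $Z \cong Y \times [-1,1]$, i.e.\ showing that every $5$-dimensional (regular-neighborhood) thickening of a $2$-complex $X$ is a product of a $4$-dimensional thickening with $[-1,1]$. The clean way is via block bundles / Hudson's theory of thickenings (or Wall's): thickenings of an $n$-complex in codimension $\geqslant n+1$ (the "stable range") are classified by stable data, and here codimension $3 > 2$ is already in the range where a $3$-thickening is determined by, and obtained from, a $2$-thickening by crossing with $[-1,1]$ — equivalently the relevant block/microbundle over the $2$-complex $X$ reduces its structure group and splits off a line because $X$ is $2$-dimensional. One must also track orientations: crossing with the oriented interval keeps things oriented, and the double inherits the orientation correctly. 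I would cite the standard references for regular neighborhoods (collapsibility $\Leftrightarrow$ regular neighborhood) and for the stable classification of thickenings rather than reproving them, since the lemma is invoked only as a stepping stone toward Proposition \ref{graph:prop}.
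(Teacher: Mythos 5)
Your direction (2) $\Rightarrow$ (1) is the paper's argument verbatim: cross $N$ with $[0,1]$, observe that it still collapses to $X$ and that its boundary is the double. For (1) $\Rightarrow$ (2) you take a genuinely different route, and it is there that you leave a gap. You correctly identify the crux --- one must show that the $5$-manifold $W$ splits as $N\times[0,1]$ for some $4$-dimensional thickening $N$ of $X$ --- but you resolve it by appealing to ``classical stable-thickening / Mazur-type'' results and promise to ``cite the standard references''. The difficulty is that this is a \emph{destabilization}: a $2$-complex has codimension-$3$ thickenings (dimension $5 = 2k+1$, the first stable dimension) and you want to peel off an interval to land in codimension $2$ (dimension $4$), which is strictly below the stable range. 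Wall's stable classification does not, off the shelf, give you that; it says stable thickenings are classified homotopy-theoretically, not that the stabilization map from dimension $4$ to dimension $5$ is surjective. That surjectivity needs its own argument, and it is exactly what the paper supplies.

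The paper's proof does this concretely by handle decompositions: triangulate $X$ and thicken to handle decompositions of $W^5$ and of $N^4\times[0,1]$. The $0$- and $1$-handles automatically agree; the attaching circles of $2$-handles, being homotopic in the boundary of the $1$-handlebody (a $4$-manifold), are isotopic; the only remaining ambiguity is the framing of each $2$-handle, which lies in $\pi_1(SO(3))=\matZ_2$ for $W$ and in $\pi_1(SO(2))=\matZ$ for $N$. Since $\pi_1(SO(2))\to\pi_1(SO(3))$ is \emph{surjective}, one can choose framings for the $2$-handles of $N$ that map to those of $W$, giving $W\isom N\times[0,1]$. That surjectivity is the precise, elementary content underlying your more abstract remark that ``the block/microbundle over the $2$-complex splits off a line''; to turn your sketch into a proof you would either have to reproduce essentially this handle argument, or produce a citation for the destabilization statement rather than the stable-range classification.
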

\begin{proof}
(2) $\Rightarrow$ (1). We have $M = DN$ for some 4-dimensional compact $N$ which collapses to $X$. Clearly the 5-dimensional $N\times [0,1]$ also collapses to $X$ and $\partial (N\times [0,1]) \isom M$.

(1) $\Rightarrow$ (2). We have $M= \partial W$ for some oriented 5-manifold $W$ which collapses to $X$. Choose a triangulation of $X$ and thicken it to a handle decomposition for $W$. Thicken arbitrarily the triangulation of $X$ to a handle decomposition of a 4-manifold $N$, and thicken it again to a handle decomposition of $N\times [0,1]$. The manifolds $W$ and $N\times [0,1]$ have the same 0- and 1-handles. Concerning 2-handles, their attaching circles are homotopic, and since they lie in some 4-dimensional manifold they are actually isotopic. 

The only thing that might differ between the handle decompositions of $W$ and $N\times [0,1]$ is the way each 2-handle is attached: there are two possibilities since $\pi_1(SO(3))=\matZ_2$. In dimension 4, there are infinitely many possibilities since $\pi_1(SO(2)) = \matZ$. A 2-handle for $N$ induces a 2-handle for $N \times [0,1]$ according to the surjective homomorphism $\pi_1(SO(2))\to \pi_1(SO(3))$ induced by a standard injective map $SO(2) \to SO(3)$. When constructing $N$, it suffices to choose on each 2-handle a framing with the right parity, coherent with the corresponding 2-handle of $W$. With this choice, we get $W \isom N\times [0,1]$, and we are done.
\end{proof}

In practice, to deal with graph manifolds we may use a smaller generating set $\calS_0' \subset \calS_0$, as the following shows.
\begin{prop} \label{smaller:prop}
Every graph manifold generated by $\calS_0$ is a connected sum of $h\geqslant 0$ copies of $S^3\times S^1$ and $k\geqslant 0$ graph manifolds generated by the set
$$\calS_0' = \big\{M_2, M_{111}, M_{12}, M_3, N_1, N_3\big\}.$$
\end{prop}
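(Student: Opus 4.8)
The plan is to exhibit, for each block in the larger set $\calS_0\setminus\calS_0' = \{M_1, M_{11}, M_{111}, M_{12}\wedge\text{(kept)}, \ldots\}$ — more precisely for the blocks $M_1$, $M_{11}$, $M_{111}\wedge\ldots$, let me restate: for $M_1$, $M_{11}$, $N_2$, and any other generator not in $\calS_0'$ — a decomposition of that block, when reassembled into a graph manifold, into pieces from $\calS_0'$ together with copies of $S^3\times S^1$ glued along $S^2\times S^1$. The key observation is that these ``missing'' blocks are obtained from $S^3\times S^1$ or $S^2\times S^2$ by drilling links that are themselves split or parallel, so they can be rebuilt by drilling the corresponding link in a connected sum. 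Concretely, $M_1$ (drilling one unknotted $S^1$ in $S^3\times S^1$) is itself $S^2\times S^2$ minus a $D^2\times S^2$ up to the appropriate identifications — or is handled directly as $S^3\times S^1$ with a $D^3\times S^1$ removed, and gluing back a solid handlebody trivially shows $M_1$ contributes only an $S^3\times S^1$ summand when it appears as a terminal (valence-one) block. Similarly $M_{11}$ and $M_{111}$ should be expressible by splitting off $S^3\times S^1$ summands until one reaches $M_2$ or $M_3$; and $N_2$ should split as $N_1$ glued to something, or contribute an $S^2\times S^2 = S^3\times S^1 \# \ldots$ — here one uses that $N_2$ is $S^2\times S^2$ drilled along two parallel $\{pt\}\times S^2$'s, and drilling the ``middle'' sphere off first exhibits a decomposition along an $S^2\times S^1$ into two copies of $N_1$.

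First I would set up the connected-sum-along-$S^2\times S^1$ language: a graph manifold generated by $\calS_0$ is built by gluing the boundary $S^2\times S^1$ components of the blocks in pairs according to a graph $G$. The operation I exploit is that gluing two blocks along an $S^2\times S^1$ where one of the two blocks is $M_1$ (which has exactly one boundary $S^2\times S^1$ and is otherwise ``$S^3\times S^1$ with a tube drilled'') has the effect of an ambient connected sum with $S^3\times S^1$, or — if the $M_1$ is attached in a way that just caps — does nothing new. So each leaf of $G$ labeled $M_1$ can be absorbed into the $S^3\times S^1$ connected-sum factor. Next I handle $M_{11}$: since $M_{11}$ is $S^3\times S^1$ drilled along two disjoint parallel copies of the core $S^1$, cutting along a separating $S^2\times S^1$ between the two drilled curves realizes $M_{11}$ as $M_1$ glued to $M_1$ along $S^2\times S^1$; inductively $M_{1^k}$ decomposes into $k$ copies of $M_1$. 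Then $M_{12}$ and $M_{111}$, which are retained in $\calS_0'$, need no treatment, but any block of the form $M_{i_1\cdots i_k}$ with some $i_j=1$ gets that strand split off as an $M_1$ piece, reducing to blocks whose drilling indices are all $\geq 2$ — and inspecting $\calS_0$ these are exactly $M_2, M_{111}$ (indices all $\geq 1$ but the relevant reduced ones), $M_{12}\to M_{\text{(split)}}$, $M_3$. For the $N$-blocks: $N_1, N_3 \in \calS_0'$; only $N_2$ must be removed, and I claim cutting $S^2\times S^2$ drilled along three... — wait, $N_2$ is drilled along two parallel $\{pt\}\times S^2$'s — cutting along the $S^2\times S^1$ lying between them exhibits $N_2$ as a union of two copies of $N_1$ along $S^2\times S^1$.

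The main obstacle I anticipate is bookkeeping the \emph{gluing maps}: when one splits a block along an internal $S^2\times S^1$, the two resulting pieces are glued back by a self-homeomorphism of $S^2\times S^1$, and one must check this homeomorphism is among those allowed in the definition of ``graph manifold generated by $\calS_0'$'' — i.e. that it does not introduce a twist that cannot be realized, and that Euler-number/framing data on the spheres (relevant for the $N_i$, since the drilled spheres have Euler number zero) is preserved. Concretely, the mapping class group of $S^2\times S^1$ and the way it acts on the boundary parametrizations of the blocks must be tracked so that the reassembly genuinely lands in $\calG_0' \subset \calG_0$. A secondary point is making precise the sense in which a leaf labeled $M_1$ ``is'' a connected-sum-with-$S^3\times S^1$ operation: this amounts to the standard fact that $S^3\times S^1$ drilled along its core, with the resulting $S^2\times S^1$ boundary, is the complement of a tubular neighborhood used in forming $\# (S^3\times S^1)$, and I would cite or quickly re-derive this from the handle pictures of Section \ref{shadows:section}. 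Once the gluing-map issue is settled, the induction on the number of blocks with a drilling index equal to $1$ (respectively on the number of $N_2$ blocks) closes the argument.
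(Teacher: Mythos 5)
There are concrete errors that defeat the argument. Your key claim for $M_{11}$ is that cutting it along a separating $S^2\times S^1$ between the two drilled curves exhibits $M_{11}$ as $M_1\cup_{S^2\times S^1} M_1$. This is impossible on purely boundary-counting grounds: $M_{11}$ has two boundary components, so cutting along one further $S^2\times S^1$ produces two pieces each with \emph{two} boundary components, whereas $M_1 = D^3\times S^1$ has only one. In fact each piece is $S^2\times [0,1]\times S^1\isom M_{11}$, so you have not simplified anything. The same mistake recurs for $N_2$: cutting $N_2$ along the intermediate $S^2\times S^1$ yields two copies of $S^2\times S^1\times[0,1]$, not two copies of $N_1 = D^2\times S^2$ (which again has just one boundary component). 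The fact you are missing is that $M_{11}\isom N_2\isom S^2\times S^1\times[0,1]$ is a \emph{collar}; the paper simply removes such a collar block from the graph (absorbing the two gluing homeomorphisms into one), or, if one prefers, replaces it by $N_1\cup N_3$ since $D^2\cup_\partial P^2$ is an annulus and hence $(D^2\cup_\partial P^2)\times S^2\isom S^2\times S^1\times[0,1]$.

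Your treatment of $M_1$ is also off. Gluing $M_1 = D^3\times S^1$ to an adjacent block is a \emph{filling}: it cancels one drilling and produces a strictly simpler block. For example gluing $M_1$ into $M_{i_1\cdots i_h}$ yields $M_{i_1\cdots\widehat{i_j}\cdots i_h}$, and gluing $M_1$ to $M_1$ yields $S^3\times S^1$. It does \emph{not} act as a connected sum with $S^3\times S^1$, so you cannot ``absorb each $M_1$-leaf as an $S^3\times S^1$ summand'' as stated; what absorbs is the index $i_j$, not a summand. (You do correctly flag that Laudenbach--Poenaru makes the filling independent of the gluing map.) The paper's actual induction: after removing collars, fill $M_1$'s into adjacent $M_{i_1\cdots i_h}$-blocks until each remaining $M_1$ is adjacent only to $N_1$ or $N_3$; then $M_1\cup N_1\isom S^4$ disappears, and $M_1\cup N_3\isom M_1\# M_1$ lets the iteration continue. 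As written, your two central decomposition claims are false and the induction has no base from which to run.
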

\begin{proof}
A graph manifold generated by $\calS_0$ decomposes into blocks homeomorphic to those of $\calS_0'$ and $M_1, M_{11}, N_2$. Each block homeomorphic to $M_{11} = N_2 = S^2\times S^1 \times [0,1]$ may be simply removed or substituted with a pair of $N_1 = D^2\times S^2$ and $N_3=P^2\times S^2$. It remains to prove that we can also rule out the block $M_1 = D^3\times S^1$. Every self-diffeomorphism of $S^2\times S^1$ extends to $D^3\times S^1$, see \cite{LaPo}. Therefore there is only one way to glue this block to the adjacent block. 

Gluing $M_1$ consists of \emph{filling}, the opposite of drilling along a curve. It is thus clear that by gluing $M_1$ to some piece $M_{i_1\cdots i_h}$ we get a simpler piece $M_{i_1\cdots \hat i_j \cdots i_h}$, or $M_\emptyset = S^3\times S^1$. So after finitely many simplifications we may suppose that each $M_1$ is glued only along a copy of $N_1$ or $N_3$. In the first case we get $S^4$. In the second case, it is easy to see that 
$$M_1 \cup N_3 \isom M_1 \# M_1 $$
and we proceed by iteration.
\end{proof}

\begin{rem}
Every manifold in $\calS_0$ is easily seen to be a double and thus admits an orientation-reversing self-homeomorphism. For that reason the chosen orientation is not important. The same holds for every graph manifold generated by $\calS_0$.
\end{rem}

We may now prove Proposition \ref{graph:prop}. A simple polyhedron without boundary in a 4-manifold is locally flat if it is locally contained in a 3-dimensional slice, see Definition \ref{properly:defn}.

\begin{prop} \label{graph2:prop}
Let $M$ be a closed oriented 4-manifold different from $\#_h (S^3\times S^1)$. The following conditions are equivalent.
\begin{enumerate}
\item $M$ is a graph manifold generated by $\calS_0$. 
\item $M$ is the boundary of a compact oriented 5-manifold which collapses onto a simple polyhedron without vertices (and without boundary).
\item $M$ is the double of a compact 4-manifold which collapses onto a simple polyhedron without vertices (and without boundary).
\item $M$ is the double of a compact 4-manifold which collapses onto a locally flat simple polyhedron without vertices (and without boundary).
\end{enumerate}
\end{prop}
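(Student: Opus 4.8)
The plan is to prove the cycle of implications $(1)\Rightarrow(3)\Rightarrow(4)\Rightarrow(2)\Rightarrow(1)$, using Lemma~\ref{23:lemma} to pass freely between the double picture and the $5$-manifold picture, Proposition~\ref{smaller:prop} to reduce the generating set to $\calS_0'$, and Proposition~\ref{simple:prop} for the structure of vertexless simple polyhedra. The implication $(4)\Rightarrow(3)$ is trivial (a locally flat polyhedron is in particular a simple polyhedron), and $(3)\Leftrightarrow(2)$ is Lemma~\ref{23:lemma} specialized to the vertexless, boundaryless case, since that Lemma is proved by thickening a triangulation of $X$ and the construction does not create vertices. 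So the real content is $(1)\Rightarrow(4)$ and $(2)\Rightarrow(1)$.

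For $(1)\Rightarrow(4)$, by Proposition~\ref{smaller:prop} it suffices to exhibit, for each block $B\in\calS_0'=\{M_2, M_{111}, M_{12}, M_3, N_1, N_3\}$, a compact $4$-manifold $W_B$ collapsing onto one of the vertexless local pieces $D^2, P^2, Y_2, Y_{111}, Y_{12}, Y_3$ of Proposition~\ref{simple:prop}, in such a way that $\partial W_B$ is $B$ with the appropriate $(S^2\times S^1)$-boundary, and then to check that the assembly of blocks along $S^2\times S^1$ corresponds to the assembly of local pieces along their boundary circles. This is essentially bookkeeping that matches the graph $G$ encoding the polyhedron (Fig.~\ref{vertices:fig}) with the graph encoding the graph manifold: a vertex of type (D) or (2) or (3) of $G$ thickens to a block of type $N_i$-ish with one boundary sphere-bundle, a vertex of type (P) thickens to a block with three $S^2\times S^1$ boundary components, etc.; in fact this identification is carried out in the later sections devoted to constructing shadows of the blocks, and here one only needs the $4$- and $5$-dimensional thickening without the shadow-complexity count. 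One must also handle the double versus boundary-of-$5$-manifold translation via Lemma~\ref{23:lemma}, and take care that the resulting polyhedron is \emph{locally flat} — which it is, since it is built from the standard local models sitting inside $3$-dimensional slices. The hypothesis $M\neq\#_h(S^3\times S^1)$ enters because $S^3\times S^1 = M_\emptyset$ corresponds to the empty (or collapsible) polyhedron, which is excluded from "without boundary" considerations, and because Proposition~\ref{smaller:prop} only produces the desired decomposition after splitting off the $S^3\times S^1$ summands.

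For $(2)\Rightarrow(1)$: let $M=\partial W$ with $W$ collapsing onto a vertexless simple polyhedron $X$ (no boundary). By Proposition~\ref{simple:prop}, $X$ decomposes along circles into pieces of type $D^2, P^2, Y_2, Y_{111}, Y_{12}, Y_3$. Thicken this decomposition: each local piece $Y_*$ has a regular neighborhood in $W$ which is a $5$-dimensional handlebody collapsing onto it, with boundary a $4$-manifold whose interesting part is a block glued to the rest along copies of $S^2\times S^1$ (a circle in $X$ thickens to an $S^2\times S^1$ in $\partial W$). Reading off which block each $Y_*$ produces — and here one uses that $D^2$ thickens to $M_1 = D^3\times S^1$, $P^2$ to $M_{111}$, $Y_2$ to $M_2$, $Y_{111}$ to $N$-type pieces, etc. — shows $M$ is a graph manifold generated by the blocks arising this way; collecting them and applying Proposition~\ref{smaller:prop} (or rather the list of blocks it is built from) shows the generating set is exactly $\calS_0$. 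The main obstacle I expect is precisely this last identification: one must verify carefully that the $4$-dimensional boundary of the $5$-dimensional thickening of each elementary piece $Y_*$ is \emph{exactly} the claimed block of $\calS_0$ (not merely something with $S^2\times S^1$ boundary), and that the two gluings of $S^1$ correspond correctly to the two gluings of $S^2\times S^1$ — the latter using that every self-diffeomorphism of $S^2\times S^1$ is isotopic to one of two standard ones, as invoked in the proof of Proposition~\ref{smaller:prop}. Everything else is a diagram-chase through the graph notation of Section~\ref{simple:subsection}.
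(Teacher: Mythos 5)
Your overall strategy is the paper's: Lemma~\ref{23:lemma} reduces the double/5-manifold business to a single equivalence, the locally-flat upgrade is observed to be free in that same lemma, and the substantive content is the two-way translation between decompositions of a vertexless simple polyhedron (Proposition~\ref{simple:prop}) and decompositions of the 4-manifold into blocks, with Proposition~\ref{smaller:prop} used to reduce the generating set. However, the crucial dictionary between local pieces of the polyhedron and blocks of the graph manifold is wrong in your sketch, and since that dictionary is exactly the content of the proof, the argument as written does not go through.

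Concretely: you assert that ``$D^2$ thickens to $M_1 = D^3\times S^1$, $P^2$ to $M_{111}$, $Y_2$ to $M_2$, $Y_{111}$ to $N$-type pieces.'' The correct correspondence is obtained by taking the \emph{horizontal} boundary of the 5-dimensional thickening of each piece, not the vertical boundary: the regular neighborhoods of $D^2$, $P^2$, $Y_2$, $Y_{111}$, $Y_{12}$, $Y_3$ in a 5-manifold are respectively $D^2\times D^3$, $P^2\times D^3$, $S^1\times D^4$ (three times), and their horizontal boundaries are $N_1 = D^2\times S^2$, $N_3 = P^2\times S^2$, $M_2$, $M_{111}$, $M_{12}$, $M_3$. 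So the surface pieces $D^2$ and $P^2$ give $N$-type blocks (not $M_1$, $M_{111}$), and the singular-circle pieces $Y_{111}$, $Y_{12}$, $Y_3$ give $M$-type blocks (not $N$-type). You have interchanged the two families; $D^3\times S^1$ is the \emph{vertical} boundary over $\partial D^2$, not the block assigned to $D^2$. The same confusion reappears in your $(1)\Rightarrow(4)$ paragraph, where you say ``a vertex of type (D) or (2) or (3) of $G$ thickens to a block of type $N_i$-ish,'' but types (2) and (3) are $Y_2$ and $Y_3$, which thicken to $M_2$ and $M_3$.

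There is also a dimensional slip in the $(1)\Rightarrow(4)$ paragraph: you write ``a compact $4$-manifold $W_B$ ... such that $\partial W_B$ is $B$,'' but $B$ is itself a 4-manifold with boundary, so this cannot hold; you evidently meant a 5-dimensional $V_B$ with $\partial_{\rm hor}V_B = B$, i.e.\ you are in fact routing through the 5-manifold picture and hence really proving $(1)\Rightarrow(2)$ as in the paper. Finally, your proposed cycle $(1)\Rightarrow(3)\Rightarrow(4)\Rightarrow(2)\Rightarrow(1)$ is inconsistent with your later remark that $(4)\Rightarrow(3)$ is the trivial implication; the implications you actually claim to establish assemble into $(1)\Rightarrow(4)\Rightarrow(3)\Rightarrow(2)\Rightarrow(1)$, which is fine, but the mislabelled cycle should be corrected.
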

\begin{proof}
The equivalence between (2) and (3) is settled by Lemma \ref{23:lemma}

(2) $\Rightarrow$ (1). Let $X$ be a simple polyhedron without vertices and $W^5$ a compact oriented 5-manifold collapsing to it. The polyhedron $X$ decomposes into pieces as stated by Proposition \ref{simple:prop}. The pieces are homeomorphic to $D^2$, $P^2$, $Y_2$, $Y_{111}$, $Y_{12}$, or $Y_3$. 

The regular neighborhood $N(X)$ of $X$ in $W^5$ decompose similarly into pieces obtained by thickening the pieces above. These pieces are homeomorphic respectively to $D^2\times D^3$, $P^2\times D^3$, $S^1\times D^4$, $S^1\times D^4$, $S^1\times D^4$, and $S^1\times D^4$ again.

Each piece $P$ of $N(X)$ fibers over the corresponding piece $\pi(P)$ of $X$. The 4-dimensional boundary $\partial P$ decomposes into a ``horizontal'' part, which is contained in $\partial N(X)$, and a ``vertical'' part, consisting of $\pi^{-1}(\partial (\pi(P)))$. The vertical part is made of copies of $D^3\times S^1$ that are glued together to form properly embedded submanifolds of $N(X)$.

It is easy to check that the horizontal part is homeomorphic respectively to $N_1$, $N_3$, $M_2$, $M_{111}$, $M_{12}$, or $M_3$. Therefore $\partial N(X)$ is a graph manifold. Since $W^5$ collapses onto $X$, we have $W^5 \isom N(X)$ and we are done.

(1) $\Rightarrow$ (2). 
By Proposition \ref{smaller:prop}, every graph manifold $M\neq \#_k(S^3\times S^1)$ is a connected sum of some graph manifolds $Q_1,\ldots, Q_h$ generated by $\calS_0'$ and $h'$ copies of $S^3\times S^1$. (We have $h\geqslant 1$ and $h'\geqslant 0$ since $M\neq \#_k(S^3\times S^1)$.)

Consider one $Q_i$. It decomposes into pieces homeomorphic to $M_2$, $M_{111}$, $M_{12}$, $M_3$, $N_1$, and $N_3$. As we have seen, every such piece is the horizontal boundary of a 5-dimensional block which fibers over some simple polyhedron with boundary without vertices. 

Every self-homeomorphism of $S^2\times S^1$ is isotopic to one which preserves the foliation in spheres and thus extends to $D^3\times S^1$. We can therefore glue correspondingly the 5-dimensional blocks. The resulting 5-manifold $W^5_i$ fibers (and collapses) to a simple polyhedron $X_i$ without boundary and without vertices. Its boundary $\partial W^5_i$ is homeomorphic to $Q_i$.

By using $h-1$ times the move in Fig.~\ref{sum_no_gleam:fig} we construct from $X_1,\ldots, X_h$ a connected simple polyhedron $X$ such that the boundary-sum $W^5 = W_1^5\sharp \ldots \sharp W_h^5$ collapses onto $X$. Of course, we have $\partial W^5 = Q_1\#\ldots \#Q_h$. We then use $h'$ times Fig.~\ref{sum_no_gleam:fig} again to realize $h'$ self-connected sums and get the $\#_{h'}(S^3\times S^1)$ factors. 

(4) $\Rightarrow$ (3). Obvious.

(2) $\Rightarrow$ (4). In the proof of Lemma \ref{23:lemma}, we have the freedom to construct a locally flat $X$. 
\end{proof}

We can easily prove Proposition \ref{G:prop}.

\begin{prop} 
The set $\calG_0$ of all 4-dimensional graph manifolds generated by $\calS_0$ is closed under connected sum and finite coverings. That is, 
\begin{enumerate}
\item if $M, M' \in \calG_0$ then $M\# M' \in \calG_0$;
\item if $M\in \calG_0$ and $\widetilde M \to M$ is a finite covering, then $\widetilde M \in \calG_0$.
\end{enumerate}
\end{prop}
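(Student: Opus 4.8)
The plan is to prove (1) using the five-dimensional description of $\calG_0$ supplied by Proposition~\ref{graph2:prop}, and to prove (2) by pulling a block decomposition back along the covering map.

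For (1), I would first dispose of the family $\{\#_h(S^3\times S^1)\}_{h\geq 0}$ by hand: one has $S^4=M_1\cup_{S^2\times S^1}N_1$ (the decomposition $\partial(D^2\times D^3)=(S^1\times D^3)\cup(D^2\times S^2)$) and $S^3\times S^1=M_1\cup_{S^2\times S^1}M_1$, and using the identification $M_1\#M_1\isom M_1\cup_{S^2\times S^1}N_3$ established in the proof of Proposition~\ref{smaller:prop} one checks by induction on $h$ that every $\#_h(S^3\times S^1)$ lies in $\calG_0$; this family is clearly closed under connected sum. So assume, say, that $M$ is not of the form $\#_h(S^3\times S^1)$, hence $M=\partial W$ for a compact oriented $5$-manifold $W$ collapsing onto a vertex-free simple polyhedron $X$ without boundary (Proposition~\ref{graph2:prop}). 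If $M'$ is also not $\#_h(S^3\times S^1)$, write likewise $M'=\partial W'$ with $W'$ collapsing onto $X'$; the boundary connected sum $W\natural W'$ satisfies $\partial(W\natural W')=M\#M'$ and collapses onto the polyhedron obtained from $X$ and $X'$ by the move of Figure~\ref{sum_no_gleam:fig}, which is again vertex-free and without boundary — this is exactly the construction already used in the proof of Proposition~\ref{graph2:prop}. The implication ``(2)$\Rightarrow$(1)'' of that proposition, whose proof uses no hypothesis on $M$, then gives $M\#M'\in\calG_0$. If instead $M'=\#_{h'}(S^3\times S^1)$, apply the self-connected-sum variant of the same move to $W$ exactly $h'$ times to realise $M\#M'=M\#_{h'}(S^3\times S^1)$ as the boundary of a $5$-manifold collapsing onto a vertex-free polyhedron without boundary, and conclude in the same way.

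For (2), let $p\colon\widetilde M\to M$ be a finite covering (we may assume $\widetilde M$ connected) and fix a decomposition $M=\bigcup_i B_i$ into blocks $B_i$ homeomorphic to elements of $\calS_0$, glued along embedded copies $\Sigma_a\isom S^2\times S^1$. Restricting $p$ over the $\Sigma_a$ and the $B_i$ exhibits $\widetilde M$ as the union of the connected components of the $p^{-1}(B_i)$, glued along the connected components of the $p^{-1}(\Sigma_a)$. Since $\pi_1(S^2\times S^1)=\matZ$, every connected finite covering of $S^2\times S^1$ is again $S^2\times S^1$, so the $p^{-1}(\Sigma_a)$ contribute only copies of $S^2\times S^1$. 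It then remains to show that every connected finite covering of a block of $\calS_0$ itself decomposes into blocks of $\calS_0$ along copies of $S^2\times S^1$ lying in its interior. For $N_1$ (simply connected) and $N_2\isom S^2\times S^1\times[0,1]$ ($\pi_1=\matZ$) this is trivial. For $N_3=P^2\times S^2$ a connected covering is $\Sigma\times S^2$ for a compact orientable surface $\Sigma$ with non-empty boundary (a covering of the pair of pants), and cutting $\Sigma$ along interior simple closed curves into disks, annuli and pairs of pants realises $\Sigma\times S^2$ as a union of $N_1$, $N_2$ and $N_3$ along interior copies of $S^2\times S^1$. For a block $M_\lambda$ with $\lambda=(i_1,\dots,i_k)$ a partition of some $m\leq 3$, the drilled link has codimension $3$, so $\pi_1(M_\lambda)=\pi_1(S^3\times S^1)=\matZ$ and the connected $n$-fold covering is the $n$-fold cyclic covering of $S^3\times S^1$ drilled along the preimage of the link; a routine covering-space computation shows this preimage consists, over the $i_j$-fold component, of $\gcd(n,i_j)$ parallel curves winding $i_j/\gcd(n,i_j)$ times each, and since $\sum_j\gcd(n,i_j)\cdot(i_j/\gcd(n,i_j))=\sum_j i_j=m$ the covering is the block $M_{\lambda'}$ for a partition $\lambda'$ of $m\leq 3$ — and every such block lies in $\calS_0$. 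Assembling these local pictures gives $\widetilde M\in\calG_0$.

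The routine parts are the handle/collapsing manipulations in (1) and the surface decomposition in (2); the one point that really has to be checked, and that I expect to be the crux, is the behaviour of the $M_\lambda$-blocks under covers. It works out because of a small numerical coincidence: $\calS_0$ contains $M_\lambda$ for every partition $\lambda$ of $1$, $2$ and $3$, and passing to a cyclic cover splits an $i_j$-fold winding into windings whose multiplicities times values again sum to $i_j$, so the total winding number $\leq 3$ is preserved. Beyond that, the genuine care needed is in matching orientations and gluing maps so that the pieces of $\widetilde M$ (and of $W\natural W'$) really do reassemble into an honest graph-manifold structure, since the blocks of $\calS_0$ are taken only up to orientation-preserving homeomorphism.
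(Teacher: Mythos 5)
Your proof of part (1) is essentially the same as the paper's: both realize $M\#M'$ as $\partial(W\natural W')$ and observe that $W\natural W'$ collapses onto the vertex-free polyhedron produced by the move of Figure~\ref{sum_no_gleam:fig}. You are, if anything, a bit more scrupulous than the paper in separating out the family $\#_h(S^3\times S^1)$, which the 5-dimensional characterization of Proposition~\ref{graph2:prop} formally excludes; the paper leaves this edge case implicit, and your handling of it (via $M_1\cup N_3\isom M_1\#M_1$ and induction, then the self-connected-sum trick) is exactly what would be needed to make the paper's one-line argument airtight.

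For part (2) your route is genuinely different. The paper stays in dimension five throughout: since $W^5$ is built from $0$-, $1$-, $2$-handles, the inclusion $\partial W^5\hookrightarrow W^5$ induces an isomorphism on $\pi_1$, so a finite cover of $\partial W^5=M$ is induced by a cover of $W^5$, hence by a cover of the spine $P^2$; a finite cover of a simple polyhedron without vertices is again a simple polyhedron without vertices, and one is done in three lines. You instead work entirely in dimension four, pulling the block decomposition of $M$ back along $p\colon\widetilde M\to M$ and analysing the cover of each block separately. This requires the concrete computation that cyclic covers of $M_{i_1\cdots i_k}$ drill a link whose winding numbers still sum to $\sum_j i_j\leqslant 3$ (the ``numerical coincidence'' you flag), that covers of $N_3$ are of the form $\Sigma\times S^2$ for $\Sigma$ a cover of the pair of pants and can be re-cut into $N_1$, $N_2$, $N_3$ pieces, and that covers of $S^2\times S^1$ are $S^2\times S^1$. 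All of these check out. Your argument is longer and demands more bookkeeping (in particular, that every covering piece is only a block \emph{up to} the ambient orientation, which is harmless because each block admits an orientation-reversing self-homeomorphism as noted in the paper's Remark~\ref{abuse:rem} area), but it is more informative: it tells you explicitly which blocks appear in the cover. The paper's argument is shorter and conceptually cleaner, but says nothing about the shape of the resulting decomposition. Both are correct.
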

\begin{proof}
If $W^5$ collapses onto a simple polyhedron $P^2$ and $W'^5$ collapses onto $P'^2$, then the $\partial$-connected sum $W\sharp W'$ collapses onto the simple polyhedron $R^2$ constructed in Fig.~\ref{sum_no_gleam:fig}. Since $\partial (W \sharp W') = \partial W \# \partial W'$, we get (1). We turn to (2). Since $W^5$ collapses onto a 2-dimensional polyhedron, it admits a decomposition with 0-, 1-, and 2-handles. Therefore the inclusion $\partial W^5 \to W^5$ induces an isomorphism on fundamental groups. Every covering of $\partial W^5$ is thus induced by a covering of $P^2$. The covering of a simple polyhedron without vertices is a simple polyhedron without vertices, hence we are done.
\begin{figure}
\begin{center}
\includegraphics[width = 9 cm]{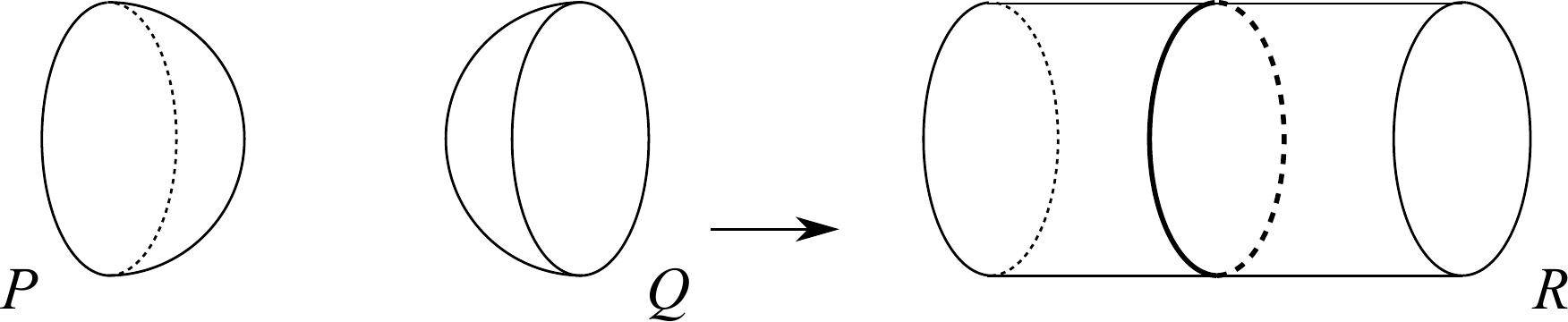}
\nota{Given two simple polyhedra $P$ and $Q$ without vertices, we easily construct a simple polyhedron $R$ which is 3-deformation equivalent to the wedge $P\vee Q$ and has still no vertices. In $R$, a disc is attached along the black circle.}
\label{sum_no_gleam:fig}
\end{center}
\end{figure}
\end{proof}

\subsection{Finite fundamental groups}
We prove here Propositions \ref{presentation:prop} and \ref{finite:prop}. Let $\calS(X^2)$ denote the set of all closed 4-manifolds that are boundaries of some orientable 5-manifold that collapses onto $X^2$.
\begin{teo}[Andrews and Curtis, and others \cite{AnCu, HoMe}] \label{AC:teo}
If $X$ and $X'$ are 3-deformation equivalent then $\calS(X) = \calS(X')$.
\end{teo}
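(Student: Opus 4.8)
The plan is to reduce Theorem \ref{AC:teo} to a single elementary move and track how the boundary of the $5$-manifold behaves. Since $3$-deformation equivalence is generated by elementary expansions and collapses of cells of dimension $\leqslant 3$, it suffices to prove that $\calS(Y)=\calS(Y')$ whenever $Y'$ is obtained from $Y$ by one elementary expansion $Y\nearrow Y'=Y\cup B^k$, where $B^k$ is a $k$-ball with $k\leqslant 3$, glued to $Y$ along a $(k-1)$-ball $A\subset\partial B^k$. (Intermediate complexes along a $3$-deformation between the $2$-complexes $X$ and $X'$ may be $3$-dimensional; this is harmless if one reads the defining property of $\calS(-)$ for complexes of dimension $\leqslant 3$ as well, the endpoints being the given $2$-complexes.)

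One inclusion is immediate. If $M=\partial W$ for an oriented $5$-manifold $W$ collapsing onto $Y'$, then $W\searrow Y'\searrow Y$ (the second collapse being the elementary collapse inverse to our expansion), hence $W\searrow Y$ and $M\in\calS(Y)$. Therefore $\calS(Y')\subseteq\calS(Y)$.

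The reverse inclusion is the crux. Let $M=\partial W$ with $W$ an oriented $5$-manifold and $W\searrow Y$. By standard regular neighbourhood theory $W$ is a regular neighbourhood of $Y$, so the sub-ball $A\subset Y\subset W$ has a regular neighbourhood of the form $A\times D^{6-k}$ in $W$; inside a collar of $\partial W$ this exhibits a PL ball $D^{k-1}\times D^{5-k}\cong D^4\subset\partial W$. Now attach to $W$ a $5$-dimensional regular neighbourhood $P\cong B^k\times D^{5-k}$ of $B^k$, glued to $\partial W$ along $A\times D^{5-k}\subset\partial P$ identified with that $D^4$. Since $P\cong D^5$ is glued to $W$ along a codimension-zero ball in $\partial W$, the resulting manifold $W':=W\cup P$ is PL-homeomorphic to $W$; in particular it is oriented and $\partial W'\cong\partial W=M$. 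On the other hand $P$ collapses onto $B^k\cup(A\times D^{5-k})$ relative to its attaching region, so $W'=W\cup P$ collapses onto $Y\cup B^k=Y'$. Hence $M\in\calS(Y')$, giving $\calS(Y)\subseteq\calS(Y')$.

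Combining the two inclusions yields $\calS(Y)=\calS(Y')$ for a single elementary move, and iterating along a $3$-deformation from $X$ to $X'$ gives $\calS(X)=\calS(X')$. The work is entirely in the standard PL facts invoked — that a manifold collapsing onto a subpolyhedron is a regular neighbourhood of it, that attaching a ball along a codimension-zero ball in the boundary does not change the homeomorphism type, and that the elementary expansion is mirrored by attaching such a ball (equivalently, by introducing a canceling pair of handles of indices $k-1$ and $k$). I expect this last point, namely verifying in detail that the enlarged $5$-manifold simultaneously keeps the same boundary and acquires $Y'$ as a spine, to be the main place where care is needed; it is also where the bound $k\leqslant 3$ enters, since one needs $5-k\geqslant 1$ for the attached handle to sit in a $5$-manifold. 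No genuinely new difficulty arises beyond this bookkeeping.
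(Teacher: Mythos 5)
The paper proves Theorem \ref{AC:teo} by simply citing \cite{AnCu, HoMe} for the fact that the sets of $5$-dimensional thickenings of $3$-deformation equivalent complexes coincide; you are attempting a direct argument instead, which is a reasonable thing to try. However, there is a genuine gap at the crucial step.

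The problem is the sentence ``so $W'=W\cup P$ collapses onto $Y\cup B^k=Y'$''. You are conflating the abstract complex $Y'=Y\cup_A B^k$ (where $B^k$ is glued to $Y$ along $A\subset Y$) with the subset $Y\cup B^k$ of $W'$. In your construction $Y$ sits in $\interior W$ (since $W\searrow Y$), while the core $B^k=B^k\times\{0\}\subset P$ meets $W$ only along $A\times\{0\}\subset\partial W$. Hence $Y$ and $B^k$ are disjoint subsets of $W'$, so $Y\cup B^k$ is disconnected and cannot be a copy of $Y'$. What $W'$ visibly collapses onto is $W\cup B^k$ with $B^k$ attached along $\partial W$; to finish one must show that a further collapse $W\searrow Y$ drags the attaching region of $B^k$ down to $A\subset Y$, leaving behind a ``track'' $A\times[0,1]$ from $\partial W$ to $Y$, and that the resulting subcomplex $Y\cup(\mathrm{track})\cup B^k$ is isomorphic to $Y'$. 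This in turn requires choosing the $D^4\subset\partial W$ carefully (e.g.\ so that it lies over $A$ with respect to the mapping-cylinder projection $\partial W\to Y$), which is exactly the step you assert but do not justify: a regular neighbourhood $A\times D^{6-k}$ of $A$ in $W$ lives in $\interior W$, and it does not obviously ``exhibit'' a $D^4$ in $\partial W$, let alone the right one. Two smaller points: the dimension check $5-k\geqslant 1$ gives $k\leqslant 4$, not $k\leqslant 3$, so the stated constraint does not come from where you say it does; and it is worth flagging that a cleaner route to the hard inclusion is to embed $Y'$ into $\interior W$ extending $Y$, take a regular neighbourhood $N(Y')$ there, and invoke uniqueness of regular neighbourhoods together with $Y'\searrow Y$ to get $N(Y')\cong W$ and hence $\partial N(Y')\cong M$; this sidesteps the track bookkeeping entirely.
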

\begin{proof}
The set of all 5-dimensional thickenings of $X$ and $X'$ coincide, see \cite{AnCu, HoMe}. Therefore the set of their boundaries also coincide.
\end{proof}

The following shows that $\calS(X)$ is finite.
\begin{prop} \label{w2:prop}
Let $X^2$ be a compact 2-dimensional polyhedron. For every class $\alpha\in H_2(X^2,\matZ_2)$ there is precisely one 5-dimensional manifold $W^5$ collapsing onto $X^2$ with $w_2(W^5) = \alpha$.
\end{prop}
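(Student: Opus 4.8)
The plan is to analyze the 5-dimensional thickenings of $X^2$ handle-by-handle, using the fact that a thickening is determined by how $2$-handles are attached. Since $X^2$ collapses to a point through cells of dimension $\leqslant 2$, any regular neighborhood $W^5$ of an embedding of $X^2$ into a $5$-manifold has a handle decomposition with only $0$-, $1$-, and $2$-handles, modeled on the cell structure of $X^2$; conversely each such thickening is obtained by choosing, for each $2$-cell of $X^2$, a framing of the attaching circle of the corresponding $2$-handle. First I would fix a triangulation (or CW-structure) of $X^2$ and the associated $0$- and $1$-handles: these are forced, because $\pi_0(\SO(5))$ is trivial and $\pi_0$ of the diffeomorphism group of $D^0\sqcup D^0$ etc.\ pose no ambiguity, so the $1$-skeleton thickening $W_1$ of $W^5$ is unique. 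The attaching circles of the $2$-handles are then determined up to isotopy (they are determined up to homotopy by $X^2$, and homotopic circles in a $\geqslant 4$-manifold are isotopic, as used already in the proof of Lemma \ref{23:lemma}). Hence the only freedom is the normal framing of each attaching circle, i.e.\ an element of $\pi_1(\SO(3)) = \matZ_2$, since the normal bundle of a circle in a $5$-manifold is a rank-$4$ bundle and framings differ by $\pi_1(\SO(4))$, but the relevant quotient controlling the thickening is $\pi_1(\SO(3)) = \matZ_2$ (the core $D^2$ of a $2$-handle has a $3$-dimensional normal disc, and the stabilization $\SO(3)\to\SO(4)$ is $\pi_1$-iso). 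So the set of thickenings of $X^2$ is a torsor over $(\matZ_2)^{r}$, where $r$ is the number of $2$-cells of $X^2$.

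Next I would identify the target of the invariant $w_2$. The space $(\matZ_2)^r$ of framing choices surjects onto $H^2(X^2, \matZ_2)$: two framing choices give the same thickening-class precisely when their difference is a coboundary, i.e.\ when it lies in the image of $\partial^*$ from the $1$-cochains, reflecting the fact that changing the framing along a $1$-handle by an isotopy does not change $W^5$. More precisely, the ambiguity group is the cokernel of the cellular coboundary $C^1(X^2,\matZ_2)\to C^2(X^2,\matZ_2)$, which is exactly $H^2(X^2,\matZ_2)$; dually this is $\mathrm{Hom}(H_2(X^2,\matZ_2),\matZ_2)$, and since $H_2(X^2,\matZ_2)$ is a finite-dimensional $\matZ_2$-vector space it is noncanonically isomorphic to $H_2(X^2,\matZ_2)$ itself. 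I would then check that the complete invariant distinguishing the thickenings is $w_2(W^5)$: the relative second Stiefel–Whitney class of the framing, evaluated on $2$-cycles, is exactly the obstruction recorded by the framing choices modulo coboundaries, and it is additive in the way required to make it a homomorphism on $H_2$. Hence the map $\{$thickenings of $X^2\}\to H_2(X^2,\matZ_2)$, $W^5\mapsto w_2(W^5)$ (using Poincar\'e–Lefschetz duality on $W^5$, or equivalently $w^2\in H^2(X^2,\matZ_2)\cong H_2(X^2,\matZ_2)$), is a bijection.

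Finally, combining these: existence of a thickening with $w_2 = \alpha$ follows from surjectivity of the framing-choice map onto $H_2(X^2,\matZ_2)$, and uniqueness follows from the identification of the fibers of that map with $H^1$-coboundaries, which are precisely the framing changes realized by ambient isotopies that do not alter $W^5$. I expect the main obstacle to be the bookkeeping that the framing invariant, a priori living in $\pi_1(\SO(4))$ or $\pi_1(\SO(3))$ per $2$-handle, assembles into an honest well-defined class in $H_2(X^2,\matZ_2)$ and coincides with $w_2$ of the resulting closed-complement or of $W^5$ itself --- i.e.\ that the local-to-global passage (from per-cell data to a cohomology class) is compatible with the standard definition of $w_2$. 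This is essentially the classical theory of thickenings of $2$-complexes (as in \cite{AnCu, HoMe}), so I would lean on that literature for the delicate step and keep the proof here to the level of indicating why the count is $|H_2(X^2,\matZ_2)|$ with the spin thickening corresponding to $\alpha = 0$.
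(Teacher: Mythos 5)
The paper does not actually prove this proposition; it simply writes ``See \cite{HaKrTe} for a proof,'' so there is no in-paper argument to compare yours against. Your handle-by-handle outline is the standard route --- it is essentially Wall's classification of thickenings of a $k$-complex in the stable range $n\geqslant 2k+1$ (here $n=5$, $k=2$), where the thickening is determined by the stable normal bundle, hence by $w_1$ and $w_2$, hence by $w_2$ alone in the oriented case. Two remarks on the sketch. First, the framing ambiguity for a $2$-handle is detected directly by $\pi_1(\SO(3))=\matZ_2$: the attaching circle sits in the $4$-manifold $\partial W_1$, so its normal bundle there has rank $3$; your detour through rank-$4$ normal bundles in $W_1$ and the stabilization $\SO(3)\to\SO(4)$ gets the right group (both are $\matZ_2$) but conflates the normal bundle of the circle in $W_1$ with the one in $\partial W_1$, which is the one that matters for attaching. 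Second, and more substantively, the identification of the equivalence imposed on framings with the image of $\delta\colon C^1(X^2,\matZ_2)\to C^2(X^2,\matZ_2)$ --- realized by self-diffeomorphisms of $\partial W_1\cong\#_k(S^3\times S^1)$ twisting along cocore spheres of the $1$-handles --- together with the computation that the resulting quotient class is $w_2(W^5)$, is exactly the technical content of the cited result. You flag this as the ``main obstacle'' and defer to the literature; that is honest, but it means your sketch organizes what has to be shown rather than proving it, and in that sense you and the paper are making the same move: both hand the crux to \cite{HoMe, HaKrTe} (or Wall). A last caveat worth noting: $w_2(W^5)$ naturally lives in $H^2(W^5,\matZ_2)\cong H^2(X^2,\matZ_2)$, not $H_2(X^2,\matZ_2)$; the two $\matZ_2$-vector spaces are noncanonically isomorphic so the count in the proposition is unchanged, but one should be aware of the identification being used, as you rightly are.
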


See \cite{HaKrTe} for a proof. The 5-dimensional thickenings of a 2-dimensional polyhedron $X^2$ are thus in natural correspondence with the elements in $H_2(X^2,\matZ_2)$. We can now prove Propositions \ref{presentation:prop} and \ref{finite:prop}.

\begin{prop}
The following holds.
\begin{enumerate}
\item The set $\calS(\calP)$ contains finitely many 4-manifolds, precisely one of which is spin. 
\item The manifolds in $\calS(\calP)$ share the same cellular 3-skeleton: therefore all their homology groups and the homotopy groups $\pi_1$ and $\pi_2$ depend only on $\calP$. 
\item If $\calP$ and $\calP'$ are related by Andrew-Curtis moves \cite{AnCu}, then $\calS(\calP) = \calS(\calP')$.
\end{enumerate}
\end{prop}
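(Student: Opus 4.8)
The plan is to deduce all three items from the structural facts already established in this section, primarily Proposition~\ref{w2:prop} and Theorem~\ref{AC:teo}. Recall that, via Theorem~\ref{bijection:teo}, a presentation $\calP$ corresponds to a $2$-polyhedron $X_\calP$ with a single vertex, one $1$-cell per generator, and one $2$-cell per relator. By definition $\calS(\calP) = \calS(X_\calP)$, the set of closed oriented $4$-manifolds bounding a $5$-dimensional thickening of $X_\calP$.

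\emph{Item (1).} By Proposition~\ref{w2:prop}, the $5$-dimensional thickenings $W^5$ of $X_\calP$ are in natural bijection with $H_2(X_\calP,\matZ_2)$ via $W^5\mapsto w_2(W^5)$. Since $X_\calP$ is a finite polyhedron, $H_2(X_\calP,\matZ_2)$ is a finite group, so there are finitely many thickenings and hence finitely many boundaries; thus $\calS(\calP)$ is finite. For the spin statement: an oriented $4$-manifold $M=\partial W^5$ satisfies $w_2(M)=i^*w_2(W^5)$ where $i\colon M\hookrightarrow W^5$ is the inclusion. First I would observe that $i^*\colon H^2(W^5,\matZ_2)\to H^2(M,\matZ_2)$ factors through the restriction to the polyhedron, and in fact $W^5$ deformation retracts onto $X_\calP$, so $H^2(W^5,\matZ_2)\isom H^2(X_\calP,\matZ_2)$; meanwhile $w_2(W^5)$, viewed in $H^2(W^5,\matZ_2)$, is exactly the Poincaré-dual incarnation of the class $\alpha\in H_2(X_\calP,\matZ_2)$ parametrizing $W^5$. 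The manifold $M$ is spin precisely when $i^*w_2(W^5)=0$. One checks (using that $X_\calP$ is $2$-dimensional and that the inclusion $M\to W^5$ is surjective on $\pi_1$ and $\pi_2$, $W^5$ being built from handles of index $\leqslant 2$) that this vanishing happens for exactly one value of $\alpha$, namely $\alpha=0$: the ``untwisted'' thickening, obtained by thickening each $2$-cell with the even framing, gives the unique spin boundary, and any nonzero $\alpha$ produces a nonzero $w_2(M)$. This is the step I expect to require the most care.

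\emph{Item (2).} Each thickening $W^5$ is obtained from $X_\calP$ by the same combinatorial recipe up to the framing choices on the $2$-handles, and a $2$-handle framing of a $5$-manifold does not affect anything below the top handles. Concretely, all the $W^5$ share the same $0$-, $1$-, and $2$-handles up to the normal framing, and the boundary $4$-manifold inherits a handle/CW structure whose cells of dimension $\leqslant 3$ (the $0$-, $1$-, $2$-handles of $W^5$ restricted to the boundary, together with their co-cores) are determined by the attaching data, which is fixed once $\calP$ is fixed (attaching circles of $2$-handles are isotopic, as in the proof of Lemma~\ref{23:lemma}). Hence the $3$-skeleton of $M$ is independent of the framing. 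Since $H_*(M)$, $\pi_1(M)$, and $\pi_2(M)$ are computed from the $3$-skeleton (for $\pi_2$ one uses $\pi_2(M)=\pi_2(M^{(3)})=H_2(\widetilde{M^{(3)}})$ together with the Hurewicz theorem applied to the universal cover), all of these depend only on $\calP$. I would spell out the identification of the $3$-skeleton via the handle decomposition and then invoke cellular homology and the Hurewicz argument.

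\emph{Item (3).} This is immediate from what is already in the excerpt: by Theorem~\ref{bijection:teo}, $\calP$ and $\calP'$ being Andrew--Curtis equivalent means $X_\calP$ and $X_{\calP'}$ are $3$-deformation equivalent, and then Theorem~\ref{AC:teo} gives $\calS(\calP)=\calS(X_\calP)=\calS(X_{\calP'})=\calS(\calP')$. So item (3) needs no further work. In summary, the only genuinely new input is the $w_2$ bookkeeping in item (1); items (2) and (3) are formal consequences of Proposition~\ref{w2:prop} and Theorem~\ref{AC:teo} respectively.
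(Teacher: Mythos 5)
Your proposal follows essentially the same route as the paper: finiteness and the spin statement come from Proposition~\ref{w2:prop} together with injectivity of $i^*\colon H^2(W^5;\matZ_2)\to H^2(\partial W^5;\matZ_2)$, item (2) from the observation that changing a $2$-handle framing leaves the core discs (hence the $3$-skeleton) untouched, and item (3) directly from Theorem~\ref{AC:teo}. The one spot where you diverge in mechanism is the injectivity of $i^*$: the paper gets it instantly from Lefschetz duality ($H^2(W^5,\partial W^5;\matZ_2)\isom H_3(W^5;\matZ_2)=0$ because $W^5$ retracts onto a $2$-complex), whereas you invoke that $\partial W^5\hookrightarrow W^5$ is $2$-connected (handles of index $\leqslant 2$) and ``one checks'' the vanishing; that route does work (two-connectedness gives $H_i(W^5,\partial W^5)=0$ for $i\leqslant 2$, hence $H^2(W^5,\partial W^5;\matZ_2)=0$ by universal coefficients), but you should spell it out, as written it reads as a promissory note rather than a proof.
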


\begin{proof}
Let $X^2$ be the polyhedron determined by $\calP$.
Proposition \ref{w2:prop} implies that $X^2$ thickens to finitely many 5-manifolds $W^5$, precisely one of which has vanishing $w_2(W^5)$. The map $i^*:H^2(W^5,\matZ_2)\to H^2(\partial W^5,\matZ_2)$ induced by inclusion is injective since $H^1(W^5,\partial W^5) \isom H_4(W^5)= 0$. By the naturality of the Stiefel-Whitney class we have $i^*(w_2(W^5)) = w_2(\partial W^5)$. Hence $W^5$ is spin if and only if $\partial W^5$ is spin, and (1) is proved.

We turn to (2). The 1-skeleton of $X^2$ can be thickened in a unique way to a 5-manifold, whose boundary is $\#_k (S^3\times S^1)$. Such a boundary intersects the 2-cells of $X^2$ into a link. The set $\calS(\calP)$ consists of all the 4-manifolds that can be obtained by surgery along that link. Therefore these manifolds share the same 3-skeleton. (A surgery consists of removing $S^1\times D^3$ and then adding a 2-handle and a 4-handle. The 2-handle depends on a framing, but its core disc does not. By adding only the core discs we thus get a common 3-skeleton for all the manifolds in $\calS(\calP)$.)

Finally, (3) follows from Theorem \ref{AC:teo}.
\end{proof}

\begin{prop}
We have the following.
\begin{eqnarray*}
\calS(\calC_n) & = & \left\{\begin{array}{ll} \left\{C_n^0, C_n^1 \right\} & {\rm \ if\ } n {\rm \ is \ even,}\\
\left\{C_n^0\right\} & {\rm \ if\ } n {\rm \ is \ odd.} \end{array}\right. \\
\calS(\calD_{2n}) & = & \left\{\begin{array}{ll} \left\{D_n^0, D_n^1, D_n^2, D_n^3\right\}& {\rm \ if\ } n=2 \\
\left\{D_n^{00}, D_n^{10}, D_n^{20}, D_n^{01}, D_n^{11}, D_n^{21} \right\}  & {\rm \ if\ } n>2 {\rm \ is \ even.} \\ 
\left\{D_n^{0}, D_n^{1}, D_n^{2}\right\}  & {\rm \ if\ } n>2 {\rm \ is \ odd.} \\ 
\end{array}\right.
\end{eqnarray*}
The manifolds $C^0_n, D^0_n, D^{00}_n$ are spin, the others are not. The manifolds $C_n^0$, $C_n^1$, $D^0_n$, $D^2_n$, $D^{00}_n$, $D^{10}_n$, $D^{20}_n$ are even, the others are odd. The universal covering of every manifold in the list is $\#_k (S^2\times S^2)$, for some $k$.
\end{prop}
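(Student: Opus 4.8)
The plan is to combine the classification of $5$-dimensional thickenings of $X_\calP$ from Proposition~\ref{w2:prop} with the surgery description of $\calS(\calP)$ implicit in Proposition~\ref{presentation:prop}. Recall that the thickenings of $X_\calP$ are in bijection with $H_2(X_\calP;\matZ_2)$, the $\matZ_2$-class being $w_2$; and that, after thickening the $1$-skeleton of $X_\calP$ to a $1$-handlebody with boundary $\#_g(S^3\times S^1)$ (with $g$ the number of generators of $\calP$), the members of $\calS(\calP)$ are obtained by surgering the link $L = K_1\cup\cdots\cup K_m$ with one component $K_j$ per relator, $[K_j]$ being the $j$-th relator word, where the surgery on $K_j$ is performed with a framing whose parity is the $j$-th coordinate of the chosen $w_2$. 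Hence $|\calS(\calP)| \leqslant |H_2(X_\calP;\matZ_2)|$, and the first step is to read this group from the cellular chain complex: for $\calC_n = \langle a \mid a^n\rangle$ one finds $H_2 = \matZ_2$ if $n$ is even and $0$ if $n$ is odd; for $\calD_{2n} = \langle a,b\mid a^2, b^2, (ab)^n\rangle$ one finds $\matZ_2^3$ if $n$ is even and $\matZ_2^2$ if $n$ is odd.

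For $\calC_n$ this already settles the count. If $n$ is odd there is a unique thickening, so $|\calS(\calC_n)| = 1$. If $n$ is even there are two, exactly one with $w_2 = 0$, and by the argument in the proof of Proposition~\ref{presentation:prop}(1) — $W^5$ is spin if and only if $\partial W^5$ is — their boundaries are respectively spin and non-spin, hence distinct; so $|\calS(\calC_n)| = 2$, and I would name these $C_n^0$ (the spin one) and $C_n^1$. That both are \emph{even} follows from a homology count: since $H_2(X_{\calC_n};\matZ) = 0$ and $\pi_1 = \matZ_n$ is finite, the long exact sequence of $(W^5,\partial W^5)$ together with $\chi(\partial W^5) = 2\chi(W^5) = 2$ forces $b_2(\partial W^5) = 0$, so $H_2(\partial W^5;\matZ)$ is torsion and the intersection form vanishes.

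The real content is the dihedral case. The organising idea is that the self-equivalences of $X_{\calD_{2n}}$ — equivalently, the automorphisms of $\calD_{2n}$ realizable by Andrew-Curtis moves — permute the thickenings via their action on $H_2(X_{\calD_{2n}};\matZ_2)$, spanned by the framing-parity coordinates $\epsilon_{a^2},\epsilon_{b^2},\epsilon_{(ab)^n}$ (only the first two being independent when $n$ is odd). For every $n$ the transposition $a\leftrightarrow b$ is such an automorphism (inner when $n$ is odd), swapping $\epsilon_{a^2}$ and $\epsilon_{b^2}$; when $n = 2$ the element $ab$ is itself an involution and $(a,b)\mapsto(a,ab)$ is an Andrew-Curtis automorphism, so the whole symmetric group on $\{a,b,ab\}$ acts by permuting the three coordinates. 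Counting orbits gives at most $4$ manifolds if $n = 2$, at most $6$ if $n > 2$ is even, at most $3$ if $n > 2$ is odd, matching the statement. Conversely, to see these orbits are genuinely distinct I would use three invariants of decreasing crudeness: (i) the thickening with $w_2 = 0$ has spin boundary and no other does; (ii) $\chi(\partial W^5) = 2\chi(X_{\calD_{2n}}) = 4$ and $\sigma(\partial W^5) = 0$ give $b_2(\partial W^5) = 2$, so the intersection form is $H$ or $[1]\oplus[-1]$, and a Kirby-calculus computation of the surgered link decides which (for $n > 2$ this is governed exactly by $\epsilon_{(ab)^n}$); (iii) for the pairs not yet separated — for instance $D_n^{10}$ versus $D_n^{20}$, or $D_2^1$ versus $D_2^3$ — compare the \emph{number of spin coverings}: for each conjugacy class of subgroup $H \leqslant \calD_{2n}$ record whether the corresponding cover of $\partial W^5$ is spin, i.e.\ whether $w_2$ restricts to zero on $H$; since the two manifolds carry distinct classes $w_2 \in H^2(\partial W^5;\matZ_2)$ this finite lattice computation tells them apart. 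All homology groups and $\pi_1,\pi_2$ coincide along $\calS(\calP)$ by Proposition~\ref{presentation:prop}(2), which is exactly why this delicate invariant is necessary.

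For the universal covers: $X_\calP$ is a finite $2$-complex with finite fundamental group $G$, so $\widetilde{X_\calP}$ is a finite simply connected $2$-complex, hence homotopy equivalent to $\vee_k S^2$ with $k = |G|\,\chi(X_\calP) - 1$; thus $k = n-1$ for $\calC_n$ and $k = 4n-1$ for $\calD_{2n}$. Consequently the universal cover of any thickening $W^5$ is a $5$-dimensional thickening of $\vee_k S^2$, classified by its $w_2 \in H_2(\vee_k S^2;\matZ_2)$; a direct cellular computation — the model case being $\matRP^2 = X_{\calC_2}$ with universal cover $S^2$, where the generator of $H^2(\matRP^2;\matZ_2)$ pulls back to an even multiple and hence to $0$ — shows this $w_2$ vanishes. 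Hence the universal cover of every $M \in \calS(\calP)$ is the boundary of the spin thickening of $\vee_k S^2$, namely $\#_k(S^2\times S^2)$, the unique spin $4$-manifold obtained by surgery with even framings on an unknotted link. I expect the main obstacle to be parts (ii)--(iii) of the dihedral case: running the Kirby calculus to identify which framing choices give homeomorphic boundaries and, above all, separating by the spin-covering count those manifolds that agree on all classical invariants.
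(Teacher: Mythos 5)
Your proposal follows essentially the same route as the paper: parametrize the $5$-dimensional thickenings of $X_\calP$ by $w_2\in H^2(X_\calP;\matZ_2)$, read off $H_2(X_\calP;\matZ_2)$ from the cellular chain complex, reduce the number of candidate boundaries by visible symmetries (your AC-automorphisms are the paper's geometric symmetries of $X_\calP$ viewed algebraically), and then separate the remaining candidates by spin, parity, and finally the count of spin double covers. Your argument for the universal cover is also the paper's, and you even pin down $k$ where the paper contents itself with ``for some $k$.''

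Two points of detail are worth flagging. First, the paper's test for evenness is substantially cleaner and more uniform than what you propose: it observes that $i_*\colon H_2(\partial W^5;\matZ)\to H_2(W^5;\matZ)$ is surjective (because $H_2(W^5,\partial W^5)\isom H^3(W^5)\isom H^3(X_\calP)=0$), so by Wu's formula $\partial W^5$ is even if and only if $w_2$ vanishes on the image of the reduction map $H_2(X_\calP;\matZ)\to H_2(X_\calP;\matZ_2)$. This replaces your ad hoc $\chi$-and-$\sigma$ argument for $C^1_n$ \emph{and} the proposed Kirby-calculus computation in the dihedral case with a one-line chain-complex computation. Second, your parenthetical claim that for $n>2$ parity is governed exactly by $\epsilon_{(ab)^n}$ is incorrect when $n/2$ is odd: there the integral generator of $H_2$ reduces mod $2$ to $(1,1,1)$ rather than $(0,0,1)$, so parity is governed by $\epsilon_{a^2}+\epsilon_{b^2}+\epsilon_{(ab)^n}$, and the paper accordingly swaps the roles of $(1,0,1)$ and $(1,0,0)$ in that subcase. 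Your Kirby-calculus route would eventually catch this, but the stated shortcut would mislabel those two manifolds; the Wu-formula criterion above gets it right at once.
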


\begin{proof}
Let $X_\calP$ be the 2-dimensional polyhedron associated to some presentation $\calP$. Let $W^5$ be the 5-dimensional thickening of $X_\calP$, determined by its Steifel-Whitney class $w_2 \in H^2(W^5,\matZ_2)\isom H^2(X_\calP,\matZ_2)$. 

By naturality, the Stiefel-Whitney class of $\partial W^5$ is the image $i^*(w_2)$ along the injective map $i^*:H^2(W^5,\matZ_2)\to H^2(\partial W^5,\matZ_2)$. The following holds:

\begin{enumerate}
\item the 4-manifold $\partial W^5$ is spin if and only if $i^*(w_2)(\alpha)=0$ for all $\alpha\in H_2(\partial W^5,\matZ_2)$;
\item the 4-manifold $\partial W^5$ is even if and only if $i^*(w_2)(\alpha)=0$ for all $\alpha\in H_2(\partial W^5,\matZ)$.
\end{enumerate}
Note that $i_*:H_2(\partial W^5) \to H_2(W^5)$ is surjective (because $H_2(W^5,\partial W^5) \isom H^3(W^5) \isom H^3(X_\calP) = 0$). Of course we have $i^*(w_2) (\alpha) = w_2(i_*(\alpha))$ for all $\alpha$. We can thus modify the two assertions above as follows.

\begin{enumerate}
\item the 4-manifold $\partial W^5$ is spin if and only if $w_2(\alpha)=0$ for all $\alpha\in H_2(W^5,\matZ_2)$;
\item the 4-manifold $\partial W^5$ is even if and only if $w_2(\alpha)=0$ for all $\alpha\in H_2(W^5,\matZ)$.
\end{enumerate}
We identify the homologies of $W^5$ and $X_\calP$. 
Let us now consider the case $\calP = \calC_n = \langle a |a^n \rangle$. 
We have the following.
$$H_2(X_{\calC_n}, \matZ_2) = \left\{\begin{array}{ll} \matZ_2 & {\rm \ if\ } n {\rm \ is \ even,} \\
0 & {\rm \ if\ } n {\rm \ is \ odd.} \end{array}\right. 
$$
If $n$ is odd, there is only one spin 5-dimensional thickening and $\partial W^5$ is a spin manifold, which we denote by $C_n^0$. If $n$ is even, we have two possibilities: one spin manifold $C_n^0$ and one non-spin manifold $C_n^1$. We have $H_2(X_{\calC_n},\matZ)=0$: by what just said, the manifold $C_n^1$ is even.

Let us turn to dihedral manifolds, \emph{i.e.}~to $\calP = \calD_{2n} = \langle a, b| a^2, b^2, (ab)^n\rangle$. We first consider the very symmetric case $n=2$. We may picture $X=X_{\calD_4}$ after a small 3-deformation as a pair-of-pants with 3 projective planes attached. We have 
\begin{eqnarray*}
H_2(X,\matZ_2) & = & \matZ_2 + \matZ_2 + \matZ_2, \\
H_2(X,\matZ) & = & \matZ.
\end{eqnarray*}
A basis for $H_2(X,\matZ_2)$ is given by the three projective planes. We then get a dual basis for $H^2(X,\matZ_2$). The modulo-2 map $H_2(X,\matZ) \to H_2(X,\matZ_2)$ sends 1 to $(1,1,1)$. 
Up to symmetries of $X$, there are four choices for $w_2 \in H^2(X,\matZ_2)$:
\begin{enumerate}
\item $(0,0,0)$ leads to a spin manifold $D_n^0$;
\item $(1,0,0)$ leads to a non-spin odd manifold $D_n^1$;
\item $(1,1,0)$ leads to a non-spin even manifold $D_n^2$;
\item $(1,1,1)$ leads to a non-spin odd manifold $D_n^3$.
\end{enumerate}
We need to distinguish $D_n^1$ from $D_n^3$. We do this by looking at their index-two coverings. Each $D_n^i$ has three such coverings, and it turns out that the number of spin manifolds among them is $3-i$. This is easily seen as follows: each covering $\pi:\tilde X \to X$ is determined by the choice of one projective plane $P$ in $X$. The polyhedron $\tilde X$ contains two projective planes fibering over $P$ and two spheres fibering over the two other projective planes in $X$. These four surfaces generate $H_2(\tilde X, \matZ_2)$. 

Let $p:\tilde W \to W$ be the covering of thickenings. We have $p^*(w_2)(\alpha) = w_2(p(\alpha))$. If $\alpha$ is a sphere, it double-covers a projective plane $P'\subset X$ and we have $w_2(p(\alpha)) = w_2(2P') = 0$. If $\alpha$ is a projective plane over $P$ we get $p^*(w_2)(\alpha) = w_2(P)$. Thus $\tilde X$ is spin iff $w_2(P)=0$. Therefore $D^i_n$ has $3-i$ spin coverings of index two.

The other dihedral manifolds are treated similarly. We always take $X$ to be a pair-of-pants with three discs attached along its boundary, winding 2, 2, and $n$ times. If $n>2$ is odd, we get
\begin{eqnarray*}
H_2(X,\matZ_2) & = & \matZ_2 + \matZ_2, \\
H_2(X,\matZ) & = & \matZ.
\end{eqnarray*}
The modulo-2 map $H_2(X,\matZ) \to H_2(X,\matZ_2)$ sends 1 to $(1,1)$. Up to symmetries we have three choices for $w_2$: 
\begin{enumerate}
\item $(0,0)$ leads to a spin manifold $D_n^0$;
\item $(1,0)$ leads to a non-spin odd manifold $D_n^1$;
\item $(1,1)$ leads to a non-spin even manifold $D_n^2$.
\end{enumerate}
If $n>2$ is even, we get
\begin{eqnarray*}
H_2(X,\matZ_2) & = & \matZ_2 + \matZ_2 + \matZ_2, \\
H_2(X,\matZ) & = & \matZ.
\end{eqnarray*}
A basis for $H_2(X,\matZ_2)$ is given by two projective planes and one 2-cell winding $n$ times. The modulo-2 map $H_2(X,\matZ) \to H_2(X,\matZ_2)$ sends 1 to $(0,0,1)$ or $(1,1,1)$, depending on whether $n/2$ is even or odd. Suppose $n/2$ is even. Up to symmetries we have six choices for $w_2$: 
\begin{enumerate}
\item $(0,0,0)$ leads to a spin manifold $D_n^{00}$;
\item $(1,0,0)$ leads to a non-spin even manifold $D_n^{10}$;
\item $(1,1,0)$ leads to a non-spin even manifold $D_n^{20}$;
\item $(0,0,1)$ leads to a non-spin odd manifold $D_n^{01}$;
\item $(1,0,1)$ leads to a non-spin odd manifold $D_n^{11}$;
\item $(1,1,1)$ leads to a non-spin odd manifold $D_n^{21}$.
\end{enumerate}
To distinguish them, we look at coverings determined by non-normal subgroups $H$ of order two. Up to conjugacy, there are only two such groups, generated by $a$ and $b$. Thus we get two coverings. As above, we see that the number of spin coverings of $D_n^{ij}$ is $2-i$, and we are done. When $n/2$ is odd the discussion is the same, except for $(1,0,1)$ and $(1,0,0)$ that are swapped:
\begin{enumerate}
\item $(1,0,1)$ leads to a non-spin even manifold $D_n^{10}$;
\item $(1,0,0)$ leads to a non-spin odd manifold $D_n^{11}$.
\end{enumerate}

Finally, the same arguments show that the universal covering of each such manifold is spin, since $H_2(\tilde X,\matZ_2)$ has a basis generated by spheres which cover an even number of times the elements in $H_2(X,\matZ_2)$. Such a manifold is still a graph manifold generated by $\calS_0$, and thus it must be $\#_k(S^2\times S^2)$.
\end{proof}

\subsection{Outline of the proof of Theorem \ref{main:teo}} \label{outline:subsection}
Theorem \ref{main:teo} says that $c(M)=0$ if and only if $M=M'\#_h\matCP^2$ for some graph manifold $M'$ generated by $\calS_0$ and some integer $h$. It is easy to see that every manifold of type $M'\#_h\matCP^2$ has indeed complexity zero using the following result. (A more detailed proof will be given in Section \ref{operations:section}, see Theorem \ref{easy:teo}.)

\begin{figure}
\begin{center}
\includegraphics[width = 9 cm] {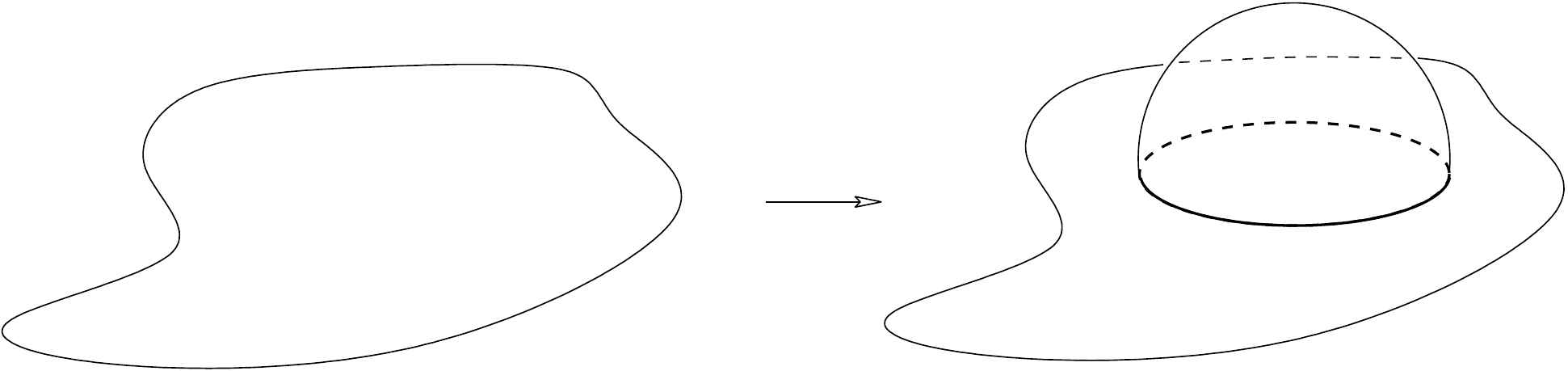}
\nota{By adding a bubble on each region we construct a shadow for the double $DM$.}
\label{bubble:fig}
\end{center}
\end{figure}

\begin{prop} \label{bubble:prop}
Let a compact orientable 4-manifold $M$ collapse onto a simple polyhedron $X\subset \interior{M}$ without boundary. A shadow $DX$ for the double $DM$ of $M$ is constructed from $X$ by adding a bubble on each region as in Fig.~\ref{bubble:fig}.
\end{prop}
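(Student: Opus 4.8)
The plan is to exhibit $DX$ as an explicit shadow of $DM$ by thickening: one puts $X$ in the interior of $M\subset DM$, interprets the bubbles as producing exactly the $2$-handles by which the second copy of $M$ is glued on, and checks that nothing but $3$- and $4$-handles is left over. Since $M$ collapses onto $X$, the polyhedron $X$ is a spine of $M$, so we may take $M=N(X)$ and, from a triangulation realizing the collapse, obtain a handle decomposition of $M$ with one $i$-handle $h_\sigma$ for each $i$-cell $\sigma$ of $X$; in particular $M$ has only handles of index $\leqslant 2$. Write $DM=M\cup_{\partial M}\overline M$ with $\overline M\isom N(X)$. Turning $\overline M$ upside down, we see that $DM$ is built from $M$ by attaching, in this order, $2$-handles $\overline h_\sigma$ dual to the $2$-cells $\sigma$, then $3$-handles dual to the $1$-cells, then $4$-handles dual to the $0$-cells; the attaching circle of $\overline h_\sigma$ is the belt circle of $h_\sigma$, namely a small meridian in $\partial M$ linking the region containing $\sigma$ once, taken with the framing induced by $h_\sigma$.

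Now realize $DX\subset DM$ by keeping $X\subset\interior M$ and attaching to each region $R$, inside a small ball of $\interior M$ centered at a chosen interior point of $R$, a $2$-sphere meeting $R$ along a circle $C_R$ bounding a disc inside $R$; arrange everything in a $3$-dimensional slice, so that $DX$ is a locally flat simple polyhedron whose only new singular circles are the $C_R$ — in particular $DX$ has exactly the same vertices as $X$. A local model computation shows that, upon thickening, $N_{DM}(DX)$ is obtained from $M=N_{DM}(X)$ by attaching, for each $R$, a single $2$-handle, namely a neighbourhood of one hemisphere of the bubble sphere glued along a neighbourhood of $C_R$; its attaching circle is a meridian of $R$, and its framing is governed by the gleam of the bubble, a free half-integer parameter. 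Choosing a cell structure on $X$ with exactly one $2$-cell per region, we match these handles with the $\overline h_\sigma$ and conclude $N_{DM}(DX)\isom M\cup\bigcup_\sigma\overline h_\sigma$. Together with the first paragraph this gives
\[
DM \;=\; N_{DM}(DX)\;\cup\;(\text{3-handles})\;\cup\;(\text{4-handles}),
\]
which is exactly what it means for $DX$ to be a shadow of $DM$.

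The delicate point — and the main obstacle — is the bookkeeping just sketched: that the $2$-handles produced by the bubbles coincide, \emph{with the correct framings}, with the dual $2$-handles of $\overline M$. This requires (i) producing a cell structure on $X$ with one $2$-cell per region (for a non-disc region a small argument is needed, since a surface with boundary collapses onto a graph and a priori carries no $2$-cell) and (ii) checking that the framing of each $\overline h_\sigma$ is realized by a legitimate choice of gleam for the corresponding bubble. A cleaner route, which handles (i) and (ii) simultaneously, is to argue on the complement: one shows directly that $DM\setminus\interior N_{DM}(DX)$ deformation retracts onto the $1$-complex obtained from $X$ by deleting one interior point of each region and collapsing the resulting surfaces-with-boundary, so that it is a $4$-dimensional $1$-handlebody $\natural_k(S^1\times D^3)$ glued to $\partial N_{DM}(DX)\isom\#_k(S^2\times S^1)$; turned over, such a handlebody contributes precisely the $3$- and $4$-handles required above, and no others. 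Either way, establishing this last claim is the heart of the matter.
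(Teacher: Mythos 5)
Your ``cleaner route'' is indeed the approach of the paper: one argues that the complement of a regular neighborhood of $DX$ in $DM$ collapses onto a graph. However you explicitly leave the crucial claim — that the complement deformation retracts onto the indicated $1$-complex — unproven (``the heart of the matter''), and the concrete embedding of $DX$ you propose would not in fact yield it.

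Specifically, you attach the bubble spheres ``inside a small ball of $\interior M$ centered at a chosen interior point of $R$''. With such an embedding $DX$ lies entirely in $\interior M$, and a regular neighborhood $N_{DM}(DX)$ can be taken inside $M$. Then $DM\setminus\interior N_{DM}(DX)$ contains the entire mirror copy $\overline M\isom M$, which is a $2$-handlebody (it collapses onto a copy of the $2$-complex $X$, not onto a graph), so the complement is not made of $3$- and $4$-handles and the shadow property fails. Equivalently: confining the bubbles to small balls forces their total gleam to be the ``wrong'' one, and the resulting abstract shadow is not the $DX$ of the statement. The missing ingredient is precisely the geometric content that the paper supplies: the bubble sphere at $x\in R$ is taken to be the \emph{double of the properly embedded fiber disc} $D\subset M$ through $x$ (so $S_x=D\cup\overline D$), hence it crosses $\partial M$ into $\overline M$ and meets the mirror copy $X_2$ transversely in exactly one point per region. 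Since $DM\setminus\interior N(X_1)\isom N(X_2)$ collapses onto $X_2$, removing the neighborhoods of these transverse spheres punctures each region of $X_2$ once, and the complement then collapses onto a $1$-subcomplex of $X_2$. Perturbing the double points on $X_1$ (Lemma \ref{perturb:lemma}) does not change the regular neighborhood and turns the immersed object into the simple polyhedron $DX$ with a bubble on each region. Without this crossing of $\partial M$ there is no mechanism to kill the $2$-dimensional part of $X_2$ in the complement, so the step you declare to be the heart of the matter cannot be established from the embedding you chose. Your first route (handle bookkeeping, matching bubble $2$-handles to the dual $2$-handles $\overline h_\sigma$ of $\overline M$) could in principle be carried through, but you correctly identify that it requires a cell structure with one $2$-cell per region and a nontrivial framing match, neither of which you complete; the paper avoids both issues by the geometric doubling just described.
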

\begin{proof}
By Proposition \ref{graph2:prop} we may suppose that $X$ is locally flat. We have two mirror copies $X_1$ and $X_2$ of $X$ inside $DM$. The complement of a regular neighborhood of $X_1$ in $DM$ collapses onto $X_2$. 

Take one point $x$ inside each region of $X_1$. Since $M$ collapses onto $X$, for each $x$ there is a natural properly embedded 2-disc $D\subset M$ intersecting $X_1$ in $x$. Its double gives a 2-sphere $S_x\subset DM$. Let $X_1'$ be $X_1$ plus the union of all these spheres $S_x$, one for each region of $X_1$. The polyhedron $X_1'$ intersects $X_2$ transversely in one point in each region of $X_2$. Therefore the complement of a regular neighborhood of $X_1'$ in $DM$ collapses onto a 1-dimensional subpolyhedron of $X_2$. Thus this complement is made of 3- and 4-handles.

To get a shadow it remains to perturb the double points $x$. This can be done as in Fig.~\ref{perturb:fig} below. The resulting polyhedron $DX$ is simple and is thus a shadow of $DM$. The result of the perturbation is that $DX$ is $X$ plus one bubble on each region.
\end{proof}

\begin{cor}
Let a compact 4-manifold $M$ collapse onto a simple polyhedron $X$ with $n$ vertices. We have $c(DM)\leqslant n$.
\end{cor}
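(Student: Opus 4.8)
The plan is to read this off directly from Proposition~\ref{bubble:prop}, the only point needing verification being that bubbling does not create vertices. First I would apply Proposition~\ref{bubble:prop}: since $M$ collapses onto the simple polyhedron $X$ (taken without boundary, as in that statement), the double $DM$ admits a shadow $DX$ obtained from $X$ by attaching one bubble to each region, as in Fig.~\ref{bubble:fig}.

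Next I would count vertices. A bubble on a region $R$ is a $2$-sphere $S$ meeting $X$ along a circle $c\subset R$; at every point of $c$ the polyhedron is a local sheet of $R$ passing through $c$ together with a hemisphere of $S$ abutting it, i.e.\ the ``$Y$''--type link. Hence $c$ is a closed triple curve carrying no vertices, it divides $R$ into two discs, and it contributes the two hemispheres of $S$ as new regions, but it produces no point whose link is a circle with three radii. Equivalently, the perturbation of the transverse double points performed in the proof of Proposition~\ref{bubble:prop} (Fig.~\ref{perturb:fig}) is manifestly vertex-free. Therefore $DX$ has exactly the same number $n$ of vertices as $X$. Since $DX$ is a shadow of $DM$ and $c(DM)$ is by definition the minimum number of vertices among all shadows of $DM$, we conclude $c(DM)\leqslant n$.

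I do not expect a genuine obstacle here: the statement is a formal corollary of Proposition~\ref{bubble:prop}. The only mildly delicate point is the local picture of a single bubble, namely checking that its singular set is a triple circle disjoint from the rest of $SX$ (not one meeting an existing edge or triple curve), so that the vertex count is genuinely left unchanged; this is immediate from the model neighborhoods in Fig.~\ref{shadow:fig}.
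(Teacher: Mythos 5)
Your proof is correct and follows the same route as the paper: apply Proposition~\ref{bubble:prop} to obtain the bubbled shadow $DX$ of $DM$, observe that bubbling is vertex-free, and conclude. The paper simply asserts ``bubbles do not add vertices'' where you spell out the local picture; both arguments (the triple-curve model and, equivalently, the vertex-freeness of the perturbation in Fig.~\ref{perturb:fig}) are valid, and your parenthetical caveat about $X$ being closed is the right reading of the implicit hypothesis carried over from Proposition~\ref{bubble:prop}.
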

\begin{proof}
Bubbles do not add vertices to a simple polyhedron. Therefore
the shadow $DX$ for $DM$ has $n$ vertices.
\end{proof}

Proposition \ref{graph:prop} implies that every graph manifold generated by $\calS_0$ has complexity zero. A shadow for $\matCP^2$ is also easily described (a projective line, which is homeomorphic to $S^2$). Finally, complexity is subadditive on connected sums, that is
$$c(M\#N) \leqslant c(M) + c(N)$$
and it is hence clear that every manifold $M'\#_h\matCP^2$ in Theorem \ref{main:teo} has complexity zero.

Proving that these are the only manifolds we can get is considerably harder. In some sense, this result is quite surprising, because there are many complicate shadows without vertices of closed manifolds that are not of the type prescribed by Proposition \ref{bubble:prop}. Many of them do not contain bubbles at all. For instance, let $X$ be the union of two (real) projective planes with an annulus connecting two non-trivial loops as in Fig.~\ref{example:fig}. It is easy to see that such a polyhedron is a shadow of the manifold $C_2^1$ introduced in Proposition \ref{finite:prop}. However, it does not contain bubbles. 

\begin{figure}
\begin{center}
\includegraphics[width = 3 cm] {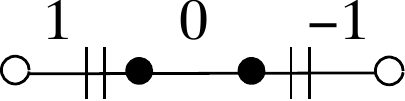}
\nota{Two projective planes connected with an annulus. This simple polyhedron without vertices is encoded by this graph. The integers encode the \emph{gleams} that are necessary to determine a 4-dimensional thickening, see Section \ref{shadows:section}. This is a shadow of $C_2^1$.  However, it is not of the type prescribed by Proposition \ref{bubble:prop}.}
\label{example:fig}
\end{center}
\end{figure}

The point is that there are various non-trivial moves that relate shadows of the same manifolds. The ones that we use here are collected in Fig.~\ref{move_all:fig} below (or equivalently Fig.~\ref{thickening:fig}). For instance, using move (5) we transform the polyhedron $X$ from Fig.~\ref{example:fig} into a projective plane with a bubble, which is indeed a shadow of the type prescribed by Proposition \ref{bubble:prop}.

Note that the graphs in the moves have (half-)integers decorating the edges. A shadow has a half-integer 
decorating each region called \emph{gleam}. Gleams make 4-dimensional thickenings much more complicate than 5-dimensional ones. Each of the listed moves can be applied only in presence of appropriate gleams.

The core proof of Theorem \ref{main:teo} consists of showing that every shadow $X$ without vertices of a closed 4-manifold can be transformed into a nice shadow with bubbles as in Proposition \ref{bubble:prop} by mean of the moves listed in the pictures. When we find a shadow with a bubble on each region we can conclude that $M$ is a graph manifold generated by $\calS_0$. (Bubbles of course have appropriate gleams.) In the transformations, we sometimes need to remove some $\matCP^2$-summands. 

To find the appropriate moves that transform a given complicated shadow $X$ into a nice shadow with bubbles we adapt to this setting a technique of Neumann and Weintraub \cite{Neu}. Neumann and Weintraub proved that a plumbing of spheres plus a 4-handle can only give rise to connected sums of $S^2\times S^2$ and $\matCP^2$. The point was that the boundary of such a plumbing is forced to be $S^3$ (in order for a 4-handle to be attached). The plumbing describes $S^3$ as a graph manifold (two solid tori connected by a chain of products $T\times [0,1]$). Since $S^3$ is a ``simple'' 3-manifold, a ``complicate'' description of $S^3$ as a graph manifold must simplify somewhere. 
Luckily, the simplification of the boundary graph manifold translates into a semplification of the plumbing, and they may proceed by induction.

We apply the same procedure here. Let $X$ be a complicate shadow without vertices of a closed 4-manifold $M$. The boundary $\partial N(X)$ of the thickening of $X$ must be homeomorphic to $\#_h(S^2\times S^1)$, in order for the 3- and 4-handles to be attached. This is a very restrictive condition. As noted by Costantino and Thurston \cite{CoThu}, the subdivision of $X$ into fundamental pieces described by Proposition \ref{simple:prop} induces a decomposition of $\partial N(X)$ as a graph manifold. Since $\#_h(S^2\times S^1)$ is relatively ``simple'', the description as a graph manifold must simplify somewhere. Hopefully, this simplification translates into a move that transforms $X$ into a simpler shadow for $M$, and we proceed by induction. Unfortunately, not all simplifications translate from $\partial N(X)$ to $X$, and more work has to be done.

During all the proof we use an approach similar to the one introduced in \cite{MaPe}. Namely, we extend the notion of shadows from closed manifolds to manifolds bounded by copies of $S^2\times S^1$: we call such a manifold a \emph{block}. When simplifying $X$, we sometimes discard some blocks that belong to $\calS_0$.

\section{Shadows} \label{shadows:section}
In this section we recall Turaev's definition of shadow \cite{Tu0, Tu}. We then focus on manifolds whose boundary is a (possibly empty) union of copies of $S^2\times S^1$, which we call \emph{blocks}. We then construct shadows for all the blocks contained in $\calS_0$ and $\matCP^2$.

\subsection{Shadows}
Let $M$ be a compact oriented 4-manifold (possibly with boundary) and $L\subset\partial M$ a (possibly empty) framed link. 
\begin{defn} \label{properly:defn}
A \emph{properly embedded} simple polyhedron $X$ in $(M,L)$ is a simple polyhedron $X\subset M$ such that $\partial X = X\cap \partial M = L$ and $X$ is locally flat in $M$, \emph{i.e.}~it is locally embedded as $Q\times \{0\} \subset D^3\times D^1$ where $Q\subset D^3$ is one of the models of Fig.~\ref{shadow:fig}. 
\end{defn}
\begin{rem} \label{horizontal:rem}
Let $X$ be a properly embedded simple polyhedron in a pair $(M,L)$. 
The boundary $\partial N(X)$ of a regular neighborhood $N(X)$ of $X$ has a
\emph{vertical} part $\partial_{\rm vert}N(X) = N(X)\cap\partial M$, 
consisting in some solid tori, and a \emph{horizontal} part $\partial_{\rm hor}N(X) = \overline {\partial N(X)\setminus \partial M}$. 
\end{rem}
We will often use the following terminology.
\begin{defn}
A \emph{1-handlebody} is a (possibly disconnected) oriented 4-manifold made of 0- and 1-handles. 
\end{defn}
Every connected component of a 1-handlebody is  homeomorphic to either $D^4$ or the boundary-connected sum of some copies of $D^3\times S^1$. 

\subsection{Gleams}
Let $X$ be a simple polyhedron properly embedded in some pair $(M,L)$. Every region of $X$ is naturally equipped with a half-integer called \emph{gleam}, defined by Turaev in \cite{Tu}. We recall its definition here.

\begin{figure}
\begin{center}
\includegraphics[width = 12.5 cm] {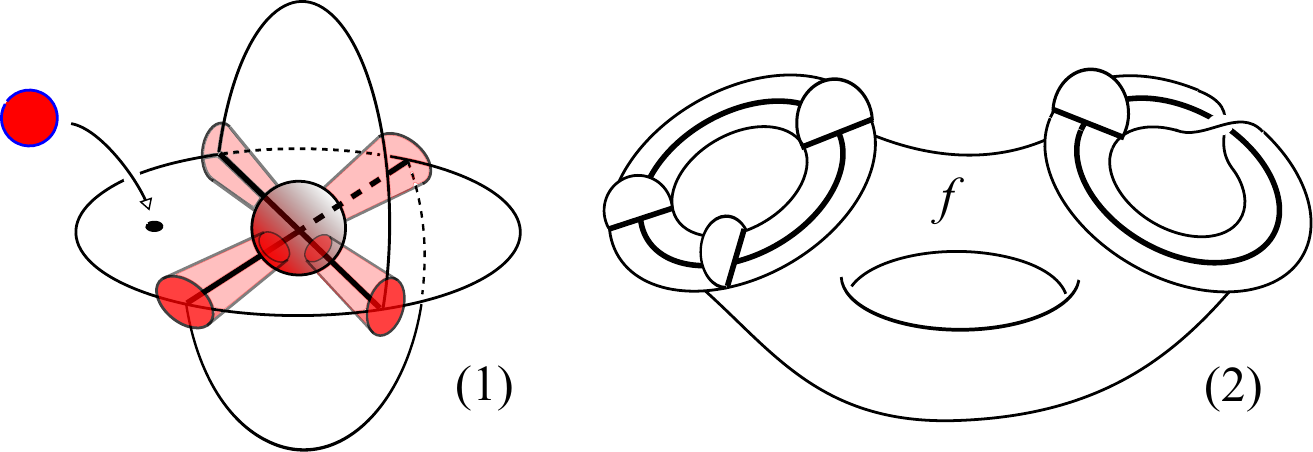}
\nota{The regular neighborhood $N(X)$ of $X$ decomposes into a 1-handlebody intersecting $X$ into a regular neighborhood of $SX$, and a disc bundle over the rest of $X$ (1). The polyhedron $X$ induces a $D^1$-fibering on each component of $\partial f$, which may be trivial or twisted (2). }
\label{two-component:fig}
\end{center}
\end{figure}

The singular part of $X$ thickens to a 1-handlebody. The rest of $X$ consists of some regions $f_1, \ldots, f_k$: each $f_i$ thickens to a $D^2$-bundle over $f_i$, see Fig.~\ref{two-component:fig}-(1). Take one $f=f_i$. The gleam of $f$ is defined by comparing this disc bundle with the interval bundle over $\partial f$ induced by $X$, see Fig.~\ref{two-component:fig}-(2). This is done as follows. 

The boundary of the $D^2$-bundle $B$ over $f$ consists of a horizontal part $\partial_{\rm hor} B$, a $S^1$-bundle over $f$, and a vertical part $\partial_{\rm vert}B$, the $D^2$-bundle over $\partial f$. The 3-manifold $\partial_{\rm hor}B$ is oriented as the boundary of $B$, which is in turn oriented since $M$ is.

Fix a section $s$ of the $S^1$-bundle $\partial_{\rm hor}B$ over $f$ and an orientation on the $S^1$-fiber. The section $s$ induces on each boundary torus $T_i$ of $\partial_{\rm hor}B$ a homology basis $(\mu_i,\lambda_i)$ such that $\lambda_i$ is the oriented fiber and $\mu_i$ is contained in $\partial s$ and oriented so that $(\mu_i,\lambda_i)$ is a positive basis (with respect to the orientation on $T_i$ induced by the one of $\partial_{\rm hor}B$).

Let $\gamma_i$ be one component of $\partial f$. If $\gamma_i$ is a component of $L$, the framing of $L$ induces a trivial $D^1$-subbundle of the $D^2$-bundle over $\gamma$. If $\gamma_i$ is not in $L$, there is a $D^1$-subbundle on $\gamma_i$ induced by $X$, which might be twisted: see Fig~\ref{two-component:fig}-(2). In both cases we get a $S^0$-subbundle of the $S^1$-bundle $\partial_{\rm hor}B$ over $\partial f$. If the $S^0$-bundle is trivial, it consists of two parallel curves which are homologically described as $\mu_i+e_i\lambda_i$ for some integer $e_i$. If the bundle is twisted, it consists of one curve, homologically described as $2\mu_i + \bar e_i\lambda_i$ for some odd integer $\bar e_i$. In this case we set $e_i = \bar e_i/2$. 

If $f$ has at least one boundary component, the \emph{gleam} of $f$ is defined as $\sum e_i$. (It does not depend on the chosen section and orientation on the $S^1$-fiber.) When $X=f$ is a closed surface, the gleam is defined as the Euler number $e$ of the $S^1$-fibration over $X$. If $X$ is orientable, this equals the self-intersection $[X]\cdot [X]$. 

Let a region $f$ of $X$ be \emph{odd} or \emph{even} if the number of twisted $D^1$-bundles on $\partial f_0$ is respectively odd or even. (This notion depends only on $X$ and not on its embedding.)
Note that the gleam of $f$ is an integer or a half-odd, depending on whether $f$ is even or odd. 

\begin{rem} \label{orientation:rem}
If the orientation of $M$ is switched, all gleams change by a sign.
\end{rem}

\begin{rem} \label{clockwise:rem}
The frame of $L$ determines the gleams of the adjacent faces. 
If we change the frame of a component of $L$ by a clockwise twist, the gleam of the adjacent face of $X$ changes by $+1$. 
\end{rem}

\subsection{Shadows}
The following definition is due to Turaev.
\begin{defn}
A \emph{shadow} is a simple polyhedron with boundary equipped with an integer (resp.~half-odd) decorating each even (resp.~odd) region.
\end{defn}
The discussion above shows that a simple polyhedron $X$ properly embedded in a pair $(M,L)$ is naturally a shadow. A converse holds. We say that the pair $(M,L)$ is a \emph{thickening} of $X$ if $M$ collapses onto $X$.

\begin{prop}[Turaev \cite{Tu}] \label{Turaev:prop}
Every shadow has a unique thickening up to homeomorphism.
\end{prop}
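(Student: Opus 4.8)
The plan is to prove both halves at once by an explicit handle-theoretic construction, and then to check that its output depends only on the shadow data. For existence, I would start from the canonical decomposition of the simple polyhedron $X$ into its singular part $SX$ (a graph, or a disjoint union of circles in the vertex-free case of Proposition~\ref{simple:prop}) and its regions, which are surfaces, possibly with boundary. The singular part, read through the local models of Fig.~\ref{shadow:fig}, determines a canonical $1$-handlebody $H$ — a $0$- and $1$-handle thickening that uses no gleam at all; its horizontal boundary contains, for each region $f$ and each component $\gamma$ of $\partial f$, a solid torus $V_\gamma$ carrying a natural meridian–longitude pair. One then attaches to $H$, for each region $f$, a thickened copy of $f$: writing $f$ as a disc with bands, this is a single $2$-handle together with four-dimensional $1$-handles, and the only choice left is the framing of the $2$-handle's attaching circle in $\partial H$. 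Here the gleam enters: together with the framing prescribed by $L$ on the components of $\partial f$ lying in $L$ and the $D^1$-subbundle induced by $X$ on the other components, the gleam $\sum e_i$ pins this framing down uniquely. Regions that are closed surfaces (components of $X$ with empty singular part) are handled separately: take the $D^2$-bundle over the surface with Euler number equal to the gleam. Assembling, one gets a compact oriented $4$-manifold $M$ that by construction collapses onto $X$ and meets its boundary in the framed link $L$, which settles existence.

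For uniqueness, I would take an arbitrary thickening $(M,L)$ and run the construction backwards. Since $M$ collapses onto $X$ it is a regular neighborhood of $X$, so PL regular-neighborhood theory furnishes a canonical sub-$1$-handlebody $H\subset M$ regularly neighboring $SX$ — and, crucially, $H$ depends only on the abstract polyhedron $X$ through its local models, not on the thickening. The complement $\overline{M\setminus H}$ splits as a union of $D^2$-bundles over the closed-up regions, glued to $H$ along the solid tori $V_\gamma$ in $\partial_{\mathrm{hor}}H$ and along $L\times D^2$. A $D^2$-bundle over a surface with nonempty boundary is determined up to the evident symmetries, and the gluing of each $V_\gamma$ is a solid-torus identification, ambiguous only up to an integral Dehn twist along the meridian; the content of Turaev's definition is exactly that the total twist over the components of $\partial f$ equals the gleam of $f$. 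For a closed region the bundle is classified by its Euler number, again the gleam. Reassembling shows $M$ — together with the framing of $L$, which by Remark~\ref{clockwise:rem} is forced once the adjacent gleams are fixed — is determined up to homeomorphism by $X$ and its gleams. The same bookkeeping shows that the choices in the existence construction (triangulation of $X$, disc-plus-bands presentations of regions, order of handle attachments) do not affect the homeomorphism type, so the two halves mesh.

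The step I expect to be the main obstacle is the verification that the gleam is exactly the right invariant: complete, and not over-determined. One must check that changing the section $s$ of the $S^1$-bundle $\partial_{\mathrm{hor}}B$ and the orientation of the $S^1$-fibre merely redistributes the integers $e_i$ among the components of $\partial f$ while leaving $\sum e_i$ fixed, so the gleam is well defined and captures precisely the residual freedom in attaching the thickened region. Intertwined with this is the careful handling of twisted $D^1$-subbundles: an odd region's gluing torus is attached along a single curve homologous to $2\mu_i+\bar e_i\lambda_i$ with $\bar e_i$ odd, which is why odd regions must carry half-odd gleams and why the factor $2$ appears. One must also confirm that the regular neighborhood of $SX$, and of $X$ itself, is genuinely independent of the ambient thickening — this is where PL regular-neighborhood uniqueness is invoked — and verify that the assembled object is a manifold with $X$ locally flat, which is routine once the local models of Fig.~\ref{shadow:fig} are matched up but is what makes everything glue. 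In the vertex-free setting needed later, all of this simplifies: $SX$ is a union of circles, $H$ is a disjoint union of $Y$-bundle thickenings, and the regions are discs, pairs of pants, and Möbius bands, so the gleam bookkeeping is short.
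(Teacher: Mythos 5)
The paper does not prove this proposition; it cites Turaev \cite{Tu}, so there is no in-text argument to compare against. Your sketch reconstructs the standard proof, and it is sound: the $1$-handlebody thickening of $SX$ is canonical (determined by the local models of Fig.~\ref{shadow:fig}, independent of any gleam data), the regions thicken to $D^2$-bundles, and the residual assembling freedom is exactly what $\sum e_i$ records. You correctly flag the crux --- that $\sum e_i$ is independent of the chosen section $s$ and of the orientation of the $S^1$-fibre, so the gleam is both well defined and a complete invariant of the gluing --- and you handle the twisted $D^1$-subbundle bookkeeping that forces half-odd gleams on odd regions. Two small points to watch in a full write-up: for a region whose underlying surface is non-orientable (a M\"obius strip, say), the $D^2$-bundle producing an \emph{orientable} total space is the twisted bundle rather than the product, so ``determined up to the evident symmetries'' should really read ``forced by orientability of $M$''; and the uniqueness obtained is as an \emph{oriented} $4$-manifold, since reversing the orientation of $M$ negates all gleams (Remark~\ref{orientation:rem}) --- your construction delivers that refinement even though you do not state it.
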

Recall that every homeomorphism is implicitely assumed piecewise-linear. The boundary $\partial M$ of a thickening decomposes into a horizontal and vertical part, see Remark \ref{horizontal:rem}.

\subsection{Blocks} \label{blocks:subsection}
The only pairs $(M,L)$ we consider in this paper are the following.

\begin{defn} \label{block:defn}
A \emph{block} is a compact 4-manifold $M$ with (possibly empty) boundary made of some copies of $S^2\times S^1$. A \emph{framed block} is a pair $(M,L)$ where $M$ is a block and $L$ consists of one fiber $\{pt\}\times S^1$ on each boundary component, with some framing.
\end{defn}
The link $L$ of a famed block $(M,L)$ is in fact determined up to isotopy by the block $M$, but its framing is not. The notion of shadow of a closed manifold was introduced by Turaev in \cite{Tu}. We extend it to blocks, in the spirit of \cite{MaPe}. 

\begin{defn} \label{shadow:defn}
A properly embedded simple polyhedron $X$ in a block $(M,L)$ is a \emph{shadow of $(M,L)$} if $M$ is obtained from a regular neighborhood of $X\cup\partial M$ by adding $3$- and $4$-handles.
\end{defn}

When $M$ is closed, the link $L$ is empty and we get Turaev's definition. 

\begin{rem} \label{1:handlebody:rem}
A properly embedded simple polyhedron $X$ in $(M,L)$ is a shadow of $(M,L)$ if and only if $M\setminus \interior{N(X)}$ is a 1-handlebody.
\end{rem}

A well-known result of Laudenbach and Poenaru together with Proposition \ref{Turaev:prop} show that a shadow of a closed 4-manifold determines the manifold. This result can be extended to blocks. 
\begin{prop} \label{unique:prop}
Let $X$ be a shadow of some famed block $(M,L)$. The framed block is determined by the thickening $(N(X),L)$ of $X$, and hence by $X$ itself.
\end{prop}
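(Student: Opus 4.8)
The plan is to follow the classical argument for closed $4$--manifolds --- where the analogous statement follows from Turaev's Proposition~\ref{Turaev:prop} together with the Laudenbach--Poenaru theorem \cite{LaPo} --- while keeping track throughout of the boundary solid tori around $L$. First I would use Proposition~\ref{Turaev:prop} to reduce to a purely $3$--dimensional gluing question: the shadow $X$ with its gleams determines the thickening $(N(X),L)$ up to homeomorphism, and hence (Remark~\ref{horizontal:rem}) also the splitting $\partial N(X)=\partial_{\rm hor}N(X)\cup\partial_{\rm vert}N(X)$, the family of boundary tori $T_1,\dots,T_r$ of $\partial_{\rm hor}N(X)$, and the meridian curve $m_i\subset T_i$ of the $i$-th solid torus of $\partial_{\rm vert}N(X)$. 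By Definition~\ref{shadow:defn} and Remark~\ref{1:handlebody:rem}, $M=N(X)\cup C$ where $C=M\setminus\interior{N(X)}$ is a $1$--handlebody glued to $N(X)$ along $\partial_{\rm hor}N(X)$; so it suffices to show that $C$ and the gluing are determined by $(N(X),L)$.

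Next I would pin down $C$ and its boundary. Since $\partial M$ is a union of copies of $S^2\times S^1$ (Definition~\ref{block:defn}) and $L$ is a standard fibre in each, $\overline{\partial M\setminus N(L)}$ is a disjoint union of solid tori $V'_1,\dots,V'_r$, so $\partial C=\partial_{\rm hor}N(X)\cup(V'_1\sqcup\cdots\sqcup V'_r)$ with $V'_i$ glued to $T_i$; and the requirement $\partial_{\rm vert}N(X)\cup\overline{\partial M\setminus N(L)}\isom\bigsqcup(S^2\times S^1)$ forces the meridian of $V'_i$ to be $m_i$. Hence $\partial C$ is a closed $3$--manifold determined by $(N(X),L)$. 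As $C$ is a $1$--handlebody, each of its components has connected boundary, so its components correspond to those of $\partial C$, and a component of $C$ is $D^4$ or $\natural_k(S^1\times D^3)$ according to whether its boundary is $S^3$ or $\#_k(S^2\times S^1)$; thus $C$ is determined up to homeomorphism.

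Finally I would remove the ambiguity in the attaching homeomorphism $\psi\colon\partial_{\rm hor}N(X)\xrightarrow{\ \sim\ }\partial C\setminus\bigsqcup\interior{V'_i}$. Two such maps $\psi,\psi'$ arising from two framed blocks with shadow $X$ both carry $m_i$ onto the meridian of $V'_i$, so $g=\psi'\circ\psi^{-1}$ is a self--homeomorphism of $\partial C\setminus\bigsqcup\interior{V'_i}$ preserving each $T_i$ together with the meridian of $V'_i$ on it; hence $g$ extends over the $V'_i$ to a self--homeomorphism $\hat g$ of the closed $3$--manifold $\partial C$. By the Laudenbach--Poenaru theorem \cite{LaPo} (and the PL Schoenflies theorem on the $S^3$ summands), $\hat g$ extends to a self--homeomorphism $G$ of the $1$--handlebody $C$, and then $\mathrm{id}_{N(X)}\cup G$ is a homeomorphism $N(X)\cup_\psi C\to N(X)\cup_{\psi'}C$; since it is the identity on $N(X)\supset\partial_{\rm vert}N(X)\supset L$, it respects $L$ and its framing. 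This shows that $(M,L)$ is well defined, proving the proposition.

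The step I expect to be the main obstacle is this last one: one must check that the ambiguity in gluing $C$ to $N(X)$ is exactly absorbed by the self--homeomorphisms of $\#_k(S^2\times S^1)$ that extend over $\natural_k(S^1\times D^3)$, and in particular that the boundary solid tori $V'_i$ cause no trouble --- i.e.\ that the meridians are genuinely forced and that $g$ really does extend over the $V'_i$. Everything else is bookkeeping built on Proposition~\ref{Turaev:prop} and the classical result of \cite{LaPo}.
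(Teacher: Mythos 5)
Your proof is correct and follows essentially the same route as the paper's: both reconstruct the full boundary $\partial M$ from $\partial_{\rm vert}N(X)$ by forcing $V_i\cup V_i'\isom S^2\times S^1$, and both absorb the ambiguity in attaching the complementary $1$--handlebody via Laudenbach--Poenaru; the paper phrases this slightly more compactly by forming $R=N(X\cup\partial M)$ and invoking the standard ``attaching $3$-- and $4$--handles is unique'' corollary, whereas you unpack it as extending a self--homeomorphism of $\partial C$ over $C$. One tiny note: for the $S^3$ components of $\partial C$ what you want is the Alexander (coning) trick rather than the PL Schoenflies theorem.
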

\begin{proof}
The shadow $X$ determines its thickening $(N(X),L)$ by Proposition \ref{Turaev:prop}. The vertical boundary $\partial_{\rm vert}N(X)$ consists of one solid torus $V_i$ fibering on each component $\gamma_i$ of $\partial X$. We can reconstruct the full boundary $\partial M$ by attaching a mirror copy $V_i'$ of $V_i$ along $\partial V_i$, so that $V_i\cup V_i'\isom S^2\times S^1$, see Fig.~\ref{determina:fig}. 

\begin{figure}
\begin{center}
\includegraphics[width = 12.5cm]{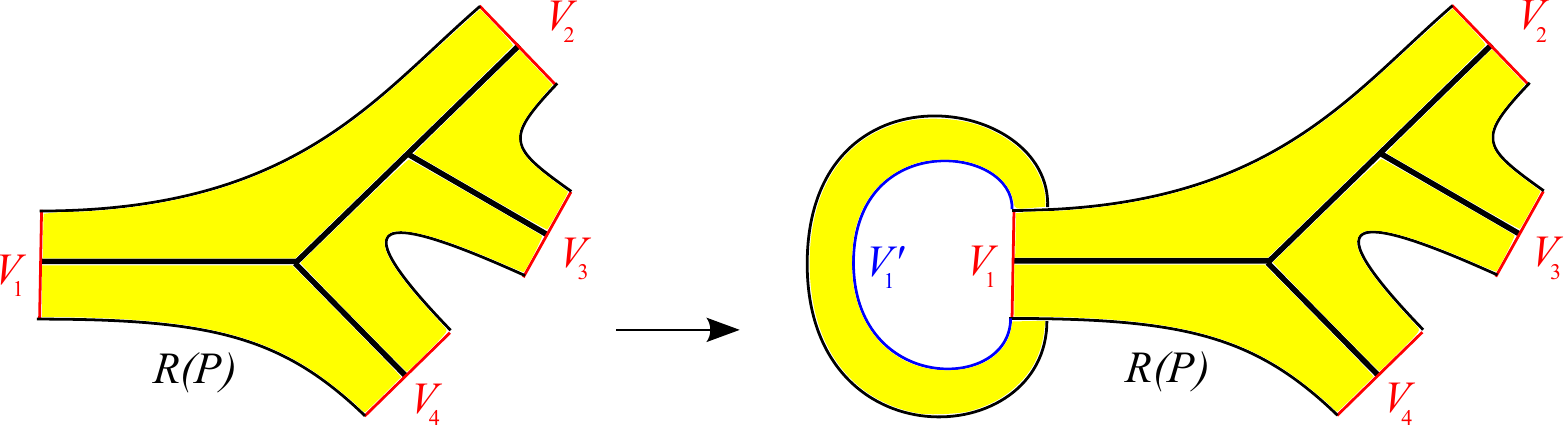}
\nota{How to reconstruct $(M,L)$ from $X$. Each vertical solid torus $V_i\subset\partial N(X)$ is doubled, so that $V_i\cup V_i'\isom S^2\times S^1$. (Here, this is shown for $i=1$.) 
}
\label{determina:fig}
\end{center}
\end{figure}

The regular neighborhood $R= N(X\cup V_1'\cup \ldots \cup V_k') = N(X\cup\partial M)$ in $M$ is uniquely determined by collaring each $V_i'$. The complement of $R$ in $M$ consists of 3- and 4-handles: by Laudenbach-Poenaru's theorem \cite{LaPo} the manifold $M$ does not depend on the way these handles are attached. Finally, the link $L$ is $\partial X$ and its framing is determined by the gleams of the incident faces, see Remark \ref{clockwise:rem}.
\end{proof}

Proposition \ref{unique:prop} talks about uniqueness. Actually, its proof also shows the following existence result. Recall that the boundary of a connected 1-handlebody is homeomorphic  to $\#_h (S^2\times S^1)$, for some $h$.

\begin{prop} \label{bordo:prop}
Let $X$ be a shadow. It is the shadow of some block $(M,L)$ if and only if the boundary $\partial N(X)$ of its thickening is homeomorphic to $\#_h (S^2\times S^1)$ for some $h\geqslant 0$.
\end{prop}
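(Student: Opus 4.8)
The plan is to analyze the structure of $\partial N(X)$ directly and to use the reconstruction recipe already established in the proof of Proposition \ref{unique:prop}. Recall that $X$ is a simple polyhedron with boundary $\partial X = \gamma_1 \cup \ldots \cup \gamma_k$ (a disjoint union of circles), and that the vertical boundary $\partial_{\rm vert} N(X)$ consists of one solid torus $V_i$ fibering over each $\gamma_i$, while the horizontal boundary $\partial_{\rm hor} N(X)$ is what remains. The point of the definition of a framed block is precisely that we want to cap each vertical solid torus $V_i$ with a mirror copy $V_i'$ so that $V_i \cup V_i' \isom S^2 \times S^1$, obtaining a manifold whose boundary is a union of copies of $S^2 \times S^1$, and then to attach only $3$- and $4$-handles to kill off the rest. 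So the question is purely about which closed $3$-manifold one is allowed to fill with handles of index $\geqslant 3$.

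First I would prove the ``only if'' direction. If $X$ is a shadow of a block $(M,L)$, then by Remark \ref{1:handlebody:rem} the complement $M \setminus \interior{N(X)}$ is a $1$-handlebody $H$, and $\partial H$ has two pieces: the part glued to $\partial_{\rm hor} N(X)$ and the part lying on $\partial M$. But $\partial M$ consists of copies of $S^2 \times S^1$, each cut by $X$ along a link $L$ into vertical solid tori $V_i$; so $H$ is a $1$-handlebody whose boundary is $\partial_{\rm hor} N(X)$ with some solid tori removed. The cleanest way to see the constraint on $\partial N(X)$ itself: glue the mirror copies $V_i'$ to $N(X)$ to form $N(X) \cup (\bigcup V_i') = N(X \cup \partial M)$; since $X$ is a shadow, $M$ is obtained from this by adding $3$- and $4$-handles, so $M \setminus \interior{N(X\cup\partial M)}$ is built from handles of index $\geqslant 3$, equivalently (turning it upside down) it is a $1$-handlebody $H'$, and $\partial H' = \partial_{\rm hor} N(X) = \partial N(X)$ once we observe that $\partial N(X\cup\partial M)$ and $\partial N(X)$ differ only by replacing each $V_i$ with $V_i'$, which does not change the horizontal part. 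Since the boundary of a connected $1$-handlebody is $\#_h (S^2 \times S^1)$ for some $h \geqslant 0$ and boundaries of disjoint unions are connected sums of such, we get $\partial N(X) \isom \#_h(S^2\times S^1)$. (One should double-check that $\partial N(X)$ is connected when $X$ is connected; if $X$ is disconnected one works component by component and the statement as phrased allows this.)

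For the ``if'' direction, suppose $\partial N(X) \isom \#_h(S^2 \times S^1)$. Form the closed manifold $\widehat{N} = N(X) \cup_{\partial} K$ where $K$ is a $1$-handlebody with $\partial K \isom \#_h(S^2\times S^1)$ — this is possible and, by Laudenbach–Poenaru \cite{LaPo}, $\widehat N$ does not depend on the gluing, so $X$ is a shadow of the closed manifold $\widehat N$ in the ordinary Turaev sense. But this is not quite what we want: we want a \emph{block}, i.e.\ we want to leave the vertical solid tori $V_i$ as boundary rather than capping them. So instead: take the thickening $(N(X),L)$ guaranteed by Proposition \ref{Turaev:prop}, attach the mirror copies $V_i'$ along the $\partial V_i$ to produce boundary components $S^2\times S^1$, and call the result $N' = N(X\cup\partial M)$. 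Its remaining ``free'' boundary is exactly $\partial_{\rm hor}N(X)$-modified, and as computed above this is still $\#_h(S^2\times S^1)$; so we may cap it off with $3$- and $4$-handles (via a $1$-handlebody complement, using Laudenbach–Poenaru again for well-definedness), obtaining a block $M$ with $\partial M = \bigcup_i (V_i\cup V_i')$ a union of copies of $S^2\times S^1$ and with $L = \partial X$ framed according to the gleams as in Remark \ref{clockwise:rem}. By construction $M$ is obtained from $N(X\cup\partial M)$ by adding $3$- and $4$-handles, so $X$ is a shadow of $(M,L)$ by Definition \ref{shadow:defn}.

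The main obstacle, and the step I would be most careful about, is the bookkeeping of the boundary: one must verify precisely that replacing each vertical solid torus $V_i$ by its mirror $V_i'$ does not alter the homeomorphism type of the ``free'' part of the boundary, so that the condition ``$\partial N(X)\isom\#_h(S^2\times S^1)$'' is genuinely equivalent to ``one can attach $3$- and $4$-handles after doubling the $V_i$''. This is essentially the observation that $V_i$ and $V_i'$ are glued along the same torus $\partial V_i = \partial_{\rm vert}N(X)\cap\partial_{\rm hor}N(X)$ and both are solid tori, so the union $\partial_{\rm hor}N(X)\cup V_i'$ is a Dehn filling of $\partial N(X)$ along a curve that bounds in $V_i$ as well — hence the filled manifold and $\partial N(X)$ are related by a surgery that undoes itself, and one sees the horizontal part is unchanged. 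Once this is pinned down, everything else is a direct application of Turaev's thickening theorem and Laudenbach–Poenaru, exactly as in the proof of Proposition \ref{unique:prop}, which is why the author flags this as essentially contained in that proof.
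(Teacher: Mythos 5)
Your proposal is correct and follows exactly the route the paper has in mind: the paper gives no separate proof, only the remark that the argument of Proposition~\ref{unique:prop} also shows existence, and you unpack precisely that argument, correctly identifying and resolving the one genuine point of care (that replacing each vertical solid torus $V_i$ by its mirror $V_i'$ is a Dehn filling along the same slope, since both have meridian $\partial D^2\times\{pt\}$ on the Heegaard torus of $S^2\times S^1$, so the free boundary of $N(X\cup\partial M)$ is homeomorphic to $\partial N(X)$). Combined with the facts that $\partial N(X)$ is connected and that a connected $1$-handlebody has boundary $\#_h(S^2\times S^1)$, this gives both directions as you describe.
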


\begin{rem} \label{abuse:rem}
Let $X$ be a shadow of some framed block $(M,L)$. By modifying the gleams on the regions incident to $L$ we get a shadow of the same block $M$, with a possibly different framing $L'$, see Remark \ref{clockwise:rem}. With a little abuse we therefore sometimes omit the gleams on these regions, and call the resulting partially decorated polyhedron a \emph{shadow} of the (unframed) block $M$. (The unframed link $L$ is determined by $M$, so we also omit it.) 
\end{rem}

\subsection{Examples} \label{examples:subection}
The 4-sphere has a shadow without vertices.
\begin{prop} \label{sphere:prop}
The 2-sphere with gleam 0 is a shadow for $S^4$.
\end{prop}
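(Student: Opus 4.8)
The plan is to identify the $4$-dimensional thickening of the candidate shadow $X=S^2$ with gleam $0$ explicitly, and then recognize the closed manifold obtained by capping it off with $3$- and $4$-handles.

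First I would compute $N(X)$. The polyhedron $X$ is a closed orientable surface carrying the single region $X$ itself, so by definition its gleam is the Euler number of the $S^1$-fibration of $\partial_{\rm hor}N(X)$ over $X$; equivalently $N(X)$ is the $D^2$-bundle over $S^2$ with that Euler number. Here the gleam is $0$, hence $N(X)\isom S^2\times D^2$ and $\partial N(X)\isom S^2\times S^1$. Since $S^2\times S^1=\#_1(S^2\times S^1)$, Proposition~\ref{bordo:prop} guarantees that $X$ is the shadow of some closed oriented $4$-manifold $M$ (the link $L$ is empty because $\partial X=\emptyset$), and by Remark~\ref{1:handlebody:rem} the manifold $M$ is obtained from $N(X)$ by gluing, along $S^2\times S^1$, the $1$-handlebody with boundary $\#_1(S^2\times S^1)$, namely $D^3\times S^1$ (equivalently, by attaching its dual $3$- and $4$-handles). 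Thus
$$M \isom (S^2\times D^2)\cup_{S^2\times S^1}(D^3\times S^1).$$

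Now the right-hand side is precisely the standard decomposition $\partial(D^3\times D^2)=(\partial D^3\times D^2)\cup_{\partial D^3\times\partial D^2}(D^3\times\partial D^2)$, and $D^3\times D^2\isom D^5$, so this boundary is $S^4$. By Proposition~\ref{unique:prop} (ultimately Laudenbach--Poenaru's theorem \cite{LaPo}, used also in Proposition~\ref{Turaev:prop}) the way the $3$- and $4$-handles are attached does not change the homeomorphism type of $M$, so $M\isom S^4$, as desired.

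There is essentially no hard point: the only things to check are that the complement of $N(X)$ really is a $1$-handlebody, which holds because $S^2\times S^1$ bounds $D^3\times S^1$, and that the resulting closed manifold is genuinely $S^4$ rather than some other filling — which is exactly what Laudenbach--Poenaru rules out. (Alternatively, one could argue directly that an unknotted $2$-sphere $S^2\subset S^4$ has trivial normal bundle, hence gleam $0$, and complement $D^3\times S^1$, a $1$-handlebody, so it is a shadow of $S^4$; the computation above is the same thing read backwards.)
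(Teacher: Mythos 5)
Your proof is correct and takes essentially the same route as the paper's: compute the thickening $N(X)\isom S^2\times D^2$, observe that $\partial N(X)\isom S^2\times S^1$, and cap off with $3$- and $4$-handles to obtain $S^4$. The paper states this in two sentences; you supply the justification (via Proposition~\ref{bordo:prop}, Remark~\ref{1:handlebody:rem}, and Laudenbach--Poenaru) that the result is indeed $S^4$ and is well-defined, which is implicit in the paper's use of Propositions~\ref{Turaev:prop} and \ref{unique:prop}.
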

\begin{proof}
Its thickening is $S^2\times D^2$. By adding a 3- and a 4-handle we get $S^4$.
\end{proof}

Complex projective space and the blocks in $\calS_0$ have shadows without vertices.

\begin{prop} \label{plane:prop}
Any complex line is a shadow for $\matCP^2$. It is a 2-sphere with gleam 1.
\end{prop}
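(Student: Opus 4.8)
The plan is to read both assertions off the classical minimal handle picture of $\matCP^2$. Fix the complex orientation on $\matCP^2$ and let $\ell\isom\matCP^1$ be a complex line; after choosing a compatible triangulation, $\ell$ is a closed surface PL-embedded in $\matCP^2$, with empty singular part and empty boundary, and locally flat (in the sense of Definition~\ref{properly:defn}) since a smooth submanifold lies locally in a $3$-dimensional slice $D^3\times\{0\}\subset D^3\times D^1$. Thus $\ell$ is automatically a vertex-free simple polyhedron properly embedded in $(\matCP^2,\emptyset)$, and it carries a well-defined gleam; the only points left to verify are that the complement of a regular neighborhood is built of $3$- and $4$-handles, and that the gleam equals $1$. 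Since $\matCP^1\isom S^2$, this simultaneously gives the ``$2$-sphere'' part of the statement.

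First I would identify a regular neighborhood $N(\ell)$ with the unit disc bundle of the normal bundle of $\ell$ in $\matCP^2$, which is $\calO(1)$ over $\matCP^1$ — by adjunction $c_1$ of the normal bundle is $c_1(T\matCP^2)|_\ell - c_1(T\matCP^1) = 3-2 = 1$ — i.e.\ the $D^2$-bundle over $S^2$ with Euler number $+1$, whose boundary is $S^3$. For the complement I would use the affine description $\matCP^2 = \matC^2\cup\ell$ with $\ell$ the line at infinity: in the affine chart $\matC^2\isom\matR^4$, a tubular neighborhood of $\ell$ is the exterior of a large round ball, so $\matCP^2\setminus\interior{N(\ell)}$ is a closed $4$-ball, i.e.\ a single $4$-handle. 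Hence $\matCP^2\setminus\interior{N(\ell)}$ is a $1$-handlebody, and by Remark~\ref{1:handlebody:rem} (equivalently by Definition~\ref{shadow:defn} with $L=\emptyset$) the surface $\ell$ is a shadow of $\matCP^2$; it has no vertices.

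Finally I would compute the gleam. Since $\ell$ is a closed orientable region, by the definition of gleam recalled above it equals the self-intersection $[\ell]\cdot[\ell]$ in $\matCP^2$ with the complex orientation; as $[\ell]$ generates $H_2(\matCP^2;\matZ)\isom\matZ$ and the intersection form of $\matCP^2$ is $\langle+1\rangle$ — equivalently, two distinct complex lines meet transversally in one point, with positive sign by positivity of complex intersections (B\'ezout) — this is $1$. I do not expect a real obstacle here: the only delicate point is convention bookkeeping, namely checking that the complex orientation yields gleam $+1$ and not $-1$ (cf.\ Remark~\ref{orientation:rem}), and making sure the affine-chart picture really presents $\matCP^2\setminus\interior{N(\ell)}$ as a $4$-ball — which is precisely the standard fact that $\matCP^2$ has a handle decomposition consisting of a $0$-handle, a $2$-handle attached to the unknot with framing $+1$, and a $4$-handle.
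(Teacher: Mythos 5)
Your argument is correct and is essentially the paper's own proof, unpacked: the paper likewise observes that the complement of a regular neighborhood of a line is a $4$-ball (so the line is a shadow) and that for a closed orientable surface the gleam equals its self-intersection, which is $+1$ for a complex line. The extra detail you add — the normal bundle computation, the affine-chart picture of the complementary ball, and the orientation bookkeeping — is accurate and fills in exactly what the paper leaves implicit.
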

\begin{proof}
The complement of an open regular neighborhood is a disc. The gleam equals its self-intersection number.
\end{proof}

We turn to the blocks in $\calS_0$.

\begin{prop} \label{blocks:prop}
The (unframed) blocks
$$M_{11}, M_2, M_{111}, M_{12}, M_3, N_1, N_2, N_3$$
have shadows homeomorphic to (respectively)
$$Y_{11}, Y_2, Y_{111}, Y_{12}, Y_3, D^2, A^2, P^2.$$
\end{prop}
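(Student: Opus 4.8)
The plan is to dispatch the eight blocks in two families, the $N_i$ and the $M_{i_1\cdots i_k}$, in each case realizing the claimed polyhedron as a properly embedded simple polyhedron whose complement is a $1$-handlebody, and then invoking Remark~\ref{1:handlebody:rem}. Every region of each of $D^2$, $A^2$, $P^2$, $Y_2$, $Y_{11}$, $Y_{12}$, $Y_{111}$, $Y_3$ is incident to the boundary, so by Remark~\ref{abuse:rem} no gleams have to be recorded and only the underlying polyhedron matters.

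For the blocks $N_i$ I would first undo the drilling. The sphere $\{pt\}\times S^2\subset S^2\times S^2$ has trivial normal bundle, so drilling it removes $D^2\times S^2$ and leaves $(S^2\setminus\interior{D^2})\times S^2$; drilling $i$ parallel such spheres therefore gives $N_i\isom\Sigma_i\times S^2$, where $\Sigma_i$ is a disc, an annulus, or a pair-of-pants for $i=1,2,3$ respectively (for $i=2$ this recovers the identification $N_2=S^2\times S^1\times[0,1]$). Embed $\Sigma_i$ as the properly embedded surface $\Sigma_i\times\{pt\}$; its boundary is one fibre $\{pt\}\times S^1$ on each component of $\partial N_i$, its regular neighbourhood is $\Sigma_i\times D^2$, and the complement $\Sigma_i\times(S^2\setminus\interior{D^2})=\Sigma_i\times D^2$ is a $1$-handlebody because the compact planar surface $\Sigma_i$ collapses onto a graph. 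Hence $\Sigma_i$ is a shadow of $N_i$.

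For the blocks $M_{i_1\cdots i_k}$ I would use the trivial fibration $S^3\times S^1\to S^1$. Since homotopy of links implies isotopy in a $4$-manifold, the drilled link $L=K_1\cup\cdots\cup K_k$ with $|[K_j]|=i_j$ can be isotoped into braid position: transverse to every fibre $S^3\times\{\theta\}$, meeting it in $N=\sum_j i_j$ points, with the $S^1$-monodromy permuting those points by a permutation $\phi$ of cycle type $(i_1,\ldots,i_k)$, the $i_j$ points lying on $K_j$ forming a single $i_j$-cycle. Drilling $L$ then exhibits $M_{i_1\cdots i_k}$ as the mapping torus of $(\Sigma,\phi)$, where $\Sigma=S^3\setminus(N\text{ open balls})$ and $\phi$ permutes its $N$ boundary spheres as above. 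Now choose inside $\Sigma$ a $\phi$-invariant, unknotted, properly embedded tree $T$ joining all $N$ boundary spheres — a single central vertex with $N$ legs, the legs permuted by $\phi$. Sweeping $T$ around the base yields a properly embedded simple polyhedron $X\subset M_{i_1\cdots i_k}$ with $\partial X$ one fibre on each boundary component, and $X$ is precisely $Y_{11}$, $Y_2$, $Y_{12}$, $Y_{111}$ or $Y_3$: two legs with trivial, resp.\ transposing, monodromy give the annulus, resp.\ the M\"obius band, while three legs with the three possible cycle types on $\{1,2,3\}$ give $Y_{111}$, $Y_{12}$, $Y_3$ (in the last two cases the legs lying in a $2$- or $3$-cycle sweep out the doubly- or triply-winding region, matching the asymmetric decoration of $Y_{12}$). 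To finish I would compute the complement: the regular neighbourhood $N_\Sigma(T)$ together with the $N$ removed balls is, inside $S^3$, a neighbourhood of a collapsible set, hence a ball, so $\Sigma\setminus\interior{N_\Sigma(T)}\isom D^3$; and since $\phi$ preserves $T$ it restricts to an orientation-preserving homeomorphism of this $D^3$, which is isotopic to the identity (Alexander's trick together with connectedness of the orientation-preserving homeomorphisms of $S^2$). Therefore $M_{i_1\cdots i_k}\setminus\interior{N(X)}$ is the mapping torus of $(D^3,\phi|_{D^3})\isom D^3\times S^1$, a $1$-handlebody, and Remark~\ref{1:handlebody:rem} applies; in particular $M_{11}=N_2$ receives the annulus as shadow from both descriptions.

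The main obstacle I expect lies in the two bookkeeping steps of the $M$-block family. First, one must be sure that the drilled link genuinely admits a braid position over $S^1$ with the stated monodromy cycle type — this is exactly the point where "homotopy implies isotopy in dimension $4$" is used, and one has to check that a component $K_j$ of winding number $i_j$ contributes an $i_j$-cycle (rather than $i_j$ fixed points) to $\phi$. Second, one must verify that $T$ can be chosen simultaneously $\phi$-invariant, unknotted, and with complementary ball on which $\phi$ acts isotopically to the identity; the slightly delicate case is $Y_{12}$, where the three legs must be placed symmetrically with respect to the transposition so that the leg swept into the doubly-winding region is the one marked in the definition of $Y_{12}$.
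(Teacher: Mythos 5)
Your proof is correct and follows the same strategy as the paper: produce a natural proper embedding of the claimed polyhedron in the block and check that the complement of a regular neighborhood is a $1$-handlebody, then invoke Remark~\ref{1:handlebody:rem}. The paper's proof is essentially a one-line appeal to this strategy (spelling out only the product case $Y_{111}=Y\times S^1\subset P^3\times S^1 = M_{111}$ in a remark); your treatment fills in the details, in particular the mapping-torus picture of $M_{i_1\cdots i_k}$ with a $\phi$-invariant tree, which neatly handles the nontrivial-monodromy blocks $M_2, M_{12}, M_3$ and correctly accounts for the asymmetry of $Y_{12}$.
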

\begin{proof}
It is easy to find a natural proper embedding of each polyhedron in the corresponding block. The complement (of an open regular neighborhood) of each polyhedron is then easily seen to collapse onto a 1-dimensional polyhedron: this implies that it is a 1-handlebody; we are hence done by Remark \ref{1:handlebody:rem}. 
\end{proof}

\begin{rem} As an example, let us denote by $P^3$ the 3-dimensional pair-of-pants, \emph{i.e.}~the 3-sphere $S^3$ minus three open balls. We have $M_{111} = P^3\times S^1$. Let $Y$ be the cone over 3 points. The polyhedron $Y_{111}$ is homeomorphic to $Y\times S^1$. It is easy to visualize $Y_{111}$ as a shadow of $M_{111}$. Embed $Y$ inside $P^3$ as in Fig.~\ref{spider:fig}. Note that $P^3\setminus \interior{N(Y)} \isom D^3$. Therefore $M_{111}\setminus \interior{N(Y_{111})} \isom D^3\times S^1$, a 1-handlebody.
\end{rem}

\begin{figure}
\begin{center}
\includegraphics[width = 6cm]{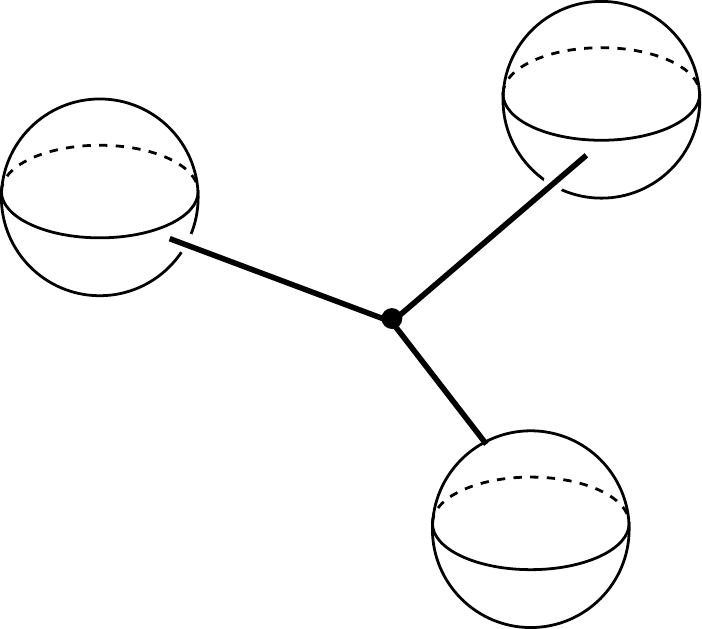}
\nota{Embed $Y$ in the 3-dimensional pair-of-pants $P^3$. The complement is an (open) disc.}
\label{spider:fig}
\end{center}
\end{figure}

\section{Operations with shadows} \label{operations:section}
Two blocks can be combined to produce a new block in two ways: by an internal connected sum, or by glueing two boundary components (the latter operation is called an \emph{assembling}, following the terminology of \cite{MaPe}). We show here how both these operations can be easily translated into some moves on shadows. An important feature of these moves is that they do not produce any new vertex. 

We recover another proof of the easy part of Theorem \ref{main:teo}, namely that every manifold of type $M'\#_h\matCP^2$ (with $M'$ graph manifold generated by $\calS_0$) has complexity zero. (Another proof was given in Subsection \ref{outline:subsection}.)

\subsection{Connected sum}
\begin{figure}
\begin{center}
\includegraphics[width = 9 cm]{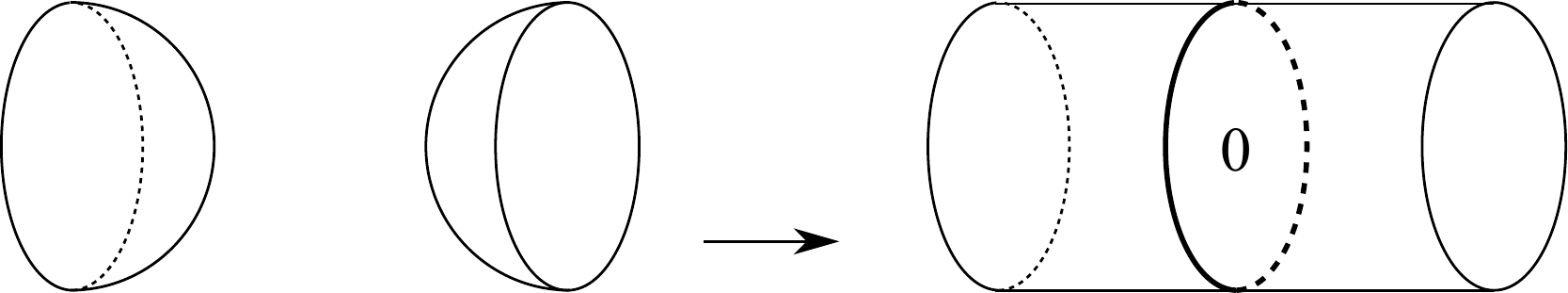}
\nota{This move on shadows corresponds to a connected sum of manifolds.}
\label{sum:fig}
\end{center}
\end{figure}

A \emph{connected sum} in a (possibly disconnected) framed block $(M,L)$ consists of removing the interiors of two $n$-discs and identifying the new boundary spheres via an orientation-reversing map. (We use this slightly more general definition instead of the usual one, where $M$ has two connected components each containing one ball.)

\begin{prop} \label{sum:prop}
The move in Fig~\ref{sum:fig} transforms a shadow $X_1$ of some framed block $(M_1,L_1)$ into a shadow $X_2$ of some other framed block $(M_2,L_2)$, and viceversa. The pair $(M_2,L_2)$ is a connected sum of $(M_1,L_1)$.
\end{prop}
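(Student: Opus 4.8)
The plan is to analyze what the move in Fig.~\ref{sum:fig} does to the thickening, using the characterization of shadows from Remark \ref{1:handlebody:rem} (a properly embedded simple polyhedron $X$ in $(M,L)$ is a shadow iff $M\setminus\interior{N(X)}$ is a 1-handlebody). The move replaces (locally) two separate regions of $X_1$ by a configuration in which the two regions are joined across a new small ``neck'' region; equivalently, it introduces a 1-handle connecting the two disc bundles and an extra piece of the polyhedron that records a connected sum. First I would identify precisely the local model: the move takes two discs $D_1, D_2$ sitting in two (possibly distinct) regions $f_1, f_2$ of $X_1$ — with $D_i$ bounding in the thickening a properly embedded 4-ball $B_i$ carved out by the disc — and produces $X_2$ by tubing. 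On the level of thickenings, passing from $N(X_1)$ to $N(X_2)$ amounts to attaching a 1-handle to $N(X_1)$ along two 3-balls in $\partial_{\rm hor}N(X_1)$ lying over $D_1$ and $D_2$, together with a 2-handle; the point is that this operation is exactly the trace of a connected sum performed inside the manifold.

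The key steps, in order, are: (1) Set up the local picture of the move and the gleams, checking that the gleam conditions in Fig.~\ref{sum:fig} are precisely those needed for the local pieces of $X_2$ to bound the trivial disc bundles / balls that make the argument go through. (2) Compute $N(X_2)$ in terms of $N(X_1)$: show that $N(X_2)$ is obtained from $N(X_1)$ by removing two standard 4-balls (neighborhoods of the centers of the two added discs) and identifying the resulting boundary $S^3$'s, i.e.\ $N(X_2) \cong N(X_1)$ with a connected sum performed, plus a cancelling 1-handle/2-handle pair whose effect is understood. (3) Verify the shadow condition: since $M_1\setminus\interior{N(X_1)}$ is a 1-handlebody $H$, show that $M_2\setminus\interior{N(X_2)}$ is obtained from $H$ by an operation that keeps it a 1-handlebody — the relevant move on the complementary side is adding a 3-handle (dual to the 2-handle above) and possibly a 0-/1-handle, and one checks this does not leave the class of 1-handlebodies after absorbing the new 3- and 4-handles into the shadow condition. (4) Track the boundary: confirm $\partial N(X_2)$ is still $\#_h(S^2\times S^1)$ (using Proposition \ref{bordo:prop}) and that the vertical solid tori are unchanged, so $L_2$ is the framed link $\partial X_2$ with the framing dictated by the gleams via Remark \ref{clockwise:rem}. (5) Conclude, via Proposition \ref{unique:prop}, that $(M_2,L_2)$ is well-defined and is the connected sum of $(M_1,L_1)$ in the sense defined above; the reverse implication is symmetric since the move is invertible.

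The main obstacle I expect is step (3): carefully bookkeeping the complementary 1-handlebody through the move while simultaneously keeping track of which handles are ``free'' to be absorbed into the $3$- and $4$-handles allowed by Definition \ref{shadow:defn}. One has to be sure that the 2-handle introduced by the tubing, viewed from the complement, is a 3-handle that can be slid off without disturbing the 1-handle structure of $M_i\setminus\interior{N(X_i)}$ — this is where the gleam hypotheses in Fig.~\ref{sum:fig} are really used, since a wrong gleam would produce a non-trivial disc bundle and the added 2-handle would have the wrong framing to cancel. A secondary, more routine, subtlety is orientations: one must check that the identification of the new boundary spheres is orientation-reversing, which is automatic from the local model once $M$ is oriented and the gleams are chosen as prescribed. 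I would handle the orientation point with a direct inspection of the local picture and defer the handle-cancellation bookkeeping to a short explicit computation in the figure's local chart.
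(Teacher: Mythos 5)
Your overall strategy is the same as the paper's: compare the thickenings $N(X_1)$ and $N(X_2)$, and use Remark \ref{1:handlebody:rem} to verify the complement stays a 1-handlebody. However, step~(2) misstates what happens to the thickening, and this is the crux of the argument. You describe $N(X_2)$ as ``$N(X_1)$ with a connected sum performed, plus a cancelling 1-handle/2-handle pair.'' An internal connected sum on $N(X_1)$ (removing two interior 4-balls and identifying the $S^3$'s) would leave $\partial N(X_1)$ unchanged and would produce $N(X_1)\#(S^3\times S^1)$; this is not what happens. What the paper shows is that $N(X_2)$ is $N(X_1)$ with a single \emph{boundary} 1-handle attached, full stop --- no 2-handle, no cancelling pair. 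These are genuinely different operations (the latter changes $\partial N$, the former does not), and your proposal conflates them in a way that would derail the handle bookkeeping in step~(3).

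The idea your proposal is missing is the clean reduction that makes this transparent. Because the new disc has gleam $0$, the right-hand portion of Fig.~\ref{sum:fig} lies inside a 3-dimensional slice $D^3\subset N(X_2)$; then the auxiliary polyhedron move of Fig.~\ref{sum2:fig}, which does not change the regular neighborhood, rewrites $X_2$ so that $N(X_2)=N(X_1)\cup(\text{1-handle})$ is manifest. From there the conclusion is immediate: $(M_2,L_2)$ is obtained from $(M_1,L_1)$ by removing two 4-balls straddling $\partial N(X_1)$ (one half-ball in $N(X_1)$, one half-ball in the complementary 1-handlebody $H$) and identifying the boundary spheres orientation-reversingly; this adds a boundary 1-handle to each side, turning $N(X_1)$ into $N(X_2)$ and $H$ into another 1-handlebody, so $X_2$ is a shadow of $(M_2,L_2)$. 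Your planned worries about 2-handle framings, dual 3-handles, and absorbing handles into the shadow condition all evaporate once you have this picture; none of that bookkeeping is needed, and the ``main obstacle'' you flag is an artifact of the wrong description of $N(X_2)$.
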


\begin{proof}
Consider the 4-dimensional thickenings $N(X_1)$, $N(X_2)$ of $X_1$, $X_2$. Since the gleam of the disc is zero, the portion on the right embeds in a three-dimensional slice, \emph{i.e.}~in a 3-disc $D^3\subset N(X_2)$. The move in Fig.~\ref{sum2:fig} does not change the thickening of $X_2$. Therefore $N(X_2)$ is obtained from $N(X_1)$ by adding a 1-handle. This easily implies the assertion.
\end{proof}

\begin{figure}
\begin{center}
\includegraphics[width = 9 cm]{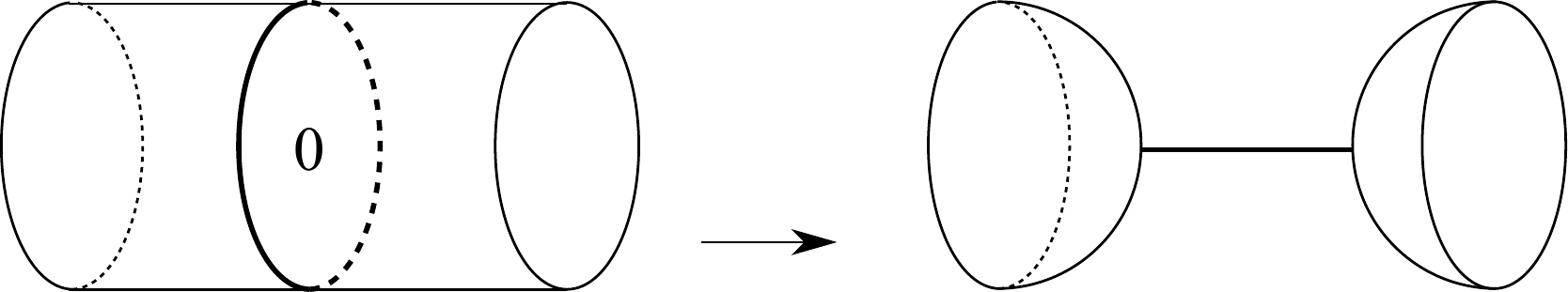}
\nota{This move does not modify the regular neighborhood of the polyhedron.}
\label{sum2:fig}
\end{center}
\end{figure}

\subsection{Immersed shadows}
An \emph{immersed shadow} is a properly embedded polyhedron $X$ in $(M,L)$ which is everywhere simple, except at finitely many double points. More precisely, the link of every point $x$ of $X$ is either a circle with three radii, a circle with a diameter, a circle, a segment, or two circles. We require implicitly as above that $X$ be locally flat, \emph{i.e.}~the star of each point is standardly embedded. The first 4 types must be embedded in a 3-dimensional slice as in Fig~\ref{shadow:fig}, and the new type is embedded as two transverse discs intersecting in $x$.

An immersed shadow $X$ is also equipped with gleams. It is naturally the image of a shadow $\widetilde X \to X$ along a map which is everywhere injective except at the double points. The regular neighborhood of $X$ in $M$ can be naturally pulled back to an abstract regular neighborhood $N(\widetilde X)$ of $\widetilde X$, which induces some gleams on $\widetilde X$. These gleams can then be projected to $X$. 

\begin{lemma} \label{perturb:lemma}
Every double point of $X$ can be locally perturbed as in Fig.~\ref{perturb:fig}, with the gleams changed as shown (there are two possible moves). The move does not change the regular neighborhood of the polyhedron.
\end{lemma}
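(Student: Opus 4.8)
The assertion is local: outside a small round $4$-ball $B$ centred at the double point $x$ nothing is touched, so it suffices to understand what happens inside $B$. There $\widetilde X$ (equivalently $X$) is the union of two transverse, properly embedded, locally flat $2$-discs $D_1, D_2$, and the abstract regular neighbourhood $N(\widetilde X)\cap B$ is the standard thickening of two transverse discs: a PL $4$-ball $W$ whose boundary sphere $S^3=\partial B$ contains the framed Hopf link $H=(D_1\cup D_2)\cap S^3$, the two framings of $H$ being recorded by the gleam contributions of $D_1$ and $D_2$ along the two circles of $H$. The plan is to produce, rel a collar of $\partial B$, a genuine simple polyhedron $Q\subset B$ — the picture of Fig.~\ref{perturb:fig} — with $N(Q)\cong W$ by a homeomorphism fixing that collar, and then to read off the gleams that $Q$ inherits.

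First I would write down $Q$ explicitly: the crossing point is opened up into a triple circle, so that the two sheets are no longer transverse but are joined along a new edge of the simple polyhedron, as in the local picture of Fig.~\ref{perturb:fig}. There are exactly two ways of doing this, interchanged by swapping the roles of $D_1$ and $D_2$ (equivalently, by pushing one sheet to either of the two sides of the other); these are the ``two possible moves'' of the statement. Next I would check that the thickening is unchanged. Exactly as for the elementary moves of Section~\ref{operations:section} (compare Fig.~\ref{sum2:fig}), this is verified by inspection of the local pictures: comparing handle decompositions of $W$ and of $N(Q)$ read off from the cell structures of $D_1\cup D_2$ and of $Q$ shows that they are identified by a homeomorphism fixing a collar of $\partial B$. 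Since both polyhedra are then spines of the same ball agreeing near $\partial B$ and, by Turaev's Proposition~\ref{Turaev:prop}, the thickening of a shadow is determined by the shadow together with its gleams, replacing $\widetilde X\cap B$ by $Q$ changes neither the abstract regular neighbourhood nor — when $X$ lies in an actual manifold — the pair $(M,L)$.

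It then remains to compute the gleams. Those of the regions disjoint from $B$ are untouched; for the two or three regions meeting $B$ I would apply Turaev's definition directly, fixing a section and an orientation of the relevant $S^1$-bundles and counting, along each boundary circle, the integer $e_i$ (or $\bar e_i/2$, in case of a twisted $D^1$-bundle) of the $D^1$-subbundle induced by $Q$. Opening the crossing into a triple circle creates exactly one new twisted $D^1$-bundle, so one of the affected regions changes parity, and carrying out this count yields the gleam labels shown in Fig.~\ref{perturb:fig}; the two resolutions give the two labellings displayed there. The internal consistency check — that the framing read off on $H\subset S^3$ agrees before and after — is automatic once the thickening has been shown to be unchanged, since the framing of the boundary link is dictated by the gleams of the incident faces (Remark~\ref{clockwise:rem}).

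The step I expect to be the main obstacle is the interplay of the second and third points: being sure that the local replacement really yields the \emph{same} framed thickening, i.e.\ that the half-twist introduced when the crossing is opened into a triple circle is exactly absorbed by the prescribed change of gleams on the neighbouring regions, with the correct sign in each of the two versions of the move. Once this bookkeeping is pinned down, the rest is a routine inspection of the local models.
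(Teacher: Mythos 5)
Your proposal follows essentially the same local strategy as the paper: localize to a $4$-ball $B$ around the double point, observe that $X\cap\partial B$ is a Hopf link, replace the two transverse discs by a simple polyhedron agreeing with $X$ near $\partial B$, verify the thickening is unchanged, and read off the gleams. Where the paper is more explicit is in naming the replacing polyhedron: it is $A\cup D$, with $A\subset S^3=\partial B$ a spanning annulus for the Hopf link and $D\subset B$ a properly embedded disc bounded by the core of $A$. That explicitness simplifies two of your steps. First, the regular neighborhood is unchanged simply because the removed piece (two transverse discs) and the inserted piece $A\cup D$ both thicken to a $4$-ball; no handle-decomposition comparison is needed. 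Second, the two versions of the move correspond to the two non-isotopic spanning annuli of the Hopf link (the two Hopf bands, differing by a full twist), not to ``swapping $D_1$ and $D_2$'' --- that swap is a symmetry of the configuration and does not, by itself, produce the second resolution. Your alternative phrasing, ``pushing one sheet to either of the two sides of the other,'' is the correct one; the parenthetical ``equivalently'' is misleading.

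There is a genuine error in your sketch of the gleam computation, which you flag as the main obstacle. You assert that opening the crossing ``creates exactly one new twisted $D^1$-bundle, so one of the affected regions changes parity.'' This is false. The new singular circle is the core of the annulus $A$, and the germ of the polyhedron along it is the trivial $Y$-bundle $Y_{111}$ over $S^1$: the annulus $A$ and the disc $D$ are both orientable, so no pair of sheets is exchanged upon a full turn, and all three $D^1$-subbundles along the new edge are untwisted. Hence $D$ is an even region with integer gleam $\pm 1$, and the two adjacent regions change gleam by the integer $\mp 1$ without changing parity, exactly as displayed in Fig.~\ref{perturb:fig}. Had a parity change occurred as you claim, the gleam shifts would be half-integral and would contradict the figure. (The paper does not carry out the direct count you propose; it cites the computation in \cite{CoThu}. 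Doing it directly from Turaev's definition is a reasonable alternative, but must be done with the untwisted $Y_{111}$-bundle, not with a spurious twist.)
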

\begin{proof}
Locally at the double point, the polyhedron $X$ consists of two transverse discs in $D^4$. Then $X$ intersects $S^3 = \partial D^4$ into a Hopf link. 

The move substitutes the two transverse discs with $A\cup D$, where $A\subset S^3$ is an annulus spanning the Hopf link and $D\subset D^4$ is a properly embedded 2-disc intersecting the core of $A$ in $\partial D$. Since the core of $A$ is an unknot in $S^3$, the disc $D$ is obtained simply by pushing inside $D^4$ a spanning disc in $S^3$.

The regular neighborhood does not change, because the removed piece (two transverse discs) and the new one $D\cup A$ both thicken to a 4-disc. 

There are two non-isotopic spanning annuli in the Hopf link, and they give rise to non-isotopic constructions. The gleam of $D$ is $\pm 1$ depending on the choice of $A$. The gleams of the incident faces are changed correspondingly as $\mp 1$. The gleams were calculated in \cite{CoThu}.
\end{proof}

\begin{figure}
\begin{center}
\includegraphics[width = 12.5 cm]{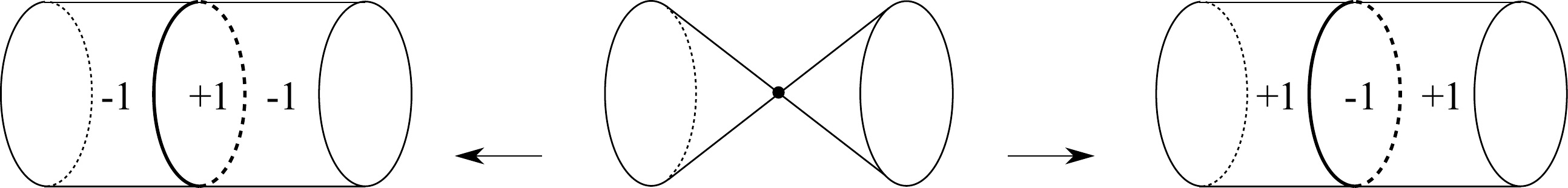}
\nota{A double point can be locally perturbed to a simple
  polyhedron. There are two ways to do this, and the resulting gleams depend on that choice.}
\label{perturb:fig}
\end{center}
\end{figure}

The perturbation is the analogue of \piu $\to$ \hacca in half dimensions (perturb a 4-valent vertex inside a surface: note that there are two possible moves also here). 

\subsection{Assembling} \label{assembling:subsection}
Let $(M,L)$ be a (possibly disconnected) framed block. Let $N_1$ and $N_2$ be two boundary components of $M$. Each component contains a framed knot. 
\begin{defn} An \emph{assembling} of $(M,L)$ is the operation of identifying $N_1$ and $N_2$ via a map $\psi$ which preserves the framed knots. The result of this operation is a new framed block $(M',L')$.
\end{defn}
We now investigate the effect of this operation on shadows. We will need the following result, proved in \cite{LaPo}.
\begin{lemma}\label{1:handlebody:lemma}
Every 2-sphere $\Sigma\subset \partial H$ in the boundary of
a $1$-handlebody $H$ bounds a properly embedded 3-disc $D^3\subset H$ such that $H\setminus
\interior {N(D^3)}$ is a 1-handlebody.
\end{lemma}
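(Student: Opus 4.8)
The plan is to reduce $\Sigma$, via a self‑homeomorphism of the $3$‑manifold $\partial H$, to one of a short list of standard model spheres, to exhibit the required $3$‑disc for each model directly, and then to pull the disc back using the Laudenbach--Poenaru extension theorem \cite{LaPo}. Since a $2$‑sphere is connected it lies in a single component of $\partial H$, so we may assume $H$ is connected; a connected $1$‑handlebody is $D^4$ or a boundary connected sum $\natural_k(S^1\times D^3)$, hence $\partial H\isom\#_k(S^2\times S^1)$. Fix a handle decomposition with one $0$‑handle and $k$ $1$‑handles, let $\beta_1,\dots,\beta_k\subset\partial H$ be the belt $2$‑spheres and $D_1,\dots,D_k\subset H$ the cocores, so that $\partial D_i=\beta_i$ and $H$ cut along $N(D_i)$ is the $1$‑handlebody $\natural_{k-1}(S^1\times D^3)$ obtained by deleting the $i$‑th $1$‑handle.

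Next I record the model spheres together with their discs. First, a model sphere $\Sigma_0$ bounding a $3$‑ball $B\subset\partial H$: pushing $\interior B$ slightly into $\interior H$ produces a properly embedded $3$‑disc $\Delta_0$; cutting $H$ along $N(\Delta_0)$ splits off the collar $B\times[0,1]\isom D^4$ and leaves a homeomorphic copy of $H$, so the complement is $\isom H\sqcup D^4$, a $1$‑handlebody. Second, the belt sphere $\Sigma_0=\beta_i=S^2\times\{pt\}$: here $\Delta_0:=D_i$ is the cocore, and $H$ cut along it is $\natural_{k-1}(S^1\times D^3)$. Third, the sphere $\Sigma_0$ realizing a connected‑sum splitting $\#_k(S^2\times S^1)=\#_{k'}(S^2\times S^1)\,\#\,\#_{k''}(S^2\times S^1)$: writing the standard $H=\natural_{k'}(S^1\times D^3)\,\natural\,\natural_{k''}(S^1\times D^3)$ as a boundary connected sum along a separating $3$‑disc $\Delta_0$, one has $\partial\Delta_0=\Sigma_0$ and $H$ cut along $\Delta_0$ equals $\natural_{k'}(S^1\times D^3)\sqcup\natural_{k''}(S^1\times D^3)$. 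In every model the complement of the disc is a $1$‑handlebody.

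It remains to bring an arbitrary $\Sigma$ to a model, for which I use two inputs. The first is the classification of embedded $2$‑spheres in $\#_k(S^2\times S^1)$: up to a self‑homeomorphism of $\#_k(S^2\times S^1)$, $\Sigma$ is one of the three models above — a null‑homotopic sphere bounds a ball, a non‑separating essential sphere is standard, and a separating essential sphere is a connected‑sum sphere — this resting on irreducibility of $S^3$ minus balls, uniqueness of prime decompositions, and Laudenbach's analysis of spheres in connected sums of $S^2\times S^1$. The second is the Laudenbach--Poenaru theorem that every self‑homeomorphism of $\partial H\isom\#_k(S^2\times S^1)$ extends to a self‑homeomorphism of $H\isom\natural_k(S^1\times D^3)$. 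Combining them: choose a self‑homeomorphism $g$ of $\partial H$ with $g(\Sigma)=\Sigma_0$ a model, extend $g$ to a self‑homeomorphism $\hat g$ of $H$, and set $D^3:=\hat g^{-1}(\Delta_0)$ for the corresponding model disc $\Delta_0$. Then $\partial D^3=g^{-1}(\Sigma_0)=\Sigma$ and $H\setminus\interior{N(D^3)}$ is homeomorphic to $H\setminus\interior{N(\Delta_0)}$, a $1$‑handlebody, as required.

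The main obstacle is the extension theorem of Laudenbach and Poenaru, whose proof is a delicate handle‑by‑handle argument for $4$‑manifolds and is exactly the content cited for this lemma; a secondary difficulty is the homeomorphism classification of $2$‑spheres in $\#_k(S^2\times S^1)$, which depends on Laudenbach's earlier three‑dimensional work. Once these are in hand, the construction of the three model discs and the transfer by $\hat g$ are routine.
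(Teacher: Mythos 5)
Your proposal is correct and takes the same route as the paper, whose entire proof is a one-sentence appeal to the Laudenbach--Poenaru extension theorem. You have simply made explicit the ingredients that the phrase ``easy consequence'' leaves implicit — chiefly the classification of embedded $2$-spheres in $\#_k(S^2\times S^1)$ up to ambient self-homeomorphism, the construction of the model discs, and the pull-back via an extension of the normalizing homeomorphism.
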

\begin{proof}
This is an easy consequence of Laudenbach-Poenaru's theorem \cite{LaPo} which states that every self-homeomorphism of $\partial H$ extends to $H$. Recall that the 1-handlebody need not to be connected.\end{proof}

\begin{figure}
\begin{center}
\includegraphics[width = 10 cm]{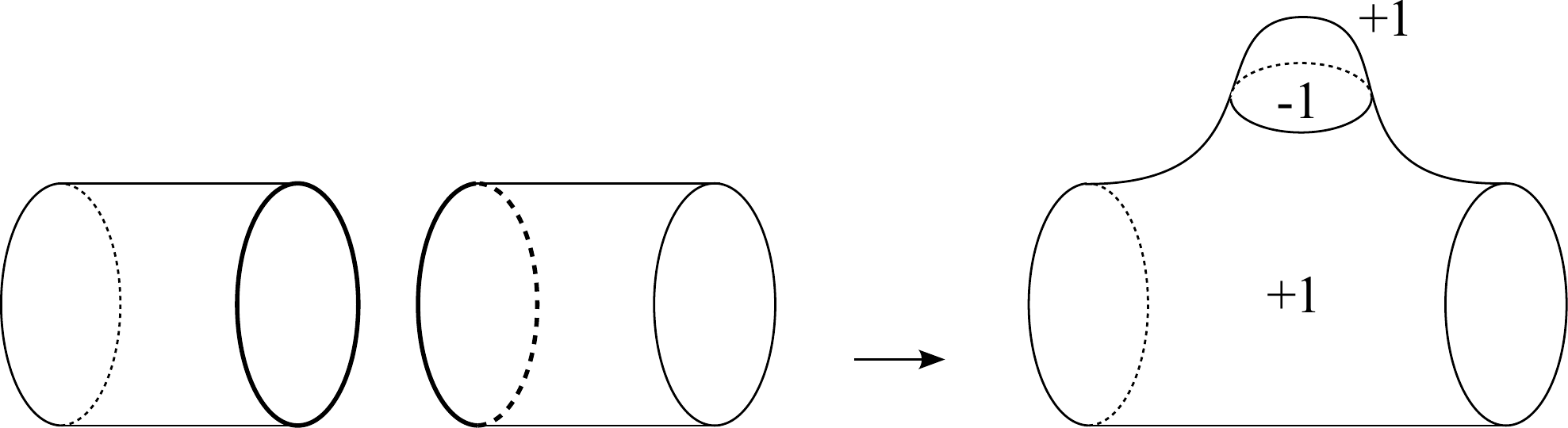}
\nota{This move on shadows represents and assembling of blocks. Two components of $\partial X$ are glued, and a bubble is added (with appropriate gleams).}
\label{assembling:fig}
\end{center}
\end{figure}

\begin{prop} \label{assembling:prop}
The move in Fig.~\ref{assembling:fig} transforms a shadow $X_1$ of some framed block $(M_1,L_1)$ into a shadow $X_2$ of some other framed block $(M_2,L_2)$, and viceversa. The pair $(M_2,L_2)$ is an assembling of $(M_1,L_1)$.
\end{prop}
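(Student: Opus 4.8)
The plan is to follow the template used for Proposition~\ref{sum:prop}: track the effect of the move on the $4$-dimensional thickening and then recognize the resulting block via Proposition~\ref{unique:prop}.

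\emph{Setup.} Let $X_1$ be a shadow of a framed block $(M_1,L_1)$ and let $\gamma_1,\gamma_2\subset\partial X_1$ be the two boundary circles that the move identifies; they correspond to boundary components $N_1,N_2\isom S^2\times S^1$ of $M_1$, each carrying the framed fiber $\{pt\}\times S^1$. By (the proof of) Proposition~\ref{unique:prop}, the vertical boundary $\partial_{\rm vert}N(X_1)$ contains solid tori $V_1,V_2$ fibering over $\gamma_1,\gamma_2$, we have $N_i=V_i\cup V_i'$ with $V_i'$ a mirror copy of $V_i$, and the framed knot in $N_i$ is the core of $V_i$, its framing recorded by the gleams of the regions incident to $\gamma_i$. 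Let $\psi\colon N_1\to N_2$ be an assembling map; after an isotopy we may assume $\psi(V_1)=V_2$ and hence $\psi(V_1')=V_2'$.

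\emph{The move, in two steps.} First, identify $\gamma_1$ with $\gamma_2$ along $\psi|$. On thickenings this glues $V_1$ to $V_2$; since the new circle $\gamma=\gamma_1=\gamma_2$ sits in the interior of a region, the result $X_1'$ is still a simple polyhedron without new vertices, and its thickening is $M_1$ with $N_1,N_2$ partially identified along $V_1=V_2$, so that $V_1'$ and $V_2'$ are now exposed. Second, the bubble of Fig.~\ref{assembling:fig}: thickening the bubble-sphere $\Sigma$ with the indicated gleams inserts a $4$-ball and produces, in the complement of the thickening, a configuration of the solid tori $V_1',V_2'$ together with a $2$-sphere lying on the boundary of (the complement of) a $1$-handlebody; by Lemma~\ref{1:handlebody:lemma} this sphere bounds a $3$-disc whose removal keeps the complement a $1$-handlebody, and one checks that the only gleams on $\Sigma$ for which the whole complement $M_2\setminus\interior{N(X_2)}$ becomes a $1$-handlebody are exactly those in the figure, and that they realize the identification $\psi(V_1')=V_2'$. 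Pinning down these gleams is the computational core. Granting it, Remark~\ref{1:handlebody:rem} (equivalently Proposition~\ref{bordo:prop}) shows $X_2$ is a shadow of a framed block $(M_2,L_2)$, and unwinding the construction gives $M_2=M_1/(N_1\sim_\psi N_2)$ with $L_2$ the images of the surviving framed fibers, i.e.\ an assembling of $(M_1,L_1)$.

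\emph{Converse and main obstacle.} The converse is the same argument read backwards: the inverse move deletes the bubble and cuts $X_2$ along the adjacent interior circle, and the thickening analysis identifies the cut-open polyhedron as a shadow $X_1$ whose assembling along the two reopened $(S^2\times S^1)$-boundaries is $(M_2,L_2)$. The step I expect to be the main obstacle is the gleam bookkeeping for the bubble: one has to control how the framings of the two framed knots relate under $\psi$ and verify that precisely the half-integers drawn in Fig.~\ref{assembling:fig} are forced by the requirement that $\partial N(X_2)\isom\#_h(S^2\times S^1)$ (so that $3$- and $4$-handles can be attached) and that they reconstruct the assembling rather than some other gluing of the solid tori $V_1',V_2'$.
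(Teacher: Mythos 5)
Your high-level picture is right — reconstruct the vertical solid tori $V_i, V_i'$ from Proposition~\ref{unique:prop}, use Remark~\ref{1:handlebody:rem} to verify the shadow condition, and invoke Lemma~\ref{1:handlebody:lemma} for the converse — but you have flagged and then skipped precisely the step that carries the proof. You call the gleam bookkeeping on the bubble ``the computational core,'' say you expect it to be the main obstacle, and proceed by ``granting it.'' That is the content of the proposition: showing the specific half-integers drawn in Fig.~\ref{assembling:fig} make $X_2$ a shadow of the assembled block. Leaving that unargued leaves a genuine gap. Moreover the criterion you propose for pinning them down — that these are the \emph{only} gleams making $M_2\setminus\interior{N(X_2)}$ a $1$-handlebody — is the wrong direction (and not even true in spirit: perturbation as in Fig.~\ref{perturb:fig} already gives two sign choices, and the block produced can change with the gleams). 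What is needed is a direct verification that the particular gleams in the figure work, not a uniqueness claim.

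The paper avoids your obstacle entirely by a trick you do not use: replace $X_2$ by the immersed polyhedron $Q\cup\Sigma$ of Fig.~\ref{perturb2:fig}, where $\Sigma$ is a genuine $2$-sphere with gleam $0$ meeting $Q$ transversely in two points. Lemma~\ref{perturb:lemma} guarantees $N(X_2)\isom N(Q\cup\Sigma)$, so no gleam computation on the bubble is ever performed; the correct bubble gleams drop out automatically from perturbing the two double points. One then embeds $\Sigma$ as $S^2\times\{pt\}$ in the glued boundary $N\isom S^2\times S^1$, observes that $N\setminus\interior{N(Q\cup\Sigma)}\isom D^3$, and concludes that the complement changes by adding a $1$-handle to a $1$-handlebody. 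In the converse direction the paper uses Lemma~\ref{1:handlebody:lemma} to produce a properly embedded $D^3$ closing the $3$-slice $N$ of $N(Q\cup\Sigma)$ to an embedded $S^2\times S^1$ along which one cuts; you place that lemma in the forward direction, where the paper does not need it. In short: your outline is on the right track, but the immersed-shadow reduction to the gleam-$0$ sphere is the missing idea that makes the argument close, and without it your proposal does not constitute a proof.
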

\begin{proof}
The move in Fig.~\ref{perturb2:fig} transforms $X_2$ into an immersed simple polyhedron (with gleams) $Q\cup\Sigma$. Here $\Sigma$ is a 2-sphere with gleam zero. The regular neighborhoods of $X_2$ and $Q\cup\Sigma$ are the same by Lemma \ref{perturb:lemma}, so we may work with $Q\cup\Sigma$ instead of $X_2$.

\begin{figure}
\begin{center}
\includegraphics[width = 9 cm]{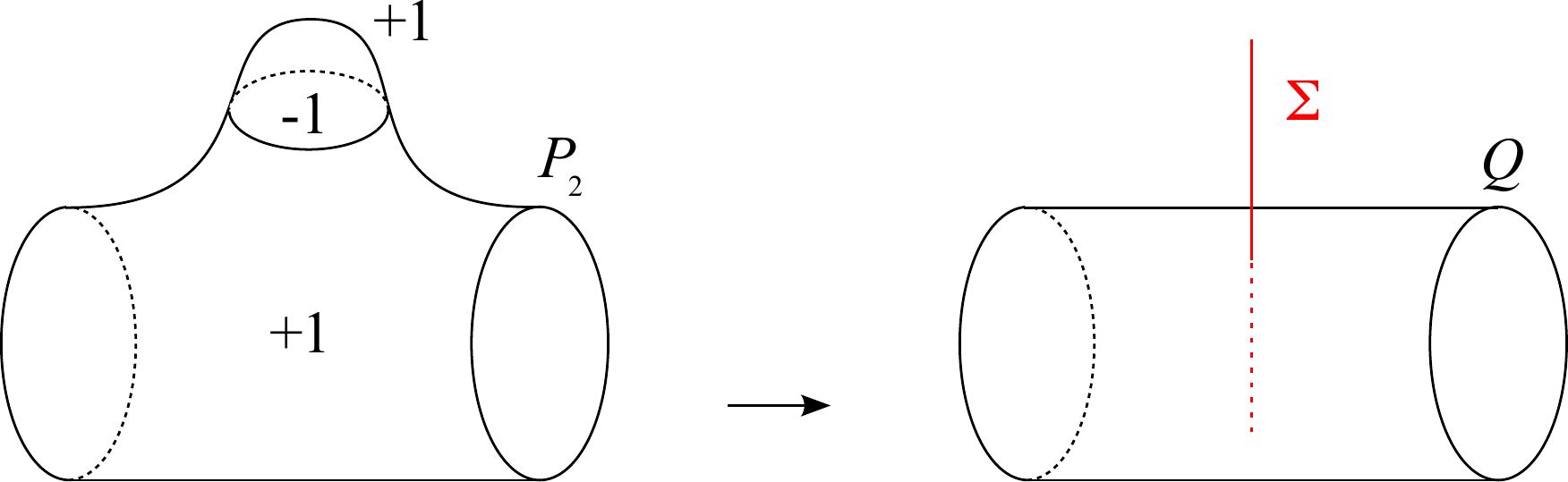}
\nota{This move does not change the regular neighborhood of the (immersed) simple polyhedron. Here $\Sigma$ is a 2-sphere with gleam zero.}
\label{perturb2:fig}
\end{center}
\end{figure}

Suppose $X_1$ is a shadow of some framed block $(M_1,L_1)$. The polyhedron $Q$ is obtained by gluing two components of $\partial X_1$ contained in two components $N'$, $N''$ of $\partial M_1$. This map can be extended to a unique homeomorphism between $N'$ and $N''$ which preserves the framing. Let $(M_2,L_2)$ be the result of such an assembling. 

We have a natural embedding $Q\subset M_2$. 
The components $N'$ and $N''$ glue to form a submanifold $N\subset M_2$ homeomorphic to $S^2\times S^1$ and intersecting $Q$ into $\{pt\}\times S^1$. Embed also $\Sigma$ as $S^2\times \{pt\}$, see Fig.~\ref{converse:fig}-(1).

Note that $N\setminus \interior{N(Q\cup \Sigma)}$ is homeomorphic to $D^3$. Therefore $M_2\setminus \interior{N(Q\cup\Sigma)}$ is obtained by adding a 1-handle to $M_1\setminus \interior{N(X_1)}$. Since the latter is a 1-handlebody, the former also is. By Lemma \ref{perturb:lemma} the regular neighborhood $N(Q\cup\Sigma)$ is isotopic to $N(X_2)$. Therefore $X_2$ is a shadow of $(M_2,L_2)$.

\begin{figure}
\begin{center}
\includegraphics[width = 12.5 cm]{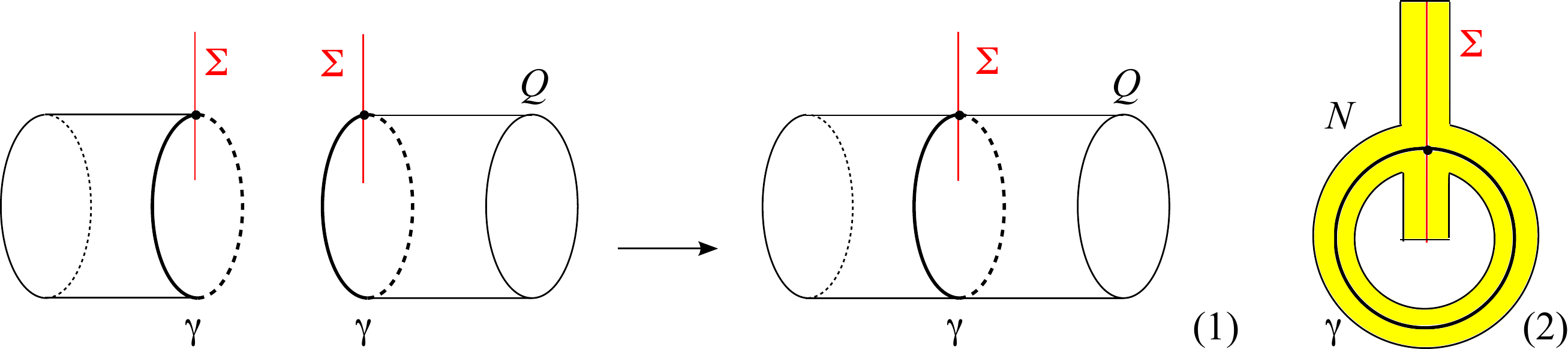}
\nota{Assembling with (immersed) shadows (1). We can take a normal regular neighborhood $N$ of $\gamma\cup\Sigma$. Its boundary is homeomorphic to $S^2$ (one annulus over $\gamma$ glued to two discs over $\Sigma$) (2)}
\label{converse:fig}
\end{center}
\end{figure}

The converse is proved similarly. Given $X_2$ shadow of $(M_2,L_2)$, we transform it into $Q\cup \Sigma$. The regular neighborhood $N(Q\cup\Sigma)$ has a 3-dimensional slice $N$ as in Fig.~\ref{converse:fig}-(2) homeomorphic to $S^2\times S^1$ minus an open ball. The boundary $\partial N$ is a 2-sphere in $\partial N(X)$. 

Since $M_2\setminus\interior{N(Q\cup\Sigma)}$ is a 1-handlebody $H$, it contains a properly embedded 3-disc $D^3$ with $\partial D^3 = \partial N$, such that $H\setminus \interior{N(D^3)}$ is again a 1-handlebody by Lemma \ref{1:handlebody:lemma}. Therefore $N\cup D^3\isom S^2\times S^1$ and by cutting $(M_2,L_2,Q)$ along $N\cup D^3$ we get a $(M_1,L_1,X_1)$, with $X_1$ a shadow for $(M_1,L_1)$ as required.
\end{proof}

\subsection{Filling}
The block $D^3\times S^1$ plays a particular role here.
We call the assembling of a framed block $(M,L)$ and a framed $D^3\times S^1$ along some component $N$ of $\partial M$ a \emph{filling} of $(M,L)$. This operation consists of attaching a 3-handle and a 4-handle to $N$, so by Laudenbach-Poenaru theorem \cite{LaPo}, the filled block depends only on $(M,L)$ and $N$.

In Section \ref{blocks:subsection} we have described some shadows of all the blocks involved in Theorem \ref{main:teo}, except $D^3\times S^1$. In some sense, the natural shadow for this block is the \emph{empty} shadow, whose complement in $D^3\times S^1$ is indeed made of 3- and 4-handles!  We adapt Proposition \ref{assembling:prop} to this particular situation.

\begin{figure}
\begin{center}
\includegraphics[width = 9 cm]{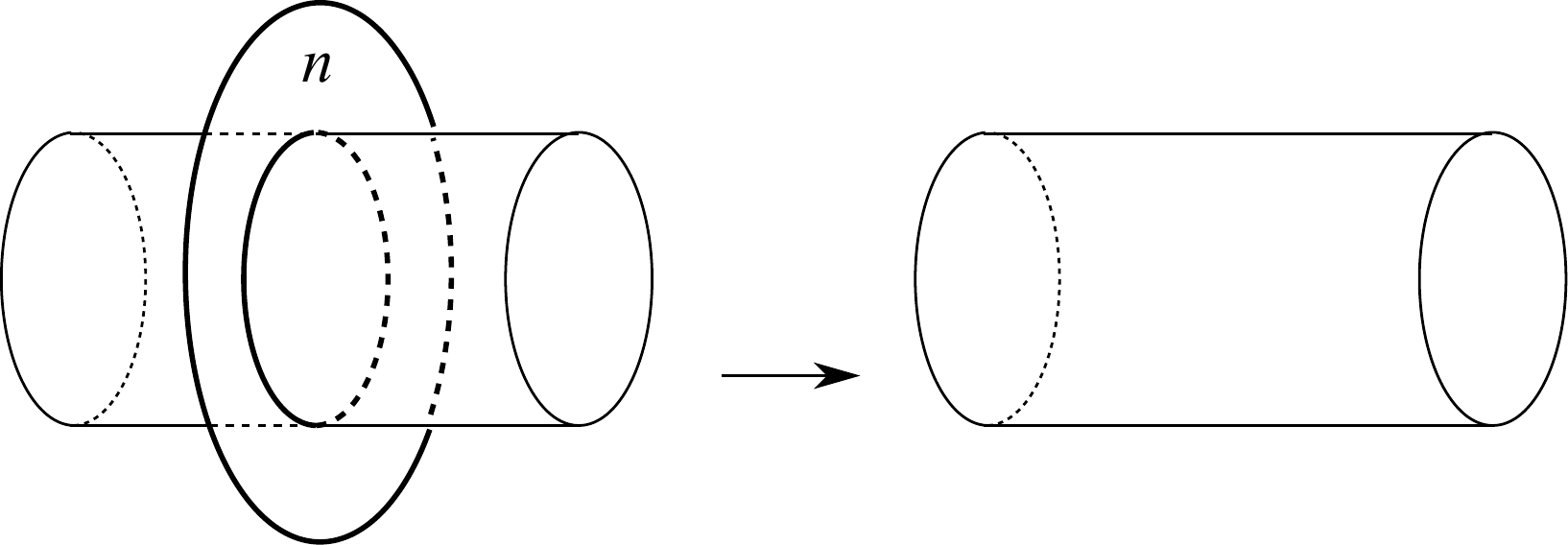}
\nota{This move on shadows represents the filling of a block. The removed annulus is adjacent to a component of $L$, whose framing is determined by its gleam $n$.}
\label{filling:fig}
\end{center}
\end{figure}

\begin{prop} \label{filling:prop}
The move in Fig.~\ref{filling:fig} transforms a shadow $X_1$ of some framed block $(M_1,L_1)$ into a shadow $X_2$ of some framed block $(M_2,L_2)$, and viceversa. The block $(M_2,L_2)$ is a filling of $(M_1,L_1)$.
\end{prop}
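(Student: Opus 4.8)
The plan is to recognise a filling as an assembling of $(M_1,L_1)$ with a copy of $D^3\times S^1$ along a boundary component, using that the empty polyhedron serves as a (degenerate) shadow of $D^3\times S^1$, and then to repeat the argument of Proposition~\ref{assembling:prop} in this particular case. First I would fix the local picture: in $X_1$ the component $\gamma$ of $L_1$ at which we fill is incident to an annular region $A$ carrying gleam $n$, and $X_2$ is obtained by deleting $A$; the inner boundary circle of $A$ becomes an ordinary (non-singular) simple closed curve of $X_2$, so $X_2$ is again a genuine simple polyhedron with boundary, with one boundary circle fewer than $X_1$ (matching the fact that a filling caps off one $S^2\times S^1$ boundary component). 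The framing of $\gamma$ affects only how the $3$-handle of the filling is attached, and by the Laudenbach--Poenaru theorem \cite{LaPo} the filled block is independent of that choice, so the gleam $n$ is essentially free, in the sense of Remark~\ref{abuse:rem}.

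For the forward implication I would argue directly, mimicking the proof of Proposition~\ref{assembling:prop}. Suppose $X_1$ is a shadow of $(M_1,L_1)$, so that $H_1 := M_1\setminus\interior{N(X_1)}$ is a $1$-handlebody by Remark~\ref{1:handlebody:rem}. Let $N\isom S^2\times S^1$ be the boundary component of $M_1$ containing $\gamma$, and $M_2 = M_1\cup_N(D^3\times S^1)$ the filling. Since $X_2\subset X_1$ we have $N(X_2)\subset N(X_1)$, whence $M_2\setminus\interior{N(X_2)}$ is obtained from $H_1$ by attaching the $D^2$-bundle $N(A)\isom D^2\times S^1\times[0,1]$ over the deleted annulus together with the filling handlebody $D^3\times S^1$. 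The key point is that $N(A)$ and this $D^3\times S^1$ reassemble, along their common fibre over $\gamma$, into a single copy of $D^3\times S^1$ attached to $H_1$ in the standard way: this is where the gleam $n$ enters, and where Laudenbach--Poenaru guarantees that the residual framing ambiguity is immaterial. Hence $M_2\setminus\interior{N(X_2)}$ is again a $1$-handlebody, so $X_2$ is a shadow of $(M_2,L_2)$ by Remark~\ref{1:handlebody:rem}, and by construction $(M_2,L_2)$ is the filling of $(M_1,L_1)$ along $N$.

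For the converse I would run this construction backwards, exactly as in the converse part of the proof of Proposition~\ref{assembling:prop}. Given a shadow $X_2$ of a framed block $(M_2,L_2)$ and a distinguished component $\gamma'$ of $L_2$, one extracts from $N(X_2)$ near $\gamma'$ a $3$-dimensional slice $N$ homeomorphic to $S^2\times S^1$ minus an open ball, whose boundary $2$-sphere lies in $\partial N(X_2)\subset\partial\big(M_2\setminus\interior{N(X_2)}\big)$. Applying Lemma~\ref{1:handlebody:lemma} to the $1$-handlebody $M_2\setminus\interior{N(X_2)}$ yields a properly embedded $3$-disc $D^3$ with $\partial D^3 = \partial N$ and with $1$-handlebody complement, so that $N\cup D^3\isom D^3\times S^1$. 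Cutting $(M_2,L_2)$ open along $N\cup D^3$ produces a framed block $(M_1,L_1)$ and a shadow $X_1$, which is $X_2$ with a gleam-$n$ annulus reattached along $\gamma'$, and $(M_2,L_2)$ is visibly the filling of $(M_1,L_1)$.

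The main obstacle is the bookkeeping of gleams and framings, precisely as in Proposition~\ref{assembling:prop}: one must check that the gleam $n$ on $A$ is exactly the datum making $N(A)\cup(D^3\times S^1)$ a standardly attached $D^3\times S^1$, and, in the converse, that the $3$-disc supplied by Lemma~\ref{1:handlebody:lemma} can be chosen compatibly with the vertical fibering so that the cut-open polyhedron is exactly $X_1$ with its gleam-$n$ annulus restored. Apart from this, the argument is a verbatim adaptation of the proof of Proposition~\ref{assembling:prop}, with the bubble there replaced by the empty shadow of the filling handlebody $D^3\times S^1$.
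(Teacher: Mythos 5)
Your proposal diverges substantially from the paper's proof, and the divergence introduces genuine gaps. The paper's own argument is a one-liner: Fig.~\ref{homeomorphic:fig} exhibits a direct homeomorphism between $M_1\setminus N(X_1\cup\partial M_1)$ and $M_2\setminus N(X_2\cup\partial M_2)$, from which both implications (``$X_1$ shadow iff $X_2$ shadow'') and the filling relation follow at once. You instead try to redo the two directions separately by transplanting the proof of Proposition~\ref{assembling:prop}, but the geometry there relies crucially on the bubble $\Sigma$, which is absent in the filling move, and the transplant does not survive.

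In your forward direction, the claim that $N(A)$ and the filling $D^3\times S^1$ ``reassemble into a single copy of $D^3\times S^1$ attached to $H_1$ in the standard way'' is the entire content of the step, and it is not actually verified. Laudenbach--Poenaru is the wrong tool here: it concerns extending self-homeomorphisms of $\#_k(S^2\times S^1)$ over $1$-handlebodies and has nothing to say about whether attaching a $D^3\times S^1$ along some sub-3-manifold of its boundary preserves the $1$-handlebody property. What makes the step work is that the extra region $R=N(A)\cup(D^3\times S^1)\cup(\text{collar of }N)$ is attached to $H_1$ along a piece of its boundary onto which $R$ collapses, so $H_1\cup R\cong H_1$; but this is precisely the homeomorphism of complements that the paper's Fig.~\ref{homeomorphic:fig} records directly, and once you see that you no longer need the two-direction split at all.

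The converse direction has concrete errors. First, you call $\gamma'$ ``a distinguished component of $L_2$'', but as you yourself observe two paragraphs earlier, the inner boundary circle of $A$ becomes an \emph{interior} non-singular circle of $X_2$; it is not a component of $L_2=\partial X_2$ (indeed $L_2$ has one fewer component than $L_1$). Second, the $3$-dimensional slice $N$ you extract is claimed to be $S^2\times S^1$ minus an open ball, but that slice was manufactured in Proposition~\ref{assembling:prop} from the annulus over the gluing curve together with the two discs over the bubble $\Sigma$; with no bubble present, the natural vertical slice near an interior circle $\gamma'$ of a region of $X_2$ is a solid torus, not $S^2\times S^1$ minus a ball, so Lemma~\ref{1:handlebody:lemma} is not in the right position to be applied. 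Third, the displayed identification ``$N\cup D^3\cong D^3\times S^1$'' is false as stated: if $N$ really were $S^2\times S^1$ minus a ball, capping its boundary $2$-sphere with a $3$-disc gives $S^2\times S^1$, not $D^3\times S^1$. A correct converse is in any case easier than you make it: once the forward direction (or better, the paper's complement-homeomorphism) shows that $X_1$ is a shadow of some framed block $(M_1,L_1)$, Proposition~\ref{unique:prop} pins down $(M_1,L_1)$, and the forward direction applied to $X_1$ then identifies $(M_2,L_2)$ as its filling.
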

\begin{proof}
As suggested by Fig.~\ref{homeomorphic:fig}, there is a homeomorphism between $M_1\setminus N(X_1\cup\partial M_1)$ and $M_2\setminus N(X_2\cup\partial M_2)$. Therefore $X_1$ is a shadow if and only if $X_2$ is, and it follows easily that $M_2$ is obtained by filling $M_1$.
\end{proof}

\begin{figure}
\begin{center}
\includegraphics[width = 8 cm]{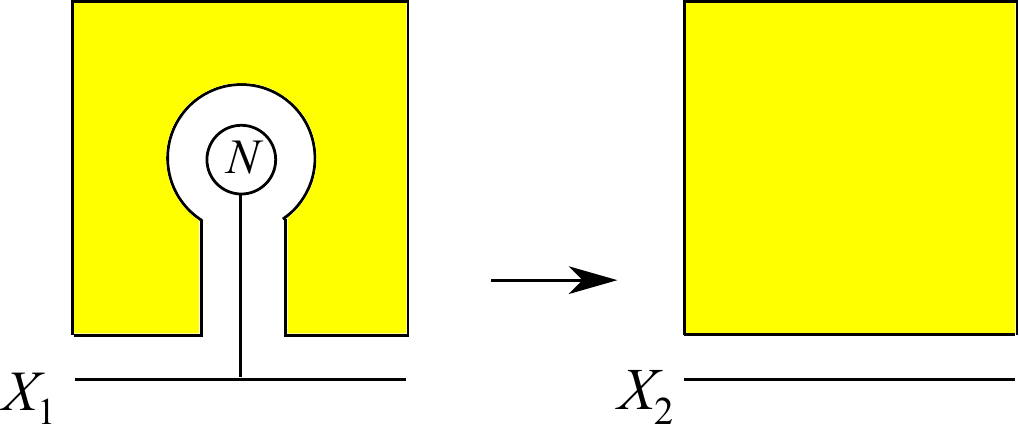}
\nota{How to pass from $X_1$ to $X_2$. Here $N$ denotes the component of $\partial M_1$ which is filled. The complements (of regular neighborhoods) are homeomorphic (painted in yellow). }
\label{homeomorphic:fig}
\end{center}
\end{figure}

\subsection{Complexity zero}
We can now prove again the easy half of Theorem \ref{main:teo} (another proof was given in Section \ref{outline:subsection}).
\begin{teo} \label{easy:teo}
Let $M$ be a graph manifold generated by $\calS_0$ and $h$ an integer. The manifold $M\#_h \matCP^2$ has complexity zero.
\end{teo}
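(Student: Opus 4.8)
The plan is to sidestep the hard implication of Theorem~\ref{main:teo} and simply \emph{exhibit} a shadow of $M\#_h\matCP^2$ with no vertices. Since complexity is by definition the minimum number of vertices of a shadow, and it is a non-negative integer, one vertex-free shadow forces $c(M\#_h\matCP^2)=0$. All the moves and building blocks needed have already been set up in Sections~\ref{shadows:section} and~\ref{operations:section}, so the argument is essentially bookkeeping.

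First I would build a vertex-free shadow of $M$ from its block decomposition. Each block of $\calS_0$ has a shadow without vertices: for $M_{11}$, $M_2$, $M_{111}$, $M_{12}$, $M_3$, $N_1$, $N_2$, $N_3$ this is Proposition~\ref{blocks:prop}, and $M_1=D^3\times S^1$ carries the empty shadow. Writing $M$ as a graph manifold generated by $\calS_0$ means decomposing it along copies of $S^2\times S^1$ into such blocks, with all boundary components matched in pairs; by Laudenbach--Poenaru's theorem \cite{LaPo} (used as in the proof of Proposition~\ref{smaller:prop}: every self-homeomorphism of $S^2\times S^1$ extends over $D^3\times S^1$) each such gluing may be realized by an assembling, Proposition~\ref{assembling:prop}, while a $D^3\times S^1$-block glued along its single boundary component corresponds to a filling, Proposition~\ref{filling:prop}. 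Starting from the disjoint union of the shadows of the blocks involved --- a vertex-free shadow of the disjoint union of these framed blocks --- and performing one assembling or filling move per edge of the graph, I obtain a shadow of $M$. Each such move merely glues two boundary circles and inserts a bubble, and bubbles have no vertices, so this shadow of $M$ is vertex-free. (The case $M=\#_h(S^3\times S^1)$, not covered by Proposition~\ref{graph2:prop}, is equally easy: it is a self-connected sum of copies of $S^4$, which has the vertex-free shadow ``$2$-sphere with gleam $0$'' by Proposition~\ref{sphere:prop}, and self-connected sum is the move of Proposition~\ref{sum:prop}.)

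Next I would attach the projective planes. By Proposition~\ref{plane:prop}, $\matCP^2$ has the vertex-free shadow given by a $2$-sphere with gleam $1$, and by Remark~\ref{orientation:rem} a $2$-sphere with gleam $-1$ is a shadow of $\overline{\matCP^2}$; take $|h|$ copies of the appropriate one, form the disjoint union with the shadow of $M$ produced above, and perform $|h|$ connected sums via the move of Proposition~\ref{sum:prop}. That move represents a connected sum of the underlying $4$-manifolds and creates no new vertex (it only inserts a disc of gleam $0$), so the result is a vertex-free shadow of $M\#_h\matCP^2$, and therefore $c(M\#_h\matCP^2)=0$.

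The only point I would state carefully --- and the one I expect to be the mild obstacle --- is the dictionary between ``graph manifold generated by $\calS_0$'' and ``iterated assembling and filling of the blocks of $\calS_0$'': one must check that every gluing map occurring in such a graph-manifold structure, including those identifying two boundary components of a single block, is captured by the assembling move of Proposition~\ref{assembling:prop}, which is precisely where the extension result \cite{LaPo} is used. Everything else is routine: the gleams arising along the way may be chosen freely on regions incident to the links, since by Remark~\ref{abuse:rem} those gleams do not change the underlying unframed block, and the disjoint-union, assembling, filling and connected-sum operations are already shown in Sections~\ref{shadows:section}--\ref{operations:section} to realize the corresponding operations on framed blocks without introducing vertices.
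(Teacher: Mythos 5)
Your proof is correct, and the overall strategy is the same as the paper's: exhibit a vertex-free shadow of $M\#_h\matCP^2$ built block-by-block from the shadows of Propositions~\ref{sphere:prop}, \ref{plane:prop}, \ref{blocks:prop} using moves that create no vertices. The one genuine routing difference is in how the block $M_1=D^3\times S^1$ is handled. The paper first invokes Proposition~\ref{smaller:prop} to rewrite $M$ as a connected sum of copies of $S^3\times S^1$ and graph manifolds generated by the smaller set $\calS_0'$ (from which $M_1$, $M_{11}$, $N_2$ have been expelled), so that afterwards only the assembling move of Proposition~\ref{assembling:prop} and the connected-sum move of Proposition~\ref{sum:prop} are ever needed. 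You instead keep the full set $\calS_0$, use the filling move of Proposition~\ref{filling:prop} for the $M_1$-blocks (whose shadow is empty), and deal in a parenthetical with the one configuration where neither assembling nor filling can be fed a nonempty shadow --- two $M_1$-blocks glued to each other, which forces $M\cong\#_h(S^3\times S^1)$ and is then handled by self-connected-summing the shadow of $S^4$. Both routes are valid: the paper's pre-simplification via Proposition~\ref{smaller:prop} removes the $M_1$--$M_1$ edge case up front at the cost of citing an extra lemma, while yours stays closer to the given block decomposition at the cost of the explicit case analysis. The Laudenbach--Poenaru extension fact you flag at the end is exactly what both arguments rely on to realize arbitrary $S^2\times S^1$-gluings as assemblings.
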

\begin{proof}
By Proposition \ref{smaller:prop}, the manifold $M$ is a connected sum of $h\geqslant 0$ copies of $S^3\times S^1$ and $k\geqslant 0$ graph manifolds generated by 
$$\calS_0' = \big\{M_2, M_{111}, M_{12}, M_3, N_1, N_3\big\}.$$
If $h=k=0$ then $M=S^4$ which has a shadow without vertices, see Proposition \ref{sphere:prop}. The blocks in $\calS_0'$ and $\matCP^2$ also have shadows without vertices, see Propositions \ref{plane:prop} and \ref{blocks:prop}. Assemblings and connected sums translate into moves for shadow that do not produce vertices by Propositions \ref{sum:prop} and \ref{assembling:prop}.
\end{proof}

It remains to show that every closed oriented 4-manifold having complexity zero is of this type. The rest of the paper is devoted to the proof of this non-trivial fact.

\section{Moves} \label{moves:section}
We describe here some moves that relate two shadows of the same block. Some basic moves are well-known: these were discovered by Turaev and are shown in Fig.~\ref{move_all:fig}. 
The moves shown in Fig.~\ref{mossa_all:fig} are new and more useful in our vertex-free context: they are proved in this section. They are more efficiently encoded in Fig.~\ref{thickening:fig}.

\begin{figure}
\begin{center}
\includegraphics[width = 11 cm]{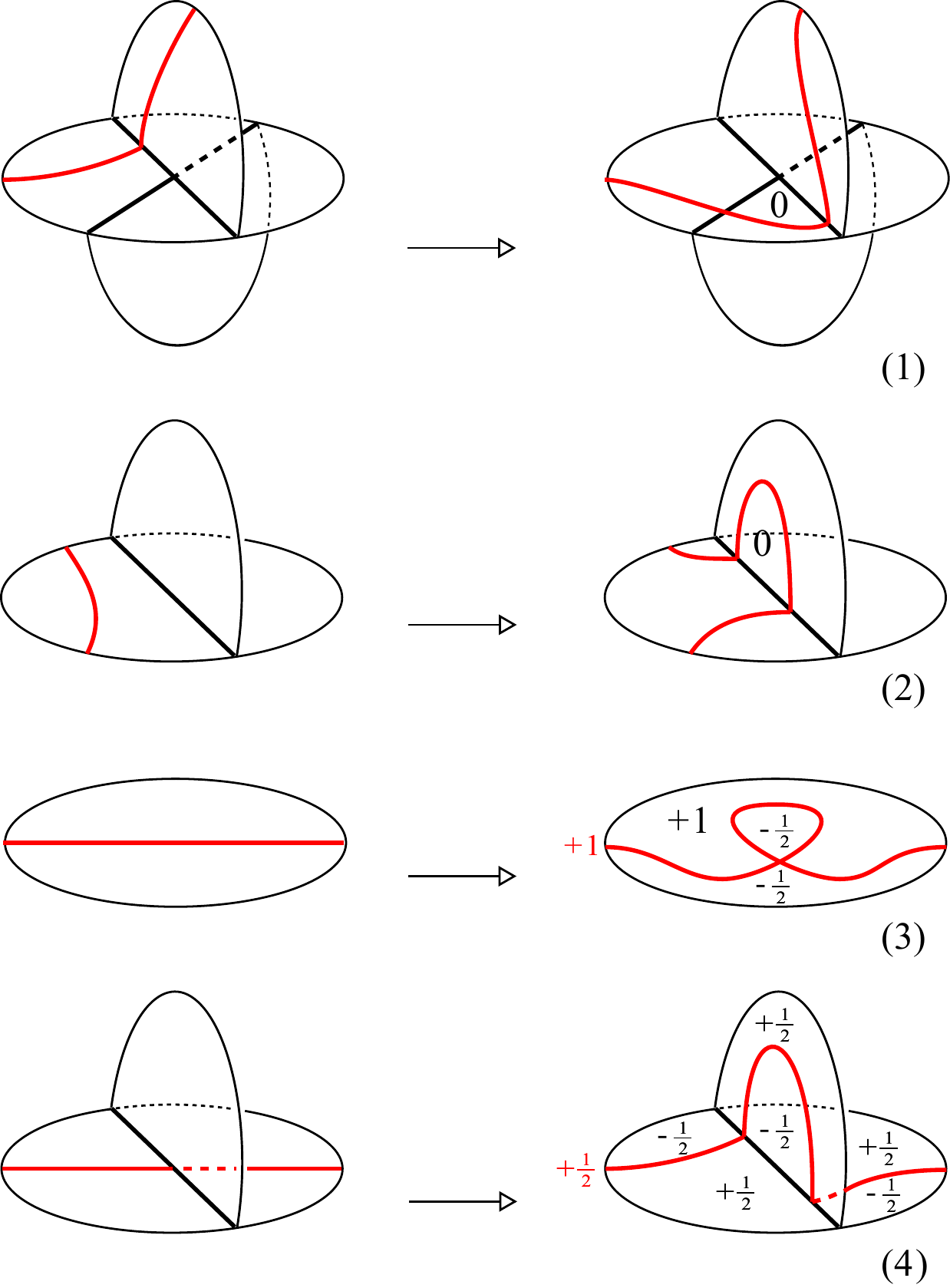}
\nota{These moves relate two shadows of the same block. A disc is attached along each red arc. Moves (1) and (2) can be embedded in a 3-dimensional slice, while the moves (3) and (4) cannot. Move (1) consists of the Matveev-Piergallini move, which is fundamental in the theory of spines of 3-manifolds. In moves (3) and (4), the gleam of the red region is modified after the move respectively by $+1$ and $+1/2$ (the number is pictured in red).}
\label{move_all:fig}
\end{center}
\end{figure}

\begin{prop}[Turaev \cite{Tu}] \label{moves:prop}
The moves in Fig.~\ref{move_all:fig} relate two shadows $X_1, X_2$ of the same block $(M,L)$.
\end{prop}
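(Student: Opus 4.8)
The plan is to reduce the statement to Turaev's theorem for \emph{closed} $4$-manifolds and then to bootstrap from closed manifolds to blocks using the filling operation of Section~\ref{operations:section}.

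First I would set up the dictionary between shadows and handle decompositions. Dualising a cell structure of a shadow $X$ of $(M,L)$ produces a handle decomposition of the regular neighbourhood $N(X\cup\partial M)$ with handles of index $\leqslant 2$ (on top of the fixed $0$- and $1$-handles carried by a collar of $\partial M$), the $2$-handles being attached along framed circles whose framings are exactly the gleams of the adjacent regions; restoring the $3$- and $4$-handles recovers $M$. Via this dictionary the proposition splits into two claims. \textbf{(A)} Any two shadows of $(M,L)$ that induce the \emph{same} handle decomposition are related by the moves. \textbf{(B)} If two shadows of $(M,L)$ induce handle decompositions that differ by a single elementary operation of four-dimensional handle calculus (isotopy of an attaching map, a handle slide, creation or cancellation of a complementary handle pair, or a change of framing of a $2$-handle), then the shadows are related by the moves. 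Since any two handle decompositions of a fixed $M$ built from handles of index $\leqslant 2$ plus one final collection of $3$- and $4$-handles are connected by such operations (Cerf theory, the attachment of the $3$- and $4$-handles being immaterial by Laudenbach--Po\'enaru \cite{LaPo}), (A) and (B) together yield the proposition.

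Claim (B) is the routine half: each handle-calculus operation has a standard local model on the attaching data, and one checks by inspection that it is realised by the moves of Fig.~\ref{move_all:fig} and their compositions --- in particular the framing changes are produced by moves~(3) and~(4), which alter the gleam of a region by $+1$ and by $+1/2$. Claim (A) is the heart of the matter and the step I expect to be the main obstacle: it is the shadow analogue of the Matveev--Piergallini theorem, and move~(1) is literally the Matveev--Piergallini move embedded in a $3$-dimensional slice. Following Turaev, I would put the two shadows in general position against a common handle decomposition and decompose the passage from one to the other into elementary two-dimensional modifications (lune, triangle and bubble moves), each generated by moves~(1) and~(2); the genuinely new difficulty compared with dimension $3$ is to propagate the gleams correctly through these modifications and to interleave them with the framing moves~(3) and~(4), which do not fit inside any $3$-dimensional slice. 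This is precisely the content of Turaev's argument in \cite{Tu}, which I would quote for the closed case rather than reprove.

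Finally, to pass from closed manifolds to blocks, I would cap each boundary component of $M$ with a copy of $D^3\times S^1$ --- a \emph{filling} --- and note, by Proposition~\ref{filling:prop}, that each filling move is a composition of the moves of Fig.~\ref{move_all:fig}; this turns $X_1$ and $X_2$ into shadows $\overline X_1$ and $\overline X_2$ of one and the same closed manifold $\overline M$. By the closed case, $\overline X_1$ and $\overline X_2$ are related by the moves; since the framings of the caps are determined by the gleams of the adjacent regions (Remark~\ref{clockwise:rem}), the sequence can be arranged to leave the capping regions untouched until the end, at which point one undoes the fillings. The result is a sequence of moves relating $X_1$ and $X_2$ as shadows of the block $(M,L)$.
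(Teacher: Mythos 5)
You have proven the wrong direction of the statement. As the paper uses it (and as its two-line proof makes clear), Proposition~\ref{moves:prop} is a \emph{soundness} claim: if $X_2$ is obtained from $X_1$ by one of the moves of Fig.~\ref{move_all:fig}, then $X_1$ and $X_2$ are shadows of the same block. The paper's argument is simply that Turaev showed these moves preserve the homeomorphism type of the thickening $N(X)$, and Proposition~\ref{unique:prop} says the framed block is determined by its thickening (via doubling the vertical solid tori and Laudenbach--Po\'enaru); since the moves are local, $X_2$ sits inside the same block $M$ with the same $1$-handlebody complement. Your proposal instead sets out to prove \emph{completeness}: that any two shadows of the same block are connected by a sequence of these moves. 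That is a far stronger assertion, it is not what is claimed, and it is almost certainly false for this finite list of four moves without further stabilization moves (Turaev's calculus in \cite{Tu} requires them). Indeed, if completeness held, the entire hard part of the paper --- Sections~\ref{reduction:section}--\ref{proof:section}, with their compressing-disc arguments, level functions, and plumbing calculus --- would be unnecessary: one could not, for example, hope to prove Theorem~\ref{main:teo} by exhibiting one nice shadow per manifold and invoking a complete calculus, precisely because no such calculus is available here.

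Even on its own terms the proposal has substantive gaps. Claim~(A), which you yourself identify as ``the heart of the matter,'' is deferred entirely to \cite{Tu} without checking that Turaev's statement (which concerns stable shadow equivalence and a larger move set) actually yields what you need. Claim~(B) hides a genuine difficulty: connecting two handle decompositions of a closed $4$-manifold while staying in the class ``handles of index $\leqslant 2$ plus $3$- and $4$-handles'' requires creating and cancelling $2$-$3$ pairs, and controlling this is exactly the kind of Andrews--Curtis-flavoured subtlety the paper is careful about elsewhere. None of this is needed: the correct proof is the two-line invariance argument via Proposition~\ref{unique:prop}.
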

\begin{proof}
As shown by Turaev, the shadows $X_1$ and $X_2$ have homeomorphic thickenings. Therefore the blocks are also homeomorphic by Proposition \ref{unique:prop}.
\end{proof}

\begin{figure}
\begin{center}
\includegraphics[width = 12.5 cm]{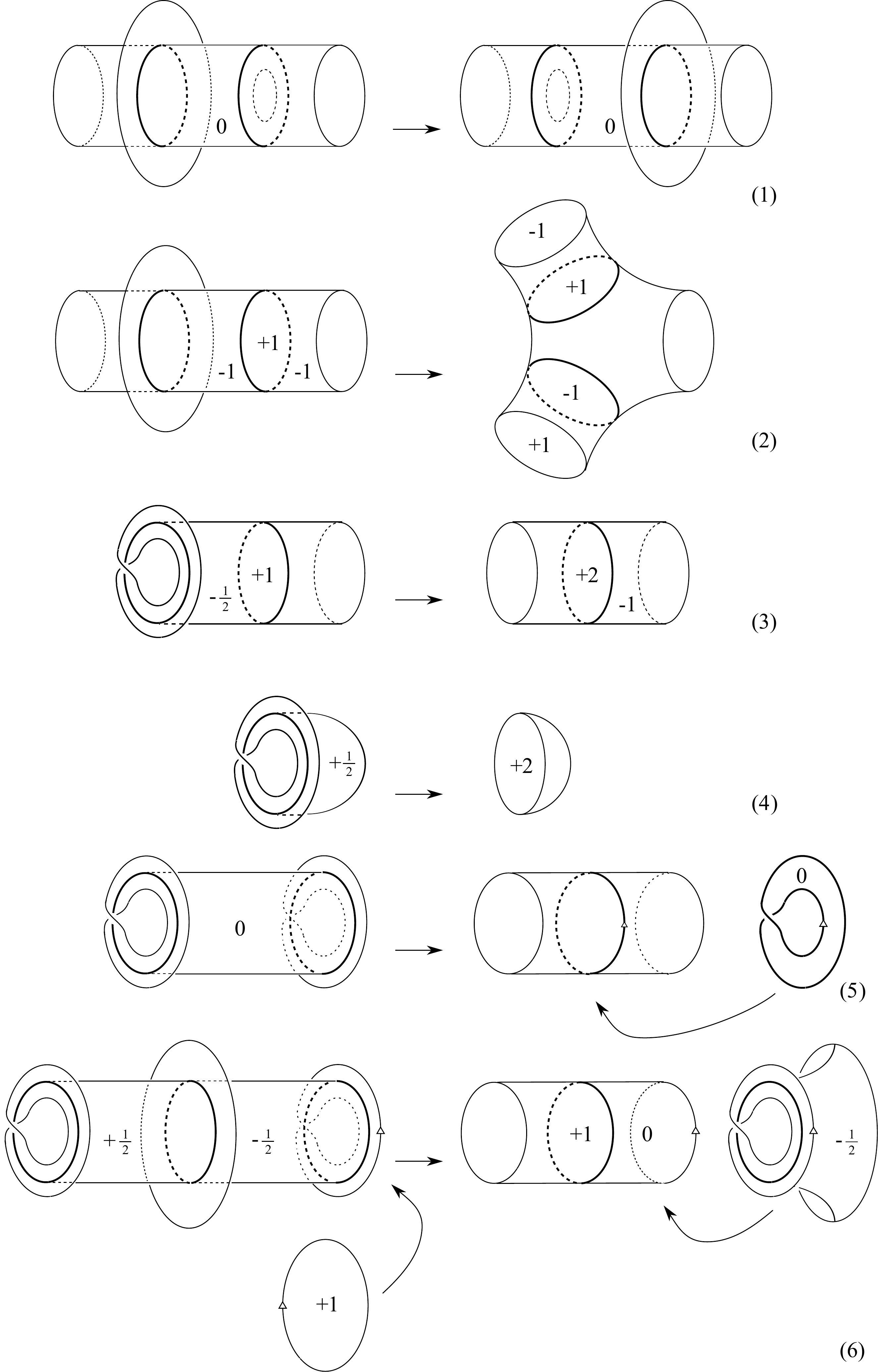}
\nota{These moves relate two shadows of the same block. In move (5) the polyhedron on the right is a M\"obius strip (with gleam zero) attached to the core of an annulus. Analogously, in move (6)-left a $+1$-gleamed disc is attached to the rightmost M\"obius strip producing a projective plane, and in (6)-right a M\"obius strip is attached as prescribed by the matching arrows. These moves are more efficiently encoded in Fig.~\ref{thickening:fig}.}
\label{mossa_all:fig}
\end{center}
\end{figure}

\begin{prop} \label{mosse:prop}
The moves in Fig.~\ref{mossa_all:fig} relate two shadows $X_1, X_2$ of the same block $(M,L)$.
\end{prop}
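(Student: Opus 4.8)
The statement to prove is Proposition~\ref{mosse:prop}: the moves (5) and (6) of Fig.~\ref{mossa_all:fig} relate two shadows $X_1,X_2$ of the same block $(M,L)$. Following the pattern already used for Proposition~\ref{moves:prop}, the plan is to verify that in each move the two polyhedra $X_1$ and $X_2$ admit \emph{homeomorphic thickenings}; the conclusion that the underlying framed blocks agree is then immediate from Proposition~\ref{unique:prop} (the thickening determines the block). So the real content is a local computation: each move is supported inside a ball, and outside that ball $X_1$ and $X_2$ coincide, so it suffices to check that the regular neighborhoods of the two local models inside $D^4$ (with the prescribed gleams) are homeomorphic rel boundary, matching up the induced $D^1$-fibrations on $\partial$ and the gleams on the surviving regions.

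First I would treat move (5): on the left we have an annulus $A$ with a disc attached along one core circle (the disc carrying some gleam $g$), and on the right a M\"obius strip with gleam $0$ attached to the core of an annulus. The strategy is to realize both sides as handle pictures. The left side thickens to $A\times D^2$ with a $2$-handle attached along the core of $A$ with framing read off from the gleam; the right side thickens similarly but the attaching region is the orientation-reversing band. I would compute the boundary of a regular neighborhood of each local model as a $3$-manifold with a marked $D^1$-fibration on the relevant solid-torus pieces, and exhibit the homeomorphism between them directly — essentially a Rolfsen-type twist / blow-down identity showing that a $0$-gleamed M\"obius band and an appropriately gleamed disc-on-annulus produce the same $4$-dimensional thickening. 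The bookkeeping of how the gleam of the red region changes (the ``$+1$'' or ``$+1/2$'' in the figure) comes out of tracking the framing curve $\mu_i+e_i\lambda_i$ versus $2\mu_i+\bar e_i\lambda_i$ through the identification, exactly as in the definition of gleam recalled in Section~\ref{shadows:section}; I would quote the computation of Costantino--Thurston \cite{CoThu} for the precise shift, just as Lemma~\ref{perturb:lemma} does.

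For move (6) I would do the same, but now there are two variants (6)-left and (6)-right, distinguished by whether the extra piece attached is a $+1$-gleamed disc completing a M\"obius strip to a projective plane, or a M\"obius strip attached along the matched arrows. In each case I would again reduce to a statement inside $D^4$: both local models thicken to the same $4$-ball-with-handles, and the induced structure on the boundary $S^3$ (a framed link, or equivalently a fibered piece of a solid torus) agrees. A convenient way to organize this is via the immersed-shadow calculus of Section~\ref{operations:section}: perturb a double point as in Lemma~\ref{perturb:lemma} to pass between a configuration containing a small $0$-gleamed sphere $\Sigma$ and one without, and check that the two sides of move (6) become the \emph{same} immersed polyhedron after such perturbations. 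Since Lemma~\ref{perturb:lemma} guarantees that perturbation does not change the regular neighborhood, establishing a common immersed form for $X_1$ and $X_2$ finishes the move.

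The main obstacle I anticipate is not any one homeomorphism — each is elementary — but the \emph{gleam bookkeeping}: getting the half-integer shifts on the red regions exactly right under the several identifications, keeping track of orientations (Remark~\ref{orientation:rem}) and of the twisted-versus-trivial $D^1$-subbundle distinction (Remark~\ref{clockwise:rem}), and making sure the M\"obius-strip regions are correctly labeled ``odd'' so that a half-odd gleam is forced. I would handle this by fixing, once and for all, a section $s$ and an orientation of the $S^1$-fiber over each region involved, computing the curves $\mu+e\lambda$ (resp.\ $2\mu+\bar e\lambda$) explicitly on both sides of the move, and reading off the difference; the answers are then exactly the numbers shown in red in Fig.~\ref{mossa_all:fig}. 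As with Proposition~\ref{moves:prop}, once the thickenings are shown homeomorphic with matching vertical boundary, Proposition~\ref{unique:prop} supplies the statement about the framed blocks, and the proof is complete.
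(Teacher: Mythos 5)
The central problem with your proposal is a misreading of the statement's scope. Proposition~\ref{mosse:prop} asserts that \emph{all} the moves in Fig.~\ref{mossa_all:fig} relate shadows of the same block, and the paper's proof accordingly treats moves (1) through (6). You address only moves (5) and (6). Moves (1)--(4) are not covered by your argument at all; you would need to repeat your scheme (or some other argument) for each of them.

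Beyond the scope issue, your proposed strategy is genuinely different from the paper's, and it is worth being aware of the tradeoff. The paper does \emph{not} re-verify homeomorphism of thickenings from scratch. Instead, it derives each of the new moves (1)--(6) of Fig.~\ref{mossa_all:fig} as a \emph{composition of the already-established basic moves} of Fig.~\ref{move_all:fig} (plus the double-point perturbation of Lemma~\ref{perturb:lemma} and the move of Fig.~\ref{sum2:fig}, both of which are known not to change the regular neighborhood). For example, move (5) is obtained as a composition of Fig.~\ref{move_all:fig}-(2) and its inverse; moves (3) and (4) are built from Fig.~\ref{move_all:fig}-(2), (3), (4) via the intermediate pictures in Figs.~\ref{triplet:fig} and~\ref{triplet2:fig}; move (6) requires sliding an auxiliary annulus and the projective-plane rotation of Fig.~\ref{mossa6_proof2:fig}. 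This is precisely what lets the author avoid the ``gleam bookkeeping'' obstacle you (correctly) flag as the hard part of your plan: once a move is expressed as a sequence of previously verified moves, the gleam shifts come along for free and never need to be re-derived via the section-and-fiber computation of Section~\ref{shadows:section}. Your plan to directly compute the framings $\mu+e\lambda$ versus $2\mu+\bar e\lambda$ on both sides, quoting \cite{CoThu} for the shifts, is in principle workable but is precisely the error-prone route the paper's proof is designed to circumvent. Note also that your appeal to Proposition~\ref{unique:prop} is the right final step in either approach; that part is fine.

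In short: correct the scope to cover all six moves; and be aware that the paper's proof is a reduction to known moves, not a de novo thickening computation, which significantly lightens the bookkeeping burden you identified as your main obstacle.
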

\begin{proof}
The annular region of both portions in Fig.~\ref{mossa_all:fig}-(1) have gleam zero. Therefore both portions may be embedded in a 3-dimensional slice $D^3$ as in the figure. Their regular neighborhoods are the same since they are so in $D^3$. (Alternatively, use Fig.~\ref{move_all:fig}-(2) a couple of times.)

\begin{figure}
\begin{center}
\includegraphics[width = 10 cm]{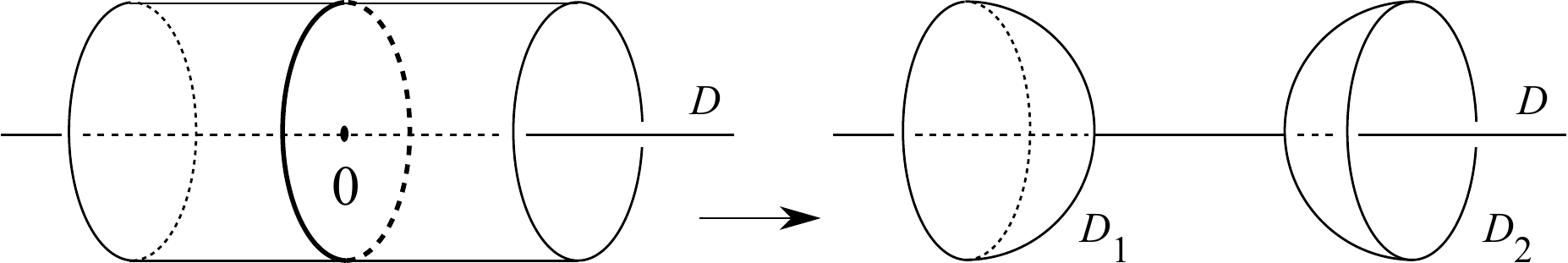}
\nota{Two intermediate steps of the move shown in Fig.~\ref{mossa_all:fig}-(2).}
\label{mossa2_proof:fig}
\end{center}
\end{figure}

The left portion in Fig.~\ref{mossa_all:fig}-(2) is the perturbation of the left portion $Q\cup D$ in Fig.~\ref{mossa2_proof:fig}, see Fig.~\ref{perturb:fig}. The portion $Q$ can be embedded in a 3-dimensional slice $D^3$ because the disc has gleam zero, and the disc $D$ intersects the slice in an arc, as in the figure. Apply the move in Fig.~\ref{sum2:fig} as in Fig.~\ref{mossa2_proof:fig}-right. The result is the union $D_1\cup D_2\cup D$ of three transverse discs. By perturbing the two intersection points $D_1\cap D$ and $D_2\cap D$ we get Fig.~\ref{mossa_all:fig}-(2)-right. (Alternatively, the move may also be obtained as a combination of the basic moves in Fig.~\ref{move_all:fig}.)

The portion of shadow in Fig.~\ref{mossa_all:fig}-(3)-left can also be drawn as in Fig.~\ref{triplet:fig}-left, with a $+1$-gleamed disc attached along the red circle. We can apply the moves shown in Fig.~\ref{triplet:fig}. In the resulting portion the disc delimited by the red circle has gleam $+1-1/2 = 1/2$. The new portion can be described as in Fig.~\ref{triplet2:fig}-left, with an annulus attached to the red circle. A final step is then shown in Fig.~\ref{triplet2:fig}.

\begin{figure}
\begin{center}
\includegraphics[width = 12.5 cm]{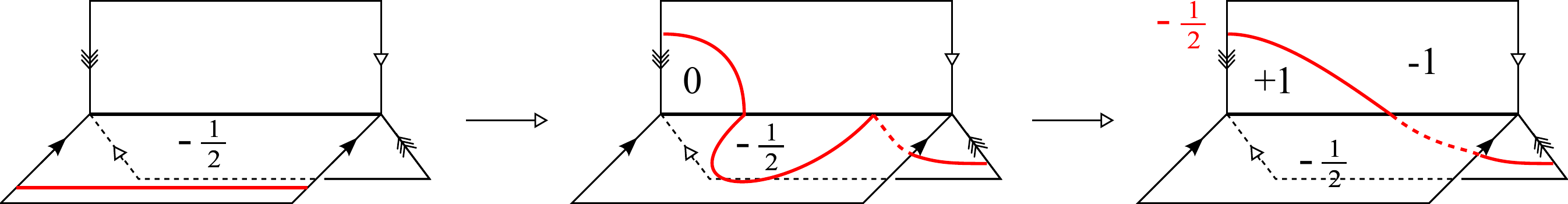}
\nota{Segments with matching arrows should be identified. We apply here Fig.~\ref{move_all:fig}-(2) and \ref{move_all:fig}-(4).}
\label{triplet:fig}
\end{center}
\end{figure}
\begin{figure}
\begin{center}
\includegraphics[width = 10 cm]{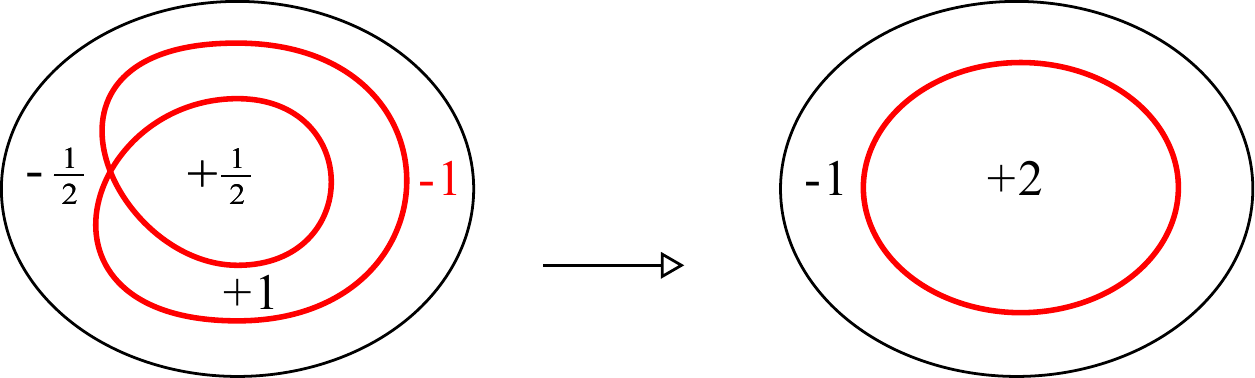}
\nota{Final step for Fig.~\ref{mossa_all:fig}-(3). Here we apply Fig.~\ref{move_all:fig}-(3).}
\label{triplet2:fig}
\end{center}
\end{figure}

The move in Fig.~\ref{mossa_all:fig}-(4) follows from the one in Fig.~\ref{mossa_all:fig}-(3): it suffices to add temporarily an auxiliary annulus in order to transform the portion in Fig.~\ref{mossa_all:fig}-(4)-left as in Fig.~\ref{mossa_all:fig}-(3)-left. 

\begin{figure}
\begin{center}
\includegraphics[width = 12.5 cm]{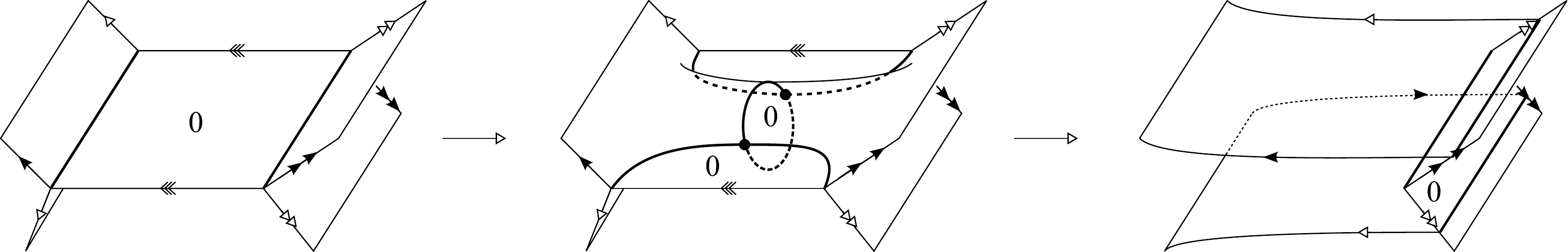}
\nota{Intermediate steps for Fig.~\ref{mossa_all:fig}-(5). Here we apply Fig.~\ref{move_all:fig}-(2) and its inverse.}
\label{mossa5_proof:fig}
\end{center}
\end{figure}

\begin{figure}
\begin{center}
\includegraphics[width = 11 cm]{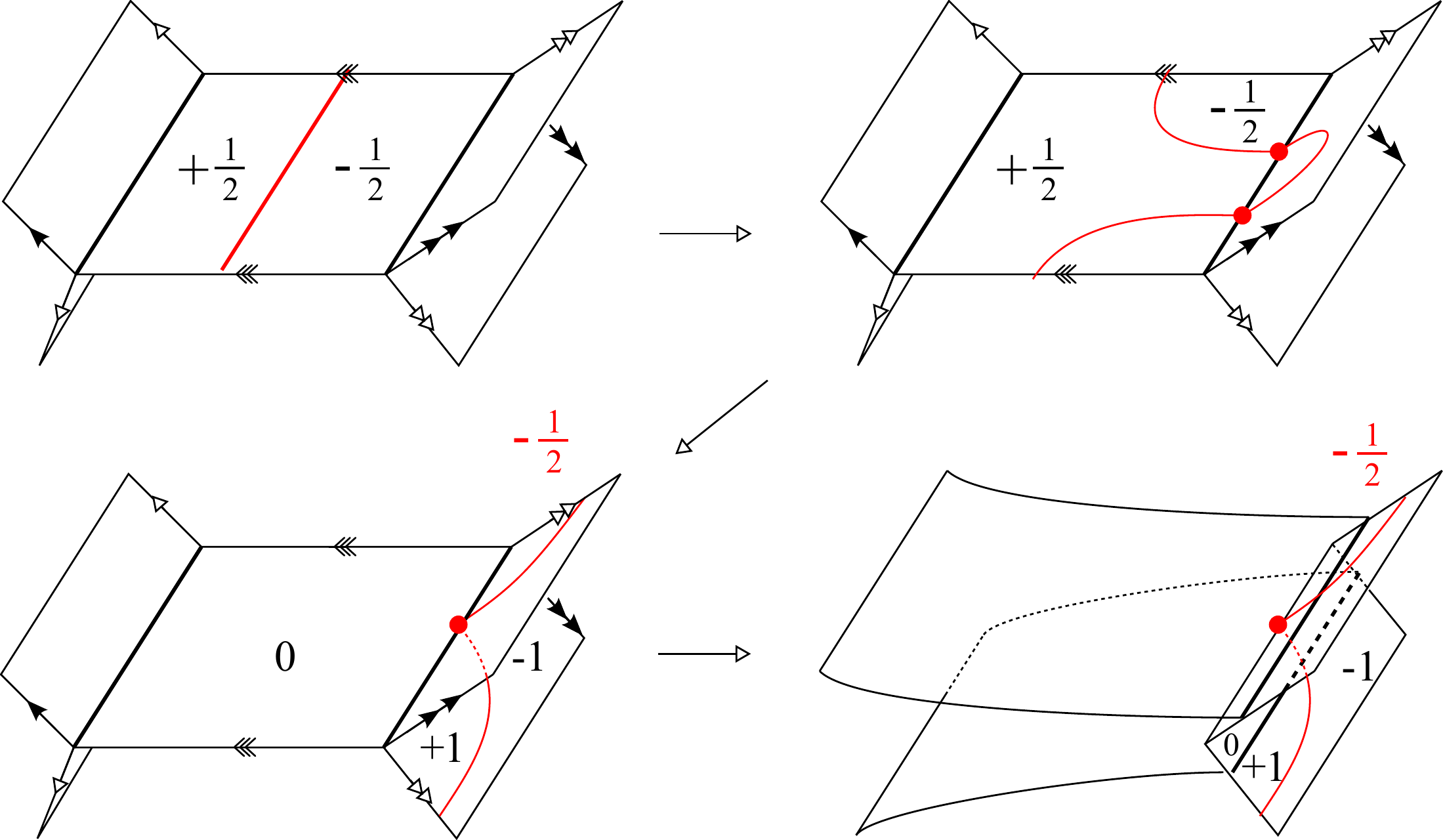}
\nota{Intermediate steps for Fig.~\ref{mossa_all:fig}-(6). We apply Fig.~\ref{move_all:fig}-(2), the opposite of Fig.~\ref{move_all:fig}-(4), and Fig.~\ref{mossa5_proof:fig} (the new vertex here can be ignored thanks to Fig.~\ref{move_all:fig}-(1)).}
\label{mossa6_proof:fig}
\end{center}
\end{figure}

\begin{figure}
\begin{center}
\includegraphics[width = 7 cm]{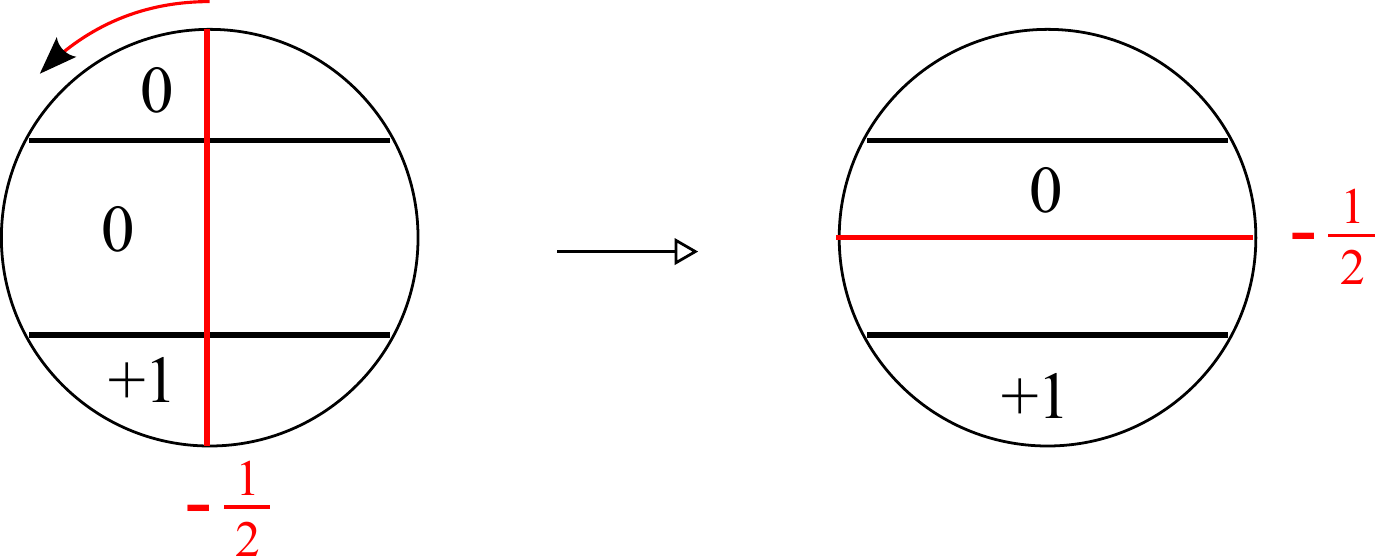}
\nota{Final step for Fig.~\ref{mossa_all:fig}-(6). We draw a projective plane as a disc with opposite boundary points identified. We apply Fig.~\ref{move_all:fig}-(2) and rotate counterclockwise the red curve.}
\label{mossa6_proof2:fig}
\end{center}
\end{figure}

The move in Fig.~\ref{mossa_all:fig}-(5) is constructed in Fig.~\ref{mossa5_proof:fig} as a composition of the move in Fig.~\ref{move_all:fig}-(2) and its inverse. The move in Fig.~\ref{mossa_all:fig}-(6) is constructed similarly: in order to apply Fig.~\ref{mossa5_proof:fig} we first slide away the vertical annulus as shown in Fig.~\ref{mossa6_proof:fig} (only the attaching of the annulus is shown, in red). Finally, note that a $+1$-gleamed disc is attached to the rightmost M\"obius band producing a projective plane: the projective plane and the two incident regions are drawn in Fig.~\ref{mossa6_proof2:fig}-left. We can turn the red segment counterclockwise as in Fig.~\ref{mossa6_proof2:fig} and get a portion as in Fig.~\ref{mossa_all:fig}-(2)-right, as required.
\end{proof}

\section{Shadows without vertices.} \label{without:section}
As shown in Section \ref{simple:subsection}, a simple polyhedron without vertices may be described via a graph. A shadow $X$ without vertices is thus encoded by a graph whose edges are decorated with half-integers. We summarize here briefly the moves introduced in the previous section using such decorated graphs. 

The boundary $\partial N(X)$ of the thickening of $X$ is a closed 3-manifold. As proved by Costantino and Thurston \cite{CoThu}, the graph describes correspondingly a decomposition of $\partial N(X)$ as a graph manifold. Such a decomposition is described at the end of this section.

\subsection{Decorated graph} \label{decorated:subsection}
Let a graph with vertices as in Fig.~\ref{vertices:fig} describe a simple polyhedron without vertices.
Let $e$ be an edge of the graph. If precisely one of its endpoints is incident to a vertex of type (5) as an unmarked edge, then the \emph{parity} of $e$ is odd. Otherwise, it is even.

\begin{defn}
A \emph{decorated graph} is a graph whose vertices are as in Fig.~\ref{vertices:fig}, and whose edges are decorated with half-integers. The half-integer decorating an edge $e$ is an integer or a half-odd, depending on the parity of $e$. 
\end{defn}

A graph determines a simple polyhedron $X$. Note that an edge of the graph determines a region of $X$. (Many edges may determine the same region.) A decorated graph determines a shadow: the gleam of a region is the sum of all the half-integers decorating the edges that determine that region. The parity of the edges was defined above in order to be coherent with the parity of the regions of $X$, so the result is indeed a shadow. 

Every simple shadow $X$ without vertices can be described by some decorated graph in this way. Such a graph is not really unique: some moves modify the graph while leaving the shadow unchanged, see Fig.~\ref{mosse_innocue:fig}.

\begin{figure}
\begin{center}
\includegraphics[width = 12.5 cm]{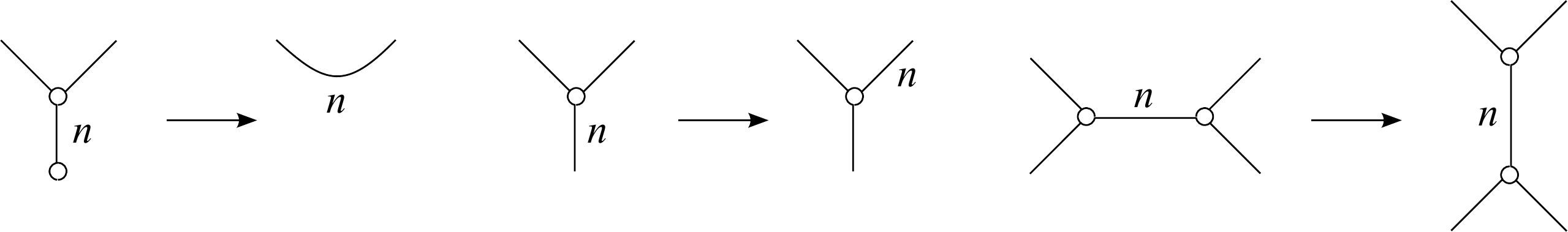}
\nota{These moves do not modify the shadow $X$.}
\label{mosse_innocue:fig}
\end{center}
\end{figure}

\begin{figure}
\begin{center}
\includegraphics[width = 10 cm]{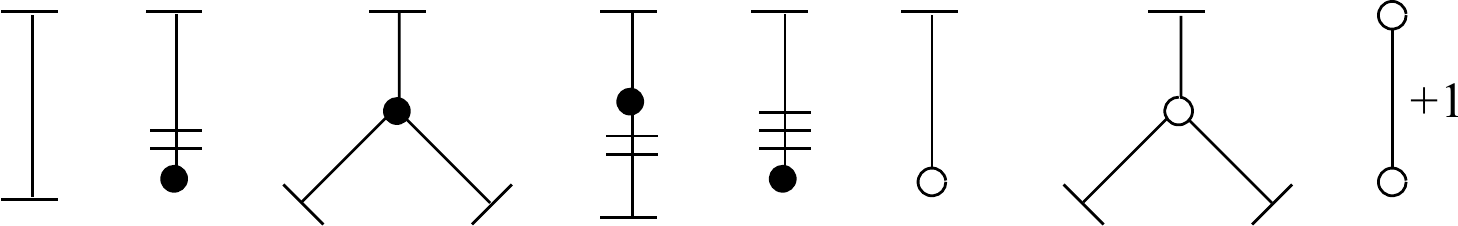}
\nota{The shadows $Y_{11} = A^2$, $Y_2$, $Y_{111}$, $Y_{12}$, $Y_3$, $D^2$, $P^2$, and $S^2$ of the blocks $M_{11} = N_2$, $M_2$, $M_{111}$, $M_{12}$, $M_3$, $N_1$, $N_3$, and $\matCP^2$. (See Propositions \ref{plane:prop} and \ref{blocks:prop}.) Decorations on the edges incident to flat vertices are omitted.
}
\label{blocks:fig}
\end{center}
\end{figure}

There are two types of 1-valent vertices \includegraphics[width = 0.6 cm]{0.pdf} and \includegraphics[width = 0.6 cm]{1.pdf}, and we call them respectively \emph{flat} and \emph{fat}. A flat vertex denotes a component of $\partial X$. When we want to describe a shadow $X$ of some (unframed) block $M$, we may omit decorations on the edges incident to flat vertices, according to Remark \ref{abuse:rem}. As an example, the graphs in Fig.~\ref{blocks:fig} describe the shadows of the blocks in $\calS_0$ and of $\matCP^2$, see Propositions \ref{plane:prop} and \ref{blocks:prop}.

\subsection{Moves}
The moves described in Section \ref{moves:section} can be easily visualized using decorated graphs. 

\begin{figure}
\begin{center}
\includegraphics[width = 12.5 cm]{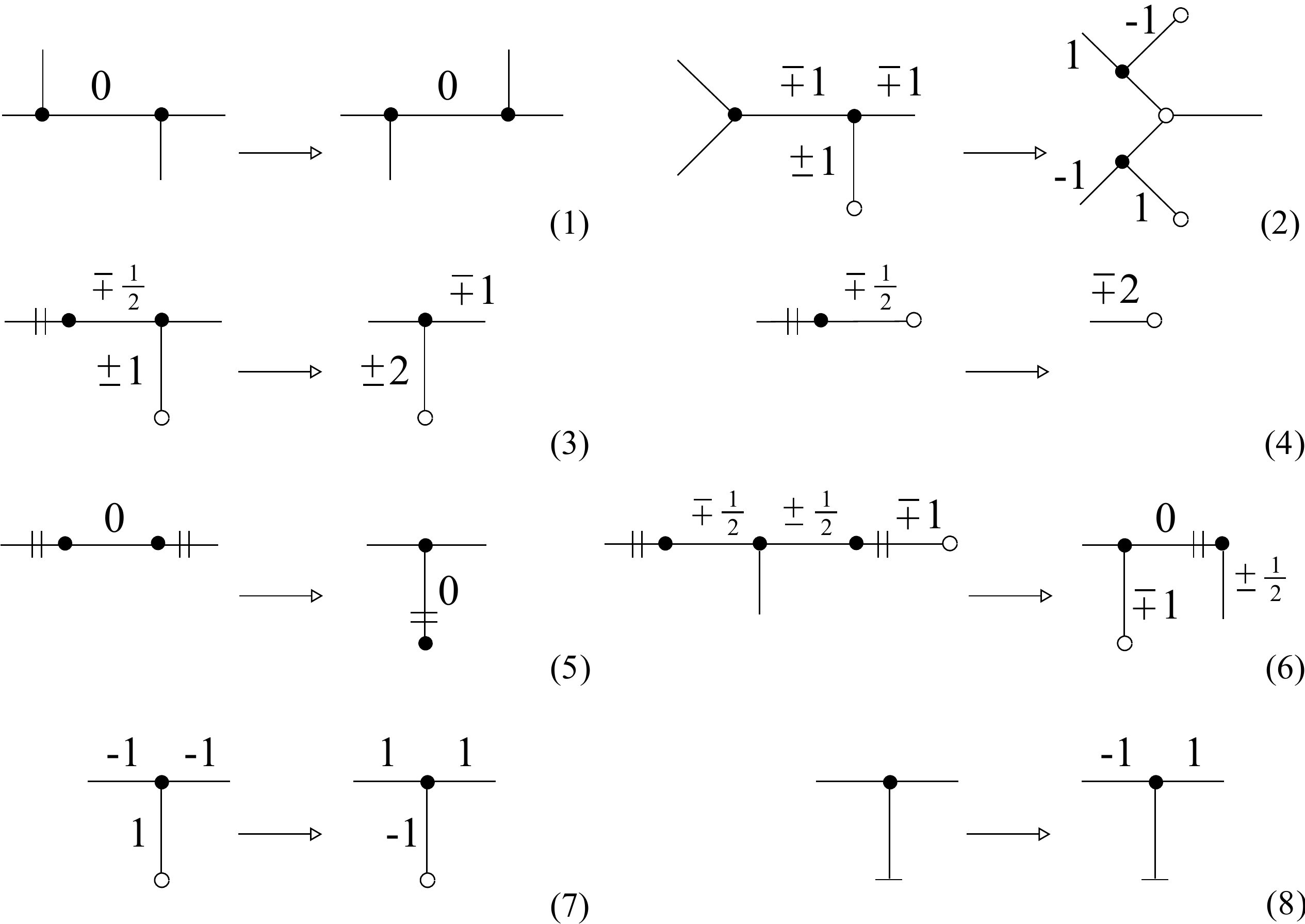}
\nota{These moves relate two shadows of the same block.}
\label{thickening:fig}
\end{center}
\end{figure}

\begin{prop}
The moves in Fig.~\ref{thickening:fig} relate two shadows $X_1, X_2$ of the same block $(M,L)$.
\end{prop}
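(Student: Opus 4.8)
The plan is to recognize that Fig.~\ref{thickening:fig} is simply the decorated-graph rendering of the moves already established in Section~\ref{moves:section}, so the statement should follow by matching each graph move to a move of Fig.~\ref{move_all:fig} or Fig.~\ref{mossa_all:fig} and then verifying that the prescribed edge-decorations reproduce the gleam changes recorded there.

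First I would set up the dictionary from Subsection~\ref{decorated:subsection}: a decorated graph $G$ determines a shadow $X$, the gleam of a region $f$ being the sum of the half-integers on the edges of $G$ that determine $f$; by Proposition~\ref{Turaev:prop} the shadow $X$ has a unique thickening, and by Proposition~\ref{unique:prop} the block $(M,L)$ is determined by $X$. Two decorated graphs presenting the same $X$ differ by a finite sequence of the innocuous moves of Fig.~\ref{mosse_innocue:fig}. Consequently, for each move of Fig.~\ref{thickening:fig} it suffices to exhibit --- after possibly applying innocuous moves to one or both sides --- the corresponding modification of the underlying simple polyhedron/shadow, and then to quote Proposition~\ref{moves:prop} or Proposition~\ref{mosse:prop}.

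Then I would proceed move by move. The graph moves of Fig.~\ref{thickening:fig} that only rearrange vertices of type (2), (111), (12), (3) within a neighborhood of the singular set --- sliding a region across an edge, changing a pair-of-pants splitting --- are the graph pictures of Turaev's moves Fig.~\ref{move_all:fig}-(1) and (2), combined with the region-splitting moves of Fig.~\ref{mosse_innocue:fig}; since no gleam changes, the only check is that, region by region, the half-integers on the new edges sum to the old gleams, which is immediate from the summation rule. The graph moves carrying a red $+1$ or $+1/2$ are the pictures of Fig.~\ref{move_all:fig}-(3) and (4): when an edge is created, deleted, or rerouted, I would verify that the decoration assigned in Fig.~\ref{thickening:fig} reproduces exactly the $+1$ (resp.\ $+1/2$) gleam change displayed in red in Fig.~\ref{move_all:fig}, using Remark~\ref{clockwise:rem} to handle edges incident to boundary (flat) vertices. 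Finally, the moves that create or absorb a M\"obius strip, or turn a M\"obius strip into a projective plane, are the pictures of Fig.~\ref{mossa_all:fig}-(5) and (6); here I must keep track of the parity of each edge (odd exactly when one endpoint is the unmarked edge of a type-(5) vertex) so that every region on both sides of the move carries an admissible decoration (an integer on an even region, a half-odd on an odd one).

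I expect the crux to be exactly this parity-and-gleam bookkeeping across the non-planar moves (5) and (6): the asymmetry of the (12)-vertex and the fact that a M\"obius-strip region contributes a half-integer through a single twisted $2\mu_i+\bar e_i\lambda_i$ curve rather than two parallel $\mu_i+e_i\lambda_i$ curves mean that the local redistribution of half-integers has to be arranged so that (a) each resulting region still carries an admissible half-integer and (b) the altered local picture glues consistently with the unchanged part of $X$. Once these finitely many case checks are done, the proposition is just a restatement of Propositions~\ref{moves:prop} and~\ref{mosse:prop} in the decorated-graph language.
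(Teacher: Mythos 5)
Your framework is sound for the first six moves of Fig.~\ref{thickening:fig}: those are indeed the decorated-graph pictures of Fig.~\ref{mossa_all:fig}, and your strategy of matching each graph picture to a polyhedron move, checking gleam sums and parities, and then invoking Proposition~\ref{mosse:prop} (and Proposition~\ref{moves:prop} where needed) is exactly how the paper handles them.

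However, there is a genuine gap: your plan assumes \emph{all} of the moves in Fig.~\ref{thickening:fig} are restatements of the shadow moves of Section~\ref{moves:section}, and hence can be dispatched by quoting Propositions~\ref{moves:prop} and~\ref{mosse:prop}. This is false for the last two moves, and your write-up never addresses them. Move~(7) of Fig.~\ref{thickening:fig} is not a shadow move at all in the sense of Fig.~\ref{move_all:fig} or Fig.~\ref{mossa_all:fig}; it encodes the fact that the two inequivalent perturbations of a double point (Lemma~\ref{perturb:lemma}, Fig.~\ref{perturb:fig}) produce polyhedra with the same regular neighborhood, and hence shadows of the same block --- this needs the immersed-shadow machinery of Section~\ref{operations:section}, not the moves of Section~\ref{moves:section}. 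Move~(8) is justified differently again: one does not exhibit a sequence of local shadow moves at all, but rather observes via Fig.~\ref{sum_ass:fig}-(3) (the filling move, Proposition~\ref{filling:prop}) that both $X_1$ and $X_2$ are shadows of one and the same block obtained by drilling a fixed curve in a fixed block, and uniqueness of the block (Proposition~\ref{unique:prop}) does the rest. Neither of these arguments is a special case of ``translate the graph move and quote Proposition~\ref{moves:prop} or~\ref{mosse:prop},'' so the proof as outlined does not close.
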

\begin{proof}
The moves (1-6) are the ones described in Fig.~\ref{mossa_all:fig}. Move (7) corresponds to two different perturbations of a double point, see Fig.~\ref{perturb:fig}. Move (8) follows from Fig.~\ref{sum_ass:fig}-(3) below: both $X_1$ and $X_2$ are shadows of the same block, obtained from another block by drilling along the same curve.
\end{proof}

\begin{figure}
\begin{center}
\includegraphics[width = 12.5 cm]{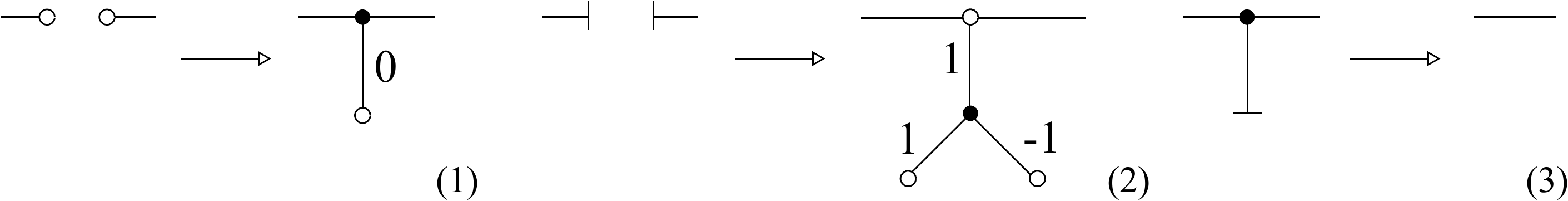}
\nota{These moves transform a shadow of $(M,L)$ into a shadow of some $(M',L')$. The new block $(M',L')$ is a connected sum (1), assembling (2), or filling (3) of the original one $(M,L)$.}
\label{sum_ass:fig}
\end{center}
\end{figure}

\begin{prop}
The moves in Fig.~\ref{sum_ass:fig} transform a shadow $X$ of a block $(M,L)$ into a shadow $X'$ of a block $(M',L')$, and viceversa. The block $(M',L')$ is respectively a connected sum, assembling, or filling of $(M,L)$.
\end{prop}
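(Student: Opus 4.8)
The plan is to recognize each of the three moves in Fig.~\ref{sum_ass:fig} as nothing more than the decorated-graph incarnation of a move already analyzed in Section~\ref{operations:section}, and then to quote the corresponding proposition. First I would translate each decorated graph appearing in Fig.~\ref{sum_ass:fig} back into the honest simple polyhedron it encodes, using the dictionary of Subsection~\ref{decorated:subsection}: a vertex of type (D), (P), (2), (111), (12), (3) is the piece $D^2$, $P^2$, $Y_2$, $Y_{111}$, $Y_{12}$, $Y_3$; a flat vertex is a boundary circle; an edge is a gluing circle; and the half-integer decorating an edge contributes to the gleam of the region it determines. Under this dictionary, move~(1) is exactly the connected-sum move of Fig.~\ref{sum:fig}; move~(2) is exactly the assembling move of Fig.~\ref{assembling:fig} (the two flat vertices being identified are the two boundary circles that get glued, and the bubble with its $\pm$ gleams is the sphere $\Sigma$ appearing in the proof of Proposition~\ref{assembling:prop}); and move~(3) is exactly the filling move of Fig.~\ref{filling:fig} (the deleted annulus adjacent to a flat vertex, its gleam $n$ recording the framing of the component of $L$ that is filled). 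Having made these identifications, Propositions~\ref{sum:prop}, \ref{assembling:prop}, and~\ref{filling:prop} give directly that $X'$ is a shadow of a block $(M',L')$ which is, respectively, a connected sum, an assembling, or a filling of $(M,L)$, and that the converse transformation is also valid.

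The only genuine work is to check that the pictures match up correctly, namely that the graph moves of Fig.~\ref{sum_ass:fig} really do induce, region by region and gluing circle by gluing circle, the polyhedral moves of Figs.~\ref{sum:fig}, \ref{assembling:fig}, and~\ref{filling:fig}, including the bookkeeping of gleams (half-integers on edges summing to the gleam of a region, with the parity conventions of Subsection~\ref{decorated:subsection}). Here I would use the innocuous graph moves of Fig.~\ref{mosse_innocue:fig}, which leave the shadow unchanged, to bring an arbitrary decorated graph into the local form in which the move is drawn; one must be mildly careful that a single region may be cut by several edges, so that the displayed local picture is an honest regular neighborhood of the relevant gluing circle (and the decoration on the drawn edge is only part of the gleam). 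The decorated-graph versions of the moves of Fig.~\ref{thickening:fig}, established in the previous proposition, may also be applied freely to massage the graph without altering the underlying block.

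I expect the main obstacle to be purely notational: making sure the gleam decorations transform correctly across each move. For the assembling move this means checking that the bubble carries precisely the gleams forced by Lemma~\ref{perturb:lemma} (the $\pm 1$/$\mp 1$ pair coming from the two spanning annuli of a Hopf link), and for the filling move that the gleam $n$ on the deleted annulus is exactly the datum that fixes the framing of the component of $L$ being filled, consistently with Remarks~\ref{clockwise:rem} and~\ref{abuse:rem}; for the connected-sum move it is just the observation that the attached disc has gleam zero, so the local model embeds in a $3$-dimensional slice as in the proof of Proposition~\ref{sum:prop}. Once the local pictures are matched, nothing further is needed: Proposition~\ref{unique:prop} guarantees that the blocks determined by $X$ and $X'$ are well defined, and the cited propositions identify the operation relating them.
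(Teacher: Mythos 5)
Your proposal is exactly the paper's argument, just spelled out in more detail: the paper's proof consists of the single sentence ``This corresponds to Propositions~\ref{sum:prop}, \ref{assembling:prop}, and \ref{filling:prop},'' i.e.~each graph move in Fig.~\ref{sum_ass:fig} is the decorated-graph encoding of the polyhedral move already analyzed in Section~\ref{operations:section}, which is precisely your identification. Your additional remarks about gleam bookkeeping and using the innocuous moves of Fig.~\ref{mosse_innocue:fig} to normalize the local picture are sound and only expand on what the paper leaves implicit.
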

\begin{proof}
This corresponds to Propositions \ref{sum:prop}, \ref{assembling:prop}, and \ref{filling:prop}.
\end{proof}

\subsection{Decomposition into pieces} \label{decomposition:subsection}
Let $X$ be a shadow without vertices and $N(X)$ its thickening. As shown by Costantino and Thurston \cite{CoThu}, there is a natural map $\pi:\partial N(X) \to X$ which is a circle fibering over the non-singular points of $X$. (Such a map might actually extended to the whole of $N(X)$, but we only need the boundary here.) Let $G$ be a decorated graph describing $X$. Recall that such a graph determines a decomposition into pieces of $X$, and each vertex of $G$ determines a piece of $X$.

\begin{prop}[Costantino-Thurston \cite{CoThu}] \label{deco:prop}
The decorated graph $G$ describes a decomposition of the closed 3-manifold $\partial N(X)$ into pieces bounded by tori, as follows.
\begin{enumerate}
\item Every piece $Q$ of $X$ determines a ``horizontal'' piece $\overline{\pi^{-1}(\interior Q)}$: its homeomorphism type depends on $Q$ and is shown in Table \ref{pieces:table}. 
\item Every component $C$ of $\partial X$ determines a ``vertical'' solid torus $\pi^{-1}(C)$.
\end{enumerate}
\end{prop}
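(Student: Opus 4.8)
The plan is to construct the circle fibration $\pi$ region by region out of the gleam data, and then identify $\overline{\pi^{-1}(\interior{Q})}$ by inspecting the six local models supplied by Proposition~\ref{simple:prop}; this is in essence the argument of Costantino--Thurston \cite{CoThu}, which one could also simply cite.

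First I would build $\pi$. For each region $f$ of $X$, the gleam construction of Section~\ref{shadows:section} produces inside $N(X)$ a $D^2$-bundle $B_f$ over $f$ whose horizontal boundary $\partial_{\mathrm{hor}}B_f$ is an $S^1$-bundle over $f$ (indeed the gleam measures exactly this bundle). On $\partial_{\mathrm{hor}}B_f$ I declare $\pi$ to be this $S^1$-bundle projection; since $f$ has nonempty boundary, $\partial_{\mathrm{hor}}B_f\isom f\times S^1$. Over a neighbourhood in $X$ of a circle $c$ of $SX$, i.e.\ over one of the pieces $Y_{111},Y_{12},Y_3$, the (at most three) bundles $\partial_{\mathrm{hor}}B_f$ of the incident regions glue along a single fibre circle lying over $c$; examining each of the three standard local models (whose $4$-dimensional thickenings are $S^1\times D^3$ or a twisted/quotient version thereof) shows that these pieces fit together into a genuine circle fibration of $\partial N(X)$ over the non-singular part $X\setminus SX$. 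By Proposition~\ref{Turaev:prop} the thickening is unique up to homeomorphism, so $\pi$ is well defined.

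Next I would read off the decomposition. For a region piece $Q\in\{D^2,P^2,Y_2\}$ the closure $\overline{\pi^{-1}(\interior{Q})}$ is the (trivial, since $\partial Q\ne\emptyset$) $S^1$-bundle $Q\times S^1$; for $Q\in\{Y_{111},Y_{12},Y_3\}$ it is obtained by gluing the three circle bundles over the adjacent regions along their common fibre over the core circle of $Q$, and a direct computation identifies the result with the Seifert piece recorded in Table~\ref{pieces:table}. In all six cases the boundary of $\overline{\pi^{-1}(\interior{Q})}$ is a union of tori, each fibering over one of the simple closed curves along which Proposition~\ref{simple:prop} cuts $X$. A boundary component $C$ of $\partial X=L$ contributes, by Remark~\ref{horizontal:rem}, the vertical solid torus of $\partial N(X)$ lying over $C$, attached along a torus to the horizontal piece of the adjacent region. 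Finally, two pieces of $X$ meet along a curve precisely when the corresponding vertices of $G$ are joined by an edge, and along that torus the gluing map between the two Seifert pieces is determined by the combinatorics of $G$ together with the half-integer decorating the edge, the latter only shifting the section curve and hence contributing to an Euler number. Thus the decorated graph $G$ encodes the full graph-manifold structure of $\partial N(X)$.

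The main obstacle is the local analysis over the singular circles: one must check that gluing three circle bundles along a common fibre yields exactly the Seifert piece of Table~\ref{pieces:table}, and track orientations and the even/odd parity of each edge carefully enough that the resulting gluing matrices agree with those extracted from the decorations on $G$. Once this model computation is carried out, the rest is bookkeeping.
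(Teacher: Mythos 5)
Your outline captures the correct statement, but it departs from the paper's argument in the one place that actually requires work, and it contains a small but real error.

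The error first: you assert that because $f$ has nonempty boundary, $\partial_{\rm hor}B_f\isom f\times S^1$, and you explicitly group $Y_2$ (the M\"obius strip) with $D^2$ and $P^2$. This is false for $Y_2$: the product $Y_2\times S^1$ is non-orientable and therefore cannot be a piece of the orientable $3$-manifold $\partial N(X)$. The correct piece is the \emph{twisted} circle bundle $Y_2\timtil S^1$ (the orientable fibering over the M\"obius strip), exactly as Table \ref{pieces:table} records. Having nonempty boundary trivializes the bundle as an $SO(2)$-bundle only when $f$ is orientable; over a non-orientable region the fiber orientation must reverse along an orientation-reversing loop, which is precisely the twist. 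The same subtlety is what makes the edge parity matter in the first place.

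More substantially, your treatment of the singular pieces $Y_{111},Y_{12},Y_3$ is different from the paper's and defers the real content to an unspecified ``direct computation.'' The paper's proof avoids that computation by a short 4-dimensional observation: a regular neighborhood of the singular circle $c$ in $N(X)$ is $D^3\times S^1$, the piece $Q$ sits properly embedded inside it, and $\overline{\pi^{-1}(\interior Q)}$ is just $\partial(D^3\times S^1)=S^2\times S^1$ with an open regular neighborhood of the closed braid $\partial Q$ removed. Reading off which braid $\partial Q$ is in each of the three local models immediately yields the entries of Table \ref{pieces:table} as link complements in $S^2\times S^1$, with the Seifert structure built in. Your route --- building $\pi$ region by region and gluing the circle bundles ``along a common fibre over the core circle'' --- is not obviously well posed, since $\pi$ is a circle fibration only over the nonsingular locus and there is no literal common fiber over $c$; making this precise essentially forces you back into the $D^3\times S^1$ picture. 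So the approaches are not really independent: yours recreates the paper's key step but in a form where it is harder to see that it terminates in the advertised Seifert models. If you want to keep the bottom-up construction, you should at least replace the gluing-along-a-fiber step with the explicit $D^3\times S^1$ model and the braid description of $\partial Q$.
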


\begin{table} 
  \begin{center}
    \begin{tabular}{r|cccccc}
      \phantom{\Big|}
      Vertex
      & \includegraphics[width = 1 cm]{1.pdf} & \includegraphics[width = 1 cm]{2.pdf} & \includegraphics[width = 1 cm]{3.pdf} & \includegraphics[width = 1 cm]{4.pdf} & \includegraphics[width = 1 cm]{5.pdf} & \includegraphics[width = 1 cm]{6.pdf} \\
      \hline
      \phantom{\bigg|}
      $Q$ (name)
      & $D^2$ & $P^2$ & $Y_2$ & $Y_{111}$ & $Y_{12}$ & $Y_3$ \\
      \phantom{$\begin{array}{c} \Bigg| \\ \bigg| \end{array}$}\!\!\!\!\!\!\!\!\!
      $Q$ (picture)
      & \begin{minipage}{.10\textwidth} \ \ \includegraphics[width = 1 cm]{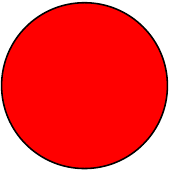} \end{minipage}
      & \begin{minipage}{.10\textwidth} \includegraphics[width = 1.5 cm]{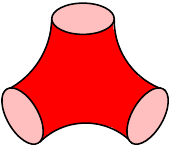} \end{minipage} 
      & \begin{minipage}{.10\textwidth} \includegraphics[width = 1.5 cm]{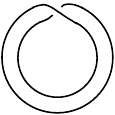} \end{minipage}
      & \begin{minipage}{.10\textwidth} \includegraphics[width = 1.5 cm]{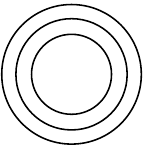} \end{minipage} 
      & \begin{minipage}{.10\textwidth} \includegraphics[width = 1.5 cm]{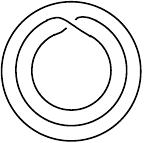} \end{minipage}
     & \begin{minipage}{.10\textwidth} \includegraphics[width = 1.5 cm]{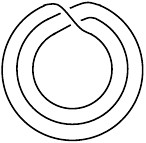} \end{minipage} \\
   \hline
      \phantom{\bigg|}
      $\pi^{-1}(Q)$ (name)
      & $D^2\times S^1$
      & $P^2\times S^1$
      & $Y_2\timtil S^1$
      & $P^2\times S^1$
      & $(A^2,2)$
      & $(D^2,3,3)$ \\
      \phantom{$\begin{array}{c} \Bigg| \\ \bigg| \end{array}$}\!\!\!\!\!\!\!\!\!
      $\pi^{-1}(Q)$ (picture) & & & \begin{minipage}{.10\textwidth} \includegraphics[width = 1.3 cm]{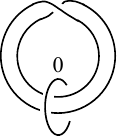}  \end{minipage}
      & \begin{minipage}{.10\textwidth} \includegraphics[width = 1.5 cm]{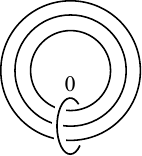} \end{minipage} 
      & \begin{minipage}{.10\textwidth} \includegraphics[width = 1.5 cm]{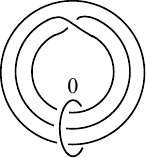}  \end{minipage}
      & \begin{minipage}{.10\textwidth} \includegraphics[width = 1.5 cm]{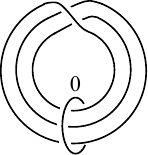} \end{minipage}
    \end{tabular}	
  \end{center} 
  \vspace{.2 cm}
  \nota{Every shadow $X$ without vertices decomposes into
  pieces $Q$. This induces a decomposition of the 3-manifold $\partial N(X)$ into some pieces bounded by tori. We denote by $Y_2\tilde\times S^1$ the orientable fibering over the M\"obius strip $Y_2$. The Seifert manifolds $(A^2,2)$ and $(D^2,3,3)$ fiber respectively over the annulus $A^2$ with one exceptional fiber of order 2, and over the disc $D^2$ with two exceptional fibers of order 3. 
  Such manifolds are pictured as link complements in $S^2\times S^1$.}
  \label{pieces:table}
\end{table}

\begin{proof}
The map $\pi:\partial N(X) \to X$ is a circle bundle on non-singular points. If $Q$ is a surface, the piece $\overline{\pi^{-1}(\interior Q)}$ is the orientable circle bundle over $Q$: this holds in cases
\includegraphics[width = .6 cm]{1.pdf}, \includegraphics[width = .6 cm]{2.pdf}, and \includegraphics[width = .6 cm]{3.pdf}. The pieces corresponding to \includegraphics[width = .6 cm]{4.pdf}, \includegraphics[width = .6 cm]{5.pdf}, and \includegraphics[width = .6 cm]{6.pdf}
are obtained by thickening the singular edge to a product $D^3\times S^1$.
We can think of $Q$ as properly embedded inside $D^3\times S^1$, so that $\overline{\pi^{-1}(\interior Q)}$ consists
of the boundary $S^2\times S^1$ minus an open regular neighborhood of $\partial Q$. The curves $\partial Q$ are the closed braids in $S^2\times S^1$ shown in the table.
\end{proof}

Note that the vertices \includegraphics[width = 0.6 cm]{0.pdf} and \includegraphics[width = .6 cm]{1.pdf} both give rise to solid tori. However, they are positioned differently with respect to the fibration $\pi$: their meridian is respectively vertical (\emph{i.e.}~a fiber of $\pi$) and horizontal (\emph{i.e.}~a section of $\pi$). Analogously, the vertices
\includegraphics[width = .6 cm]{2.pdf} and \includegraphics[width = .6 cm]{4.pdf} both yield a piece homeomorphic to $P^2 \times S^1$, but positioned differently: the fiber $\{pt\} \times S^1$ is respectively vertical and horizontal.

\section{Reduction to very simple polyhedra} \label{reduction:section}
This and the subsequent sections are strictly devoted to the proof of Theorem \ref{main:teo}. We start 
by eliminating some types of vertices. In this section we prove the following.

\begin{figure}
\begin{center}
\includegraphics[width = 11 cm]{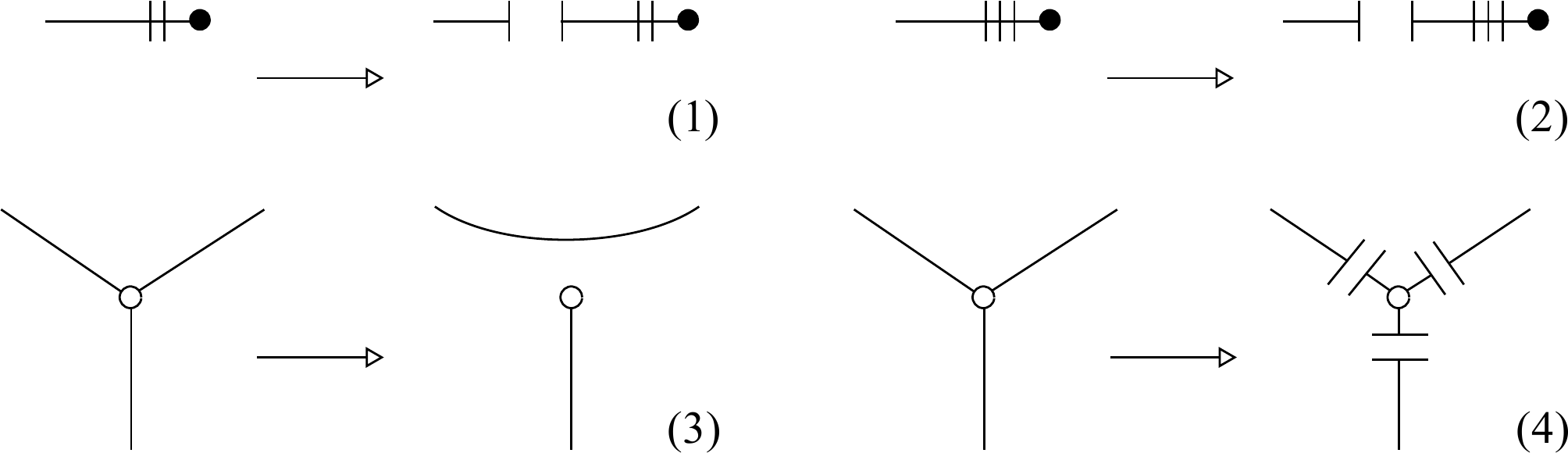}
\nota{The moves (1) and (2) transform a shadow of a block into a shadow of a de-assembled block. If the graph describing the shadow contains a trivalent white vertex (\emph{i.e.}~a pair-of-pants, see Fig.~\ref{pieces:table}-(3)) either the move (3) or (4) applies.}
\label{reduction:fig}
\end{center}
\end{figure}

\begin{teo} \label{very:simple:teo}
Let $X$ be a shadow of a block $(M,L)$, described via a decorated graph.
\begin{itemize}
\item Suppose the graph contains a vertex of type \includegraphics[width = 0.6 cm]{3.pdf}.
The move shown in Fig.~\ref{reduction:fig}-(1) transforms $X$ into a shadow $X'$ of a block $(M',L')$. \item Suppose the graph contains a vertex of type \includegraphics[width = 0.6 cm]{6.pdf}.
The move shown in Fig.~\ref{reduction:fig}-(2) transforms $X$ into a shadow $X'$ of a block $(M',L')$. \item Suppose the graph contains a vertex of type \includegraphics[width = 0.6 cm]{2.pdf}.
One of the two moves shown in Fig.~\ref{reduction:fig}-(3) and (4) transforms $X$ into a shadow $X'$ of a block $(M',L')$.
\end{itemize}
In all cases, the original block $(M,L)$ is obtained from $(M',L')$ by a combination of assemblings or connected sums.
\end{teo}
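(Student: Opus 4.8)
The plan is to verify each of the three bullets by analysing the $4$-dimensional thickening $N(X)$ locally around the offending vertex and exhibiting inside $M$ an embedded copy of $S^2\times S^1$ along which the move cuts; by Propositions~\ref{assembling:prop} and~\ref{sum:prop} (applied in the direction opposite to Fig.~\ref{sum_ass:fig}) this will identify the resulting polyhedron $X'$ as a shadow of a block $(M',L')$ from which $(M,L)$ is recovered by an assembling or a connected sum. The starting point is the decomposition of the closed $3$-manifold $\partial N(X)$ into Seifert pieces induced by the decorated graph (Proposition~\ref{deco:prop}). Since $X$ is a shadow of a block, $\partial N(X)\isom\#_h(S^2\times S^1)$ by Proposition~\ref{bordo:prop}, and this manifold contains no incompressible torus. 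On the other hand the piece of $\partial N(X)$ lying over a vertex of type $Y_2$, $Y_3$ or $P^2$ is respectively $Y_2\tilde\times S^1$, $(D^2,3,3)$ or $P^2\times S^1$ (Table~\ref{pieces:table}); each of these is Seifert fibered over a base of non-positive orbifold Euler characteristic and is not a solid torus, hence has incompressible boundary. Therefore every boundary torus $T$ of such a piece must compress on the \emph{complementary} side, and a compressing disc for $T$ realises $\partial N(X)$ as a connected sum across the $2$-sphere obtained by surgering $T$ along the disc.

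The next step is to put a compressing disc $D$ in standard position with respect to the circle fibration $\pi$, following the strategy of Neumann--Weintraub \cite{Neu}: one isotopes $D$ so that it meets only the pieces adjacent to $T$, so that the induced simplification of the graph-manifold structure of $\partial N(X)$ lifts to a move on $X$. After such an isotopy, $D$ together with the fibred collar of $T$ will bound a submanifold of $\partial N(X)$ homeomorphic to $(S^2\times S^1)\setminus\interior{B^3}$ positioned correctly with respect to $X$; capping its spherical boundary off with a $3$-disc inside the $1$-handlebody $M\setminus\interior{N(X)}$ — possible by Lemma~\ref{1:handlebody:lemma} — produces the $S^2\times S^1$ along which to cut, exactly as in the proof of Proposition~\ref{assembling:prop}. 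Cutting $M$ there is the move of Fig.~\ref{reduction:fig}; the half-integer decorations prescribed on $X'$ are then forced by the change of framing of $L$ (Remark~\ref{clockwise:rem}) together with additivity of gleams over the edges determining a region (Subsection~\ref{decorated:subsection}), with orientations handled by Remark~\ref{orientation:rem}.

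For a vertex of type $Y_2$ or $Y_3$ the neighbouring piece is attached along a single torus, so the compression is essentially unique and one move results (Fig.~\ref{reduction:fig}-(1), resp.~(2)); in the $Y_2$ case the M\"obius region winds twice around its singular circle, so unrolling it after the cut may leave a $\matCP^2$-summand, which is harmless since connected sums are permitted. For a vertex of type $P^2$ there are three cuff tori, and both \emph{which} of them carries the compression and \emph{how} $\partial D$ sits relative to the fibre of $P^2\times S^1$ (parallel to a fibre, or transverse to it) decide which of moves~(3),~(4) applies. I expect the main obstacle to be precisely the exhaustiveness of this dichotomy together with the accompanying bookkeeping: one must enumerate the ways the three cuffs of $P^2\times S^1$ can be glued (to one another or to pieces of types $D^2$, $P^2$, $Y_2$, $Y_{111}$, $Y_{12}$, $Y_3$), check that in no configuration are all the relevant tori incompressible in $\partial N(X)$ — which would contradict $\partial N(X)\isom\#_h(S^2\times S^1)$ — and verify in each case that the resulting local replacement keeps the complement $M\setminus\interior{N(X')}$ a $1$-handlebody, so that $X'$ is indeed a shadow. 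Once the Neumann--Weintraub set-up is in place, the remaining ingredients — the incompressibility of the three Seifert pieces, the standard-position argument for $D$, and the passage from a single compression to the pictured local move — are routine.
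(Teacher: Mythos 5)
Your overall strategy coincides with the paper's: consider the decomposition of $\partial N(X\cup\partial M)\cong\#_h(S^2\times S^1)$ into Seifert pieces given by the decorated graph, note that the pieces over a vertex of type $Y_2$, $Y_3$ or $P^2$ have incompressible boundary, conclude that a compressing disc for the adjacent torus lies on the complementary side, and convert it into the move via an $S^2\times S^1$-cut using the $1$-handlebody structure of $M\setminus\interior{N(X)}$. However, there is a genuine gap at the decisive step. Proposition~\ref{disc:prop} (the only way the paper turns a compressing disc into a move on $X$) requires the disc to be \emph{vertical} or \emph{horizontal} in the sense of Definition~\ref{horizontal:defn}; mere compressibility does not give this, since $\partial D$ can a priori sit on any slope of the torus. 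You treat the reduction of $D$ to ``standard position'' as routine, but it is precisely the content of Propositions~\ref{no:2:prop} and~\ref{no:36:prop}. For $Y_2$ and $Y_3$ the paper views the piece as a knot complement in $S^2\times S^1$ (Table~\ref{pieces:table}) and shows by a direct Dehn-filling computation that the only slope whose filling gives $\#_h(S^2\times S^1)$ is $\infty$, hence $D$ is forced to be vertical; for $P^2\times S^1$ a short case analysis (outward discs would leave a Seifert summand with three singular fibres, an inward non-standard disc would give an essential planar surface meeting $\partial$ horizontally or vertically, contradiction) proves that either some boundary torus carries a horizontal disc or all three carry vertical ones — exactly the dichotomy behind moves~(3) and~(4). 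Your proposed way of filling the gap, enumerating the gluings of the cuffs of $P^2\times S^1$ and checking that ``not all the relevant tori are incompressible,'' is both unnecessary and too weak: every torus in $\#_h(S^2\times S^1)$ compresses, so incompressibility cannot distinguish the cases; what must be controlled is the slope of $\partial D$.

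A smaller inaccuracy: for a $Y_2$ vertex the compressing disc is vertical, so the move is an assembling (Fig.~\ref{hv:fig}-(1)), not a connected sum; the ``$\matCP^2$-summand left after unrolling the M\"obius band'' does not occur here.
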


This result allows to restrict our investigation to a smaller class of shadows, whose underlying polyhedron is as follows. 

\begin{defn} A \emph{very simple polyhedron} is a simple polyhedron which may described via a decorated graph with vertices of types shown in Fig.~\ref{vertices2:fig}.
\end{defn}

\begin{figure}
\begin{center}
\includegraphics[width = 5 cm]{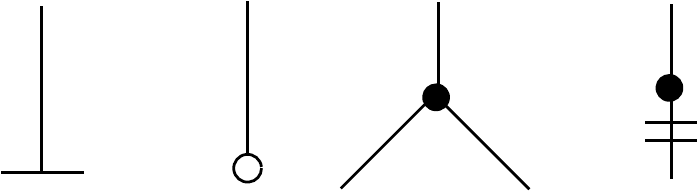}
\nota{A very simple shadow is encoded by a decorated graph with these types of vertices.}
\label{vertices2:fig}
\end{center}
\end{figure}

In other words, there are no pieces of type \includegraphics[width = 0.6 cm]{2.pdf}, \includegraphics[width = 0.6 cm]{3.pdf}, and \includegraphics[width = 0.6 cm]{6.pdf} from Fig.~\ref{vertices:fig}: these pieces can be ruled out thanks to Theorem \ref{very:simple:teo}, as the following corollary shows. (The notion of graph manifold generated by $\calS_0$ extends trivially to manifolds with non-empty boundary.)

\begin{cor} \label{very:simple:cor}
Every block $(M,L)$ having a shadow without vertices is obtained via connected sums and assemblings from $(M_1,L_1)\sqcup (M_2,L_2)$ where $M_1$ is a graph manifold generated by $\calS_0$ and $(M_2,L_2)$ has a very simple shadow.  (Both $M_1$ and $M_2$ may be disconnected.)
\end{cor}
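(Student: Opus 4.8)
The plan is to prove the corollary by induction on the number $b$ of pieces of type $P^2$, $Y_2$ or $Y_3$ occurring in a fixed decorated graph $G$ describing the vertex-free shadow $X$ of $(M,L)$; these are exactly the three kinds of piece that a very simple polyhedron is forbidden to contain, so the base case $b=0$ is precisely the case in which $X$ is already very simple — there one writes $M=M_1\#M_2$ with $(M_2,L_2)=(M,L)$ and $M_1=S^4$ (a graph manifold generated by $\calS_0$, cf. the proof of Proposition~\ref{smaller:prop}; one may instead allow $M_1$ to be empty).

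For the inductive step, assume $b>0$ and pick a piece $v$ of $X$ of type $P^2$, $Y_2$ or $Y_3$. I would feed $v$ to the appropriate case of Theorem~\ref{very:simple:teo}: one of the moves (1)--(4) of Fig.~\ref{reduction:fig} transforms $X$ into a shadow $X'$ of a block $(M',L')$ from which $(M,L)$ is recovered by a sequence of assemblings and connected sums. The two facts I would read off the pictures are: \emph{(a)} the move strips off an entire connected component $P$ of $M'$ whose shadow is precisely the piece $v$, so that by Proposition~\ref{blocks:prop} the block $P$ is $M_2$, $M_3$ or $N_3$ according to the type of $v$, and in particular $P\in\calS_0$; and \emph{(b)} writing $M'=P\sqcup M''$, the decorated graph $G''$ describing the shadow $X''$ of $M''$ is obtained from $G$ by deleting $v$ and cutting along the boundary circles incident to it, an operation which does not subdivide any remaining piece — every edge of $G$ is a gluing circle of the canonical decomposition of Proposition~\ref{simple:prop}, hence cutting it merely separates or ungenuses pieces — so that $X''$ is again vertex-free and has exactly $b-1$ pieces of type $P^2$, $Y_2$ or $Y_3$.

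I would then invoke the induction hypothesis on the (possibly disconnected) block $(M'',L'')$: it is obtained from $(M_1'',L_1'')\sqcup(M_2'',L_2'')$ by assemblings and connected sums, with $M_1''$ a graph manifold generated by $\calS_0$ and $(M_2'',L_2'')$ carrying a very simple shadow. Setting $M_1:=P\sqcup M_1''$ — still a graph manifold generated by $\calS_0$, since disjoint unions of such are again of this kind — and $(M_2,L_2):=(M_2'',L_2'')$, and composing the two batches of operations (first those producing $M''$ from $M_1''\sqcup M_2''$, then those gluing $P$ back onto $M''$), one exhibits $M$ as obtained from $(M_1,L_1)\sqcup(M_2,L_2)$ by assemblings and connected sums. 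It is harmless here that some gluing may join the $P$-summand of $M_1$ to the $M_2$-part: the statement only requires $M$ to arise from the disjoint union $M_1\sqcup M_2$ under these operations, so such ``crossing'' gluings are permitted. This closes the induction.

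The only step needing genuine care is the bookkeeping claim \emph{(a)--(b)} in the inductive step — that each application of Theorem~\ref{very:simple:teo} peels off exactly one of $M_2$, $M_3$, $N_3$ as a separate component and strictly decreases $b$ without introducing a new forbidden piece. This is essentially contained in Theorem~\ref{very:simple:teo} and Proposition~\ref{blocks:prop} together with a careful inspection of the moves in Fig.~\ref{reduction:fig}; the rest of the corollary is the purely formal observation that assemblings, connected sums and disjoint unions compose in the expected way. I do not expect any deeper obstacle.
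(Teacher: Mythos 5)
Your overall scheme matches the paper's: both induct on the presence of forbidden pieces of type $P^2$, $Y_2$, $Y_3$ and repeatedly invoke Theorem~\ref{very:simple:teo}. (The paper's own proof is extremely terse and does not even spell out the induction variable.) However, your bookkeeping claim \emph{(a)} --- that each application of Theorem~\ref{very:simple:teo} peels off a new \emph{component} $P\in\calS_0$ whose shadow is exactly the piece $v$ --- is false in one of the three cases, namely when $v$ is a pair-of-pants (type $P^2$) and one of its three incident tori admits a \emph{horizontal} compressing disc. By Proposition~\ref{no:2:prop} the dichotomy is: one torus has a horizontal disc, or all three have vertical discs. In the horizontal case the move applied is the connected-sum inverse of Fig.~\ref{hv:fig}-(2), realized as Fig.~\ref{reduction:fig}-(3): one boundary circle of the pair-of-pants is capped by a disc, the pair-of-pants degenerates to an annulus and is absorbed into the ambient region after the clean-up move of Fig.~\ref{mosse_innocue:fig}, and $(M,L)$ is recovered from $(M',L')$ by a connected sum --- but no separate $\calS_0$-summand $P$ is produced. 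Your claim \emph{(b)}, that the new decorated graph is $G$ with $v$ and its incident edges deleted, is also wrong here: only one edge at $v$ is cut, and $v$ disappears by absorption rather than by deletion. (Your claims are fine for the $Y_2$ and $Y_3$ moves, and for the pair-of-pants with all-vertical compressing discs, which does split off $N_3$.)

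The gap is nevertheless easily repaired, because the underlying induction is sound: in the horizontal case the measure $b$ still strictly decreases (the pair-of-pants ceases to exist as a piece), the new shadow is still vertex-free, and $(M,L)$ is obtained from $(M',L')$ by an allowed operation (a connected sum). Simply apply the induction hypothesis to the new shadow and set $M_1:=M_1''$ and $(M_2,L_2):=(M_2'',L_2'')$ with no extra $\calS_0$-summand. With this emendation your argument is correct and coincides with what the paper's proof leaves implicit.
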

\begin{proof}
Let $X$ be a shadow without vertices of $(M,L)$. It may be described as a decorated graph $G$. If $G$ is as in Fig.~\ref{blocks:fig}, then $M$ is a graph manifold. Otherwise,
suppose it contains a vertex of type \includegraphics[width = 0.6 cm]{2.pdf}, \includegraphics[width = 0.6 cm]{3.pdf}, or \includegraphics[width = 0.6 cm]{6.pdf}. Theorem \ref{very:simple:teo} applies: we can perform one of the moves in Fig.~\ref{reduction:fig} which simplifies the graph, and we conclude by induction. 
\end{proof}

The rest of the section is mainly devoted to the proof of Theorem \ref{very:simple:teo}. 

\subsection{Horizontal and vertical compressing discs}
Let $X$ be a shadow of some block $(M,L)$, encoded via a decorated graph $G$. Each edge of $G$ determines a simple closed curve $\gamma$ in a region of $X$ and a torus $T = \pi^{-1}(\gamma)\subset \partial N(X)$ fibering over $\gamma$ via the natural fibration $\pi:\partial N(X) \to X$, see Section \ref{decomposition:subsection}. Such a torus has a compressing disc $D$ in $\partial N(X\cup \partial M)$ because of the following general fact.

\begin{lemma} \label{compressing:lemma}
Every torus $T$ inside  $\#_k(S^2\times S^1)$ has a compressing disc. 
\end{lemma}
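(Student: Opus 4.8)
The plan is to argue that a torus $T$ embedded in $N=\#_k(S^2\times S^1)$ cannot be incompressible, hence it must admit a compressing disc on one of its two sides. The cleanest route uses the fact that $\#_k(S^2\times S^1)$ is an irreducible-free, in fact a connected sum of $S^2\times S^1$'s, together with standard 3-manifold topology: an incompressible torus in a closed orientable 3-manifold $N$ is $\pi_1$-injective, so it would give a subgroup $\matZ\oplus\matZ \hookrightarrow \pi_1(N)$. But $\pi_1(N)$ is a free group $F_k$ (this is the well-known computation $\pi_1(\#_k(S^2\times S^1))\cong F_k$, using that $S^2$ is simply connected so only the $S^1$-factors contribute), and a free group has no $\matZ\oplus\matZ$ subgroup since any two commuting elements of a free group lie in a common cyclic subgroup. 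Therefore $T$ is compressible, i.e.\ there is an embedded disc $D\subset N$ with $D\cap T=\partial D$ a homotopically nontrivial curve on $T$; this is exactly a compressing disc.

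First I would reduce to the case $k\geqslant 1$, since for $k=0$ the manifold is $S^3$ and the claim is the statement that every torus in $S^3$ bounds a solid torus on at least one side (Alexander's theorem), from which a compressing disc is immediate. For $k\geqslant 1$, I would invoke the loop theorem / Dehn's lemma in the following standard packaging: if the inclusion $T\hookrightarrow N$ is not injective on $\pi_1$, then $T$ has an embedded compressing disc. So the whole content is showing $\pi_1$-injectivity fails, which is the free-group argument above. Alternatively — and this is perhaps the most self-contained phrasing — one observes that $H_2(N;\matZ_2)\to H_2(\text{something})$... no, better: one uses that $N$ has a genus-$k$ Heegaard splitting into two handlebodies, and a theorem of Haken/Waldhausen says a Heegaard-irreducible closed $3$-manifold that contains an incompressible torus must not be $\#_k(S^2\times S^1)$; but the free-group argument is shorter and I would go with that.

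The main obstacle, such as it is, is purely expository: making sure the statement "$T$ compressible $\Rightarrow$ $T$ has a compressing disc" is cited in the right form (loop theorem), and handling the degenerate possibility that the compressing curve, while essential on $T$, might a priori be inessential in $N$ — but that is fine, a compressing disc by definition only requires $\partial D$ essential on $T$, not in $N$. One should also note the torus need not be separating, which causes no trouble for the argument. In the context of this paper the lemma will be applied to the tori $T=\pi^{-1}(\gamma)$ sitting inside $\partial N(X\cup\partial M)=\#_k(S^2\times S^1)$, and the compressing disc produced here is the input to the subsequent "horizontal vs.\ vertical compressing disc" dichotomy; no more than the bare existence statement is needed.
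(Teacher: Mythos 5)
Your proof is correct and follows essentially the same route as the paper: $\pi_1(\#_k(S^2\times S^1))$ is free, free groups contain no $\matZ\oplus\matZ$, so $T$ is not $\pi_1$-injective and Dehn's lemma (loop theorem) produces a compressing disc. Your extra remarks (the $k=0$ case via Alexander, the non-separating case) are fine but not needed beyond what the paper already states.
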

\begin{proof}
The fundamental group of $\#_k(S^2\times S^1)$ is a free group. A free group does not contain $\matZ\times\matZ$, so $T$ has a compressing disc by Dehn's Lemma. 
\end{proof}

Such a compressing disc may be positioned in various ways with respect to the fibration $\pi$. We will be interested only in two special cases.
\begin{defn} \label{horizontal:defn}
A compressing disc $D$ for $T$ is \emph{vertical} (resp. \emph{horizontal}) if $\partial D$ it is isotopic to a fibre (resp. a section) of the fibration $\pi:T\to\gamma$.
\end{defn}

If the compressing disc of $T$ is horizontal or vertical, we may somehow simplify the shadow, as the following shows.

\begin{figure}
\begin{center}
\includegraphics[width =11 cm]{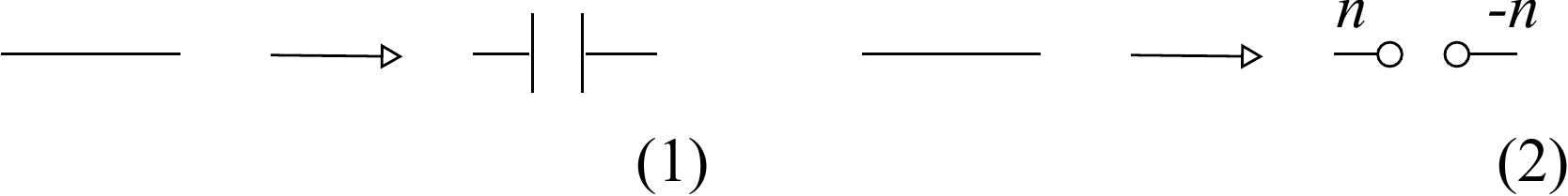}
\nota{If the torus $T$ above $\gamma$ has a vertical or horizontal compressing disc we can perform respectively the move (1) and (2). The integer $n$ depends on how many times the horizontal disc winds around the fiber.}
\label{hv:fig}
\end{center}
\end{figure}

\begin{prop} \label{disc:prop}
Suppose that $T$ has a vertical (resp.~horizontal) compressing disc. The move in Fig.~\ref{hv:fig}-(1) (resp.~(2)) transforms $X$ into a shadow $X'$ of a block $(M',L')$. The original $(M,L)$ is obtained from $(M',L')$ by assembling (resp.~connected sum).
\end{prop}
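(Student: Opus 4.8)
\emph{The plan.} The idea is to realise the purely combinatorial move of Fig.~\ref{hv:fig} by an honest operation on the $4$--manifold: in the vertical case a cut of $M$ along an embedded $S^2\times S^1$ (the reverse of an assembling), in the horizontal case the splitting off of a connected summand. Recall first that, since $X$ is a shadow of $(M,L)$, by Definition~\ref{shadow:defn} and Remark~\ref{1:handlebody:rem} the complement $M\setminus\interior{N(X\cup\partial M)}$ is a $1$--handlebody, so its boundary $\partial N(X\cup\partial M)$ is a connected sum of copies of $S^2\times S^1$; this is the $3$--manifold in which the torus $T=\pi^{-1}(\gamma)$ and its compressing disc live. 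Note also that inside the $4$--dimensional thickening $N(X)$ the curve $\gamma$, sitting in a region $f$, bounds a solid torus $B_\gamma\isom\gamma\times D^2$ (the $D^2$--bundle over $\gamma$ inside the $D^2$--bundle over $f$), with $\partial B_\gamma=T$, whose meridian is exactly a fibre of $\pi|_T$, and whose longitude can be taken to be a section of $\pi|_T$.

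\emph{Vertical case.} Let $D\subset\partial N(X\cup\partial M)$ be a compressing disc with $\partial D$ isotopic in $T$ to a fibre, i.e.\ to the meridian of $B_\gamma$. Gluing $D$ along $\partial D$ to the meridian disc of $B_\gamma$ produces an embedded $2$--sphere $\Sigma\subset M$ which meets $X$ transversally in a single point of $f$ and has self--intersection zero (a parallel fibre of $\pi|_T$ bounds a parallel copy of $\Sigma$ disjoint from it), so $N(\Sigma)\isom S^2\times D^2$ and $\partial N(\Sigma)\isom S^2\times S^1$. Cutting $M$ along this copy of $S^2\times S^1$ gives a (possibly disconnected) block $(M',L')$; the corresponding change of the polyhedron is the removal of a small open disc from the region $f$, which is exactly the move of Fig.~\ref{hv:fig}-(1). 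That $X'$ is a shadow of $(M',L')$ follows because $M'\setminus\interior{N(X'\cup\partial M')}$ is obtained from the old $1$--handlebody by attaching a collar of the new boundary sphere, and stays a $1$--handlebody by Lemma~\ref{1:handlebody:lemma}. Re--gluing the two new $S^2\times S^1$ boundary components of $M'$ by the tautological identification recovers $(M,L)$: by the definition of Section~\ref{assembling:subsection} this is an assembling, and the framings are consistent because the gleams near $\gamma$ are left untouched.

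\emph{Horizontal case.} Suppose now $\partial D$ is isotopic in $T$ to a section of $\pi|_T$, i.e.\ to a push--off of $\gamma$ into $\partial N(X)$. After a small isotopy we may take $D$ disjoint from $X$, so that $\gamma$ bounds a disc in $M$, and the integer $n$ of Fig.~\ref{hv:fig}-(2) records how many times this disc winds around the fibre, equivalently the difference between the framing of $\gamma$ induced by $f$ and the one induced by $D$. Compressing the surface $f$ along $\gamma$ by means of $D$ cuts the corresponding edge of the decorated graph and yields the move of Fig.~\ref{hv:fig}-(2) once $n$ is absorbed into the gleams of the adjacent regions; at the level of $4$--manifolds this compression writes $M$ as a connected sum of a block $(M',L')$, containing the surgered polyhedron $X'$, with a summand determined by $n$. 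As before $M'\setminus\interior{N(X'\cup\partial M')}$ remains a $1$--handlebody by Lemma~\ref{1:handlebody:lemma}, so $X'$ is a shadow of $(M',L')$ and $(M,L)$ is a connected sum of $(M',L')$.

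\emph{Main obstacle.} The delicate points are: in the vertical case, checking that $\Sigma$ really has trivial normal bundle (so that we cut along $S^2\times S^1$ and not a twisted bundle) and that it can be isotoped to meet $X$ in exactly one point; and, in both cases, matching the geometric operation with the combinatorial move of Fig.~\ref{hv:fig} together with the bookkeeping of gleams, of the bubble produced by an assembling, and of the winding integer $n$. Each of these reduces, via repeated use of Lemma~\ref{1:handlebody:lemma} and the local models of Table~\ref{pieces:table}, to a routine verification.
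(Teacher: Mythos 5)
Your plan—push the compressing disc in, build a sphere $\Sigma$ in the vertical case, and interpret the result as a de--assembling or a connected--sum split—is the same broad strategy the paper uses. But there are two concrete problems in the vertical case.

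First, you cut $M$ along $\partial N(\Sigma)$ and assert that the induced change on $X$ is the move of Fig.~\ref{hv:fig}-(1). That is not what happens. Cutting along $\partial N(\Sigma)$ removes a small disc around the single intersection point $x$, splitting $X$ into a tiny disc (inside $N(\Sigma)\isom D^2\times S^2$) and $X\setminus(\text{disc around }x)$. The move in Fig.~\ref{hv:fig}-(1) instead cuts the region along the whole curve $\gamma$ determined by the edge, which is generally an \emph{essential} curve in that region (for instance the waist of an annulus between two pieces), not a small circle near $x$; the resulting polyhedra are genuinely different. The correct $S^2\times S^1$ to cut along is not $\partial N(\Sigma)$ but the one appearing in the proof of Proposition~\ref{assembling:prop}: a $3$--slice $N$ of $N(X\cup\Sigma)$ containing both $\Sigma$ and $\gamma$, capped off by a $3$--disc provided by Lemma~\ref{1:handlebody:lemma}. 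The paper sidesteps your identification problem entirely by never cutting $M$ directly: it shows that $Y=X\cup D'$ (which closes up to $X\cup\Sigma$ in the vertical case) is \emph{another shadow of the same block} $(M,L)$, perturbs the double point to obtain a bubble, and then invokes the already established Proposition~\ref{assembling:prop}.

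Second, the step establishing that the complement stays a $1$--handlebody is not justified. You write that $M'\setminus\interior{N(X'\cup\partial M')}$ ``is obtained from the old $1$--handlebody by attaching a collar of the new boundary sphere, and stays a $1$--handlebody by Lemma~\ref{1:handlebody:lemma}''; attaching a collar never changes the homeomorphism type, so the cited lemma does no work here, and it is in any case about spheres bounding $3$--discs, not about $2$--discs. The actual point (and the crux of the paper's proof) is that, after pushing the interior of $D$ into $H=M\setminus\interior{N(X\cup\partial M)}$ rel $\partial D$, the manifold $H\setminus\interior{N(D)}$ is homeomorphic to $H$ with a $1$--handle attached, hence again a $1$--handlebody; this is what makes $Y=X\cup D'$ a shadow. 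Your self--intersection computation for $\Sigma$ is also left at the level of a plausibility argument (``a parallel fibre bounds a parallel copy of $\Sigma$''), whereas the gleam--zero statement is needed to invoke Fig.~\ref{perturb2:fig}. The horizontal case is closer in spirit to the paper, but suffers from the same missing $1$--handlebody verification and the same lack of justification that the compression really matches Fig.~\ref{hv:fig}-(2) with the correct transfer of gleams. I would rebuild the proof around the observation that $H\setminus\interior{N(D)}$ is still a $1$--handlebody, declare $Y=X\cup D'$ a shadow of $(M,L)$, and then reduce to Propositions~\ref{sum:prop} and \ref{assembling:prop}, as the paper does.
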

\begin{proof}
Let $H$ be the 1-handlebody $M\setminus \interior{N(X\cup\partial M)}$. We push the interior of the compressing disc $D$ slightly inside $H$, keeping $\partial D$ fixed. Now $H' = H \setminus \interior{N(D)}$ is homeomorphic to $H\cup($1-handle) and is hence still a 1-handlebody. We enlarge $D$ to a disc $D'\supset D$ with $\partial D' \subset X$, see Fig.~\ref{push:fig}. Set $Y = X \cup D'$. We have $M\setminus \interior{N(Y\cup\partial M)} \isom H'$. 

If $D$ is horizontal, the disc $D'$ is attached along $\alpha$ and $Y$ is thus simple. By construction, the disc $D'$ has gleam zero. Therefore a regular neighborhood of $D'$ looks like the right portion of Fig.~\ref{sum:fig}, with some gleams $n$ and $-n$ added to the two adjacent regions (for some integer $n$, which depends on how many times $\partial D$ winds around the fiber). We can therefore apply the inverse of the move in Fig.~\ref{sum:fig}, and the result is as in Fig.~\ref{hv:fig}-(2)-right. By Proposition \ref{sum:prop}, the result is a shadow of some $(M',L')$ of which $(M,L)$ is a connected sum.

If $D$ is vertical, the curve $\partial D$ projects to a point $x\in \gamma$ and the whole of $\partial D'$ is thus identified with $x$. That is, the disc $D'$ actually closes up to a 2-sphere $\Sigma$ which intersects $X$ transversely in $x$. We can thus apply the converse of the moves shown in Fig.~\ref{perturb2:fig} and Fig.~\ref{assembling:fig}. The result follows from Proposition \ref{assembling:prop}.
\end{proof}

\begin{figure}
\begin{center}
\includegraphics[width = 12 cm]{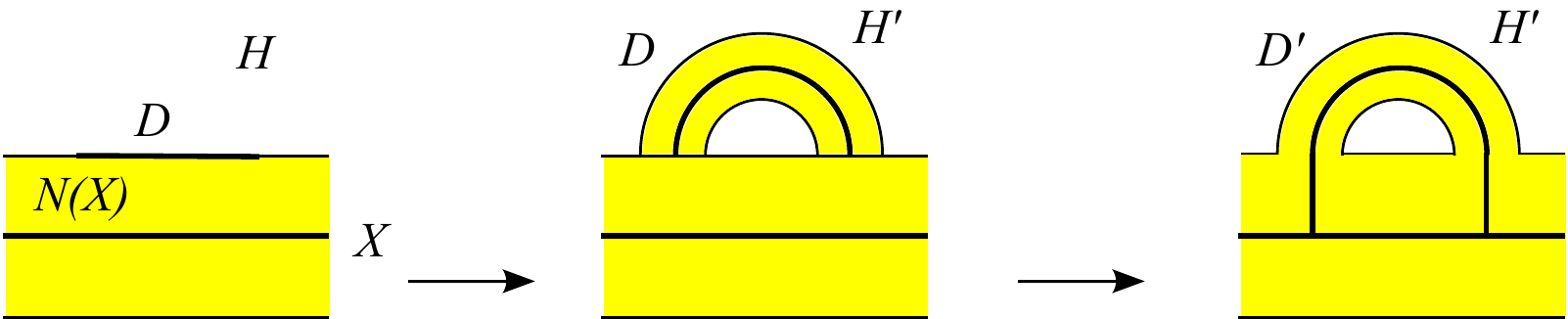}
\nota{We push the compressing disc $D$ inside $H$: the complement of an open regular neighborhood is again a 1-handlebody $H'$. (We have added a canceling pair of 2- and 3-handles.) Then we enlarge $D$ to $D'$, so that $\partial D'\subset X$.}
\label{push:fig}
\end{center}
\end{figure}

Note that in most cases the compressing disc in neither horizontal nor vertical, and no move is possible.
Proposition \ref{disc:prop} is a key tool we will use to prove inductively Theorem \ref{main:teo}. Given a decorated graph, we look for horizontal or vertical compressing discs. If found, the graph may be simplified along one of the moves in Fig.~\ref{hv:fig}, and we are done. Finding such a compressing disc is however hard: it is sometimes necessary to first modify the decorated graph with some of the moves listed in Fig.~\ref{thickening:fig}. The rest of the paper is mostly devoted to fulfill this task.

\subsection{Eliminate some types of vertices}\label{no:236:subsection}
Let $X$ be a shadow of a block $(M,L)$ described by a decorated graph $G$.
We prove here that a vertex of type \includegraphics[width = 0.6 cm]{2.pdf}, \includegraphics[width = 0.6 cm]{3.pdf}, or \includegraphics[width = 0.6 cm]{6.pdf} gives rise to a vertical or horizontal compressing disc. 

\begin{prop} \label{no:2:prop}
Consider a vertex of type \includegraphics[width = 0.6 cm]{2.pdf}. Let $T_1, T_2, T_3$ be the tori lying above the three indident edges. Either there is one $T_i$ which has a horizontal compressing disc, or every $T_i$ has a vertical compressing disc.
\end{prop}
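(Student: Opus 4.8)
The plan is to compare the piece $P:=\overline{\pi^{-1}(\interior{P^2})}$ --- which by Table~\ref{pieces:table} is homeomorphic to $P^2\times S^1$, fibered by the restriction of $\pi$ --- with the ambient closed $3$-manifold $\partial N(X\cup\partial M)\isom\#_k(S^2\times S^1)$. On each of the three boundary tori $T_i$ the fibration $\pi$ singles out two isotopy classes of essential curves, the \emph{fibre} slope $\phi_i$ and the \emph{section} slope $\sigma_i$ from Definition~\ref{horizontal:defn}. Since $\pi_1(\#_k(S^2\times S^1))$ is free it has no $\matZ\oplus\matZ$, so each $T_i$ is compressible (Lemma~\ref{compressing:lemma}); and since $\pi_1(T_i)$ injects into $\pi_1(P^2\times S^1)=F_2\times\matZ$, each $T_i$ is incompressible \emph{inside} $P$. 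A standard innermost-disc argument therefore places, for every $i$, a compressing disc $D_i$ in the closure of $\partial N(X\cup\partial M)\setminus N(P)$; let $c_i$ denote its slope on $T_i$. If $c_i=\sigma_i$ for some $i$ we are done (a horizontal disc), so assume $c_i\neq\sigma_i$ for all $i$.

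I would first treat the case $c_i=\phi_i$ for some $i$. Filling $T_i$ along $\phi_i$ produces a block $P'$ with $\pi_1(P')=(F_2\times\matZ)/\langle\langle\phi_i\rangle\rangle=F_2$, in which the fibre class dies; hence for $j\neq i$ the kernel of $\pi_1(T_j)\to\pi_1(P')$ is exactly the cyclic group $\langle\phi_j\rangle$, and it is nontrivial, so the loop theorem yields a compressing disc for $T_j$ of slope $\phi_j$, i.e.\ a vertical one. Thus in this case \emph{every} $T_i$ acquires a vertical compressing disc and we are done. It remains to exclude the case $c_i\notin\{\sigma_i,\phi_i\}$ for all three $i$.

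To do that I would Dehn-fill $P$ along $c_1,c_2,c_3$ simultaneously, obtaining a closed manifold $\widehat P$. Because the $2$-handles $N(D_i)$ are embedded in $\partial N(X\cup\partial M)$ and the complementary region is glued to $P\cup\bigcup_iN(D_i)$ along $2$-spheres, van Kampen shows that $\pi_1(\widehat P)$ is a free factor of $\pi_1(\#_k(S^2\times S^1))$, hence a free group. On the other hand, since $c_i\neq\phi_i$ the Seifert fibration of $P^2\times S^1$ over $P^2$ extends over $\widehat P$, so $\widehat P$ is a closed orientable Seifert fibered space over $S^2$ with at most three exceptional fibres; and a closed orientable Seifert fibered space with free fundamental group must be $S^3$ or $S^2\times S^1$. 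One then runs through the short list of Seifert structures on $S^3$ and $S^2\times S^1$ with at most three exceptional fibres and feeds in the extra facts that $\pi_1(P)\to\pi_1(\#_k(S^2\times S^1))$ factors through $\pi_1(\widehat P)$ and that, by the minimality of $|D_i\cap N(P)|$, the solid torus cut off by each $D_i$ sits on the side of $T_i$ away from $P$; this is what must rule out $c_i\notin\{\sigma_i,\phi_i\}$ and force a section slope or a fibre slope to appear among the $c_i$, contradicting the standing assumption.

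The last step is where I expect the real work to be. The danger is that ``the fibre is nullhomotopic in $\partial N(X\cup\partial M)$'' --- which the Seifert analysis readily gives --- does not by itself produce a compressing disc whose boundary \emph{is} a fibre: a nullhomotopic essential curve on an embedded torus need not bound an embedded disc (a $(2,3)$-curve on the Heegaard torus of $S^3$ is a trefoil). So one must use genuinely three-dimensional input: that the side of $T_i$ away from $P$ is assembled from the fibered pieces of Table~\ref{pieces:table}, and that an innermost compressing disc meets such a piece in a fibre or a section of \emph{its} own fibration, which the edge decoration (gleam) then translates back into a horizontal or a vertical disc for one of the $T_i$. Keeping track of the half-integer decorations throughout this reduction is the delicate part; once a horizontal or vertical disc is in hand, Proposition~\ref{disc:prop} converts it into one of the moves of Figure~\ref{hv:fig}.
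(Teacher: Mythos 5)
Your overall strategy mirrors the paper's: each $T_i$ is compressible by Lemma~\ref{compressing:lemma}, the piece above the trivalent vertex is $P^2\times S^1$, and the case where all three compressing slopes are neither fibres nor sections is killed by Seifert-manifold theory (the filled manifold would be a closed Seifert space over $S^2$ with three singular fibres appearing as a prime summand of $\#_k(S^2\times S^1)$, hence equal to $S^3$ or $S^2\times S^1$, and neither carries such a Seifert structure). Your preliminary observation that $T_i$ is incompressible in $P$, so that the compressing discs can be isotoped into the complement of $P$, is a tidy replacement for the paper's ``disc directed inside'' case, which is treated there via essential planar surfaces in $P^2\times S^1$.

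The place where your write-up has a genuine gap is the case $c_i=\phi_i$. The loop theorem produces a vertical compressing disc for $T_j$ inside the \emph{abstract} filling $P'=P\cup_{T_i}V$, but $V$ need not embed in $\partial N(X\cup\partial M)$: the sphere obtained by compressing $T_i$ along $D_i$ can be essential, in which case $T_i$ does not bound a solid torus on the side of $D_i$. So at this stage you are not entitled to a vertical disc for $T_j$ in the actual $3$-manifold. This can be repaired --- isotope the loop-theorem disc so that it meets $V$ only in meridian discs and then replace each such disc by a parallel copy of $D_i$ --- but it is simpler, and is what the paper's phrase ``the compressing disc extends fiberwise'' refers to, to take $D_i$ together with a vertical annulus $\alpha\times S^1\subset P^2\times S^1$ joining a fibre of $T_i$ to a fibre of $T_j$: after smoothing, that union is already an embedded disc in $\partial N(X\cup\partial M)$ with boundary the fibre of $T_j$. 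Finally, the difficulty you flag at the end (a nullhomotopic essential curve on an embedded torus need not bound an embedded disc) is not actually where your argument needs work: in the third case the Seifert analysis yields a contradiction outright without ever needing to upgrade a nullhomotopy to an embedded disc, and in the first two cases the vertical or horizontal disc is constructed directly.
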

\begin{proof}
The corresponding piece of $\partial N(X\cup\partial M)$ is homeomorphic to $P^2\times S^1$. Some standard arguments in 3-dimensional topology show that at least one boundary torus $T_i$ of $P^2\times S^1$ has a compressing disc $D$ whose boundary $\partial D$ is either isotopic to a fiber (\emph{i.e.}~vertical) or to a section (\emph{i.e.}~horizontal). In the first case, the compressing disc extends fiberwise also to the two other boundary tori.

This is the argument. Every boundary component of $P^2\times S^1$ has a compressing disc. Suppose each of them is neither horizontal nor vertical. If all discs are directed outside of $P^2\times S^1$, then $\partial (N(X\cup\partial M))\isom \#_h (S^2\times S^1)$ has a summand which is a Seifert manifold with 3 singular fibers: a contradiction \cite{Sei}. If one disc is directed inside, after an isotopy it intersects $P^2\times S^1$ into an essential planar surface. However, such a surface in $P^2\times S^1$ must intersect one boundary component either horizontally or vertically \cite{Sei}, against our assumptions.
\end{proof}

\begin{prop} \label{no:36:prop}
Consider a vertex of type 
\includegraphics[width = 0.6 cm]{3.pdf} or \includegraphics[width = 0.6 cm]{6.pdf}.
The torus $T$ lying above the incident edge has a vertical compressing disc.
\end{prop}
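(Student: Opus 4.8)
The plan is to argue inside the closed $3$--manifold $W:=\partial N(X\cup\partial M)\isom\#_h(S^2\times S^1)$ (its being a connected sum of copies of $S^2\times S^1$ is forced by $X$ being a shadow of a block, see Proposition~\ref{bordo:prop}), much as in the proof of Proposition~\ref{no:2:prop}. Let $Q$ be the region of $X$ attached to the given vertex ($Y_2$ for a type~$(3)$ vertex, $Y_3$ for a type~$(6)$ vertex) and let $\gamma$ be the incident edge, so that $T=\pi^{-1}(\gamma)$ and the piece $P:=\overline{\pi^{-1}(\interior Q)}$ of $W$ lying over $Q$ is, by Table~\ref{pieces:table}, a Seifert fibered space with a single boundary torus $T$ --- the orientable twisted $I$--bundle over the Klein bottle if $Q=Y_2$, and the Seifert manifold $(D^2,3,3)$ if $Q=Y_3$. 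Under the identification $\pi|_T\colon T\to\gamma$ with the boundary Seifert fibration of $P$, a compressing disc for $T$ is \emph{vertical} in the sense of Definition~\ref{horizontal:defn} exactly when its boundary is the regular fibre slope $f$.

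First I would show that the compressing disc lies off $P$. By Lemma~\ref{compressing:lemma} the torus $T$ bounds a compressing disc $D$ in $W$, and $D$ lies either in $P$ or in $Z:=\overline{W\setminus P}$. But $\partial P=T$ is incompressible in $P$: a Seifert fibered space over a base orbifold with non-empty boundary has incompressible boundary unless it is a solid torus, $T^2\times I$, or an $I$--bundle over the Klein bottle, and of these the only one occurring here is the orientable twisted $I$--bundle over the Klein bottle, whose boundary torus is still incompressible. Hence $D\subset Z$.

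Next I would identify the slope $s:=\partial D$. Compressing $T$ along $D$ produces an embedded $2$--sphere in $W$, and the usual surgery/connected-sum bookkeeping exhibits the Dehn filling $\widehat P:=P(s)$ (after discarding an $S^2\times S^1$ summand if that sphere fails to separate) as a connected summand of $W\isom\#_h(S^2\times S^1)$. If $s\neq f$, then the Seifert fibration of $P$ extends across the filling solid torus, so $\widehat P$ is a (possibly reducible) Seifert fibered space obtained from $(D^2,3,3)$, resp.\ from the twisted $I$--bundle over the Klein bottle, by filling along a non-fibre slope; running through the possibilities with the help of \cite{Sei}, one sees that $\widehat P$ --- or at least one of its prime summands, if it is reducible --- carries a non-free fundamental group (a quotient of $\matZ/3\ast\matZ/3$ when $Q=Y_3$, a group with $2$--torsion when $Q=Y_2$) and is therefore neither $S^3$ nor $S^2\times S^1$. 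This contradicts the fact that every prime summand of $\#_h(S^2\times S^1)$ is a copy of $S^2\times S^1$. Hence $s=f$, i.e.\ $D$ is a vertical compressing disc, as claimed.

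The delicate point, exactly as in Proposition~\ref{no:2:prop}, is the interplay between the Seifert structure of $P$ and the global topology of $W$: one must be sure of precisely which Seifert fibered spaces arise as the pieces $\pi^{-1}(Q)$ of Table~\ref{pieces:table}, must know that all of them but the single twisted $I$--bundle model have incompressible boundary (so that the compressing disc is pushed onto the $Z$--side), and must run the surgery argument cleanly enough to conclude that a non-fibre filling of $P$ would be a forbidden summand of $\#_h(S^2\times S^1)$. I expect this last enumeration of fillings --- and checking that it also goes through in the borderline type~$(3)$ case, where $P$ admits two distinct Seifert fibrations --- to be the main obstacle.
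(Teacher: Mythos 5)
Your outline follows the paper's strategy closely (exhibit a compressing disc, show it lies outside $P$, then constrain its slope by analysing Dehn fillings), and the incompressibility argument placing $D$ on the $Z$--side is a correct elaboration of the paper's terse ``directed outward.'' However, there is a genuine gap in your identification of the ``vertical'' slope, and it invalidates the vertex-$(6)$ case.

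You assert that $\pi|_T\colon T\to\gamma$ is the boundary restriction of the Seifert fibration of $P$, so that ``vertical'' means ``$\partial D$ is the regular fibre $f$.'' For a vertex of type $(6)$ this is false. Here $Q=Y_3$ is a \emph{singular} piece (a $Y$-bundle over $S^1$), so $\pi|_P\colon P\to Q$ is not a Seifert fibration onto a surface at all; the paper realises $P$ instead as the complement of the closed $3$--braid $\partial Q$ inside $S^2\times S^1=\partial(D^3\times S^1)$ (Table~\ref{pieces:table} and the proof of Proposition~\ref{deco:prop}). In that picture the $\pi$--fibre $\pi^{-1}(x)$ over $x\in\gamma$ is the \emph{meridian} $\mu$ of the braid, and the meridian is a section, not a fibre, of the unique Seifert fibration of $(D^2,3,3)$. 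Indeed the $\mu$--filling of $P$ returns $S^2\times S^1$, whereas the fibre-filling yields a reducible manifold of the form $L(3,\ast)\,\#\,L(3,\ast)$, which is certainly not $\#_h(S^2\times S^1)$. Thus both halves of your argument break: the identification ``vertical $\Leftrightarrow$ $\partial D=f$'' is wrong, and the claim that every non-fibre filling has non-free fundamental group is also wrong, since $P(\mu)=S^2\times S^1$ with $\mu\neq f$. Put differently, your argument would force $\partial D$ onto the fibre slope, which is exactly the slope that cannot occur; the paper's (correct) assertion is that only the $\infty$--filling --- meridional, i.e.\ vertical in the sense of Definition~\ref{horizontal:defn} --- produces a summand of $\#_h(S^2\times S^1)$, and one argues this by the Seifert/orbifold Euler-number bookkeeping you began but misapplied. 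The vertex-$(3)$ case happens to come out right because there $Q=Y_2$ is a genuine surface, so $\pi|_P$ really is a circle bundle over the M\"obius strip and its fibre coincides with the $\pi$--fibre; but this coincidence is special to the non-singular pieces --- compare the paper's remark after Proposition~\ref{deco:prop} that the same manifold $P^2\times S^1$ appears with \emph{vertical} Seifert fibre over a vertex of type $(2)$ and \emph{horizontal} Seifert fibre over a vertex of type $(4)$.
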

\begin{proof}
The corresponding piece in $\partial N(X\cup\partial M)$ is $Y_2\timtil S^1\isom (D^2,2,2)$ or $(D^2,3,3)$, see Table \ref{pieces:table}. Its boundary has a compressing disc $D$, directed outward. By Dehn filling the piece along the slope $\partial D$ we thus get some summands of $\partial N(X\cup \partial M) \isom \#_k (S^2\times S^1)$. 

Standard arguments on Seifert manifolds show that the $p/q$-Dehn filling on the knot shown in Table \ref{pieces:table} is $\#_h (S^2\times S^1)$ if and only if $p/q=\infty $, \emph{i.e.}~when the meridinal disc is vertical (and $h=1$ in this case). Therefore $D$ must be vertical.
\end{proof}

The two propositions just stated imply Theorem \ref{very:simple:teo}.

\dimo{very:simple:teo}
If the decorated graph contains a vertex of type \includegraphics[width = 0.6 cm]{3.pdf} or \includegraphics[width = 0.6 cm]{6.pdf}, the torus lying above the incident edge has a compressing disc and hence we can apply Fig.~\ref{hv:fig}-(1). The result is a move as in Fig.~\ref{reduction:fig}-(1,2).

If it contains a vertex of type \includegraphics[width = 0.6 cm]{2.pdf}, there are three tori above the edges. Either one has a horizontal compressing disc, or all three have vertical compressing discs. The corresponding move in Fig.~\ref{hv:fig} applies and the result is one of the moves in Fig.~\ref{reduction:fig}-(3,4). (Apply Fig.~\ref{mosse_innocue:fig}-left.)
\finedimo

\subsection{Try to eliminate other types of vertices}
Unfortunately, there is no result analogous to Propositions \ref{no:2:prop} and \ref{no:36:prop} for vertices type \includegraphics[width = 0.6 cm]{1.pdf}, \includegraphics[width = 0.6 cm]{4.pdf}, or \includegraphics[width = 0.6 cm]{5.pdf}. A partial result for the 3-valent vertex is the following.

\begin{figure}
\begin{center}
\includegraphics[width = 5 cm]{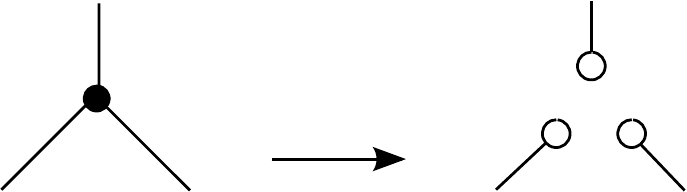}
\nota{This move applies only when the fiber of the $P^2\times S^1$ lying above the vertex bounds a compressing disc.}
\label{scoppia:fig}
\end{center}
\end{figure}

\begin{prop} \label{(4):prop}
Consider a vertex of type 
\includegraphics[width = 0.6 cm]{4.pdf}. It determines a piece in $\partial N(X\cup\partial M)$ homeomorphic to $P^2\times S^1$. Suppose that the fiber $\{pt\}\times S^1$ bounds a disc in $\partial N(X\cup\partial M)$. The move in Fig.~\ref{scoppia:fig} transforms $X$ into a shadow $X'$ of some $(M',L')$ of which $(M,L)$ is a twice connected sum.
\end{prop}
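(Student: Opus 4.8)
The plan is to promote the hypothesis to a horizontal compressing disc for the torus lying over (an edge incident to) the type-(4) vertex, feed it into the mechanism of Proposition~\ref{disc:prop}, and then observe that ``bursting'' the $Y_{111}$-piece costs exactly two connected sums because $N(Y_{111})\cong D^3\times S^1$ joins its three $S^2\times S^1$-ends into a single connected block.

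First I would unwind the geometry. By Table~\ref{pieces:table} the type-(4) vertex corresponds to a piece $Q\cong Y_{111}=Y\times S^1$ of $X$, with $\overline{\pi^{-1}(\interior{Q})}\cong P^2\times S^1$ inside $\partial N(X\cup\partial M)$; by the remark following Proposition~\ref{deco:prop} the $S^1$-factor $h=\{pt\}\times S^1$ is \emph{horizontal}, i.e.\ when $h$ is pushed onto any of the three boundary tori $T_1,T_2,T_3$ sitting over the incident edges $\gamma_1,\gamma_2,\gamma_3$ it becomes a section of $\pi$ (in particular a primitive curve on each $T_i$). The hypothesis says $h$ bounds a disc in $\partial N(X\cup\partial M)\cong\#_k(S^2\times S^1)$. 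Since the $\pi$-fibre on $T_i$ is a nontrivial loop around a puncture of $P^2$, the torus $T_i$ does not compress inside $P^2\times S^1$, so a standard innermost-disc argument (of the kind used for Lemma~\ref{compressing:lemma} and Proposition~\ref{no:2:prop}) shows that the section slope bounds an embedded compressing disc $D_i$ lying on the side of $T_i$ away from $P^2\times S^1$; that is, each $T_i$ carries a horizontal compressing disc.

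Next I would run the argument of Proposition~\ref{disc:prop}: push $D_1$ and $D_2$ slightly into the $1$-handlebody $H=M\setminus N(X\cup\partial M)$, keeping the boundaries fixed, so that $H$ minus their neighbourhoods is still a $1$-handlebody, and enlarge $D_i$ to a disc $D_i'$ with $\partial D_i'\subset X$. As in that proof each $D_i'$ is gleam-$0$, and a regular neighbourhood of it looks like the right-hand portion of Fig.~\ref{sum:fig} (with compensating gleams $n_i,-n_i$ on the two adjacent regions), so two applications of the inverse of the move of Fig.~\ref{sum:fig} turn $X$ into the polyhedron $X'$ drawn on the right of Fig.~\ref{scoppia:fig}, in which the type-(4) vertex has been resolved; by Proposition~\ref{sum:prop} this realizes $(M,L)$ as a twice-iterated connected sum of the block $(M',L')$ with shadow $X'$. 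The third leg needs no separate surgery: once the hub has been cut along $D_1'$ and $D_2'$ the central $D^3\times S^1=N(Y_{111})$ is split off and the remaining three legs are attached only to their neighbours.

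The main obstacle I expect is the bookkeeping in this last step: one must check that the abstract disc handed over by the hypothesis can genuinely be isotoped to an \emph{embedded} horizontal compressing disc positioned so that its boundary can be slid onto $X$ without creating vertices, and one must match the resulting local modification of the shadow (with its gleam shifts) precisely with the move of Fig.~\ref{scoppia:fig}. Pinning down that the number of connected sums is exactly two — rather than one or three — is the delicate point; it should follow from the fact that $N(Y_{111})\cong D^3\times S^1$ has first Betti number one and is glued to the rest of the thickening along all three of its ends, so trivializing the compressible section slope disconnects it into three contractible pieces at the cost of removing two $1$-handles from $H$.
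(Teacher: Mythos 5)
Your core idea is the right one: the hypothesis hands you a horizontal compressing disc, which is exactly what the paper's proof uses as its starting point before applying the move in Fig.~\ref{hv:fig}-(2). After that, though, the paper takes a cheaper route: one application of Fig.~\ref{hv:fig}-(2) followed by the inverse of the combinatorial move in Fig.~\ref{sum_ass:fig}-(1) (this two-step sequence is the content of Fig.~\ref{scoppia_proof:fig}), whereas you propose to find a \emph{second} horizontal compressing disc $D_2$ at $T_2$ and rerun Proposition~\ref{disc:prop} a second time.

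Your variant can be made to work, but two points are left hanging. First, you run the two instances of Proposition~\ref{disc:prop} simultaneously (``push $D_1$ and $D_2$ slightly into $H$ \dots so that $H$ minus their neighbourhoods is still a $1$-handlebody''), which tacitly requires $D_1$ and $D_2$ to be disjoint embedded discs; this is true, by the usual cut-and-paste argument since $\partial D_1\subset T_1$ and $\partial D_2\subset T_2$ are on disjoint tori, but it should be said. Second, and more seriously, the heuristic in your final paragraph does not hold up: after cutting along $D_1'$ and $D_2'$, the $Y_{111}$-piece is \emph{not} ``split off'' --- it is still attached to the rest of the shadow through the third edge $\gamma_3$. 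Likewise, removing $N(D_i)$ from $H$ \emph{adds} a 1-handle to $H$ (as stated in the proof of Proposition~\ref{disc:prop} and Fig.~\ref{push:fig}), it does not remove one, so ``removing two $1$-handles from $H$'' is backwards. The count of ``exactly two'' connected sums doesn't come from a Betti-number argument about $D^3\times S^1$; it is simply that you have performed two moves, each of which is a connected-sum move by Proposition~\ref{sum:prop}. That part of your reasoning should be replaced with a direct check that the resulting graph agrees with the right-hand side of Fig.~\ref{scoppia:fig}.
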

\begin{proof}
The compressing disc is actually a horizontal disc in this case! We can therefore perform the move in Fig.~\ref{hv:fig}-(2) and the inverse of Fig.~\ref{sum_ass:fig}-(1). The sequence of moves is shown in Fig.~\ref{scoppia_proof:fig}.
\begin{figure}
\begin{center}
\includegraphics[width = 9 cm]{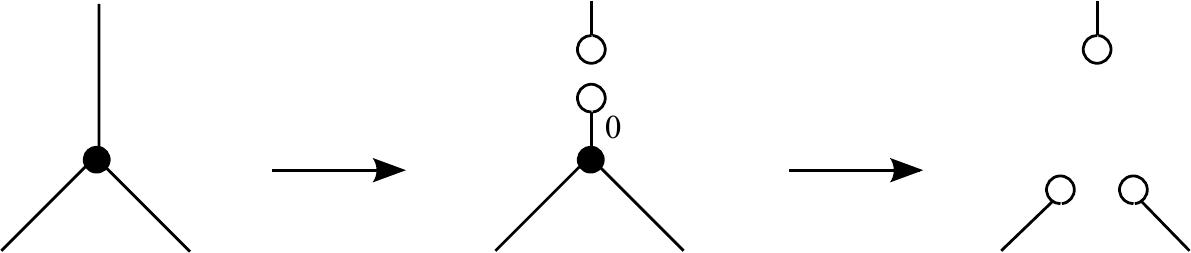}
\nota{Two steps in Fig.~\ref{scoppia:fig}.}
\label{scoppia_proof:fig}
\end{center}
\end{figure}
\end{proof}

A much weaker result concerning the 2-valent vertex is the following.

\begin{figure}
\begin{center}
\includegraphics[width = 6 cm]{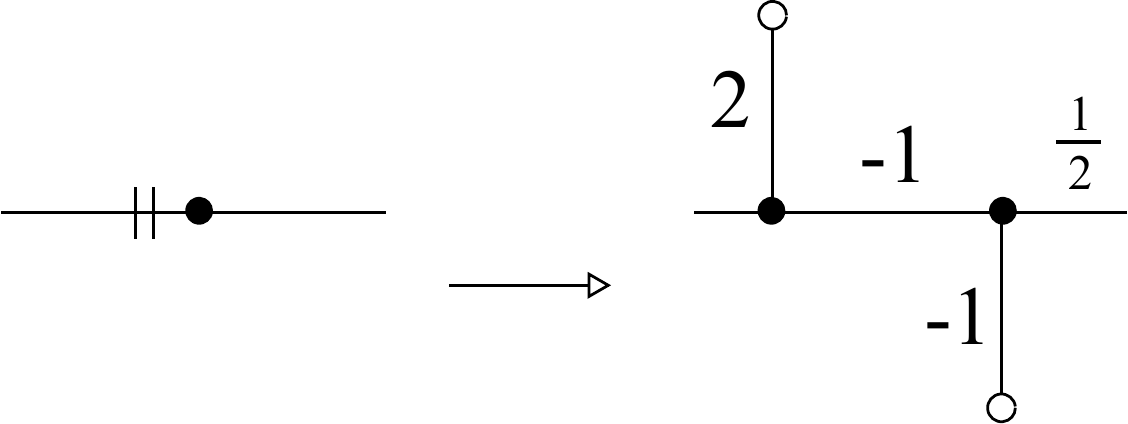}
\nota{This move relates shadows of different blocks.}
\label{boundary:fig}
\end{center}
\end{figure}

\begin{prop} \label{boundary:prop}
Consider a vertex of type \includegraphics[width = 0.6 cm]{5.pdf}. The move in Fig.~\ref{boundary:fig} transform $X$ into the shadow $X'$ of some other block $(M',L')$. 
\end{prop}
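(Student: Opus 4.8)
The plan is to read the move of Fig.~\ref{boundary:fig} as a \emph{drilling}, the operation inverse to the filling of Proposition~\ref{filling:prop}, and then to certify the result with the $1$-handlebody criterion of Remark~\ref{1:handlebody:rem}. Recall that the type-(5) vertex carries a region $R$ winding twice around the singular circle $\sigma\subset SX$, and that by Table~\ref{pieces:table} the piece of $\partial N(X\cup\partial M)$ lying over it is the Seifert manifold $(A^2,2)$, with a single exceptional fibre $c$ of order $2$ sitting over $\sigma$. Push a parallel copy of $c$ slightly into the interior of $N(X)\isom N(Y_{12})\isom D^3\times S^1$, and let $(M',L')$ be the framed block obtained from $(M,L)$ by drilling along this curve, i.e.\ removing an open tubular neighbourhood $\isom S^1\times D^3$; then $\partial M'$ is $\partial M$ together with one further copy of $S^2\times S^1$, so $M'$ is genuinely a different block, in accordance with the caption of Fig.~\ref{boundary:fig}. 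On the polyhedron side, drilling along $c$ converts $R$ into a region winding only once and opens a new flat vertex, turning the $Y_{12}$-piece into the configuration drawn on the right of Fig.~\ref{boundary:fig} (which, after Theorem~\ref{very:simple:teo}, one may normalise to a $Y_{111}$-piece or to a $P^2$-piece attached to a new boundary component).

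With this set-up the argument is short. Since the drilled-out tube lies inside $N(X)$, one has $N(X')\isom\overline{N(X)\setminus N(c)}$ and therefore
\[
M'\setminus\interior{N(X'\cup\partial M')}\ \isom\ M\setminus\interior{N(X\cup\partial M)},
\]
which is a $1$-handlebody because $X$ is a shadow of $(M,L)$; by Remark~\ref{1:handlebody:rem} it follows that $X'$ is a shadow of $(M',L')$. (Alternatively, without the exact identification of neighbourhoods, the re-gluing after the drilling changes the complement only by handles of index $\leqslant 2$, and Lemma~\ref{1:handlebody:lemma} together with Laudenbach--Poenaru's theorem \cite{LaPo} shows it stays a $1$-handlebody.) Proposition~\ref{bordo:prop} then guarantees that $X'$ is the shadow of some block, Proposition~\ref{unique:prop} pins that block down to $(M',L')$, and reading off the new vertical solid torus recovers $(M,L)$ from $(M',L')$ by a filling, as expected.

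The step I expect to be the main obstacle is verifying the local claim that drilling along $c$ produces exactly the right-hand polyhedron of Fig.~\ref{boundary:fig}, with the correct gleams on the old and new regions. The difficulty is intrinsic to the type-(5) vertex: because $R$ winds twice around $\sigma$ one cannot simply isotope $c$ off and collapse, so one must work with the explicit model $N(Y_{12})\isom D^3\times S^1$ (with $\sigma$ its core) and with the precise position of the order-$2$ exceptional fibre of $(A^2,2)$, tracking the gleam of $R$ along the lines of \cite{CoThu}. The choice between the two normal forms $Y_{111}$ and $P^2$ on the right is exactly the ambiguity (vertical versus horizontal $\{pt\}\times S^1$) noted after Table~\ref{pieces:table}, and it is resolved by the framing of the drilling; once the local model is fixed, the global conclusion is a routine application of the criteria already in place.
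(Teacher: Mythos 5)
The proposal misreads the nature of the move in Fig.~\ref{boundary:fig}, and this is not a cosmetic issue. You interpret the move as a pure \emph{drilling} (inverse of Proposition~\ref{filling:prop}): you drill out an $S^1\times D^3$, so $\partial M'$ acquires an extra $S^2\times S^1$ component, a new flat vertex appears, and $M$ is recovered from $M'$ by filling. The paper's proof is different and strictly longer: as the caption of Fig.~\ref{boundary_proof:fig} records, one drills along a curve as in Fig.~\ref{sum_ass:fig}-(3), then \emph{assembles} the resulting boundary with the block $N_1 = D^2\times S^2$ whose shadow is a disc, and finally cleans up with Fig.~\ref{thickening:fig}-(1,3). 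Drilling followed by capping with $D^2\times S^2$ is a \emph{surgery}, replacing an $S^1\times D^3$ by an $S^2\times D^2$; $\partial M'$ and $\partial M$ are the same, and what appears on the right of Fig.~\ref{boundary:fig} is a \emph{fat} (disc) vertex coming from the $N_1$-assembling, not a flat one. This is exactly what the remark following the proposition says (``\ldots $M'$ is obtained from $M$ by surgery, i.e.\ by substituting a $S^1\times D^3$ with a $S^2\times D^2$''), and it is also what is needed later (in Fig.~\ref{elimina5:fig} the resulting leaf must be fat and decorated $\pm 1$ so that Proposition~\ref{leaf:prop} applies). Your $(M',L')$ is not the $(M',L')$ of the statement.

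Even on its own terms the drilling argument has gaps. Drilling a curve $c$ in the interior of $N(X)$ does not by itself modify $X$: to obtain a shadow of the drilled block you must add an annulus to $X$ reaching the new boundary sphere$\times$circle, exactly as in Figs.~\ref{sum_ass:fig}-(3) and \ref{homeomorphic:fig}; the polyhedron does not ``convert $R$ into a region winding once'' by itself. The identification $N(X')\cong\overline{N(X)\setminus N(c)}$ is therefore not available without first defining $X'$ properly, and the claimed equality of complements does not follow. The correct and much shorter route is the one the paper takes: perform the drilling move of Fig.~\ref{sum_ass:fig}-(3), assemble with the disc shadow of $N_1$ via Proposition~\ref{assembling:prop}, and then simplify with Turaev's moves; each step has already been verified to produce a shadow of a well-defined block, so there is nothing further to check.
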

\begin{proof}
See Fig.~\ref{boundary_proof:fig}.
\begin{figure}
\begin{center}
\includegraphics[width = 10 cm]{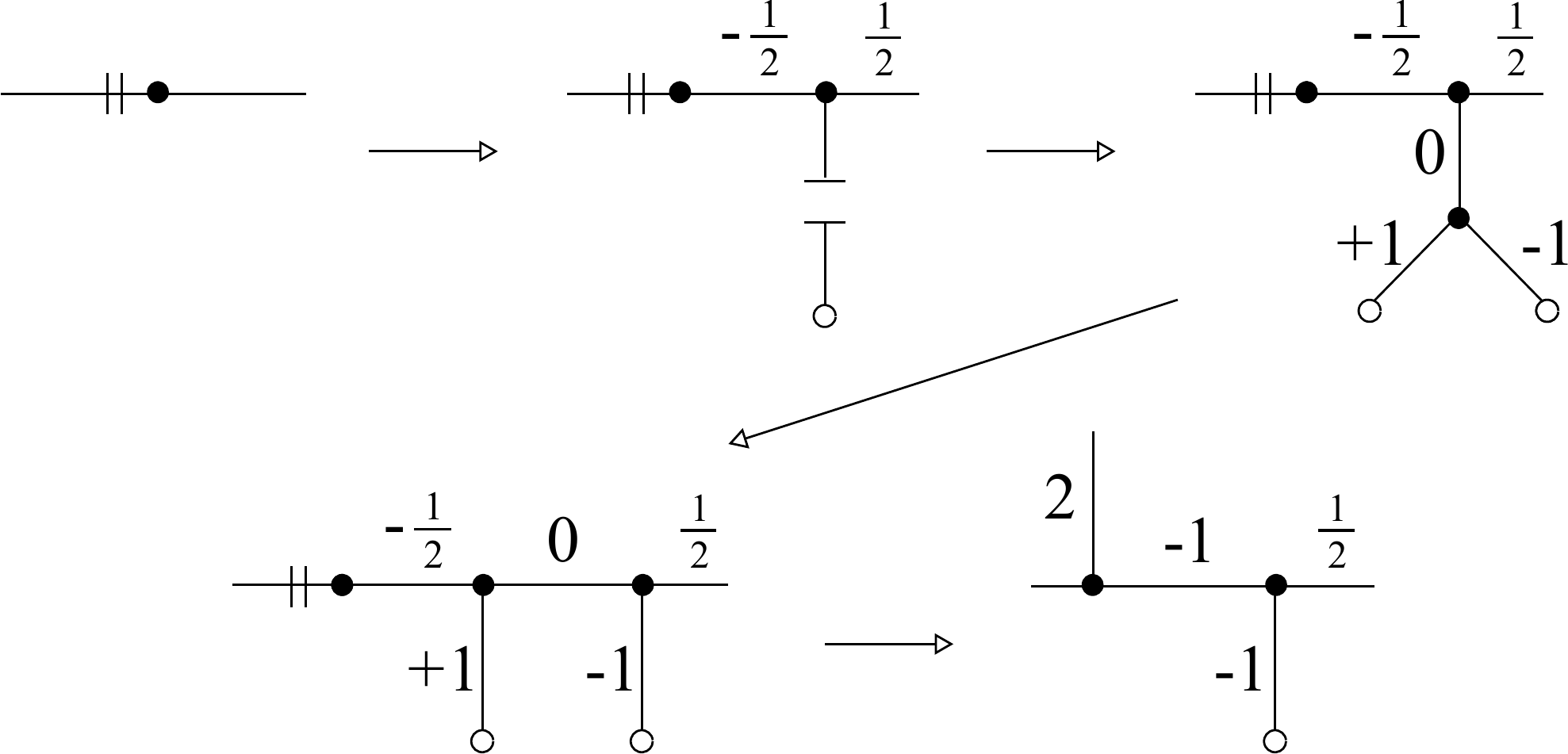}
\nota{Intermediate steps for Fig.~\ref{boundary:fig}. We drill along a curve as in Fig.~\ref{sum_ass:fig} and then assemble with the shadow of $D^2\times S^2$ (taken from Fig.~\ref{blocks:fig}). The manifold $M$ thus changes via surgery. Then we use and Fig.~\ref{thickening:fig}-(1, 3).}
\label{boundary_proof:fig}
\end{center}
\end{figure}
\end{proof}

The move shown in Fig.~\ref{boundary:fig} changes dramatically the block and thus cannot be used to simplify shadows. (The proof  shows that $M'$ is obtained from $M$ by surgery, \emph{i.e.}~by substituting a $S^1\times D^3$ with a $S^2\times D^2$.)

\section{Trees with level functions} \label{trees:section}
We make here another step towards the proof of Theorem \ref{main:teo}. According to Corollary \ref{very:simple:cor}, we may restrict to blocks having very simple shadows. A very simple shadow is described via a decorated graphs with vertices as in Fig.~\ref{vertices2:fig}.

In this section, we show that we may further restrict to decorated graphs that are trees equipped with a \emph{level function}. The level function is a function on vertices which is defined below. A decorated tree equipped with such a function is a \emph{decorated tree with levels}. We prove here the following.

\begin{teo} \label{leveled:teo}
Let $X$ be a very simple shadow of a block $(M,L)$. One of the following holds.
\begin{enumerate}
\item A move as in Fig.~\ref{scoppia:fig} transforms $X$ into a shadow $X'$ of a block $(M',L')$ such that $(M,L)$ is a twice connected sum of $(M',L')$.
\item The shadow $X$ can be encoded via a decorated tree with levels.
\end{enumerate}
\end{teo}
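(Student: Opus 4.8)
The plan is to work throughout with a decorated graph $G$ encoding $X$ — whose vertices, by Fig.~\ref{vertices2:fig}, are only flat, fat, of type $(111)$, or of type $(12)$ — and to reach a decorated tree with levels in two stages: first make $G$ a tree, then equip it with a level function. We are allowed the innocuous moves of Fig.~\ref{mosse_innocue:fig}, the block-preserving moves of Fig.~\ref{thickening:fig}, and reductions that replace $(M,L)$ by a block of which it is a connected sum or an assembling (these only simplify the situation and are absorbed into the global induction, exactly as the move of Fig.~\ref{scoppia:fig} in case~(1)); and, just as in Propositions~\ref{no:2:prop} and~\ref{no:36:prop}, the analysis is run on the graph-manifold decomposition of $\partial N(X\cup\partial M)\isom\#_k(S^2\times S^1)$ furnished by Proposition~\ref{deco:prop} and Table~\ref{pieces:table} — which for a very simple shadow involves only vertical solid tori and copies of $D^2\times S^1$, $P^2\times S^1$, and $(A^2,2)$ — together with Lemma~\ref{compressing:lemma}.

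\emph{Reducing to a tree.} Suppose $G$ contains a cycle. In a very simple shadow the only vertices of valence $\geqslant 2$ are the $(12)$-vertices (valence $2$) and the $(111)$-vertices (valence $3$), so a cycle avoiding all $(111)$-vertices would be an entire connected component of $G$: that is, $G$ would be a single circle of $(12)$-vertices and $\partial N(X)$ a cyclic chain of copies of $(A^2,2)$, glued compatibly with $\pi$. Such a manifold is Seifert fibered over a torus or a Klein bottle with cone points of order $2$, hence has non-free fundamental group, contradicting $\partial N(X)\isom\#_k(S^2\times S^1)$. Therefore every cycle meets a $(111)$-vertex $v$, whose piece is a $P^2\times S^1$, and we may pick a non-bridge edge $e$ incident to $v$; then $\gamma_e\subset X$ is non-separating, so $T_e=\pi^{-1}(\gamma_e)$ is a non-separating torus in $\#_k(S^2\times S^1)$ and compresses by Lemma~\ref{compressing:lemma}. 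Applying to this $P^2\times S^1$ the argument on essential surfaces used in the proof of Proposition~\ref{no:2:prop} shows that one of its three boundary tori carries a compressing disc whose slope is either a fibre or a section of the product structure. If it is a fibre, that fibre — being the same curve $\{pt\}\times S^1$ on all three tori up to isotopy in $P^2\times S^1$ — is null-homotopic in $\partial N(X)$, so Proposition~\ref{(4):prop} applies and the move of Fig.~\ref{scoppia:fig} puts us in case~(1). If it is a section, then after reading off in the fibration $\pi$ the slope of that section on the relevant torus (via Table~\ref{pieces:table}) and, if necessary, adjusting the ambient gleams with the moves of Fig.~\ref{thickening:fig}, the disc becomes vertical or horizontal in the sense of Definition~\ref{horizontal:defn}; Proposition~\ref{disc:prop} then performs the corresponding move of Fig.~\ref{hv:fig}, which — since $\gamma_e$ is non-separating — strictly lowers $b_1(G)$ while exhibiting $(M,L)$ as a connected sum or assembling of the new block. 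Iterating, we reach a very simple shadow whose graph is a tree.

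\emph{Installing the level function.} Root the tree $G$ and propagate the level function down the edges. The local compatibility conditions a level function must satisfy at a leaf, at a $(12)$-vertex, and among the three edges at a $(111)$-vertex are dictated by how the corresponding solid tori, $(A^2,2)$'s, and $P^2\times S^1$'s are glued along the tori of Table~\ref{pieces:table}; since $G$ is now a tree there is no monodromy around loops to obstruct the propagation, so the only possible failures are local conflicts of the three branches at a $(111)$-vertex. We first try to reconcile these by normalising $G$ near such a vertex with the moves of Figs.~\ref{mosse_innocue:fig} and~\ref{thickening:fig}; when the branches cannot be made compatible, the fibre of the corresponding $P^2\times S^1$ is forced to be null-homotopic in $\partial N(X)$, and Proposition~\ref{(4):prop} with Fig.~\ref{scoppia:fig} again returns case~(1). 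If no such obstruction survives, the propagation succeeds and $X$ is encoded by a decorated tree with levels — case~(2).

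\emph{The main difficulty} lies in the slope analysis of the compressing disc of $T_e$ in the tree-reduction step: showing that, possibly after block-preserving moves, this disc can always be taken vertical or horizontal for $\pi$, failing which the product fibre of an adjacent $P^2\times S^1$ bounds a disc. This is the point at which one must carefully translate between the product and Seifert fiberings of the individual pieces of $\partial N(X)$ listed in Table~\ref{pieces:table} and the global circle fibration $\pi\colon\partial N(X)\to X$, extending the three-dimensional arguments already used in Propositions~\ref{no:2:prop} and~\ref{no:36:prop}.
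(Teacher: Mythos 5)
The proposal takes a genuinely different route from the paper, but the crucial steps are left as sketches, and there is a real gap in the second stage.

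The paper does not separate "$G$ is a tree" from "$G$ admits a level function"; it proves a claim about arbitrary decompositions of $\#_h(S^2\times S^1)$ into solid tori, copies of $(A^2,2)$, and copies of $P^2\times S^1$, by induction on the number of pieces. The induction pivots on a trichotomy for how a solid torus is attached to its neighbour: along the \emph{fibre} of $P^2\times S^1$ (then conclusion (1) of the claim holds), along the fibre of $(A^2,2)$ (impossible, since it would embed $\matRP^2$ into $\#_h(S^2\times S^1)$), along a \emph{section} (then the two pieces collapse to $A\times S^1$ or $D^2\times S^1$ and one can peel them off, induct, and lift the level function by the moves of Fig.~\ref{claim:fig}), or along \emph{neither} (then after regluing one gets a Seifert manifold over an orbifold with $\chi\leqslant 0$, or a manifold with nontrivial JSJ decomposition — contradicting $\#_h(S^2\times S^1)$). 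This case analysis is the substance of the proof; the "is a tree" conclusion falls out of it for free.

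Your second stage — "root the tree, propagate the level function down, and when branches cannot be reconciled the fibre of some $P^2\times S^1$ must be null-homotopic" — is exactly the statement that needs a proof, not an argument for it. Nothing in the proposal explains what "compatibility" at a $(111)$- or $(12)$-vertex means in terms of attaching slopes, nor why a failure of propagation forces the product fibre to compress. The paper gets this from the section/non-section dichotomy above; your sketch does not reproduce (or replace) that mechanism. In particular, requirement (4) of the level function — that each $M_{S_v}$ be a solid torus whose meridian is a \emph{section} of the fibration of $M_v$ — is precisely the case the paper isolates for simplification; you never verify it.

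The tree-reduction stage also has weak points. You invoke "the argument on essential surfaces used in the proof of Proposition~\ref{no:2:prop}" for the $P^2\times S^1$ over a $(111)$-vertex, but type~$(2)$ and type~$(4)$ vertices produce copies of $P^2\times S^1$ positioned \emph{oppositely} with respect to $\pi$: for type~$(4)$ the product fibre is a $\pi$-section (horizontal) and the product section is the $\pi$-fibre (vertical), so the dictionary must be set up explicitly, not assumed. The suggestion that one can make the compressing disc vertical or horizontal "after adjusting the ambient gleams with the moves of Fig.~\ref{thickening:fig}" is also suspicious: those moves preserve the block $(M,L)$ but do change $X$ and hence $\partial N(X)$ and the fibration $\pi$; whether they can be used to normalise the slope of a given compressing disc on a persisting torus $T_e$ is not obvious and is not argued. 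You acknowledge this is "the main difficulty," but that is where the theorem actually lives. As written, the proposal is a plausible outline but would need the paper's trichotomy (or an equivalent analysis) to be turned into a proof.
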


We thus get a refinement of Corollary \ref{very:simple:cor}.
\begin{cor} \label{leveled:cor}
Every block $(M,L)$ having a shadow without vertices is obtained via connected sums and assemblings from $(M_1,L_1)\sqcup (M_2,L_2)$ where $M_1$ is a graph manifold generated by $\calS_0$ and $(M_2,L_2)$ has a shadow encoded by a decorated tree with levels. (Both $M_1$ and $M_2$ may be disconnected.)
\end{cor}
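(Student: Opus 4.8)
The plan is to obtain Corollary~\ref{leveled:cor} by feeding the conclusion of Corollary~\ref{very:simple:cor} into Theorem~\ref{leveled:teo} and iterating, all the geometric content being already packaged in those two statements. Concretely: let $(M,L)$ be a block admitting a shadow without vertices. By Corollary~\ref{very:simple:cor} it is obtained via connected sums and assemblings from a disjoint union $(M_1,L_1)\sqcup(M_2,L_2)$ in which $M_1$ is a graph manifold generated by $\calS_0$ and $(M_2,L_2)$ carries a very simple shadow $X_2$, encoded by a decorated graph $G$ with vertices as in Figure~\ref{vertices2:fig}. Since ``obtained via connected sums and assemblings from'' is transitive, it suffices to prove that $(M_2,L_2)$ is itself obtained, via connected sums and assemblings, from a disjoint union of a graph manifold generated by $\calS_0$ and a block whose shadow is encoded by a decorated tree with levels: one then composes the two chains of operations and merges the two graph-manifold summands (a disjoint union of graph manifolds generated by $\calS_0$ is again one, the notion extending trivially to disconnected manifolds).

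I would prove this reduction by induction on the number $v(G)$ of vertices of the decorating graph of the very simple shadow, treating the connected components of the shadow (equivalently, the pieces obtained after de-assembling and de-summing) separately. Apply Theorem~\ref{leveled:teo} to $(M_2,L_2)$. In case~(2) we are done, as $X_2$ is already a decorated tree with levels. In case~(1) the move of Figure~\ref{scoppia:fig} yields a shadow $X_2'$ of a block $(M_2',L_2')$ of which $(M_2,L_2)$ is a twice connected sum; inspecting that move (it is the combination of the compressing-disc move of Figure~\ref{hv:fig}-(2) with the inverse connected-sum move of Figure~\ref{sum_ass:fig}-(1) used in the proof of Proposition~\ref{(4):prop}), it does not increase the number of vertices of the graph and removes the vertex at which it is applied, so $v(G')<v(G)$. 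The inductive hypothesis applied to $(M_2',L_2')$ puts it in the desired form, and composing with the two extra connected sums does the same for $(M_2,L_2)$. When $X_2'$ is disconnected one applies the inductive hypothesis to each component and takes disjoint unions; this is consistent with the statement, in which $M_2$ is explicitly allowed to be disconnected, a ``decorated tree with levels'' for a disconnected shadow being read componentwise (i.e. as a decorated forest with levels).

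I do not expect a genuine obstacle here, since the work is all in Corollary~\ref{very:simple:cor} and Theorem~\ref{leveled:teo}; the two points that need attention are purely bookkeeping. The first is termination: one must name a complexity strictly decreasing under the move of Figure~\ref{scoppia:fig}, and the vertex count of the decorating graph works for the reason indicated above. The second is connectedness: the de-summing move of Figure~\ref{scoppia:fig} (like the inverse of Figure~\ref{sum_ass:fig}-(1)) may split the shadow into several components, which is precisely why the block $M_2$ appearing in the conclusion is in general disconnected, each of its components carrying a shadow encoded by a decorated tree with levels. With this understood, no further argument is needed, and the corollary follows.
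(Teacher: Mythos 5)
Your overall structure is the same as the paper's: reduce to very simple shadows via Corollary~\ref{very:simple:cor}, apply Theorem~\ref{leveled:teo}, and induct when case~(1) of that theorem holds. However, the induction measure you choose is wrong. You claim that the move of Fig.~\ref{scoppia:fig} ``does not increase the number of vertices of the graph and removes the vertex at which it is applied,'' so that the total vertex count $v(G)$ strictly drops. That is not what happens: the move removes the trivalent vertex of type (111) but compensates by capping the freed-up edges with fat disc vertices (this is exactly what realizes $(M,L)$ as a \emph{twice} connected sum of $(M',L')$, cf.\ Proposition~\ref{(4):prop} and Fig.~\ref{scoppia_proof:fig}). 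So the total vertex count in general goes \emph{up}, and your induction would not terminate. The paper's proof avoids this by inducting on the number of vertices of type \includegraphics[width = 0.6 cm]{4.pdf} only, which genuinely decreases by one under the move, since the type-$(111)$ vertex is destroyed and the new fat caps are of type \includegraphics[width = 0.6 cm]{1.pdf}. Replacing your measure by that one fixes the argument; everything else in your proposal (transitivity of ``obtained via sums and assemblings,'' componentwise treatment of disconnected shadows, merging the two graph-manifold summands) is correct and matches the paper.
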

\begin{proof}
By Corollary \ref{very:simple:cor}, we may restrict to very simple shadows. Let $X$ be a very simple shadow. If it may be encoded as a tree with a level function we are done. Otherwise, the move in Fig.~\ref{scoppia:fig} applies: the number of vertices of type \includegraphics[width = 0.6 cm]{4.pdf} decreases and we proceed by induction.
\end{proof}
The rest of this section is devoted to the definition of a level function and to the proof of Theorem \ref{leveled:teo}.

\subsection{The level function} \label{leveled:subsection}
Two vertices in a graph are \emph{adjacent} if they are joined by an edge. A sequence of distinct vertices $v_1,\ldots, v_k$ form a \emph{line} if $v_i$ and $v_{i+1}$ are adjacent for all $i$. A \emph{decorated tree} is a decorated graph $T$ without cycles.

Let $T$ be a decorated tree which encodes a shadow $X$ of a block $(M,L)$. A \emph{level function} on $T$ is a function which associates to each vertex $v$ a non-negative integer $l(v)$ such that the following holds.

\begin{enumerate}
\item there are $k\geqslant 2$ vertices having level zero, and they form a line $v_1,\ldots, v_k$ called \emph{root}; 
\item every vertex $v$ of type \includegraphics[width = 0.6 cm]{4.pdf} or \includegraphics[width = 0.6 cm]{5.pdf}  is adjacent to precisely one vertex $v'$ with $l(v')>l(v)$;
\item on every line $w_1,\ldots, w_h$ we have $w_i \leqslant \max \{w_1,w_h\}$ for all $i$.
\end{enumerate}
The third condition says that whenever the level starts increasing on a line, it keeps being non-decreasing forever. There is also a fourth condition which relates the function $l$ with the induced decomposition of the closed 3-manifold $\partial N(X)$. To state it we first need to introduce first some terminology and prove some easy facts. 

Consider a decorated tree $T$ and a function $l$ fulfilling the three requirements just stated. Let $v$ be a vertex. Define $S_v$ as the set of all vertices $v'$ such that there is a line
$$v=v_1, \ldots, v_k = v'$$
with $l(v_2)>l(v_1)$. 
\begin{prop}
We have $l(v')>l(v)$ for every $v'\in S_v$. The set $S_v$ is non-empty precisely when $v$ is of type \includegraphics[width = 0.6 cm]{4.pdf}  or \includegraphics[width = 0.6 cm]{5.pdf} . When non-empty, it contains precisely one vertex adjacent to $v$, and spans a subtree of $T$; the vertex $v$ is the only one in $T\setminus S_v$ which is adjacent to some vertex in $S_v$.
\end{prop}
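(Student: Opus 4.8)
The plan is to verify the three assertions of the proposition one at a time, working directly from the definition of $S_v$ and the three requirements imposed on the level function $l$.

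First I would establish that $l(v')>l(v)$ for every $v'\in S_v$. Pick $v'\in S_v$, so there is a line $v=v_1,\ldots,v_k=v'$ with $l(v_2)>l(v_1)$. I want to show $l(v_i)>l(v_1)$ for all $i\geqslant 2$, which I prove by induction on $i$, or more cleanly by contradiction: if some $l(v_i)\leqslant l(v_1)$ for $i\geqslant 2$, take the smallest such $i$; then the sub-line $v_1,\ldots,v_i$ has $l(v_2)>\max\{l(v_1),l(v_i)\}$ (since $l(v_2)>l(v_1)\geqslant l(v_i)$), violating requirement (3). Hence $l(v_k)=l(v')>l(v_1)=l(v)$.

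Next, for the claim that $S_v$ is nonempty exactly when $v$ has type \includegraphics[width = 0.6 cm]{4.pdf} or \includegraphics[width = 0.6 cm]{5.pdf}: $S_v\neq\emptyset$ simply means $v$ has a neighbour $v'$ with $l(v')>l(v)$. If $v$ is of type \includegraphics[width = 0.6 cm]{4.pdf} or \includegraphics[width = 0.6 cm]{5.pdf}, requirement (2) directly gives such a neighbour. Conversely, suppose $v$ is not of one of these two types but has a neighbour $v'$ with $l(v')>l(v)$; I would derive a contradiction by a global counting/extremal argument. Consider the edge $vv'$ and orient every edge of the tree ``upward'' toward the endpoint of larger level (edges between equal levels need separate handling, but requirement (1) forces the only level-zero vertices to be the root line, and requirement (3) forces equal-level edges to lie inside the root). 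Each vertex of type \includegraphics[width = 0.6 cm]{4.pdf} or \includegraphics[width = 0.6 cm]{5.pdf} has, by (2), exactly one upward edge; the other vertex types (the trivalent white vertex \includegraphics[width = 0.6 cm]{1.pdf} and the flat/fat $1$-valent vertices) are the ones we must show have \emph{no} upward edge once we are not in the root. The key observation is requirement (3) again: along any line starting at a vertex $w$ whose level then strictly increases, the level never comes back down, so following upward edges from $v$ we reach a ``local maximum'' vertex $w$ all of whose neighbours have level $\leqslant l(w)$; such a $w$ cannot be of type \includegraphics[width = 0.6 cm]{4.pdf} or \includegraphics[width = 0.6 cm]{5.pdf} by (2), and one checks it also cannot be trivalent or a root vertex without contradicting (3) or (1). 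I expect the bookkeeping here—carefully using (3) to rule out a level that rises and then stalls or falls—to be the main obstacle; it is the place where the interplay of the three requirements is genuinely used.

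Finally, for the structural claims: that $S_v$, when nonempty, contains exactly one neighbour of $v$ and spans a subtree, and that $v$ is the unique vertex of $T\setminus S_v$ adjacent to $S_v$. Uniqueness of the neighbour is immediate from requirement (2) (for types \includegraphics[width = 0.6 cm]{4.pdf}, \includegraphics[width = 0.6 cm]{5.pdf} there is exactly one neighbour of strictly larger level, and any line realising membership in $S_v$ must start through such a neighbour). That $S_v$ spans a subtree: since $T$ is a tree, it suffices to show $S_v$ is connected, i.e.\ closed under taking the unique path in $T$ back toward $v$; if $v'\in S_v$ with realising line $v=v_1,\ldots,v_k=v'$, then every intermediate $v_i$ ($i\geqslant 2$) also lies in $S_v$ because the truncated line $v_1,\ldots,v_i$ still has $l(v_2)>l(v_1)$, and this path is the unique one in the tree from $v$ to $v_i$. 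Connectedness of $S_v$ follows, and a connected subset of a tree spans a subtree. For the last assertion, any vertex $u\notin S_v$ adjacent to some $v'\in S_v$: the unique path in $T$ from $v$ to $v'$ lies in $S_v\cup\{v\}$ by the previous step, and since $T$ is a tree and $u\neq v'$ is adjacent to $v'$ but not in $S_v$, $u$ must be the neighbour of $v'$ on that path, which—tracing back—forces $u=v$. This finishes the proof.
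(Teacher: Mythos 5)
Your arguments for the first assertion ($l(v')>l(v)$) and for the structural claims (uniqueness of the neighbour of $v$ in $S_v$, connectedness of $S_v$, and $v$ being the only outside vertex adjacent to $S_v$) are correct and complete; they use requirement (3) applied to initial segments of the witnessing line, requirement (2), and the uniqueness of simple paths in a tree, exactly as one would want. The paper itself offers only ``It follows easily from the assumptions (1), (2), and (3)'', so your writeup is a genuine elaboration rather than a restatement.

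The genuine gap is where you yourself flag it: the ``only if'' direction of the second assertion. You sketch a global scheme of orienting edges upward, counting upward edges per vertex type, and following upward edges to a local maximum, and you concede that the bookkeeping to rule out a level that rises and then stalls or falls is unfinished. That bookkeeping is in fact unnecessary, and the cleaner route is local. In a decorated tree with levels the only vertex types that are not \includegraphics[width = 0.6 cm]{4.pdf} or \includegraphics[width = 0.6 cm]{5.pdf} are the $1$-valent ones, \includegraphics[width = 0.6 cm]{0.pdf} and \includegraphics[width = 0.6 cm]{1.pdf}. Let $v$ be such a vertex with unique neighbour $w$, and suppose towards a contradiction that $l(w)>l(v)$. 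If $l(v)=0$, then $v$ is an endpoint of the root line (an interior root vertex would have two neighbours inside the root), so $w$ is the adjacent root vertex and $l(w)=0$, a contradiction. If $l(v)\geqslant 1$, let $u_0=v,u_1,\ldots,u_m$ be the unique line in $T$ from $v$ to a vertex of the root, so $l(u_m)=0$ and $u_1=w$. Requirement (3) on this line gives $l(u_1)\leqslant\max\{l(u_0),l(u_m)\}=l(v)$, again a contradiction. This replaces your global-maximum argument entirely and is the same kind of reasoning you already used correctly in the proof of the first assertion.

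One small correction of terminology in the sketch: in the paper \includegraphics[width = 0.6 cm]{1.pdf} denotes the fat $1$-valent vertex, not a trivalent white vertex; the trivalent white vertex is \includegraphics[width = 0.6 cm]{4.pdf}. With the local argument above this confusion becomes moot, but it is worth keeping straight since the whole ``only if'' direction hinges on the non-\includegraphics[width = 0.6 cm]{4.pdf}, non-\includegraphics[width = 0.6 cm]{5.pdf} vertices all being $1$-valent.
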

\begin{proof}
It follows easily from the assumptions (1), (2), and (3) above.
\end{proof}

Recall from Proposition \ref{deco:prop} that $T$ also encodes a decomposition of the closed 3-manifold $\partial N(X)$. Every vertex $v$ corresponds to a 3-dimensional piece $M_v\subset \partial N(X)$ bounded by tori according to Table \ref{pieces:table}. If $S$ is a set of vertices of $T$, we set $M_S = \cup_{v\in S} M_v$.

\begin{prop} Let $v$ be a vertex of type \includegraphics[width = 0.6 cm]{4.pdf} or \includegraphics[width = 0.6 cm]{5.pdf} . The manifold $M_{S_v}$ is connected and has only one boundary torus, attached to one boundary torus of $M_v$.
\end{prop}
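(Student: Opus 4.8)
The plan is to deduce the statement from the previous proposition, which pins down the combinatorial shape of $S_v$, together with Proposition~\ref{deco:prop}, which turns the decorated tree into the torus decomposition of $\partial N(X)$.

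First I would record the combinatorial picture. Since $v$ has type \includegraphics[width = 0.6 cm]{4.pdf} or \includegraphics[width = 0.6 cm]{5.pdf}, the previous proposition gives that $S_v\neq\emptyset$, that $S_v$ spans a subtree $T'$ of $T$, that there is a unique vertex $w\in S_v$ adjacent to $v$, and that $v$ is the only vertex of $T\setminus S_v$ adjacent to a vertex of $S_v$. Because $T$ is a tree, this means the single edge $e_0$ joining $v$ and $w$ is the only edge of $T$ with exactly one endpoint in $S_v$: removing $e_0$ disconnects $T$ into $T'$ and $T\setminus S_v$.

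Next I would pass to the $3$-manifold side. By Proposition~\ref{deco:prop} the decorated tree $T$ realizes $\partial N(X)$ as the union of the pieces $M_{v'}$, one for each vertex $v'$, where $M_{v'}$ and $M_{v''}$ meet exactly when $v'$ and $v''$ are joined by an edge $e$, and then meet precisely along the torus $T_e=\pi^{-1}(\gamma_e)$; moreover every piece listed in Table~\ref{pieces:table} is connected. I would then build $M_{S_v}=\bigcup_{v'\in S_v}M_{v'}$ up one vertex at a time along the subtree $T'$: list the vertices of $T'$ as $w=w_1,w_2,\dots,w_m$ so that each $w_j$ with $j\geqslant 2$ is adjacent to exactly one earlier $w_i$; then $\bigcup_{i\leqslant j}M_{w_i}$ is obtained from the connected manifold $\bigcup_{i<j}M_{w_i}$ by gluing the connected piece $M_{w_j}$ along a single torus, so it stays connected, and by induction $M_{S_v}$ is connected. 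Finally, since $\partial N(X)$ is closed, the boundary of the submanifold $M_{S_v}$ is exactly the union of the tori $T_e$ for which $e$ has one endpoint in $S_v$ and one outside (tori over edges of $T'$ lie in the interior of $M_{S_v}$, and tori over edges disjoint from $S_v$ do not meet $M_{S_v}$, since only $v$ is adjacent to $S_v$ from outside). By the first step the only such edge is $e_0$, so $\partial M_{S_v}=T_{e_0}$ is a single torus; and since $v\notin S_v$ lies on $e_0$, this torus is one of the boundary tori of $M_v$, attached to $M_{S_v}$ along $M_w$.

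I do not expect a serious obstacle here: the only point needing a little care is the identification of $\partial M_{S_v}$ with the set of tori over the ``cut'' edges of $T$, which uses that the decomposition of Proposition~\ref{deco:prop} glues the torus-bounded pieces along all of the tori $T_e$ and along nothing else, together with the fact — read off Table~\ref{pieces:table} — that each individual piece is connected. Note also that this argument uses only conditions (1)--(3) in the definition of a level function (through the previous proposition); the as-yet-unstated condition (4) plays no role.
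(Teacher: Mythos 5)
Your proof is correct and follows essentially the same line as the paper's: connectedness because $S_v$ spans a subtree, and a single boundary torus because $v$ is the only vertex outside $S_v$ adjacent to $S_v$. You simply spell out the inductive gluing argument and the identification of $\partial M_{S_v}$ with the torus over the unique cut edge, details the paper leaves implicit.
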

\begin{proof}
The set $S_v$ spans a subtree; thus the corresponding pieces
in $\partial N(X)$ glue to form a connected manifold $M_{S_v}$. Since $v$ is the only vertex adjacent to some vertices of $S_v$, this manifold is bounded by a single torus attached to $M_v$.
\end{proof}
When $v$ is of type \includegraphics[width = 0.6 cm]{4.pdf}  or \includegraphics[width = 0.6 cm]{5.pdf} , the piece $M_v$ is a Seifert manifold, homeomorphic to either $P^2\times S^1$ or $(A,2)$. (In both cases, the Seifert fibration is unique up to isotopy and induces a fibration on the boundary tori \cite{Sei}.) Finally, we can state the fourth and last requirement for our level function $l$.

\begin{enumerate}
\addtocounter{enumi}{3}
\item for every vertex $v$ of type \includegraphics[width = 0.6 cm]{4.pdf} or \includegraphics[width = 0.6 cm]{5.pdf} , the manifold $M_{S_v}$ is a solid torus, whose meridian is attached to a section of the fibration of $M_v$.
\end{enumerate}

\begin{defn} A \emph{level function} on $T$ is a function which fulfills  all the requirements (1)-(4) listed above.
\end{defn}

A \emph{decorated tree with levels} is a decorated tree $T$ which encodes a shadow $X$ of some block $(M,L)$, equipped with a level function.

\subsection{Build a level function}
Let $T$ be a decorated tree with levels, encoding a shadow $X$ of a block $(M,L)$. As the following result shows, the level function puts some serious restrictions on the decomposition of $\partial N(X)$. 

\begin{prop} \label{4:consequence:prop}
For every vertex $v$ of type \includegraphics[width = 0.6 cm]{4.pdf}  or \includegraphics[width = 0.6 cm]{5.pdf} , the manifold $M_v\cup M_{S_v}$ is either homeomorphic to $A\times S^1$ or $D^2\times S^1$. The manifold $\partial N(X)$ is either $S^3$ or $S^2\times S^1$.
\end{prop}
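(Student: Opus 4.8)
The plan is to determine $\partial N(X)$ from the decomposition of Proposition~\ref{deco:prop} together with the four conditions defining the level function, reducing it to a union of two solid tori.

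First I would prove the local statement. Let $v$ be a vertex of type $Y_{111}$ or $Y_{12}$. By Table~\ref{pieces:table} the piece $M_v$ is Seifert fibred, homeomorphic to $P^2\times S^1$ in the first case and to $(A^2,2)$ in the second, and in both cases its Seifert fibration is unique up to isotopy \cite{Sei}. The preceding proposition gives that $M_{S_v}$ is connected with a single boundary torus, glued to one boundary torus $T$ of $M_v$, and condition~(4) says that $M_{S_v}$ is a solid torus whose meridian is a section of the fibration on $T$. Since a section meets the fibre once, the Seifert fibration of $M_v$ extends over $M_{S_v}$ with the core of $M_{S_v}$ a regular fibre, so $M_v\cup M_{S_v}$ is Seifert fibred over the base of $M_v$ with one boundary circle capped off: an annulus with no exceptional fibre in the $Y_{111}$ case, a disc with one exceptional fibre of order $2$ in the $Y_{12}$ case. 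As every circle bundle over the annulus is trivial, and a Seifert fibring over the disc with one exceptional fibre is a solid torus, this gives $M_v\cup M_{S_v}\isom A\times S^1$ or $D^2\times S^1$ respectively, which is the first assertion.

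Next I would read off the shape of the tree $T$. Since $X$ is very simple, its vertices are only flat, fat, $Y_{111}$, $Y_{12}$, of valences $1,1,3,2$. Let $v_1,\dots,v_k$ ($k\geq 2$) be the root. An internal root vertex has both its line-edges at level $0$; by condition~(2) it can be neither $Y_{12}$ (it would have no higher neighbour) nor $1$-valent, hence it is $Y_{111}$, its third edge runs uphill, and the subtree $S_{v_i}$ hangs from it. Each endpoint is either $1$-valent (flat or fat, with $S_{v_1}=\emptyset$) or $Y_{12}$ with $S_{v_1}$ on its non-line edge; it cannot be $Y_{111}$, since then both non-line edges would run uphill, contradicting~(2). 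Finally every vertex of positive level lies in some $S_{v_i}$: walking from it towards the root and letting $w$ be the last level-$0$ vertex met, condition~(3) forces the level to stay positive past $w$, so $w=v_i$ has an uphill edge and our vertex lies in $S_{v_i}$; these subtrees are pairwise disjoint, each attached to its $v_i$ by one edge. Hence $\partial N(X)=\bigcup_{i=1}^k(M_{v_i}\cup M_{S_{v_i}})$, consecutive summands glued along the tori of the root edges, where $M_{v_i}\cup M_{S_{v_i}}$ is a solid torus for $i\in\{1,k\}$ (by the first part, or by Table~\ref{pieces:table} for a flat/fat endpoint) and is $A\times S^1\isom T^2\times[0,1]$ for $1<i<k$.

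It remains to conclude. Absorbing each product collar into the neighbouring solid torus shows that $\partial N(X)$ is a union of two solid tori along a torus, hence a lens space (allowing $S^3$ and $S^2\times S^1$), whatever the gluing maps are. On the other hand $X$ is a shadow of a block, so $\partial N(X)\isom\#_h(S^2\times S^1)$ for some $h\geq0$ by Proposition~\ref{bordo:prop}. A lens space has cyclic fundamental group, while $\pi_1(\#_h(S^2\times S^1))$ is free of rank $h$; so $h\leq1$, i.e.\ $\partial N(X)\isom S^3$ or $S^2\times S^1$. I expect the main obstacle to be the middle step — extracting the exact combinatorics of $T$ (internal root vertices of type $Y_{111}$, every other vertex swallowed by a unique $S_{v_i}$) from the four axioms; the first step is routine Seifert-fibred bookkeeping and the last is immediate.
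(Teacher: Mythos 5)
Your first half (the Seifert--fibred computation showing $M_v\cup M_{S_v}\isom A\times S^1$ or $D^2\times S^1$) is precisely the paper's argument. For the second claim the paper takes a shorter inductive route: it simplifies the decomposition of $\partial N(X)$ one piece at a time, deleting $M_v\cup M_{S_v}\isom T^2\times[0,1]$ when $v$ is trivalent and replacing it by a single solid torus when $v$ is bivalent, until only solid tori remain, which immediately bounds the Heegaard genus by $1$. You instead pin down the combinatorics of the tree --- internal root vertices must be of type $Y_{111}$, endpoints must be leaves or of type $Y_{12}$, and every positive-level vertex is absorbed into a unique $S_{v_i}$ --- and then assemble $\partial N(X)$ as two solid tori with product collars $T^2\times[0,1]$ in between, reaching the same conclusion. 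Your route is correct and more explicit about the global shape of the tree, at the cost of some extra combinatorial case-checking (distinctness of the $S_{v_i}$'s, the type of each root vertex) that the paper's top-down simplification bypasses; both proofs close identically by observing that a Heegaard-genus-$\leqslant 1$ manifold that is also $\#_h(S^2\times S^1)$ forces $h\leqslant 1$.
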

\begin{proof}
The manifold $M_v$ is homeomorphic to either $P^2\times S^1$ or $(A,2)$. The manifold $M_{S_v}$ is a solid torus attached to $M_v$, whose meridian is a section of the fibration of $M_v$. The fibration on $M_v$ thus extends on $M_v\cup M_{S_v}$ without creating new exceptional fibers. Therefore $M_v\cup M_{S_v}$ is either homeomorphic to $A\times S^1$ or to $(D,2)\isom D^2\times S^1$.

We may simplify inductively the decomposition of $\partial N(X)$ as follows: if $v$ is of type \includegraphics[width = 0.6 cm]{4.pdf}, simply delete $M_v\cup M_{S_v}$; if it is of type \includegraphics[width = 0.6 cm]{5.pdf}, substitute it with a single solid torus. After finitely many steps we end up with a decomposition containing only solid tori, and thus $\partial N(X)$ has Heegaard genus at most 1.
Since $X$ is a shadow of some block, we must have $\partial N(X) = \#_h(S^2\times S^1)$. Therefore $h$ equals 0 or 1, as required.
\end{proof}
We can finally prove Theorem \ref{leveled:teo}.

\dimo{leveled:teo}
The very simple shadow $X$ is encoded via a decorated graph $G$, whose vertices are of type \includegraphics[width = 0.6 cm]{0.pdf}, \includegraphics[width = 0.6 cm]{1.pdf}, \includegraphics[width = 0.6 cm]{4.pdf} , or \includegraphics[width = 0.6 cm]{5.pdf}. This also encodes correspondingly a decomposition of $\partial N(X)\isom \#_h(S^2\times S^1)$ into pieces homeomorphic to solid tori, solid tori, $P^2\times S^1$, and $(A,2)$. 

The theorem follows from a slightly more general result about decompositions of $\#_h(S^2\times S^1)$ into pieces homeomorphic to solid tori, $(A,2)$, and $P^2\times S^1$. Any such decomposition yields a graph with vertices of valence 1, 2, or 3, and the notion of level function applies \emph{as is} to this more general context. 

\emph{Claim:
Let a decomposition of $\#_h(S^2\times S^1)$ be given. It induces a graph $G$.
One of the following holds.
\begin{enumerate}
\item There is a piece $P^2\times S^1$ whose fiber $\{pt\}\times S^1$ bounds a compressing disc in its complement.
\item The graph $G$ is actually a tree which may be equipped with a level function.
\end{enumerate}
}
We prove the claim by induction on the number of pieces in the decomposition.
If the decomposition consists of two solid tori then (2) holds and we are done. Otherwise, every solid torus $D^2\times S^1$ is adjacent to a $P^2\times S^1$ or $(A,2)$. If the meridian of one solid torus is attached to $P^2\times S^1$ along the fiber, then (1) holds and we are done. It cannot be attached to a fiber of $(A,2)$, since this would yield a projective plane, but there is no such surface in $\#_h(S^2\times S^1)$.

Therefore we can suppose the solid tori are not attached along fibers. Suppose one solid torus is attached along a section of the fibration of the adjacent $P^2\times S^1$ or $(A,2)$. The two pieces glued together are then homeomorphic to either $A\times S^1$ or $(D,2)\isom D^2\times S^1$. We can thus construct a simpler decomposition by removing these pieces and adding a $D^2\times S^1$ if the second case holds. By our induction hypothesis either (1) or (2) holds. If (1) holds in the new decomposition, it also holds in the old one, and we are done. If (2) holds, the new decomposition has a level function on its graph $G'$. The level function easily lifts from $G'$ to $G$, as follows. The graph $G$ is constructed from $G'$ with one of the moves shown in Fig.~\ref{claim:fig}. With move (1), assign $l(v_3) = \max \{l(v_1), l(v_2)\}$ and $l(v_4) = l(v_3)+1$. With move (2), assign $l(v_2) = l(v_1)$ and $l(v_3) = l(v_2)+1$.

\begin{figure}
\begin{center}
\includegraphics[width = 11 cm]{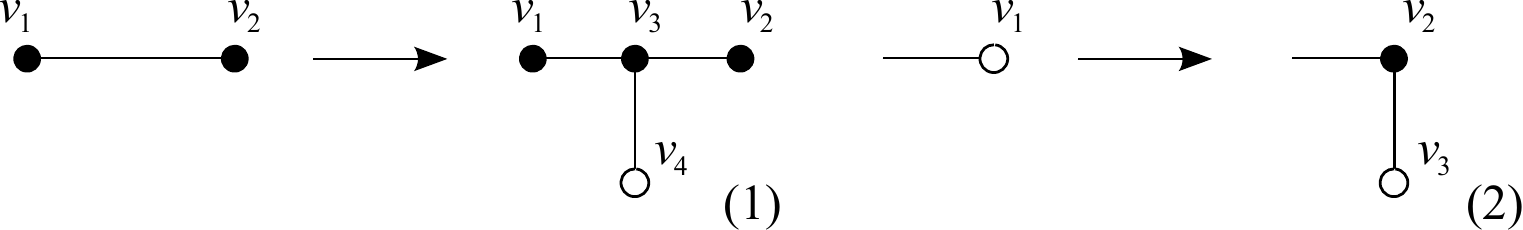}
\nota{The old graph $G$ is obtained by the new graph $G'$ by one of these moves. White vertices represent solid tori. In (1), the vertices $v_1$ and $v_2$ may be of any kind.}
\label{claim:fig}
\end{center}
\end{figure}

We are left with the case every solid torus is attached along a curve which is neither a fiber nor a section of the adjacent $P^2\times S^1$ or $(A,2)$. This produces a new singular fiber. We thus get a decomposition into blocks that are either $P^2\times S^1$, an annulus with one singular fiber, a
disc with two singular fibers, or $S^2$ with $3$ singular fibers. 
By assembling blocks with matching
fibers, we get either a Seifert manifold fibering over an orbifold with $\chi\leqslant 0$, and hence not homeomorphic to $S^2\times S^1$
and $S^3$, or a prime manifold with nontrivial JSJ: a contradiction in all cases. The claim is proved.

Finally, we show how the claim implies Theorem \ref{leveled:teo}. Our shadow $X$ may be represented as a decorated graph $G$, which also encodes a decomposition of $\partial N(X\cup\partial M)\isom \#_h(S^2\times S^1)$. The claim applies to $G$. If (1) holds, there is a piece $P^2\times S^1$ whose fiber bounds a compressing disc. It corresponds to a vertex of type \includegraphics[width = 0.6 cm]{4.pdf} and Proposition \ref{(4):prop} applies. The move in Fig.~\ref{scoppia:fig} thus transforms $X$ into a shadow $X'$ of some $(M',L')$ of which $(M,L)$ is a twice connected sum. If (2) holds, the graph $G$ is a tree which may be equipped with a level function, and we are done again.

\finedimo

\section{Leaves, fruits, and branches} \label{leaves:section}
We investigate here the decorated trees with levels defined in the previous section. We introduce some terminology -- \emph{leaves}, \emph{fruits}, and \emph{branches} -- and we study some moves that transform a tree into another.

\subsection{Drawing and cutting}
Let $T$ be a decorated tree with levels. We always draw $T$ with this convention: higher vertices in the picture have lower levels. As an example, see Fig.~\ref{leveled:fig}.

\begin{figure}
\begin{center}
\includegraphics[width = 6 cm]{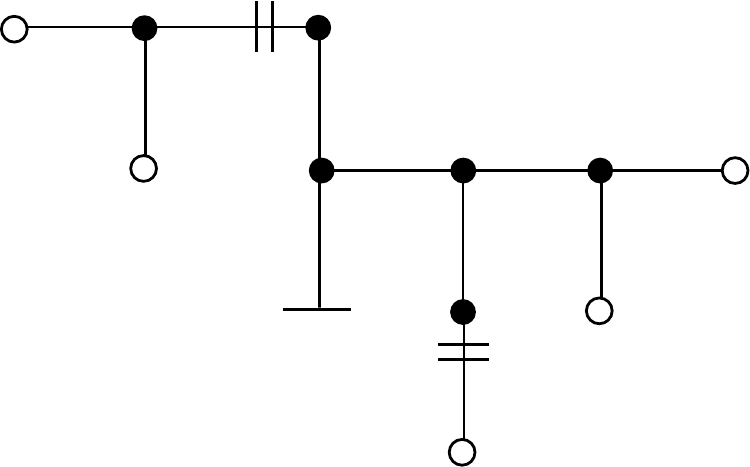}
\nota{A tree with levels. The level function may be deduced from the picture. There are 3 vertices at level 0 (the root), 5 vertices at level 1, 3 at level 2, and one at level 3.}
\label{leveled:fig}
\end{center}
\end{figure}

Consider a vertex $v$ on $T$. Recall that $S_v$ generates a subtree, which is non-empty precisely when $v$ is of type \includegraphics[width = 0.6 cm]{4.pdf} or \includegraphics[width = 0.6 cm]{5.pdf}. Since the vertex \includegraphics[width = 0.6 cm]{5.pdf} may be oriented in two different ways, there are three possibilities, which we may picture as in Fig.~\ref{Sv:fig}. 

We start by investigating the first one. Fig.~\ref{stacca:fig} shows a way of \emph{cutting} the subtree spanned by $S_v$. The result is a new decorated tree $T'$ with levels. The new level function $l'$ should be clear from the figure. (More precisely: let $v_2\in S_v$ be the vertex adjacent to $v$. We set $l'(w) = 0$ and $l'(v_*) = l(v_*) - l(v_2)$ for each vertex $v_*\in S_v$. In particular, the vertices $w$ and $v_2$ belong to the root of $T'$.)

\begin{figure}
\begin{center}
\includegraphics[width = 7 cm]{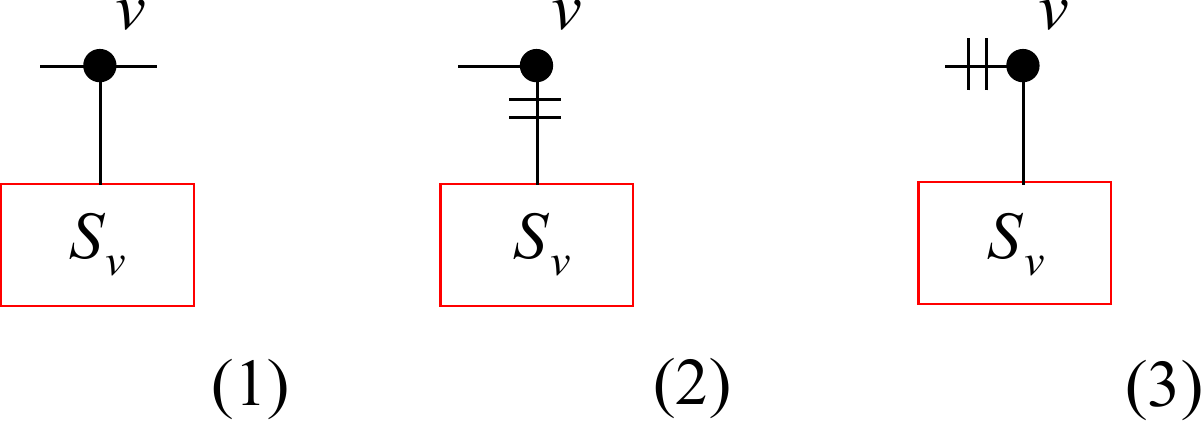}
\nota{When $S_v$ is non-empty, \emph{i.e.}~when the vertex $v$ has valence 2 or 3, there are three possible configurations.}
\label{Sv:fig}
\end{center}
\end{figure}

\begin{figure}
\begin{center}
\includegraphics[width = 5 cm]{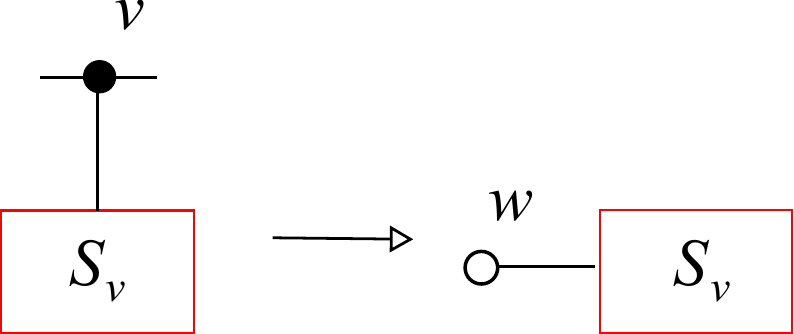}
\nota{This cut produces a new tree $T'$ with levels. It encodes a shadow $X'$ with $\partial N(X')= S^3$.}
\label{stacca:fig}
\end{center}
\end{figure}

\begin{prop} \label{cut:prop}
Let $T$ be a decorated tree with levels and $v$ a vertex of type \includegraphics[width = 0.6 cm]{4.pdf}. The cut in Fig.~\ref{stacca:fig} produces a new decorated tree $T'$ with levels. The new tree $T'$ encodes a shadow $X'$ with $\partial N(X') = S^3$.
\end{prop}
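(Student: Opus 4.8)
The plan is to establish the statement in two stages: first that $T'$ together with the induced function $l'$ is a genuine decorated tree with levels (so that it does encode a simple polyhedron $X'$ with thickening $N(X')$), and second — the substantive part — to read off $\partial N(X')$ from the canonical decomposition of Proposition~\ref{deco:prop} and recognise it as $S^3$.

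For the combinatorial part I would argue as follows. Since $S_v$ spans a subtree of $T$ and $T'$ is that subtree with the single pendant vertex $w$ attached at $v_2$, the graph $T'$ is again a tree. Conditions (1)--(3) for $l'$ are routine: on $S_v$ the level is merely shifted by the constant $l(v_2)$, so (2) and (3) persist there verbatim; the only edge of $T'$ leaving $S_v$ joins $v_2$ to $w$, and along it $l'$ is constant ($l'(w)=l'(v_2)=0$), so $w$ does not become the ``unique higher neighbour'' of any vertex of type $Y_{111}$ or $Y_{12}$, and the level-$0$ vertices of $S_v$ (which already form a line ending at $v_2$, by conditions (2) and (3) for $T$) form together with $w$ the root. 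For requirement (4) one checks that for each vertex $v_{*}\in S_v$ of type $Y_{111}$ or $Y_{12}$ the set $S_{v_{*}}$ computed in $T'$ coincides with the one computed in $T$ — a level-increasing line out of $v_{*}$ can never reach $w$, since it would have to traverse $v_2$, on which $l'$ is already $0$ — and hence the associated piece $M_{S_{v_{*}}}\subset\partial N(X')$ is literally the same submanifold as in $\partial N(X)$ and (4) is inherited.

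For $\partial N(X')$ I would use Proposition~\ref{deco:prop}: its canonical decomposition consists of the solid torus $V_w=\pi^{-1}(w)$, of ``fat'' type, i.e.\ with meridian a section of $\pi$, together with the pieces $M_{v_*}$ ($v_*\in S_v$) glued according to $T'|_{S_v}=T|_{S_v}$, plus the vertical solid tori over the flat vertices of $S_v$; all of the latter assemble into exactly the submanifold $M_{S_v}\subset\partial N(X)$ associated to the original tree. Since $v$ has type $Y_{111}$, the piece $M_v$ is $P^2\times S^1$, and requirement~(4) together with the argument in the proof of Proposition~\ref{4:consequence:prop} shows that $M_v\cup M_{S_v}\cong A\times S^1$ with a Seifert fibration (extending the product one on $M_v$) and no exceptional fibre; consequently $M_{S_v}$ is itself a solid torus $D^2\times S^1$ whose core is a regular fibre of $\pi$, and along its boundary torus $T_v$ — the one that used to be glued to $M_v$ and is now glued to $V_w$ — the meridian of $M_{S_v}$ is the section slope of $M_v$'s fibration while a fibre of $\pi$ is a longitude. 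Thus $\partial N(X')=V_w\cup_{T_v}M_{S_v}$ is a union of two solid tori glued along a torus, hence a lens space (a priori any $L(p,q)$, including $S^3$ and $S^2\times S^1$). The gluing of $V_w$ is recorded by the half-integer decorating the new edge $w$--$v_2$, which is precisely what is prescribed in Fig.~\ref{stacca:fig}; I would isolate in a short computation the fact that attaching a pendant disc with gleam $g$ to $M_{S_v}$ along $T_v$ produces $L(g,\ast)$ (translating, via the homology basis $(\mu,\lambda)$ of the definition of the gleam and the identification $M_v\cup M_{S_v}\cong A\times S^1$, ``section of $M_v$'' into ``meridian of $M_{S_v}$'' and ``fibre of $\pi$'' into ``longitude''), so that the value $g=\pm1$ read off from the figure gives $\partial N(X')\cong S^3$. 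By Proposition~\ref{bordo:prop} this in particular confirms that $X'$ is genuinely a shadow of a block.

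The main obstacle is this last slope computation: one must match carefully, through the identification $M_v\cup M_{S_v}\cong A\times S^1$, the ``section'' appearing in level-function requirement~(4), the meridian of the fat-vertex solid torus $\pi^{-1}(w)$, and the shift of the $\lambda$-coordinate produced by the gleam on the new edge, in order to be certain that the two meridians of $V_w$ and $M_{S_v}$ meet exactly once — i.e.\ that the filling is genuinely $S^3$ and not another lens space. Everything else (the tree/level bookkeeping and the identification of the pieces) is formal once Propositions~\ref{deco:prop} and~\ref{4:consequence:prop} are in hand.
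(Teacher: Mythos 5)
Your proposal follows the same two-stage plan as the paper (check that $T'$ with the shifted level function is again a decorated tree with levels; then analyse $\partial N(X')$ as a union of two solid tori), and the first stage is handled with more care than the paper bothers to give. But the second stage — which you yourself single out as ``the main obstacle'' — is not actually closed, and the way you gesture at closing it is backwards. You propose to ``read off $g=\pm1$ from the figure.'' That cannot work: the gleam on the new edge $w$--$v_2$ in Fig.~\ref{stacca:fig} is not prescribed to be $\pm 1$; it is whatever was inherited from $T$. Indeed the paper's Proposition~\ref{leaf:prop} is proved by \emph{applying} Proposition~\ref{cut:prop} to deduce that this gleam must equal $\pm1$. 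Feeding $g=\pm1$ back into the proof of Proposition~\ref{cut:prop} is therefore circular; the proposition has to be established without any information about the gleam on that edge.

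The paper's actual argument is short precisely because it avoids the gleam altogether. It identifies the meridian of the new solid torus $M_w$ with the Seifert fibre $f_v$ of the deleted piece $M_v$; requirement~(4) of the level function says the meridian of $M_{S_v}$ is a Seifert \emph{section} of $M_v$. Fibre and section on a boundary torus of a Seifert piece meet algebraically once, so the two meridians have intersection number $\pm1$, and the union of the two solid tori is $S^3$ no matter what the decorations are. Your route — note that $M_v\cup M_{S_v}\cong A\times S^1$, deduce that $\pi^{-1}(w)\cup M_{S_v}$ is some lens space $L(p,q)$, and then try to pin down $(p,q)$ — still has to prove the same dual pairing at the end, and the ``$\pi$-fibre is a longitude of $M_{S_v}$'' step together with the $L(g,\ast)$ bookkeeping needs exactly the careful identification of slopes that you deferred. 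So the structure is right and you have correctly located the crux, but the crux is left open, and the hint you give for resolving it would make the argument circular.
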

\begin{proof}
The axioms (1)-(4) descend easily from $T$ to $T'$, hence $T'$ is indeed a decorated tree with levels. The only non-trivial fact to prove is that $\partial N(X')=S^3$.

The manifold $M_{S_v}$ is a solid torus, whose meridian is attached to a section of $M_v$. We have $\partial N(X') = M_w \cup M_{S_v}$. The meridian of the solid torus $M_w$ is in fact isotopic to the fiber of $M_v$. Therefore the meridians of the two solid tori $M_w$ and $M_{S_v}$ have intersection 1, and thus $\partial N(X')=S^3$.  
\end{proof}

\subsection{Leaves}
Let $v$ be a vertex of type \includegraphics[width = 0.6 cm]{4.pdf}. If $S_v$ consists of a single vertex, this vertex is a \emph{leaf}. A leaf is a vertex of valence 1 and is either flat or fat, see Fig.~\ref{leaf:fig}. The edge connecting the base $v$ with its leaf is decorated with some integer $n$. When the vertex is flat the integer is not very important since it only determines the framing on the corresponding component of $\partial X$. On a fat vertex, we must have $n=\pm 1$, as the following shows.

\begin{figure}
\begin{center}
\includegraphics[width = 3 cm]{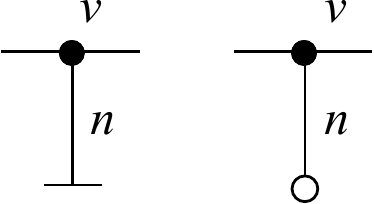}
\nota{A flat and fat leaf based at some vertex $v$. On a fat leaf, we must have $n= \pm 1$.}
\label{leaf:fig}
\end{center}
\end{figure}

\begin{prop} \label{leaf:prop}
Let $T$ be a decorated tree with levels. The edge joining a fat leaf and its base is decorated by $\pm 1$.
\end{prop}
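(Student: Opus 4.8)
The plan is to analyze what the level function forces on the $3$-dimensional pieces sitting above a fat leaf and its base, and then to read off the decoration. Recall that a fat leaf $w$ is a vertex of type \includegraphics[width = 0.6 cm]{1.pdf}, so by Table~\ref{pieces:table} the associated piece $M_w$ in $\partial N(X)$ is a solid torus whose meridian is \emph{horizontal} (a section of the fibration $\pi$), in contrast with the flat vertex whose meridian is vertical. The base $v$ is of type \includegraphics[width = 0.6 cm]{4.pdf}, so $M_v\isom P^2\times S^1$ is Seifert fibered with the fiber $\{pt\}\times S^1$ being vertical. The leaf case is exactly the situation $S_v = \{w\}$, so $M_{S_v} = M_w$, and axiom (4) of the level function says that the meridian of the solid torus $M_{S_v}=M_w$ is attached to a \emph{section} of the fibration of $M_v$ on the relevant boundary torus.

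The key computation is then a linear-algebra statement on the boundary torus $T = \partial M_w$ glued to one boundary torus of $M_v$. Fix the basis $(\mu,\lambda)$ on this torus where $\lambda$ is the fiber of $\pi$ (vertical) and $\mu$ is a section. The meridian of $M_w$ is, by definition of a fat vertex together with the gleam $n$ on the edge, of the form $\pm\mu + n\lambda$ (this is precisely the bookkeeping in Section~\ref{shadows:section}: the decoration $n$ on an even edge records how the horizontal section of $M_w$ winds around the vertical fiber). Axiom (4) requires this meridian to be isotopic to a section of the Seifert fibration of $M_v$, i.e.\ a curve with vertical coordinate $\pm 1$ in the $(\mu,\lambda)$-basis — equivalently, the meridian must have intersection number $\pm 1$ with the fiber $\lambda$. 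Computing $\langle \pm\mu + n\lambda,\ \lambda\rangle = \pm 1$ is automatic, so this is not yet the constraint; the constraint comes from requiring the meridian itself to be a section, which means its $\lambda$-coordinate, once we normalize the $\mu$-coordinate to $\pm 1$, must be $0$ modulo the ambiguity coming from Dehn twisting along the fiber — but that Dehn-twist ambiguity is exactly absorbed into the choice of section on the $M_v$ side, which is already fixed by how $M_v$ meets the rest of the tree. The upshot is that $n$ is pinned down: the only freedom is a sign, giving $n = \pm 1$.

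Concretely I would argue as follows. Glue $M_w$ to $M_v$ and cap off $S_v$'s torus as in Proposition~\ref{cut:prop}: by that proposition (or a direct Seifert-theoretic computation) $M_w\cup M_v$ extends the fibration of $M_v$ over an annulus or disc, and for $M_{S_v}$ a solid torus with section-meridian one gets $M_v\cup M_{S_v}\isom D^2\times S^1$ as in Proposition~\ref{4:consequence:prop}. Tracking the gleam $n$ through this identification: the new solid torus $D^2\times S^1$ has a well-defined meridian, and demanding that the gluing produce $D^2\times S^1$ (rather than a lens space $L(p,q)$ with $p = |n\mp 1|$ or similar, which would contradict $\partial N(X) = \#_h(S^2\times S^1)$ via Proposition~\ref{4:consequence:prop} and the fact that $\#_h(S^2\times S^1)$ admits no lens-space summand) forces the off-diagonal winding to be $\pm 1$. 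In other words, $|n| = 1$: any other value of $n$ introduces a genuine exceptional fiber or torsion into $H_1$, which is incompatible with $\partial N(X)$ being a connected sum of copies of $S^2\times S^1$.

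The main obstacle I anticipate is purely notational: getting the right normalization of the $(\mu,\lambda)$ basis on the glued torus so that the fat-vertex decoration $n$, the section of the Seifert fibration of $P^2\times S^1$, and the meridian of the would-be solid torus are all expressed in a single consistent frame — the factor-of-two subtleties for odd edges (half-integer gleams) need not arise here since a fat leaf sits on a type-\includegraphics[width = 0.6 cm]{4.pdf} vertex and the incident edge is even, so $n$ is an honest integer, but one must be careful that the "section" used in axiom (4) is the same section used to read the gleam. Once the frames are aligned, the conclusion $n=\pm 1$ is immediate from the Seifert-fibered classification of fillings of $P^2\times S^1$ that yield $S^2\times S^1$ or $S^3$ (the only summands allowed by Proposition~\ref{4:consequence:prop}), exactly as in the proof of Proposition~\ref{no:36:prop}.
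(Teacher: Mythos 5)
There is a genuine gap, rooted in a factual error about the decomposition of $\partial N(X)$. You assert that for the base $v$ of type \includegraphics[width = 0.6 cm]{4.pdf}, the Seifert fiber $\{pt\}\times S^1$ of $M_v\isom P^2\times S^1$ is vertical; but as the end of Section~\ref{decomposition:subsection} states explicitly, it is \emph{horizontal} (a $\pi$-section) for vertices of type \includegraphics[width = 0.6 cm]{4.pdf}, and vertical only for type \includegraphics[width = 0.6 cm]{2.pdf}. This is exactly what makes your intersection computation come out tautological: believing the Seifert fiber of $M_v$ is $\lambda$, you pair the meridian with $\lambda$ and get $\langle\pm\mu+n\lambda,\lambda\rangle=\pm 1$ automatically, so axiom~(4) appears to give no constraint. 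The correct pairing is with the Seifert fiber $\mu$, which gives $\langle\pm\mu+n\lambda,\mu\rangle=\mp n$, and axiom~(4) then forces $|n|=1$ at once. (Your ``Dehn-twist ambiguity'' discussion misses this: a section of the Seifert fibration of $M_v$ on the boundary torus has $\lambda$-coordinate $\pm 1$, not $0$.)

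Your fallback paragraph gestures at the paper's actual argument but also slips in the details: for $v$ of type \includegraphics[width = 0.6 cm]{4.pdf}, Proposition~\ref{4:consequence:prop} gives $M_v\cup M_{S_v}\isom A\times S^1$, not $D^2\times S^1$, and the claim that producing a solid torus ``forces the off-diagonal winding to be $\pm 1$'' is never made precise. The paper's proof is shorter and avoids these issues: cutting the leaf as in Fig.~\ref{stacca:fig} yields a shadow $X'=S^2$ with gleam $n$, whose thickening has boundary $L(n,1)$, and Proposition~\ref{cut:prop} forces $\partial N(X')\isom S^3$, whence $n=\pm 1$. If you fix the fiber/section mix-up, your first-paragraph route --- reading off the constraint from axiom~(4) on the single gluing torus --- does work and is a legitimate alternative that dispenses with the cut.
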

\begin{proof}
If we cut the leaf as in Fig.~\ref{stacca:fig} we find a shadow $X' = S^2$ with gleam $n$. Therefore $\partial N(X')$ is the lens space $L(n,1)$. We must have $\partial N(X') = S^3$ by Proposition \ref{cut:prop}: therefore $n= \pm 1$.
\end{proof}

The sign of $\pm 1$ can in fact be changed easily.
\begin{prop}
Let $T$ be a decorated tree with levels, encoding a shadow $X$ of some block $(M,L)$. The moves in Fig.~\ref{move_leaf:fig} transform $T$ into a decorated tree with levels $T'$ encoding a shadow $X'$ of the same block $(M,L)$.
\end{prop}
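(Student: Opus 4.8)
The plan is to verify the two halves of the assertion in turn: that $T'$ is again a decorated tree with levels, and that $X'$ is a shadow of the \emph{same} block $(M,L)$.

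The first half is essentially bookkeeping. By the setup of Proposition~\ref{leaf:prop} the base $v$ of the fat leaf is a vertex of type $Y_{111}$ (Table~\ref{pieces:table}), and the moves of Fig.~\ref{move_leaf:fig} are supported in a neighbourhood of the leaf and of its incident edge; in particular they leave the underlying tree $T$, the level function $l$, and the subtree spanned by $S_w$ for every vertex $w$ unchanged, so axioms (1)--(3) of a level function hold for $T'$ word for word. The only point of axiom (4) that could be affected concerns $v$ itself, where $S_v$ is the lone fat leaf: $M_{S_v}$ is then the solid torus $\overline{\pi^{-1}(\interior{Q})}$ associated to the leaf region, glued to $M_v\isom P^2\times S^1$ along a section of its Seifert fibration. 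Replacing the edge decoration $n=\pm1$ by $-n$ merely reverses an orientation --- one still reads off $\partial N(X')\isom L(n,1)=L(-n,1)=S^3$ exactly as in Proposition~\ref{cut:prop} --- and does not disturb the structural fact that $M_{S_v}$ is a solid torus attached along a section. Hence axiom (4) persists and $T'$ is a decorated tree with levels.

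For the second half it suffices to realise the move of Fig.~\ref{move_leaf:fig} by a finite composition of moves from Fig.~\ref{thickening:fig}, each of which relates two shadows of a fixed block (Propositions~\ref{moves:prop} and \ref{mosse:prop} for moves (1)--(6); Lemma~\ref{perturb:lemma} together with Proposition~\ref{unique:prop} for the perturbation move (7)). The quickest route is via move~(7): because the fat-leaf disc $D$ carries gleam $n=\pm1$ and caps an unknotted circle, it can be exhibited as one of the two local perturbations of a single double point --- two transversally intersecting discs --- and performing the opposite perturbation turns $n$ into $-n$ while changing the gleam of the incident region of the $Y_{111}$-piece by $\mp1$, which is exactly the data recorded in Fig.~\ref{move_leaf:fig}. (One may instead reach the same configuration through the annulus-and-M\"obius-strip pictures of moves (5)--(6), at the price of passing through several immersed shadows and invoking Lemma~\ref{perturb:lemma} at each double point.) A final tidy-up with the innocuous moves of Fig.~\ref{mosse_innocue:fig} returns $X'$ to decorated-graph form, and since all the moves used are vertex-free, $X'$ is still a shadow without vertices. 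The step demanding the most care is the gleam bookkeeping here: one must confirm that the compensating $\mp1$ in the incident region is the \emph{only} change, so that the output is a shadow of $(M,L)$ and not of $(M,L)$ with spurious $\matCP^2$ summands --- and this is precisely where the computation of gleams under perturbation from \cite{CoThu} (cf.\ Lemma~\ref{perturb:lemma}) is used.
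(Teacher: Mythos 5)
Your proposal correctly handles move (1) of Fig.~\ref{move_leaf:fig}, and does so in essentially the same way as the paper: you identify the sign change $n\mapsto -n$ on a fat leaf with the two inequivalent perturbations of a single double point, i.e.\ with move (7) of Fig.~\ref{thickening:fig}. The extra care you take in checking that the level-function axioms (1)--(4) survive is reasonable (the paper leaves this implicit), and the observation that the compensating change $\mp 1$ in the adjacent region is the only gleam modification is precisely why no spurious $\matCP^2$ summand is created.

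However, there is a genuine gap: Fig.~\ref{move_leaf:fig} contains \emph{two} moves, and your argument addresses only the first. You write as though the figure were a single sign-flip move, and the parenthetical remark about using moves (5)--(6) instead merely offers an alternative route to that same sign flip; it does not account for a second move. The paper's proof explicitly treats the two separately: move (1) is Fig.~\ref{thickening:fig}-(7), while move (2) is obtained by first applying Fig.~\ref{thickening:fig}-(8) (the filling/drilling move, which transports the gleam $n$ across the $Y_{111}$ vertex) and then Fig.~\ref{thickening:fig}-(1) (the Matveev--Piergallini move). That second move is what later lets one slide leaves past adjacent vertices (see the end of the proof of Lemma~\ref{contains:lemma}) and cannot be deduced from the perturbation argument alone, since perturbing a double point only trades the two local framings of the fat disc and never relocates the leaf. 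To complete the proof you would need to realise move (2) as a composition of the block-preserving moves of Fig.~\ref{thickening:fig}, for instance as the paper does, and then check separately that the relocated leaf still satisfies axioms (1)--(4).
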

\begin{proof}
Move (1) is Fig.~\ref{thickening:fig}-(7). To get (2), first use Fig.~\ref{thickening:fig}-(8) to move the gleam $n$ to the right, and then use Fig.~\ref{thickening:fig}-(1). 
\end{proof}

\begin{figure}
\begin{center}
\includegraphics[width = 10 cm]{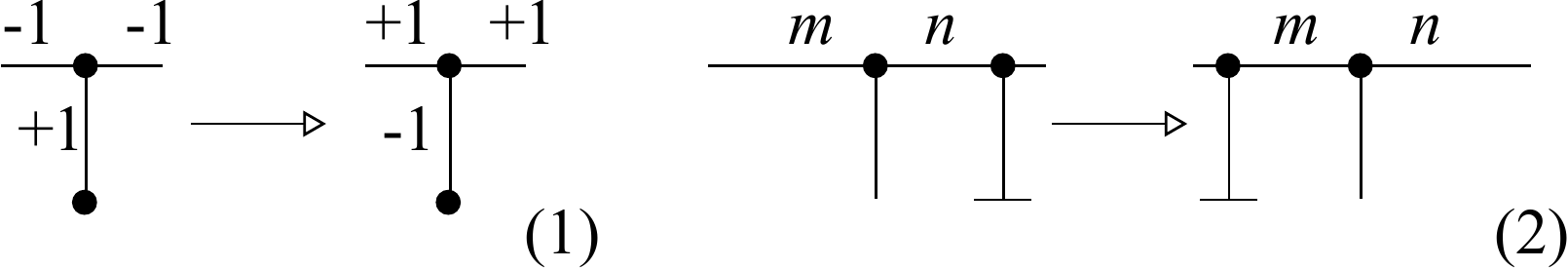}
\nota{These moves relate two decorated trees with levels determining the same block $(M,L)$.}
\label{move_leaf:fig}
\end{center}
\end{figure}

\subsection{Vertices of valence 2.}
Vertices of type \includegraphics[width = 0.6 cm]{5.pdf} are more difficult to treat than 3-valent vertices.
We may eliminate them with a move which changes however dramatically the topology of the block.  

\begin{prop}
Each of the moves in Fig.~\ref{elimina5:fig} transforms a decorated tree $T$ with levels into another decorated tree $T'$ with levels.
\end{prop}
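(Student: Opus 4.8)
The plan is to obtain each of the moves of Fig.~\ref{elimina5:fig} as a composition of moves already available to us and then to verify, one axiom at a time, that a level function survives. A vertex $v$ of type \includegraphics[width=0.6cm]{5.pdf} has valence $2$, and by requirement (2) of the level function exactly one of its two edges leads to the (non-empty) subtree spanned by $S_v$; since the vertex $Y_{12}$ is asymmetric (one edge winds twice), the two possible positions of this edge give the two configurations of Fig.~\ref{Sv:fig} that involve such a vertex, which is why there are two moves. First I would recognise that the local picture near $v\cup S_v$ is changed by the move of Fig.~\ref{boundary:fig} (Proposition~\ref{boundary:prop})---which drills a curve, assembles a copy of the shadow $D^2$ of $N_1 = D^2\times S^2$, and then applies Fig.~\ref{thickening:fig}-(1,3)---possibly followed by the leaf moves of Fig.~\ref{move_leaf:fig} and the innocuous moves of Fig.~\ref{mosse_innocue:fig} to bring the result into the normal form of Fig.~\ref{elimina5:fig}. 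All of this happens inside a regular neighbourhood of $v\cup S_v$ in the polyhedron, and since $v$ is a cut vertex of the tree $T$ separating $S_v$ from the root, the modification does not merge two branches: hence $T'$ is again a tree.

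Next I would produce the level function $l'$ on $T'$. Outside $S_v$ the tree is untouched, so I set $l'=l$ there; in particular the root of axiom (1) is unchanged, and axiom (3) (once a line starts to climb it never descends) continues to hold on the unchanged part. Inside $S_v$ the move simply translates every level downward by $l(v)$ if $S_v$ is detached into its own root, or keeps it in place if $S_v$ stays attached at level $l(v)$; the finitely many vertices created by the local picture of Fig.~\ref{elimina5:fig} receive the levels read off from that figure, and with this bookkeeping axioms (1)--(3) are immediate. Axiom (2) then has to be checked only for the new vertices of type \includegraphics[width=0.6cm]{4.pdf} or \includegraphics[width=0.6cm]{5.pdf} appearing near $v$, which is a finite inspection of Fig.~\ref{elimina5:fig}.

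The substantive point---and the one I expect to be the main obstacle---is axiom (4): for every remaining vertex $w$ of type \includegraphics[width=0.6cm]{4.pdf} or \includegraphics[width=0.6cm]{5.pdf}, the manifold $M_{S_w}\subset\partial N(X')$ must still be a solid torus glued to $M_w$ along a section of its Seifert fibration. For vertices $w$ whose $S_w$ is disjoint from $v\cup S_v$ nothing changes. For the vertex $v_2\in S_v$ adjacent to $v$ (when it is of this type), and for the $w$ in the main part whose $S_w$ previously contained $v$, one uses that axiom (4) applied to $v$ in $T$ makes $M_{S_v}$ a solid torus attached along a section of $M_v\isom (A,2)$, so that $M_v\cup M_{S_v}\isom D^2\times S^1$ as in Proposition~\ref{4:consequence:prop}; the surgery of Fig.~\ref{boundary:fig} replaces the corresponding vertical solid torus and the piece $M_v$ by a $P^2\times S^1$ carrying a Seifert fibration that restricts on the relevant boundary tori exactly as before, so the section condition is inherited. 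Making this compatibility of the surgery with the Seifert fibrations precise---which is what forces the exact gleams appearing in Fig.~\ref{elimina5:fig}---is the only real work; everything else is combinatorics on the tree.
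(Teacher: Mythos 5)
Your proposal follows the same route as the paper: identify the move of Fig.~\ref{elimina5:fig} as (a normalization of) the move of Fig.~\ref{boundary:fig} from Proposition~\ref{boundary:prop}, then verify that the levels shown in the figure make $T'$ a decorated tree with levels. The paper's own proof is extremely brief --- it simply observes that the move comes from Fig.~\ref{boundary:fig}, that the new levels can be read off the figure, that they satisfy the axioms, and notes as a sanity check that the newly created leaves carry gleam $\pm 1$ as forced by Proposition~\ref{leaf:prop}; the verification of the axioms is left to inspection.

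Your additional discussion of axiom (4) is a reasonable point to flag, since the move does change the $4$-manifold by surgery and hence changes $\partial N(X)$. But you leave it at the level of a plausibility argument (``the surgery replaces $M_v$ and the vertical solid torus by a $P^2\times S^1$ whose fibration restricts exactly as before'') rather than pinning down what the new pieces $M_w$ and $M_{S_w}$ actually are; this is precisely what the specific gleams in Fig.~\ref{elimina5:fig} encode, and you say so yourself in the last sentence but stop short of computing it. The paper does not carry this out explicitly either, so this is a shortcoming you share with the source rather than a gap you introduce. One imprecision worth correcting: you say ``outside $S_v$ the tree is untouched,'' but the move acts precisely at the vertex $v$, which lies outside $S_v$; what remains untouched is the complement of $\{v\}\cup S_v$ (or, really, a small neighbourhood of $v$). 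The conclusion is unaffected, but the bookkeeping sentence should be reworded accordingly.
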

\begin{proof}
The move is taken from Fig.~\ref{boundary:fig}. In each move of Fig.~\ref{elimina5:fig} the levels of the new vertices in $T'$ can be deduced from the picture. They are arranged so that $T'$ is indeed equipped with a level function. (Note that every leaf is decorated with a gleam $\pm 1$, in accordance with Proposition \ref{leaf:prop}.)
\end{proof}

\begin{figure}
\begin{center}
\includegraphics[width = 12 cm]{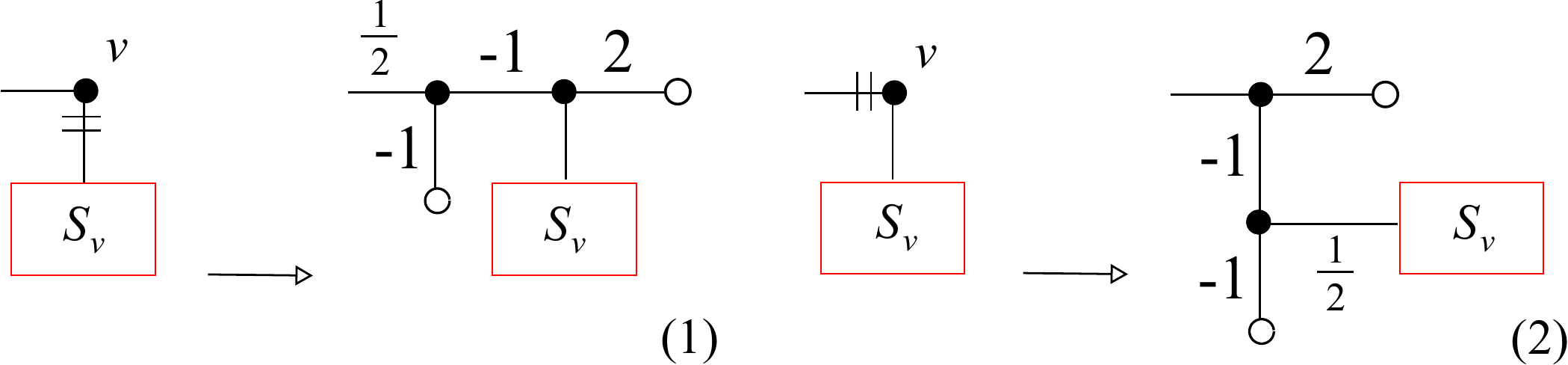}
\nota{These moves transform a decorated tree with levels into another.}
\label{elimina5:fig}
\end{center}
\end{figure}

Note that $T$ and $T'$ determine non-homeomorphic blocks $(M,L)$ and $(M',L')$ in general.
We can now state a version of Proposition \ref{cut:prop} for 2-valent vertices.
\begin{prop} Let $T$ be a decorated tree with levels and $v$ a vertex of type \includegraphics[width = 0.6 cm]{5.pdf}. Each cut in Fig.~\ref{stacca_generalized:fig} produces a new decorated tree with levels $T'$. The new tree $T'$ encodes a shadow $X'$ such that $\partial N(X') = S^3$.
\end{prop}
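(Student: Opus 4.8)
The plan is to imitate the proof of Proposition~\ref{cut:prop} line by line, dealing with each of the two configurations of Fig.~\ref{Sv:fig} (equivalently, each of the cuts displayed in Fig.~\ref{stacca_generalized:fig}) in turn. Write $v_2\in S_v$ for the vertex of $S_v$ adjacent to $v$, and define the level function $l'$ on $T'$ exactly as in Proposition~\ref{cut:prop}: set $l'(w)=0$ on the new leaf $w$ and $l'(v_*)=l(v_*)-l(v_2)$ for every $v_*\in S_v$, so that $w$ and $v_2$ lie in the root of $T'$. With this definition axioms (1), (2), (3) of a level function are immediate, and axiom (4) survives because the pieces $M_{S_{v'}}$ for the vertices $v'$ of the subtree $S_v$ are literally unchanged by the cut, while the only new vertex $w$ is a leaf, so $S_w$ is a single vertex and $M_{S_w}$ is a solid torus with the required meridian--section incidence (this is exactly what Fig.~\ref{stacca_generalized:fig} is arranged to provide).

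The substantive point is the identity $\partial N(X')\isom S^3$. After the cut, the tree $T'$ consists of the subtree $S_v$ together with the leaf $w$, and correspondingly $\partial N(X')$ decomposes as the union of two solid tori, $M_{S_v}$ and $M_w$, glued along the torus $T_0$ that was the common boundary of $M_v$ and the pieces over $S_v$ in $T$. Since $v$ has valence $2$, the piece $M_v\isom(A,2)$ is Seifert fibred with a unique fibration up to isotopy; axiom (4) for $v$ in $T$ says that the meridian of the solid torus $M_{S_v}$ is a section of this fibration, hence meets the regular fibre of $M_v$ once on $T_0$. On the other hand, the leaf $w$ is drawn in Fig.~\ref{stacca_generalized:fig} precisely of the type and with the gleam for which $M_w$ is a solid torus whose meridian is the regular fibre of $M_v$ on $T_0$, exactly as the new leaf in Fig.~\ref{stacca:fig} realises the fibre of $M_v$ in Proposition~\ref{cut:prop} (cf.\ the remark at the end of Section~\ref{decomposition:subsection} distinguishing vertical and horizontal meridians). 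Therefore the meridians of $M_{S_v}$ and $M_w$ have geometric intersection number one on $T_0$, so $M_{S_v}\cup M_w\isom S^3$, i.e.\ $\partial N(X')\isom S^3$. The second configuration of Fig.~\ref{Sv:fig} differs only in which of the two non-interchangeable edges of the asymmetric valence-$2$ vertex is cut; since the section/fibre dichotomy on $\partial M_v$ and axiom (4) do not see this choice, the same computation goes through.

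I expect the only delicate point to be verifying the claim that the new leaf $w$ produced by Fig.~\ref{stacca_generalized:fig} really has meridian equal to the \emph{regular fibre} of $M_v$ on $T_0$ --- as opposed to a section, which would instead yield $S^2\times S^1$ or a lens space --- and checking that the exceptional fibre of $(A,2)$ does not disturb the intersection count on $T_0$. Both of these are local statements about the circle fibration $\pi\colon\partial N(X')\to X'$ near $w$ and are read off from Table~\ref{pieces:table} and the gleams prescribed in the figure, in the same way the corresponding fact was verified for valence-$3$ vertices in the proof of Proposition~\ref{cut:prop}.
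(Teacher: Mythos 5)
The paper's actual proof is a one-line reduction: "First apply the corresponding move in Fig.~\ref{elimina5:fig} and then Proposition~\ref{cut:prop}." That is, the move of Fig.~\ref{elimina5:fig} (derived from the surgery move of Fig.~\ref{boundary:fig}) converts the valence-$2$ vertex \includegraphics[width = 0.6 cm]{5.pdf} into a valence-$3$ vertex \includegraphics[width = 0.6 cm]{4.pdf} plus a fat leaf, with an accompanying change of gleams; the cut of Fig.~\ref{stacca_generalized:fig} then coincides, by construction, with performing this substitution and then the cut of Fig.~\ref{stacca:fig} at the new valence-$3$ vertex, so the conclusion $\partial N(X')=S^3$ is inherited directly from Proposition~\ref{cut:prop}. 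Your proposal takes a different, more direct route: you reproduce the $M_{S_v}\cup M_w$ decomposition argument of Proposition~\ref{cut:prop} from scratch, with $M_v\isom(A,2)$ in place of $P^2\times S^1$, and argue that the meridian of $M_{S_v}$ is a section of the Seifert fibration while that of $M_w$ is a regular fibre, whence the union is $S^3$. The structure is sound, and you correctly locate the delicate point: that the leaf $w$ and the gleam combination prescribed by Fig.~\ref{stacca_generalized:fig} really encode a solid torus whose meridian is the regular fibre of $(A,2)$ on $T_0$, rather than some other slope. But that verification, which you defer to "reading off from Table~\ref{pieces:table} and the figure," is precisely the non-trivial content here --- the gleam arithmetic (visible in Fig.~\ref{leveled_example:fig}, where $1/2+3/2=2$) has to absorb the Seifert twist contributed by the exceptional order-$2$ fibre of $(A,2)$, and this is not the "same" local computation as in the $P^2\times S^1$ case of Proposition~\ref{cut:prop}. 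The reduction via Fig.~\ref{elimina5:fig} is exactly what packages this bookkeeping, which is why the paper's proof can afford to be so short. So: correct in outline and a legitimate alternative, but as written it leaves the one genuinely substantive verification to the reader, which the paper handles by its reduction rather than by recomputation.
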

\begin{proof}
First apply the corresponding move in Fig.~\ref{elimina5:fig} and then Proposition \ref{cut:prop}.
\end{proof}

An example is shown in Fig.~\ref{leveled_example:fig}. 
\begin{figure}
\begin{center}
\includegraphics[width = 12.5 cm]{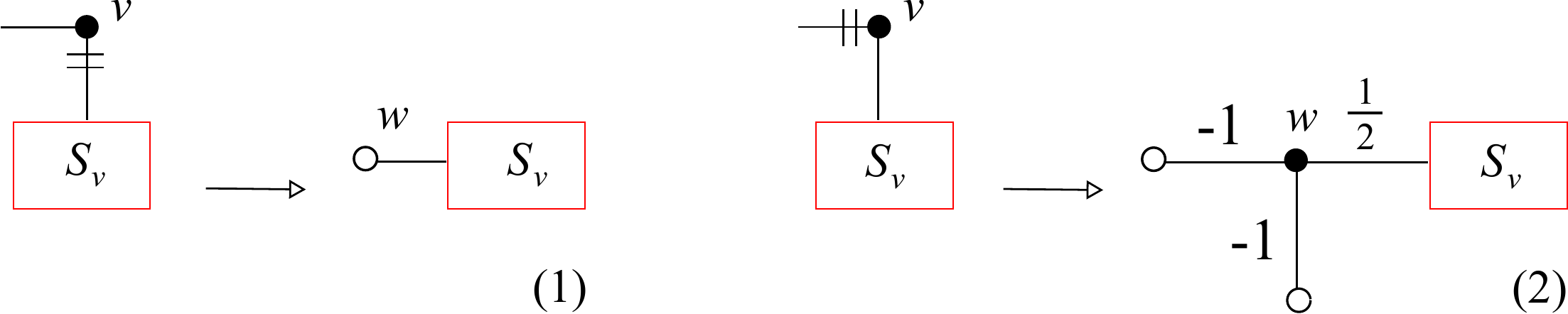}
\nota{Both these cuts produce a new tree $T'$ with levels which encodes a shadow $X'$ with $\partial N(X')=S^3$.}
\label{stacca_generalized:fig}
\end{center}
\end{figure}

\begin{figure}
\begin{center}
\includegraphics[width = 11 cm]{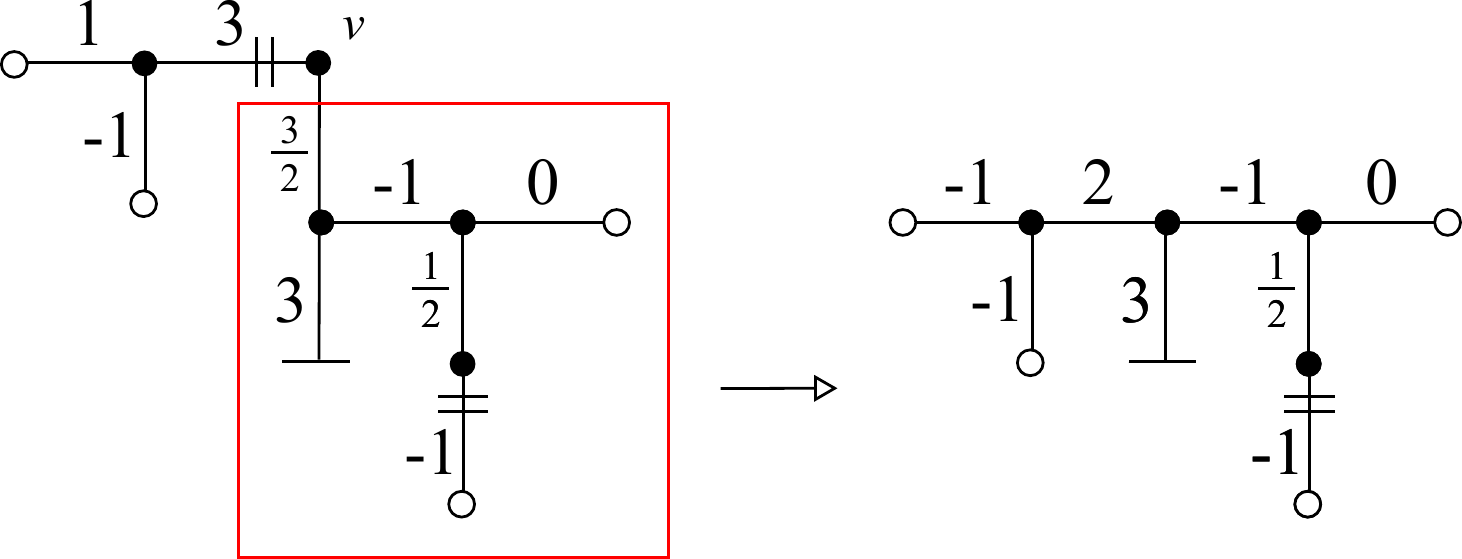}
\nota{How to apply the move in Fig.~\ref{stacca_generalized:fig}-(2). Note that $1/2+3/2 = 2$.}
\label{leveled_example:fig}
\end{center}
\end{figure}

\subsection{Nice flat vertices}
We may suppose that flat vertices only occur in some ``nice'' position, which we now explain. 

\begin{figure}
\begin{center}
\includegraphics[width =12.5 cm]{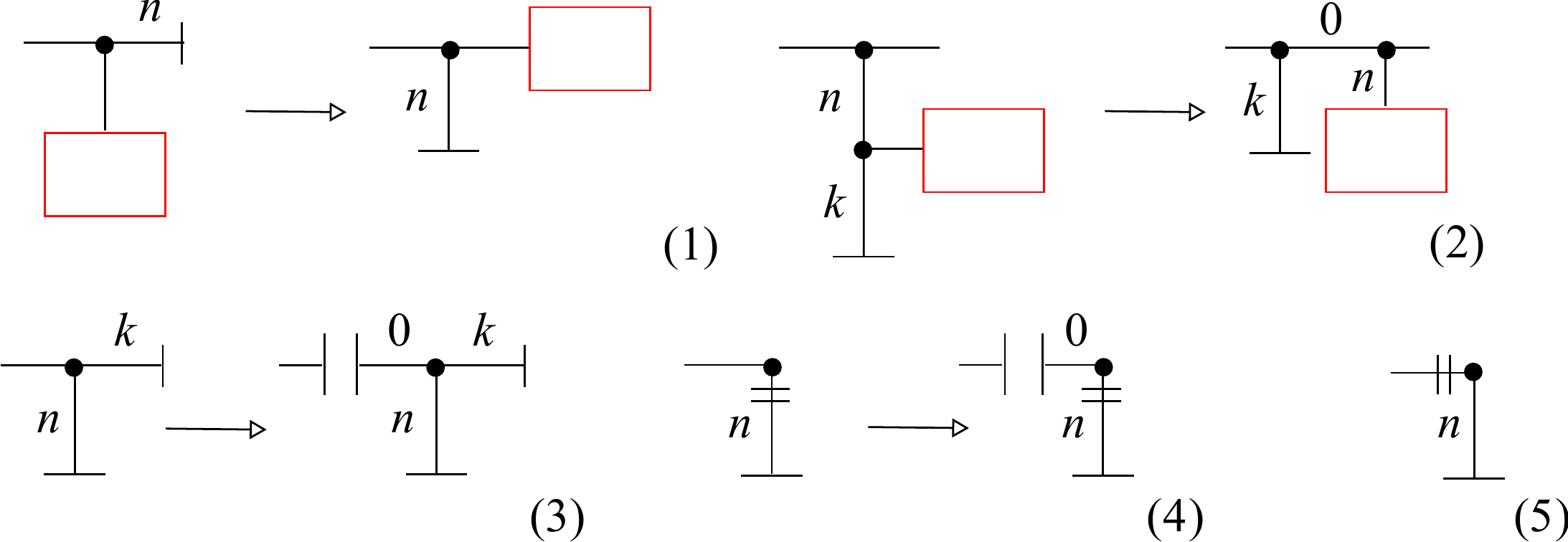}
\nota{The moves (1)-(4) transform a tree $T$ with levels encoding a shadow $X$ of some block $(M,L)$ into another tree $T'$ with levels encoding a shadow $X'$ of some block $(M',L')$. The former $(M,L)$ is homeomorphic to $(M',L')$, possibly after one assembling. A portion as in (5) cannot occur.}
\label{flat:fig}
\end{center}
\end{figure}

\begin{prop} \label{flat:prop}
Let $T$ be a decorated tree with levels, encoding a shadow $X$ of some block $(M,L)$. 
\begin{itemize}
\item Each of the moves in Fig.~\ref{flat:fig}-(1,2,3,4) transforms $T$ into a decorated tree $T'$ with levels encoding a shadow $X'$ of some block $(M',L')$. The block $(M,L)$ is homeomorphic to $(M',L')$ or obtained from it via an assembling. 
\item The tree $T$ cannot contain a portion as in Fig.~\ref{flat:fig}-(5).
\end{itemize}
\end{prop}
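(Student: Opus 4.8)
The plan is to realize each of the moves in Fig.~\ref{flat:fig}-(1,2,3,4) as a composition of moves already shown to be admissible, and then to check that the output is again a decorated tree with levels. For the assertion about blocks, observe that each of these moves is built from the basic shadow moves of Fig.~\ref{thickening:fig}, which by Proposition \ref{mosse:prop} (in its graph reformulation) relate two shadows of the \emph{same} block, together with the leaf moves of Fig.~\ref{move_leaf:fig}, and, in the cases where a flat vertex is pushed past the trivalent or bivalent vertex, the assembling move of Fig.~\ref{sum_ass:fig}-(2). By Proposition \ref{assembling:prop} the latter turns a shadow of $(M,L)$ into a shadow of a block $(M',L')$ from which $(M,L)$ is recovered by a single assembling, while all the other moves leave the block unchanged; this gives the first part of the first bullet. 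For each move one writes down, exactly as in the proof of Proposition \ref{mosse:prop}, the explicit sequence of pictures in Figs.~\ref{move_all:fig} and \ref{mossa_all:fig} realizing it.

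It remains to verify that the decorated tree $T'$ still carries a level function, i.e.\ still satisfies axioms (1)--(4) of Subsection \ref{leveled:subsection}, with the new levels as indicated in Fig.~\ref{flat:fig}. Axioms (1)--(3) are purely combinatorial: in each move only a bounded neighbourhood of one flat vertex is rearranged, and the reassignment of levels shown in the figure is chosen precisely so as not to create a new local maximum of $l$ along any line, so the monotonicity condition (3) survives. Axiom (4) is the delicate point. Recall from Table \ref{pieces:table} that a flat vertex contributes to $\partial N(X)$ a solid torus whose meridian is a \emph{fiber} of $\pi$ (a vertical slope), whereas the trivalent vertex of type $Y_{111}$ and the bivalent vertex of type $Y_{12}$ contribute copies of $P^2\times S^1$ and $(A,2)$ whose Seifert fibrations are unique up to isotopy \cite{Sei} and induce fixed slopes on the boundary tori. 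In each move the set $S_v$ of any new trivalent or bivalent vertex $v$ equals the old $S_v$ with at most one solid torus adjoined along its boundary torus; since that attaching slope meets the meridian of the adjoined solid torus once (the flat-vertex meridian being a fiber), the union $M_{S_v}$ is again a solid torus, and a direct computation of the gluing slope — tracking the gleams, which by Remark \ref{clockwise:rem} record the framing data fixing the slope — shows that its meridian still lies on a section of the fibration of $M_v$. Hence (4) holds and $T'$ is a decorated tree with levels. I expect this last computation to be the main obstacle: one must check, move by move and for each of the two possible orientations of the bivalent vertex, that the meridian of $M_{S_v}$ lands exactly on a section of the fibration of $M_v$, and this is where the gleams must be followed carefully through the moves of Fig.~\ref{thickening:fig}.

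For the second bullet, suppose $T$ contains a portion as in Fig.~\ref{flat:fig}-(5). Reading the induced decomposition of $\partial N(X\cup\partial M)\isom \#_h(S^2\times S^1)$ off Table \ref{pieces:table}, in that configuration the flat-vertex solid torus would be attached to the adjacent Seifert piece $P^2\times S^1$ or $(A,2)$ along a \emph{fiber}. Such a filling extends the Seifert fibration with a new exceptional fiber, so after assembling the remaining pieces along matching fibers one obtains a summand of $\#_h(S^2\times S^1)$ that is either a closed Seifert manifold fibering over an orbifold of non-positive Euler characteristic, or a lens space $L(p,q)$ with $p\neq 0,1$, or a manifold with non-trivial JSJ decomposition — impossible in every case, since the only summands of $\#_h(S^2\times S^1)$ are copies of $S^2\times S^1$. (Equivalently, this configuration would already contradict axiom (4) for $T$ itself, since then $M_{S_v}$ would fail to be a solid torus.) This is the same mechanism already used in Propositions \ref{no:2:prop} and \ref{no:36:prop} and in the claim inside the proof of Theorem \ref{leveled:teo}. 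Hence no such portion can occur.

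Summarizing, the proof reduces to (i) decomposing each of the four moves of Fig.~\ref{flat:fig} into the elementary moves of Figs.~\ref{move_all:fig}--\ref{mossa_all:fig}, \ref{move_leaf:fig}, \ref{sum_ass:fig}, invoking Propositions \ref{mosse:prop} and \ref{assembling:prop} for the effect on the block; (ii) transferring the level function by the explicit level reassignments in the figure and checking axioms (1)--(4), with axiom (4) handled by the slope computation on $\partial N(X)$ described above; and (iii) ruling out configuration (5) by the Seifert/JSJ obstruction for $\#_h(S^2\times S^1)$.
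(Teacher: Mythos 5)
Your plan diverges from the paper's proof in two places, and in both places the paper's argument is more economical and yours has a gap or an error.

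For moves (3) and (4) the paper does \emph{not} decompose the move into elementary moves of Fig.~\ref{thickening:fig}/\ref{move_leaf:fig}/\ref{sum_ass:fig} as you propose; it uses Proposition~\ref{disc:prop} directly. The non-flat vertex of type \includegraphics[width = 0.6 cm]{4.pdf} or \includegraphics[width = 0.6 cm]{5.pdf} contributes a piece $P^2\times S^1$ or $(A,2)$ of $\partial N(X\cup\partial M)$, drawn as a link complement in $S^2\times S^1$ (Table~\ref{pieces:table}); the adjacent flat vertices contribute $\infty$-Dehn fillings; and the resulting filled piece is a solid torus, so the remaining boundary torus acquires a vertical compressing disc and Fig.~\ref{hv:fig}-(1) (a de-assembling) applies. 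This yields the statement about blocks and the level function in one stroke. In your version you merely assert that each move ``is built from'' the basic moves and that one would ``write down the explicit sequence of pictures'' — but for moves (3) and (4) such a decomposition is not obvious, is never exhibited, and is not the mechanism the paper uses. The assertion, as written, is a gap.

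For the exclusion of configuration (5), your argument has an internal inconsistency and misidentifies the contradiction. You claim the flat-vertex solid torus is attached ``along a fiber'' and that ``such a filling extends the Seifert fibration with a new exceptional fiber'' — but a filling along the fiber slope is precisely the one that does \emph{not} extend the fibration. Moreover, the parenthetical remark that ``$M_{S_v}$ would fail to be a solid torus'' is incorrect: $M_{S_v}$ is the solid torus contributed by the flat vertex. The actual contradiction (which is what the paper uses) concerns $M_v\cup M_{S_v}$: the $\infty$-filling of $(A,2)$ on the boundary torus corresponding to the once-winding (unmarked) edge gives $(D^2,2,2)$, not a solid torus (and not $A\times S^1$), so Proposition~\ref{4:consequence:prop} — which is a consequence of axiom (4) — is violated. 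Your global Seifert/JSJ obstruction argument is a heavier tool that the paper does not need here; the local slope computation already does the job.

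Your outline of the level-function verification (axioms (1)--(4) transferring under the moves) is conceptually in the right direction, but again is left at the level of ``a direct computation shows''; in the paper this is handled implicitly, and in the nontrivial cases (moves (3), (4)) it follows automatically from the compressing-disc reading rather than from a slope-by-slope check.
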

\begin{proof}
The move (1) is simply a changing of level function. Move (2) is similar to Fig.~\ref{move_leaf:fig}-(2). Move (3) and (4) follow from Proposition \ref{disc:prop}: the non-flat vertex gives a block $P^2\times S^1$ or $(A,2)$, which we see as a link complement from Table \ref{pieces:table}. The flat vertices produce an $\infty$ Dehn filling. The result is a solid torus which yields a vertical compressing disc, so that Fig.~\ref{hv:fig}-(1) applies.

On the other hand, an $\infty$ filling on the knot winding once in the picture representing $(A,2)$ does not give a solid torus. Proposition \ref{4:consequence:prop} thus forbids Fig.~\ref{flat:fig}-(5).
\end{proof}

\begin{figure}
\begin{center}
\includegraphics[width = 2 cm]{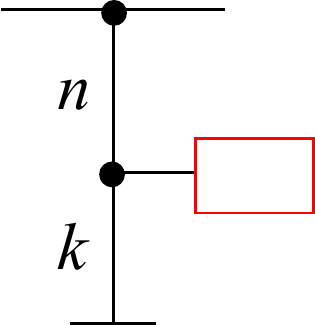}
\nota{Such a flat vertex is not nice.}
\label{not_nice:fig}
\end{center}
\end{figure}

A flat vertex $v$ is \emph{nice} if it is a leaf and is not contained in a portion as in Fig.~\ref{not_nice:fig}. (In other words, $v$ is nice if it is adjacent to a 3-valent vertex $v'$ with $l(v')< l(v)$, which is not itself adjacent to another 3-valent vertex $v''$ with $l(v'')<l(v)$.)  By the following result, we may suppose that every vertex is nice.
\begin{cor} \label{flat:cor}
Let $T$ be a decorated tree with levels encoding a shadow $X$ of some block $(M,L)$. The block is obtained via assemblings from $(M_1,L_1)\sqcup (M_2,L_2)$ where $(M_1,L_1)$ is a graph manifold generated by $\calS_0$ and $(M_2,L_2)$ has a shadow $X'$ encoded via a decorated tree $T'$ with levels such that
\begin{enumerate}
\item every flat vertex of $T'$ is nice;
\item the tree $T'$ has no more vertices than $T$.
\end{enumerate}
\end{cor}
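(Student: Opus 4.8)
The plan is to deduce the corollary from Proposition~\ref{flat:prop} by an induction that uses the moves of Fig.~\ref{flat:fig}-(1,2,3,4) to push all flat vertices into nice position. Order the decorated trees with levels by the pair $(\,|V(T)|,\ n(T)\,)$ with the lexicographic order, where $|V(T)|$ is the number of vertices of $T$ and $n(T)$ is the number of its non-nice flat vertices. If $n(T)=0$ we are done immediately: take $(M_1,L_1)$ to be the empty block and $(M_2,L_2)=(M,L)$, $X'=X$, $T'=T$, so that condition (1) holds by hypothesis and condition (2) holds with equality. (In particular, the case in which $M$ is itself a graph manifold generated by $\calS_0$ is \emph{not} special-cased here: if such a $T$ still has non-nice flat vertices it is handled by the inductive step below, and the recursion terminates with $(M_2,L_2)$ the empty block.)

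Now assume $T$ has a non-nice flat vertex $v$, with unique neighbour $v'$. By Proposition~\ref{flat:prop}, $T$ cannot contain a portion as in Fig.~\ref{flat:fig}-(5); this is where Proposition~\ref{4:consequence:prop} enters, as it forbids the configuration in which the $\infty$-filling produced by a flat vertex on the ``winding once'' edge of a vertex of type \includegraphics[width = 0.6 cm]{5.pdf} fails to yield a solid torus. Inspecting the finitely many local configurations that a flat leaf and its neighbour can have inside a decorated tree with levels whose vertices are of the types in Fig.~\ref{vertices2:fig}, every non-nice $v$ is then contained in a portion of type (1), (2), (3) or (4) on the left of Fig.~\ref{flat:fig}, and we apply the corresponding move. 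If the move is (1) or (2), Proposition~\ref{flat:prop} produces a decorated tree with levels $T''$ encoding a shadow of the same block $(M,L)$, with $|V(T'')|=|V(T)|$ and $n(T'')<n(T)$ — the move is arranged so that $v$ becomes nice without disturbing the niceness of any other flat vertex — and we conclude by the inductive hypothesis applied to $T''$. If the move is (3) or (4), Proposition~\ref{flat:prop} exhibits $(M,L)$ as an assembling of the block encoded by the resulting tree, and, since that move cuts an edge of $T$, the new tree splits as a disjoint union $T_1\sqcup T_2$. The component $T_1$ carries only $v'$ (of type \includegraphics[width = 0.6 cm]{4.pdf} or \includegraphics[width = 0.6 cm]{5.pdf}) with flat leaves on its remaining edges, and reading off Fig.~\ref{blocks:fig} we recognise the block it encodes as a graph manifold generated by $\calS_0$ (with boundary); the component $T_2$ encodes a block $(M_2',L_2')$ with $|V(T_2)|<|V(T)|$. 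Applying the inductive hypothesis to $T_2$ writes $(M_2',L_2')$ as an assembling of a graph manifold $(M_1'',L_1'')$ generated by $\calS_0$ and a block whose shadow is encoded by a decorated tree with levels all of whose flat vertices are nice and with vertex count $\leqslant|V(T_2)|<|V(T)|$. Using associativity of assembling and its compatibility with disjoint union, we set $(M_1,L_1)$ to be the disjoint union of $(M_1'',L_1'')$ with the graph manifold encoded by $T_1$, and let $(M_2,L_2)$ be the remaining block; this gives the statement, with the final tree having no more vertices than $T$.

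The main obstacle, as throughout this part of the paper, is the case analysis that opens the inductive step: one must verify that, once Fig.~\ref{flat:fig}-(5) is excluded via Proposition~\ref{flat:prop}, every non-nice flat vertex genuinely lies in one of the four displayed portions, and that moves (1) and (2) strictly decrease $n(T)$ at fixed vertex count (so that the lexicographic complexity really drops in every case). Once these combinatorial points are settled, the remainder is the routine bookkeeping with assemblings carried out above.
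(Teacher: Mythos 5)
Your proposal is correct and takes essentially the same approach as the paper: repeatedly apply the moves of Fig.~\ref{flat:fig}-(1,2,3,4) guaranteed by Proposition~\ref{flat:prop}, harvesting the de-assembled graph-manifold pieces into $(M_1,L_1)$, until all flat vertices are nice. The only real difference is organizational: the paper first uses moves (1,3,4) to force every flat vertex to be a leaf and then uses move (2) to kill the Fig.~\ref{not_nice:fig} configurations, whereas you package everything as one well-founded induction on the lexicographic pair $(|V(T)|,n(T))$; this is slightly more explicit about termination, and it cleanly separates moves (1,2) (which preserve the block and only adjust levels/positions) from moves (3,4) (which genuinely cut the tree and de-assemble) — a distinction the paper's terse ``each such move de-assembles a 4-dimensional graph manifold'' glosses over, since move (1) is merely a relabelling of levels.
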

\begin{proof}
We may suppose that every flat vertex is a leaf by using the moves in Fig.~\ref{flat:fig}-(1,3,4). Each such move de-assembles a 4-dimensional graph manifold. We then eliminate the configurations as in Fig.~\ref{not_nice:fig} using Fig.~\ref{flat:fig}-(2).
\end{proof}

\subsection{Fruits}
Take a decorated tree $T$ with levels and a 3-valent vertex $v$. If $S_v$ is as in Fig.~\ref{fruit:fig}-(1), we call it a \emph{fruit}. The vertex $v$ is the \emph{base} of the fruit. Note that a fruit encodes a projective plane in the shadow.

\begin{figure}
\begin{center}
\includegraphics[width = 4 cm]{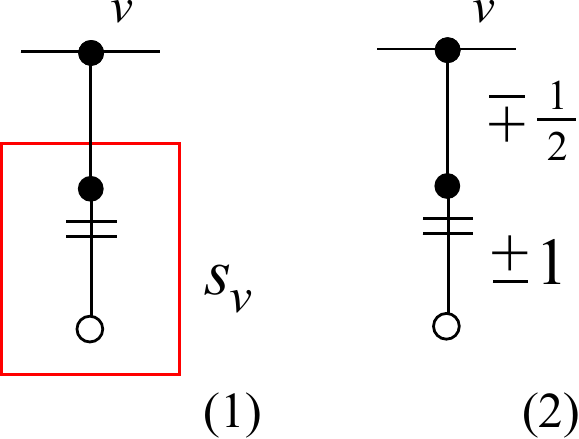}
\nota{A fruit based at some vertex $v$ (1). It must be decorated as in (2): there are two possibilities (signs do not match).}
\label{fruit:fig}
\end{center}
\end{figure}

\begin{figure}
\begin{center}
\includegraphics[width = 5.5 cm]{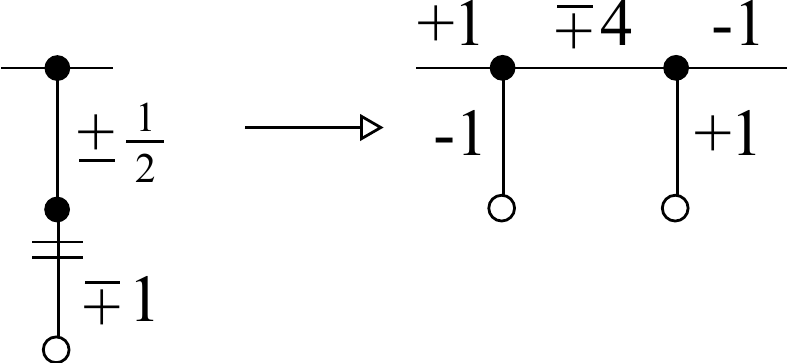}
\nota{These moves transform a decorated tree with levels into another.}
\label{move_fruit:fig}
\end{center}
\end{figure}

\begin{prop} \label{fruit:prop}
Let $T$ be a decorated tree with levels. A fruit is decorated as in Fig.~\ref{fruit:fig}-(2). The move in Fig.~\ref{move_fruit:fig} transforms $T$ into another tree $T'$ with levels.
\end{prop}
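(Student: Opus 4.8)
The plan is to prove the two assertions separately, in both cases by \emph{cutting off} the fruit and reducing to an explicit model computation about the boundary of a thickening.

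\emph{The decoration.} A fruit $S_v$ is based at a $3$-valent vertex $v$ of type \includegraphics[width = 0.6 cm]{4.pdf}, so Proposition \ref{cut:prop} applies directly (equivalently, one first applies the relevant move in Fig.~\ref{elimina5:fig} to the $2$-valent vertex inside the fruit and then Proposition \ref{cut:prop}, exactly as in Fig.~\ref{stacca_generalized:fig}). The cut produces a decorated tree $T'$ with levels whose shadow $X'$ satisfies $\partial N(X') = S^3$. The point is that $X'$ is now an \emph{explicit} very simple polyhedron: it is the fruit capped off by a disc, that is, essentially a projective plane (arising from the $2$-valent vertex and its fat neighbour, as in the remark preceding the statement) together with a trivial disc, with the gleams appearing inside the fruit as its only free parameters. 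By Table \ref{pieces:table} the manifold $\partial N(X')$ decomposes into two solid tori and a copy of $(A,2)$, glued according to these gleams; writing the gluing out explicitly, $\partial N(X')$ is a Seifert fibered space whose exceptional fibers and Euler number are determined by the gleams, and a routine Dehn-filling/Seifert-invariant computation, of the kind already used in the proofs of Propositions \ref{no:2:prop}, \ref{no:36:prop} and \ref{leaf:prop}, shows that it is $S^3$ only for the two admissible decorations drawn in Fig.~\ref{fruit:fig}-(2), and that the two sign choices are incompatible. (In particular the edge towards the fat vertex must carry a gleam $\pm1$, exactly as in Proposition \ref{leaf:prop}.)

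\emph{The move.} I would realize the transformation in Fig.~\ref{move_fruit:fig} as a finite composition of moves already established. Up to the innocuous graph moves of Fig.~\ref{mosse_innocue:fig}, a fruit is a projective plane attached to the rest of $X$ along a section of a M\"obius fibration, so moves (5) and (6) of Fig.~\ref{thickening:fig} apply and turn it into the configuration on the right of Fig.~\ref{move_fruit:fig}; the bookkeeping on the gleams closes up precisely because of the decoration just determined. By Proposition \ref{mosse:prop} this does not change the thickening, hence $X'$ is again a shadow of a block, so it only remains to check that $T'$ carries a level function. Conditions (1)--(3) of Section \ref{leveled:subsection} are immediate from the picture (the move is local and tree-like, so no cycle is created and the level structure away from the fruit is untouched), and condition (4) is verified by observing that for the affected vertices $v'$ of $T'$ the submanifold $M_{S_{v'}}$ is obtained from the old $M_{S_v}$ by the same elementary Seifert operations used in the proof of Proposition \ref{4:consequence:prop}, so it is still a solid torus filled along a section of the fibration.

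\emph{Main obstacle.} The delicate part is the Seifert computation underlying the decoration statement: one must keep careful track of the parities of the regions (integer versus half-odd gleams), of the orientation conventions fixing the signs of the gleams (Remarks \ref{orientation:rem} and \ref{clockwise:rem}), and of which boundary slopes count as ``vertical'' and ``horizontal'' in each piece of Table \ref{pieces:table}, in order to conclude that exactly two, sign-incompatible, decorations yield $\partial N(X') = S^3$. The same care is needed, but is more routine, in checking that condition (4) of the level function survives the move.
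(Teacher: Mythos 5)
Your treatment of the decoration is a genuinely different route from the paper's, and it is plausible in outline: you propose to cut the fruit off at its $3$-valent base via Proposition~\ref{cut:prop} and then read off the constraints on the two gleams from the Seifert structure of $\partial N(X')$, which by Table~\ref{pieces:table} is a union of two solid tori and one $(A,2)$ piece. The paper instead makes two successive cuts (first the fat leaf off the $2$-valent vertex, giving $a=\pm 1$ as in Proposition~\ref{leaf:prop}; then a second cut after some moves, yielding three discs with gleams $b,-1,-3$), turns the result into a plumbing line, and invokes Lemma~\ref{plumbing:lemma} to force $b=1$. Your single cut would have to produce \emph{both} constraints at once, which means keeping very careful track of the meridian slope of $M_{S_v}$ against the section of the fibration of $M_v$, not just verifying $\partial N(X')\isom S^3$ (that statement is an automatic \emph{consequence} of the level axioms, not an extra constraint); this is doable but not the routine computation you suggest, and the paper's reduction to plumbing lines is precisely designed to bypass it.

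There is, however, a genuine gap in your treatment of the move. You realize Fig.~\ref{move_fruit:fig} by moves (5) and (6) of Fig.~\ref{thickening:fig} and then assert ``By Proposition~\ref{mosse:prop} this does not change the thickening, hence $X'$ is again a shadow of a block.'' But Proposition~\ref{mosse:prop} is exactly the statement that those moves preserve the block, whereas the move in Fig.~\ref{move_fruit:fig} \emph{does not}: the remark immediately following the proposition in the paper says explicitly that ``$T$ and $T'$ represent non-homeomorphic blocks in general.'' Indeed Fig.~\ref{move_fruit:fig} replaces a projective plane (the fruit, thickening to a twisted $D^2$-bundle over $\matRP^2$) by a disc of gleam $\pm 4$, which corresponds to a surgery on the underlying $4$-manifold, not to an isotopy of shadows. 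The paper's construction (Fig.~\ref{fruit_proof:fig}-(2) and (4)) accordingly uses the block-changing moves of Fig.~\ref{elimina5:fig}, Fig.~\ref{sum_ass:fig}-(1), and Fig.~\ref{move_leaf:fig} before the shadow move Fig.~\ref{thickening:fig}-(2). A composition consisting only of block-preserving moves, as you propose, cannot realize Fig.~\ref{move_fruit:fig}; you must incorporate at least one surgery-type move, and once you do, the claim that ``it only remains to check $T'$ carries a level function'' has to be re-justified, since you are no longer dealing with the same $(M,L)$.
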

\begin{proof}
Take a fruit, decorated with some gleams $a$ and $b-1/2$ as in Fig.~\ref{fruit_proof:fig}-(1), with $a,b$ both integers. As in the proof of Proposition \ref{leaf:prop}, by cutting the lowest vertex as in Fig.~\ref{stacca_generalized:fig}-(1) we find that we must have $a=\pm 1$. Up to switching both gleams we may set $a =-1$. The moves in Fig.~\ref{fruit_proof:fig}-(2) produce a tree $T'$ with levels encoding some shadow $X'$. 

We can cut $X'$ as in Fig.~\ref{fruit_proof:fig}-(3). The result is a shadow $X''$ with $\partial N(X'')=S^3$ by Proposition \ref{cut:prop}. It is made of three discs with gleams $b$, $-1$, $-3$. This may be further transformed into two spheres intersecting transversely in a point, with Euler numbers $b-1$ and $-4$ using Fig.~\ref{perturb:fig}. Since $\partial N(X'')=S^3$, one such sphere must have Euler number zero, and hence $b=1$ as required. (See Lemma \ref{plumbing:lemma}.)

Finally, the move in Fig.~\ref{move_fruit:fig} is constructed in Fig.~\ref{fruit_proof:fig}-(4).
\end{proof}
As above, recall that $T$ and $T'$ represent non-homeomorphic blocks in general.

\begin{figure}
\begin{center}
\includegraphics[width = 12.5 cm]{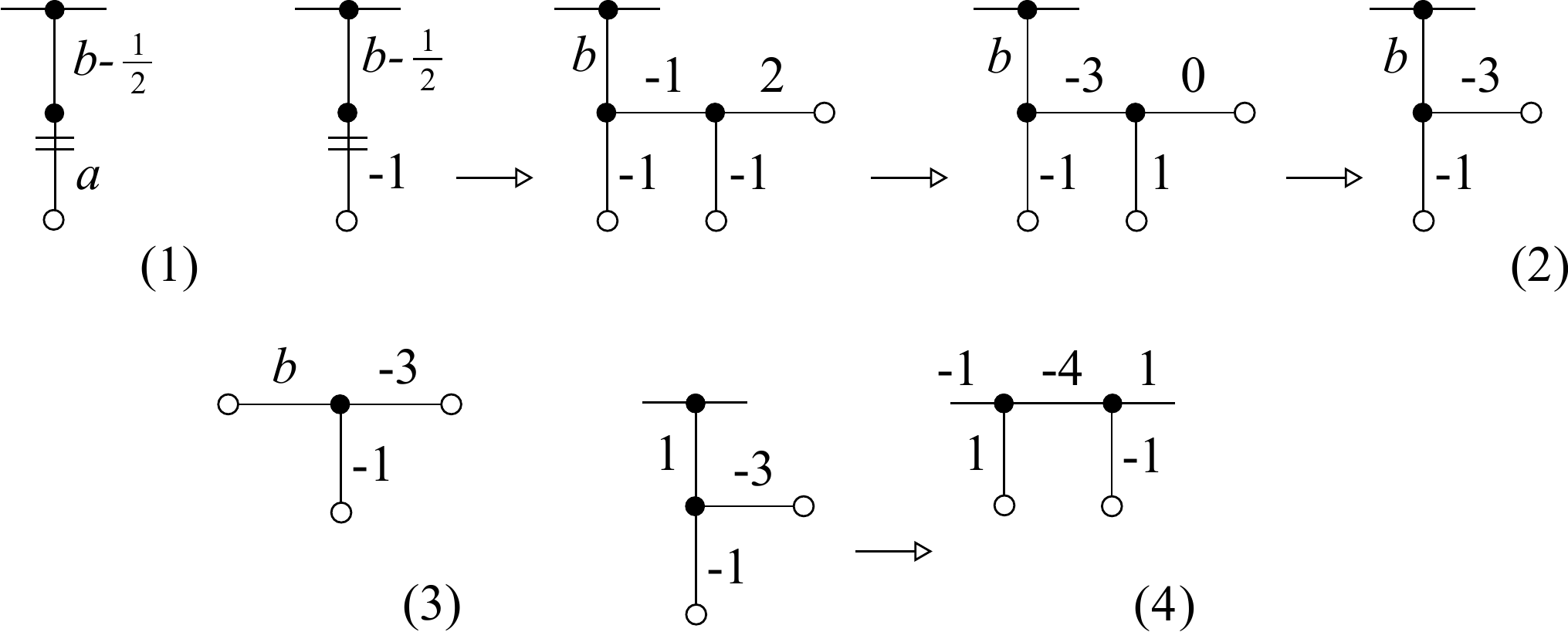}
\nota{Proof of Proposition \ref{fruit:prop}. A fruit (1). We use the moves in Figg.~\ref{elimina5:fig}, \ref{move_leaf:fig}, \ref{sum_ass:fig}-(1) in (2). We cut it. The resulting shadow $X''$ has $\partial N(X'')=S^3$: therefore $b=1$ (3). We use the move in Fig.~\ref{thickening:fig}-(2) to conclude (4).}
\label{fruit_proof:fig}
\end{center}
\end{figure}

\subsection{Branches}
Take a decorated tree $T$ with levels and a vertex $v$ of type \includegraphics[width = 0.6 cm]{4.pdf} or \includegraphics[width = 0.6 cm]{5.pdf}. If $S_v$ is not contained in a leaf or in a fruit we call it a \emph{branch}. See an example in Fig.~\ref{leveled_generalized:fig}.

\begin{figure}
\begin{center}
\includegraphics[width = 8 cm]{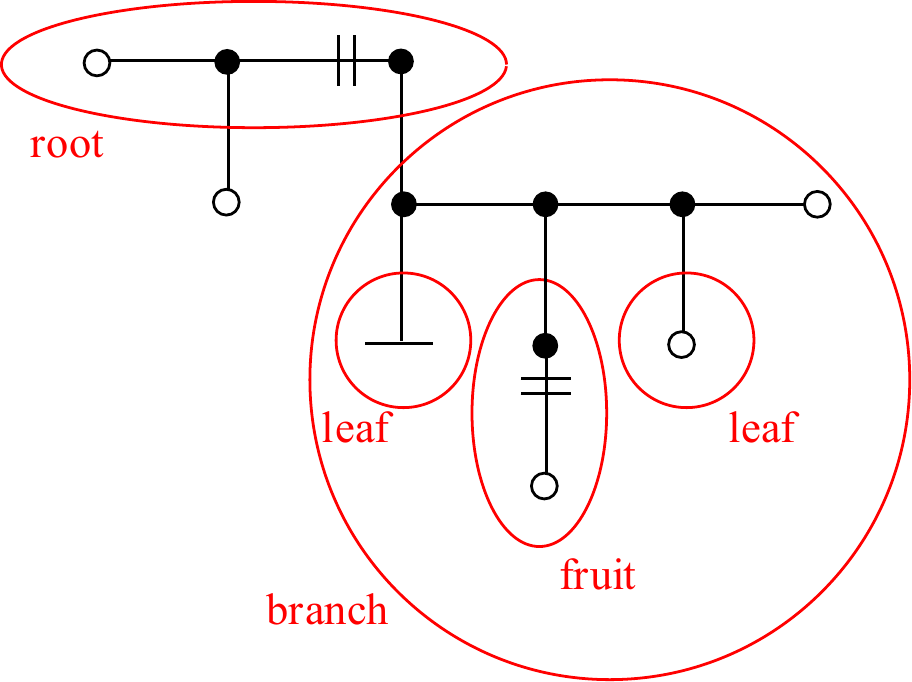}
\nota{Branches, leaves, and fruits on a tree with levels.}
\label{leveled_generalized:fig}
\end{center}
\end{figure}

Depending on its base $v$, there are three types of branches, shown in Fig.~\ref{Sv:fig}. We will prove Theorem \ref{main:teo} inductively by simplifying branches, starting from the ones of highest level. It is relatively easy to simplify a branch of type (1) or (3) from Fig.~\ref{Sv:fig}. Unfortunately, more work needs to be done to simplify branches of type (2). Therefore we call a branch as in Fig.~\ref{Sv:fig}-(2) a \emph{bad branch}. We now analyze bad branches.

Let $T$ be a decorated tree with levels defining a shadow $X$.
Let a vertex $v$ be the base of a bad branch. It defines a block $M_v \isom (A,2)$ in the decomposition of $\partial N(X)$. The branch $S_v$ in turn defines a solid torus $M_{S_v}$ whose meridian is attached to a boundary component $T$ of $(A,2)$. The torus $T$ has a preferred homology basis: the meridian $\mu$ is the fiber $\pi^{-1} (x)$ of a point in $X$ along the natural projection $\pi:\partial N(X) \to X$. The longitude $\lambda$ is the fiber of the Seifert fibration $(A,2)$.

The meridian of the solid torus $M_{S_v}$ is attached along a curve $\mu + q\lambda$. We call the integer $q$ the \emph{torsion} of the bad branch. We show some examples (omitting the proof).

\begin{example}
Two bad branches with torsion $q$ are shown in Fig.~\ref{bad_branch_examples:fig}-(1,2).
\end{example}

\begin{figure}
\begin{center}
\includegraphics[width = 9 cm]{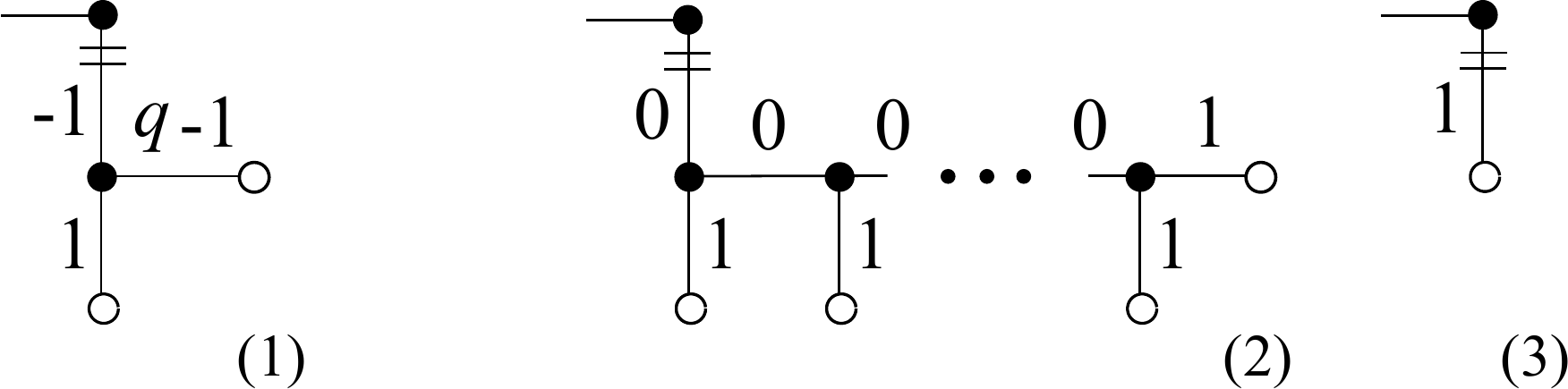}
\nota{Two bad branches with torsion $q$. The branch (2) contains $q\geqslant 1$ leaves: when $q$ is negative, simply reverse the signs of all the gleams (the branch is not defined when $q=0$). 
Such a branch describes a particular ``tower'', using the terminology of \cite{CoThu}. 
When $q=1$ the branch (2) is as in (3).}
\label{bad_branch_examples:fig}
\end{center}
\end{figure}

\begin{figure}
\begin{center}
\includegraphics[width = 7 cm]{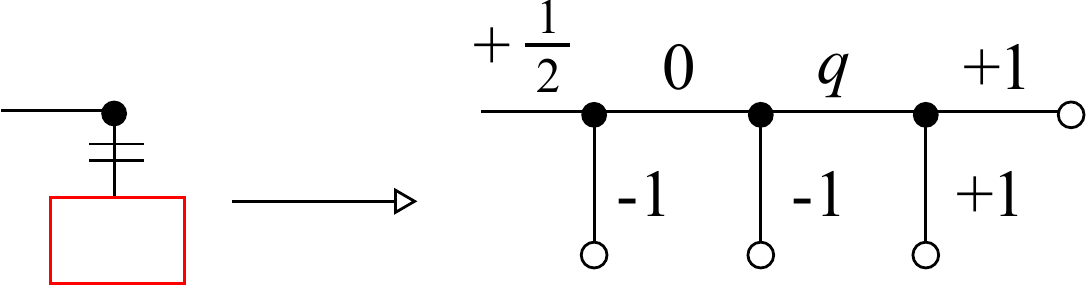}
\nota{This move kills a bad branch and transforms a decorated tree with levels into another decorated tree with levels. }
\label{bad_branch_move1:fig}
\end{center}
\end{figure}

\begin{prop} \label{bad:branch:prop}
Let $T$ be a decorated tree with levels containing a bad branch with torsion $q$. The move in Fig.~\ref{bad_branch_move1:fig} transforms $T$ into another decorated tree $T'$ with levels.
\end{prop}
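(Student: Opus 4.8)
The plan is to follow the pattern of Propositions~\ref{cut:prop} and \ref{fruit:prop}. First I would check that the move of Fig.~\ref{bad_branch_move1:fig} is a legitimate transformation of shadows: the goal is to exhibit it as a finite composition of elementary moves already justified in the previous sections (those of Figs.~\ref{thickening:fig}, \ref{sum_ass:fig}, \ref{elimina5:fig}, \ref{move_leaf:fig}, and the cuts of Figs.~\ref{stacca:fig} and \ref{stacca_generalized:fig}), so that it sends a shadow $X$ of a block $(M,L)$ to a shadow $X'$ of a block $(M',L')$ obtained from $(M,L)$ by assemblings, connected sums, and fillings. Concretely, I would first bring the bad branch into one of the normal forms of Fig.~\ref{bad_branch_examples:fig} using the moves of Section~\ref{leaves:section}, then use the torsion $q$ to read off the gleam modifications produced by the move, and finally absorb the pieces that become superfluous. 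In particular $X'$ is again a shadow of a block, so $T'$ does encode a shadow of a block. That $T'$ is a \emph{tree} is immediate, since the move is local and creates no cycle.

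The main part is to equip $T'$ with a level function $l'$. Axioms (1), (2), (3) of Section~\ref{leaved:subsection} descend by assigning to each new vertex the level dictated by Fig.~\ref{bad_branch_move1:fig}; as in Propositions~\ref{cut:prop} and \ref{fruit:prop}, the new vertices sit on a line along which the level is non-decreasing away from the root, and every new vertex of type \includegraphics[width = 0.6 cm]{4.pdf} or \includegraphics[width = 0.6 cm]{5.pdf} is adjacent to exactly one vertex of strictly higher level. Axiom (4) is the delicate point. Recall that before the move $v$ is the base of a bad branch, so $M_v\isom (A,2)$ and, by axiom (4) for $T$, the branch $S_v$ gives a solid torus $M_{S_v}$ whose meridian is glued along $\mu+q\lambda$, where $\mu=\pi^{-1}(x)$ is the vertical fiber and $\lambda$ is the Seifert fiber of $(A,2)$; by Proposition~\ref{4:consequence:prop}, $M_v\cup M_{S_v}$ is a solid torus. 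The move replaces these pieces by simpler ones, and for each new vertex $v'$ of type \includegraphics[width = 0.6 cm]{4.pdf} or \includegraphics[width = 0.6 cm]{5.pdf} one must verify that $M_{S_{v'}}$ is again a solid torus whose meridian is glued to a section of the Seifert fibration of $M_{v'}$. This is done by composing the relevant Dehn fillings and tracking the induced Seifert invariants, using the standard facts on Seifert manifolds of \cite{Sei}; the integer $q$ is precisely the datum that gets absorbed into the gleams of $T'$ so that axiom (4) is restored. As a consistency check, one can cut $T'$ with the moves of Fig.~\ref{stacca_generalized:fig} and verify, exactly as in Proposition~\ref{leaf:prop}, that the resulting $\partial N(X')$ is $S^3$.

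The main obstacle is the Seifert-fibration bookkeeping behind axiom (4): following how the slopes $\mu$ and $\lambda$ on the boundary torus of $M_v$ transform under the move, and checking that the torsion $q$ is correctly converted into the new gleams so that every $M_{S_{v'}}$ is a solid torus attached along a section. Once the move is written as a composition of known moves, verifying that $T'$ is a tree and that axioms (1)--(3) hold is routine, so essentially all the content is in this one computation.
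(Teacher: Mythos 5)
You never state or use the paper's key observation: \emph{substituting a bad branch with any other bad branch having the same torsion $q$ immediately yields another decorated tree with levels}. This is essentially tautological from the definitions: axioms (1)--(3) are local to the tree and can be restored by hand on the new branch, and axiom (4) for the base vertex $v$ only constrains the slope $\mu+q\lambda$ along which the solid torus $M_{S_v}$ is glued — which is precisely the torsion, by definition. No shadow-moves are involved in this substitution, and in particular the block represented by the shadow is allowed to change arbitrarily. This matters: the proposition, unlike Propositions \ref{cut:prop} and \ref{fruit:prop}, makes \emph{no} claim about the relation between $(M,L)$ and $(M',L')$, and the paper explicitly warns immediately after that ``$T$ and $T'$ represent non-homeomorphic blocks in general.'' Your opening sentences, which set out to realize the move as a composition of assemblings, connected sums, and fillings, are therefore aiming at a statement that is both stronger than the proposition and not what the paper proves.

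The concrete gap is your plan to ``bring the bad branch into one of the normal forms of Fig.~\ref{bad_branch_examples:fig} using the moves of Section~\ref{leaves:section}.'' Those moves relate shadows of blocks connected by assemblings, connected sums and fillings, and there is no reason why an arbitrary bad branch should be reachable from the normal form of Fig.~\ref{bad_branch_examples:fig}-(1) by such moves — this is exactly why the paper \emph{substitutes} the branch rather than transforming it. Once the substitution to the explicit small branch of Fig.~\ref{bad_branch_examples:fig}-(1) is made, the paper simply applies the concrete manipulations of Fig.~\ref{bad_branch_proof:fig} (Figs.~\ref{elimina5:fig}-(1) and \ref{thickening:fig}-(2)) to reach the configuration of Fig.~\ref{bad_branch_move1:fig}; no Seifert-invariant bookkeeping is needed. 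Your instinct that axiom (4) is the crux is correct, and your remark that $q$ is the only datum that matters for the gluing is exactly the right idea — but you should turn that remark into the abstract substitution argument rather than into a Seifert-fibration computation on an arbitrary branch, which as sketched would be both harder to carry out and unnecessary.
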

\begin{proof}
If we substitute a bad branch with torsion $q$ with another bad branch having the same torsion $q$ we get a new decorated tree with levels. Here, we substitute the bad branch with the one in Fig.~\ref{bad_branch_examples:fig}-(1). Then we modify as in Fig.~\ref{bad_branch_proof:fig}.
\end{proof}

\begin{figure}
\begin{center}
\includegraphics[width = 10 cm]{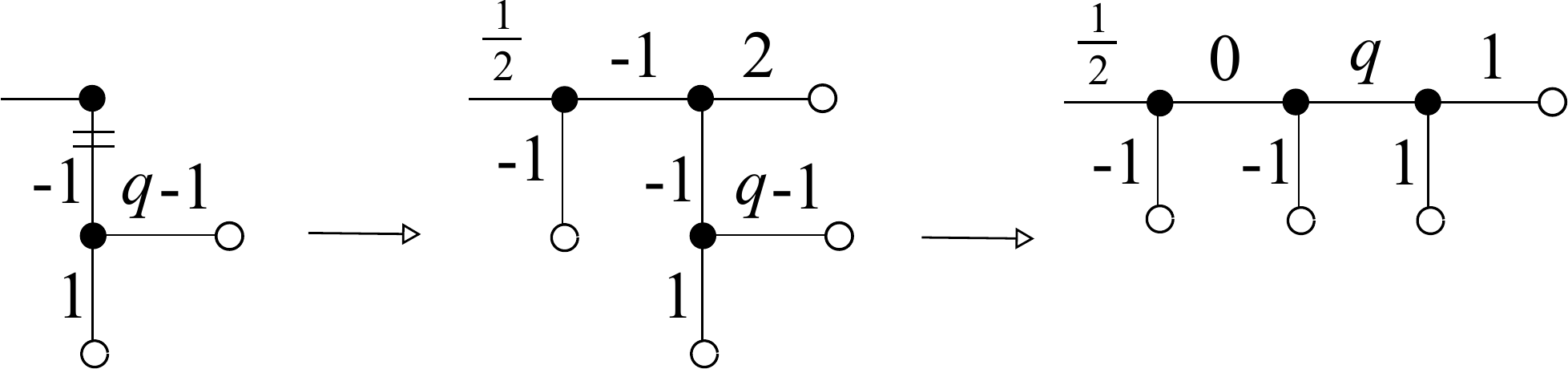}
\nota{We may substitute any bad branch with another branch having the same torsion $q$. Take the portion on the left. Then modify it by using Fig.~\ref{elimina5:fig}-(1) and Fig.~\ref{thickening:fig}-(2).}
\label{bad_branch_proof:fig}
\end{center}
\end{figure}

\begin{figure}
\begin{center}
\includegraphics[width = 8 cm]{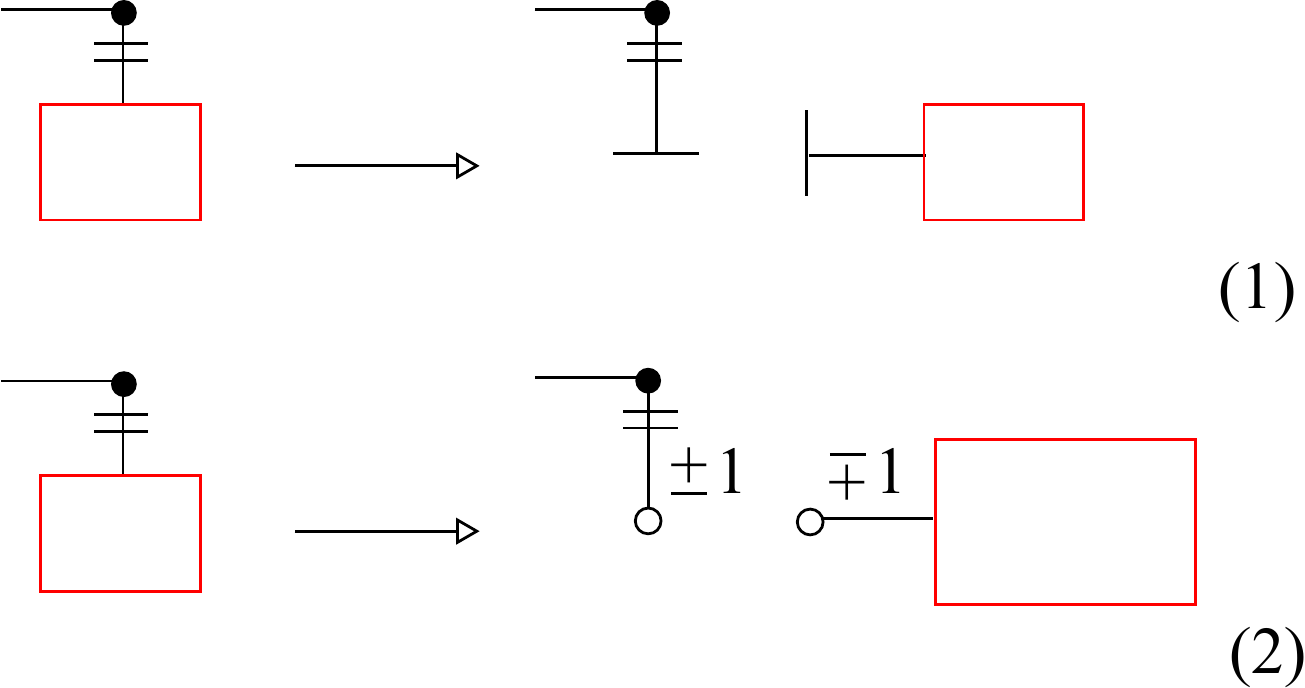}
\nota{If $q=0$, the move (1) applies. If $q=\pm 1$, the move (2) applies. The former block is an assembling (1) or a connected sum (2) of the new one.}
\label{bad_branch_move2:fig}
\end{center}
\end{figure}

More can be done if $|q|\leqslant 1$.

\begin{prop}  \label{reducible:prop}
Let $T$ be a decorated tree with levels encoding a shadow $X$ of a block $(M,L)$. Let $T$ contain a bad branch with torsion $q$. If $q=0$ (resp. $\pm 1$), the move in Fig.~\ref{bad_branch_move2:fig}-(1) (resp. (2)) produces a decorated tree $T'$ with levels encoding a shadow $X'$ of a block $(M',L')$. The block $(M,L)$ is an assembling (resp. connected sum) of $(M',L')$.
\end{prop}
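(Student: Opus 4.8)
The plan is to produce the required $T'$ by exhibiting an explicit compressing disc for a torus in the graph-manifold decomposition of $\partial N(X\cup\partial M)$ induced by $T$, and then feeding it to Proposition \ref{disc:prop}. First I would normalize: by Proposition \ref{bad:branch:prop} we may assume the bad branch is the standard model of Fig.~\ref{bad_branch_examples:fig}-(1) with the same torsion $q$, which changes neither $(M,L)$ nor the tree up to the moves already available. Write $v$ for the base of the branch, so that its piece in $\partial N(X\cup\partial M)$ is $M_v\isom(A,2)$, and let $T=\pi^{-1}(\gamma)$ be the boundary torus of $M_v$ along which the solid torus $M_{S_v}$ (a solid torus, by requirement (4) of the level function) is glued; $T$ lies over the edge joining $v$ to $S_v$.

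Next I would compute the relevant slope. By the definition of the torsion, on $T$ the preferred pair $(\mu,\lambda)$ — $\mu$ the $\pi$-fibre $\pi^{-1}(x)$, $\lambda$ the Seifert fibre of $(A,2)$ — is a homology basis, so $\mu\cdot\lambda=\pm1$, and the meridian of $M_{S_v}$ is the slope $\mu+q\lambda$. Hence the meridian disc $D$ of the solid torus $M_{S_v}$ is a compressing disc for $T$ inside $\partial N(X\cup\partial M)$, with $\partial D=\mu+q\lambda$. If $q=0$ then $\partial D=\mu$ is a $\pi$-fibre, so $D$ is vertical in the sense of Definition \ref{horizontal:defn}; if $q=\pm1$ then $(\mu+q\lambda)\cdot\mu=q(\lambda\cdot\mu)=\pm1$, so $\partial D$ meets each $\pi$-fibre once and is therefore isotopic to a section, i.e.\ $D$ is horizontal. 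Proposition \ref{disc:prop} now applies directly: with the move of Fig.~\ref{hv:fig}-(1) when $q=0$, realizing $(M,L)$ as an assembling of the resulting block $(M',L')$, and with the move of Fig.~\ref{hv:fig}-(2) when $q=\pm1$, realizing it as a connected sum (the integer $n$ in that move being the winding number of $\mu+q\lambda$ around a section).

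It then remains to translate these moves into moves on the decorated tree and to check that the output is again a decorated tree with levels. In both cases the branch $S_v$ is absorbed: the move of Fig.~\ref{hv:fig}-(1) caps the edge over $T$ and de-assembles $M_{S_v}$ off, while the move of Fig.~\ref{hv:fig}-(2) inserts the zero-gleamed disc of Fig.~\ref{sum:fig} and then splits off a connected-sum factor — in each case yielding precisely the local picture of Fig.~\ref{bad_branch_move2:fig}-(1), resp.~(2), and creating no new vertices of the polyhedron. For the level structure one keeps the levels of all surviving vertices, lowers them on what was the subtree $S_v$, assigns the obvious levels to the handful of new flat vertices, and verifies requirements (1)–(4); this is routine, since away from $v$ the decomposition of $\partial N(X')$ is unchanged, at $v$ the vertex has either disappeared or become flat, and a disconnection of $T'$ (which can occur in the assembling case) is dealt with componentwise.

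The step I expect to be the main obstacle is not the topology — that is immediate from Proposition \ref{disc:prop} once the slope $\mu+q\lambda$ has been identified — but this last bookkeeping: confirming that the generic moves of Fig.~\ref{hv:fig}, applied at the edge over $T$, collapse to exactly the local pictures of Fig.~\ref{bad_branch_move2:fig}, and checking that level requirement (4) (every remaining vertex of type \includegraphics[width = 0.6 cm]{4.pdf} or \includegraphics[width = 0.6 cm]{5.pdf} still has $M_{S_w}$ a solid torus attached along a section of $M_w$) survives the surgery on the tree.
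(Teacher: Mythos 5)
Your core argument is exactly the paper's: the meridian disc of the solid torus $M_{S_v}$ is a compressing disc for the torus $T$ over the edge joining $v$ to $S_v$, its slope is $\mu+q\lambda$ by the very definition of the torsion, and this disc is vertical when $q=0$ and horizontal when $q=\pm1$, after which Proposition~\ref{disc:prop} and the moves of Fig.~\ref{hv:fig} give the local picture of Fig.~\ref{bad_branch_move2:fig}. The paper's proof is precisely this three-line observation.

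One step in your write-up, however, is both unnecessary and wrong. You begin by ``normalizing'' the bad branch to the model of Fig.~\ref{bad_branch_examples:fig}-(1), invoking Proposition~\ref{bad:branch:prop}, and assert that this ``changes neither $(M,L)$ nor the tree up to the moves already available.'' That is not what Proposition~\ref{bad:branch:prop} says, and it is false: substituting one bad branch for another with the same torsion replaces a whole subpolyhedron of $X$, so it changes the thickening $N(X)$ and hence the block. The paper is explicit about the analogous Proposition~\ref{fruit:prop}: ``recall that $T$ and $T'$ represent non-homeomorphic blocks in general.'' The torsion only controls how the solid torus $M_{S_v}$ is attached inside the 3-manifold $\partial N(X\cup\partial M)$; it does not determine the 4-dimensional thickening over the branch. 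Fortunately, your normalization buys you nothing: the slope computation you perform next uses only the intrinsic definition of $q$ and applies verbatim to an arbitrary bad branch, so you should simply delete the first step and argue directly on $T$ as given.
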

\begin{proof}
If $q=0$, the meridian of the solid torus is vertical. If $q= \pm 1$, it is horizontal. This gives a vertical or horizontal compressing disc and Lemma \ref{compressing:lemma} applies. The moves in Fig.~\ref{hv:fig} may be represented here as in Fig.~\ref{bad_branch_move2:fig}.
\end{proof}

Let us say that a bad branch is \emph{reducible} if one of the following holds:
\begin{itemize}
\item $q=0$ and the branch does not consist of a single flat vertex;
\item $q=\pm 1$ and the branch does not consist of a single fat vertex.
\end{itemize}
A reducible bad branch can indeed be simplified thanks to Proposition \ref{reducible:prop}. We will thus focus on non-reducible bad branches.

\section{Plumbing lines} \label{plumbing:section}
We will use some techniques that were inspired by a paper of Neumann and Weintraub~\cite{Neu}. In that paper, the authors classified the closed 4-manifolds that may be obtained by adding a 4-handle to a plumbing of spheres. What we do here is in fact a generalization of that result, since a plumbing of sphere becomes a simple polyhedron without vertices after perturbing the double points as in Fig.~\ref{perturb:fig}. Our generalization is twofold: we consider any kind of simple polyhedron without vertices, and we also admit 3-handles.

Recall that a \emph{plumbing} of spheres in a $4$-manifold is a subspace
consisting of some embedded oriented (locally flat) $2$-spheres with 
transverse intersections. Its regular neighborhood is encoded by the
\emph{plumbing graph}, having a vertex for each sphere, decorated with
the Euler number of its normal bundle (\emph{i.e.}~its algebraic self-intersection), and an edge for each intersection, decorated with
its sign. In particular, in a \emph{plumbing line} as in
Fig.~\ref{plumbing_line:fig} we can orient all spheres in order to get
positive intersections, so that the plumbing is determined by the sequence of Euler
numbers $(e_1,\ldots,e_n)$. The \emph{boundary} of the plumbing is the
$3$-dimensional boundary of its regular neighborhood. 

\begin{figure}
\begin{center}
\resizebox{6 cm}{!}{\input{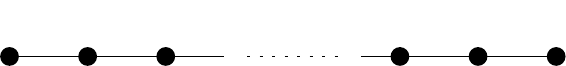_t}}
\nota{A plumbing line.}
\label{plumbing_line:fig}
\end{center}
\end{figure}

\begin{lemma}[Neumann-Weintraub, \cite{Neu}]\label{easy:plumbing:lemma}
Let $(e_1,\ldots,e_n)$ be a plumbing line, whose boundary is homeomorphic to $S^3$. We have $|e_i|\leqslant 1$ for at least one value of $i$.
\end{lemma}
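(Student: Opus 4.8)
The plan is to argue by contradiction: assume $|e_i|\geqslant 2$ for every $i$, and deduce that the boundary of the plumbing line has nonzero first homology, so that it cannot be $S^3$.

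\textbf{Setup.} Let $W$ be the regular neighbourhood of the plumbing line. It is simply connected, and in the basis of $H_2(W;\matZ)\cong\matZ^n$ given by the $n$ sphere classes its intersection form is the tridiagonal matrix $A=A_n$ with diagonal entries $e_1,\dots,e_n$ and all super- and subdiagonal entries equal to $1$ (this is just the meaning of a plumbing line). Poincar\'e--Lefschetz duality and the long exact sequence of the pair $(W,\partial W)$ — using $H_1(W)=0$ — identify the composition $H_2(W)\to H_2(W,\partial W)\cong H^2(W)\cong H_2(W)^{\ast}$ with $A$, whence $H_1(\partial W;\matZ)\cong \mathrm{coker}(A)$. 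So it is enough to show $|\det A_n|\geqslant 2$: then $H_1(\partial W)$ is a nontrivial finite group and $\partial W\not\cong S^3$.

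\textbf{The determinant estimate.} Expanding $\det A_n$ along the last row gives the recursion $\det A_k=e_k\det A_{k-1}-\det A_{k-2}$, with $\det A_0=1$ and $\det A_1=e_1$. Put $d_k=|\det A_k|$. By the triangle inequality and $|e_k|\geqslant 2$ we get $d_k\geqslant 2d_{k-1}-d_{k-2}$. Since $d_0=1$ and $d_1=|e_1|\geqslant 2$, a one-line induction shows $d_k>d_{k-1}$ for all $k\geqslant 1$: indeed if $d_{k-1}>d_{k-2}$ then $d_k\geqslant d_{k-1}+(d_{k-1}-d_{k-2})>d_{k-1}$, using that these are integers. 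Hence $d_n\geqslant d_1\geqslant 2$, completing the argument.

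The only step that really needs care is the identification $H_1(\partial W)\cong\mathrm{coker}(A)$; it is standard, but I would state it cleanly since it is where all the input ($\partial W\cong S^3$ versus a plumbing description) enters. The determinant inequality is then immediate. One could equivalently phrase the whole proof via Hirzebruch--Jung continued fractions — identifying $\partial W$ with a lens space $L(p,q)$ with $p/q=e_1-1/(e_2-\cdots-1/e_n)$ and showing $|p|\geqslant 2$ when all $|e_i|\geqslant 2$ — but the homological version above is shorter and avoids orientation and continued-fraction sign conventions.
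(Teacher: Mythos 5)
Your argument is correct, and it is essentially the same approach the paper takes: the paper cites Neumann--Weintraub for this easy lemma, but its own proof of the stronger Lemma \ref{plumbing:lemma} opens with exactly the observation that the boundary has (cyclic) fundamental group of order $|\det A|$ and then runs the same tridiagonal recursion $f(e_1,\ldots,e_k)=e_kf(e_1,\ldots,e_{k-1})-f(e_1,\ldots,e_{k-2})$, handling the case $|e_i|\geqslant 2$ for all $i$ by precisely your strictly-increasing induction. Your phrasing in terms of $H_1(\partial W)\cong\mathrm{coker}(A)$ via Poincar\'e--Lefschetz is a minor cosmetic variant of the paper's use of $\pi_1(\partial W)$; for the boundary of a plumbing line these coincide, and your version has the small advantage of not needing to know the boundary is a lens space.
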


We need here the following stronger version of Lemma \ref{easy:plumbing:lemma}. If $(e_1,\ldots,e_n)$ is a plumbing line, note that $(e_n,\ldots,e_1)$ and $(-e_1,\ldots,-e_n)$ are plumbing lines defining the same unoriented 4-manifold.

\begin{lemma}\label{plumbing:lemma}
Let $(e_1,\ldots,e_n)$ be a plumbing line, whose boundary is homeomorphic to either $S^3$ or $S^2\times S^1$. Up to reversing the sequence and/or changing all signs, one of the following holds.
\begin{itemize}
\item $e_1 = 0$,
\item $e_1=1$ and $n=1$,
\item $e_1=1$ and $e_2\in \{0,1,2,3\}$,
\item $e_i=0$ for some $i\not\in \{1,n\}$ and $e_{i-1}e_{i+1}\leqslant 0$,
\item $e_i= 1$ for some $i\not\in \{1,n\}$ and $e_{i-1} \in\{0,1,2,3\}, e_{i+1}\geqslant 0$.
\end{itemize}
\end{lemma}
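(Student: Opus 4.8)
The strategy is to extract explicit arithmetic information from the hypothesis that the boundary $3$-manifold is $S^3$ or $S^2\times S^1$, using the standard continued-fraction description of the boundary of a plumbing line. Recall that the boundary of the plumbing line $(e_1,\dots,e_n)$ is the $3$-manifold obtained by surgery on a chain link with framings $e_1,\dots,e_n$; equivalently its first homology is presented by the tridiagonal matrix $A=A(e_1,\dots,e_n)$ with diagonal $(e_1,\dots,e_n)$ and $\pm1$ just off the diagonal. The boundary is $S^3$ exactly when $|\det A|=1$ (together with $A$ being, over $\matZ$, equivalent to the identity — but for chain links $|\det A|=1$ already forces $S^3$), and it is $S^2\times S^1$ exactly when $A$ presents $\matZ$, i.e.\ $\det A=0$ and the $1$-eigenspace issue is controlled; in both cases $|\det A|\leqslant 1$. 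The first step is therefore to record the recursion $\det A(e_1,\dots,e_n)=e_1\det A(e_2,\dots,e_n)-\det A(e_3,\dots,e_n)$ and the analogous fact that each principal ``tail'' minor $D_k:=\det A(e_k,\dots,e_n)$ satisfies $|D_k|$-type bounds coming from $S^3/S^2\times S^1$ being Heegaard genus $\leqslant 1$.

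**Key steps.** First, I reduce to Lemma~\ref{easy:plumbing:lemma}: there is some index $j$ with $|e_j|\leqslant 1$. Up to reversing the sequence and changing all signs I may push this ``small'' entry as far to the left as possible, and I may assume $e_j\in\{0,1\}$ (if $e_j=-1$ flip all signs). Now split into cases. \emph{Case $e_1\in\{0,1\}$:} if $e_1=0$ we are in the first bullet; if $e_1=1$ and $n=1$ the second bullet. If $e_1=1$ and $n\geqslant2$, I must show $e_2\in\{0,1,2,3\}$ — this is where the bulk of the arithmetic lives. \emph{Case the smallest-index small entry is an interior index $i\notin\{1,n\}$:} here by minimality $e_1,\dots,e_{i-1}$ all have $|e_\ell|\geqslant2$, and I need to pin down $e_{i-1}$ and the sign of $e_{i+1}$. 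In both the ``$e_1=1$'' and the ``interior'' subcases the engine is the same: a plumbing line $(2,2,\dots,2,\,?\,)$ of $k$ twos has boundary with $H_1$ of order $k+1$ (these are the $A_k$ lens-space pieces), so a long run of $2$'s contributes a large homological obstruction; the only way $|\det A|\leqslant1$ can survive is if the entry adjacent to the $0$ or $1$ is itself $0,1,2,3$. Concretely, I expand the determinant by peeling the small entry: writing $e_i=0$, $\det A=-D_{i-1}'\cdot D_{i+1}''$ where $D_{i-1}'=\det A(e_1,\dots,e_{i-1})$ and $D_{i+1}''=\det A(e_{i+1},\dots,e_n)$; since all of $e_1,\dots,e_{i-1}$ have absolute value $\geqslant2$ one shows $|D_{i-1}'|\geqslant 2$ unless $i=1$, forcing $|D_{i+1}''|=0$ hence a stronger constraint — and tracking signs of the continuant gives $e_{i-1}e_{i+1}\leqslant0$. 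For $e_i=1$ I substitute $e_i=1$, do a ``blow-down''-type row/column operation (subtract to clear the off-diagonal $\pm1$'s), which replaces $(e_{i-1},1,e_{i+1})$ by $(e_{i-1}-1,\,e_{i+1}-1)$ shortening the line; induction on $n$ then applies, and unwinding the reduction yields exactly the stated list $e_{i-1}\in\{0,1,2,3\}$, $e_{i+1}\geqslant0$.

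**Main obstacle.** The hard part is the sign bookkeeping in the interior cases — i.e.\ proving $e_{i-1}e_{i+1}\leqslant0$ when $e_i=0$, and $e_{i-1}\in\{0,1,2,3\}$ with $e_{i+1}\geqslant 0$ (not merely $|e_{i-1}|\leqslant3$) when $e_i=1$. A pure determinant bound gives $|e_{i-1}|\leqslant3$ but not the sign; to get the sign one must use that the boundary is $S^3$ or $S^2\times S^1$ and not just some homology sphere, which rules out, e.g., the $(2,0,2)$ or $(2,1,2)$ configurations whose boundaries are genuine Seifert pieces with nontrivial JSJ (compare the argument at the end of Section~\ref{trees:section}). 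So I would argue: if the sign condition failed, the plumbing line after a blow-down would contain a sub-line realizing a Seifert-fibered piece over an orbifold of non-positive Euler characteristic, or a connected sum with a lens space $L(p,q)$, $p\geqslant2$, inside $S^3$ or $S^2\times S^1$ — impossible since those manifolds are prime with trivial or cyclic $H_1$ and no such incompressible tori. This geometric input, rather than a calculation, is what forces the inequalities in the last two bullets. The remaining verifications — that after the blow-down reductions the continued fraction of $S^3$ or $S^2\times S^1$ indeed collapses, and that reversing/sign-flipping covers all normalizations — are routine and I would only sketch them.
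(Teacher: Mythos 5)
Your overall plan (reduce to determinants of tridiagonal matrices, use the blow\-/down recursion, induct on $n$) is the same general strategy as the paper, but the paragraph you call the ``main obstacle'' is where the proposal goes wrong, and it's the load\-/bearing step. You assert that a pure determinant bound cannot produce the sign constraints in the last two bullets and that one therefore needs geometric input (incompressible tori, JSJ, primeness) to rule out configurations such as $(2,0,2)$ and $(2,1,2)$. Both halves of this claim are false, and the proposed geometric substitute would not work even if it were needed.

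First, the boundaries of \emph{linear} plumbings are always lens spaces (including $S^3$ and $S^2\times S^1$), so there is no nontrivial JSJ and no nontrivial prime decomposition to appeal to; the only obstruction available is $|\det|$, which equals the order of $H_1$. Second, your two test examples do not behave as you say: $(2,0,2)$ has determinant $-4$, hence boundary a lens space $L(4,\ast)$, and is already excluded by the determinant alone; and $(2,1,2)$ has determinant $0$, so its boundary \emph{is} $S^2\times S^1$, and it satisfies the lemma's conclusion (fifth bullet with $i=2$, $e_{i-1}=e_{i+1}=2$) — it is not a counterexample that needs to be ``ruled out.'' Third, and most importantly, the paper shows that the sign constraints \emph{do} come out of a pure determinant argument: one negates the statement (for all four normalizations), and this negation — ``any $0$ has two neighbors of the same sign; any $\pm1$ has a neighbor of opposite sign or two neighbors of $|e_ie_j|\geqslant4$'' — is exactly the inductive hypothesis under which blow\-/downs preserve the structure and the continuant recursion forces $|f(e_1,\ldots,e_n)|\geqslant2$. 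Showing that the hypothesis survives the blow\-/downs requires a genuine case analysis (e.g.\ chains $(1,4,1,e_4,\ldots)$ with $e_4\geqslant4$ collapse to $(2,\ldots,2)$ or $(2,\ldots,2,3)$ and still satisfy it), and that case analysis is the missing content of your proof. As written, your proposal identifies the right tool but stops short of the induction that actually closes the argument, and the geometric argument you substitute for it is incorrect.
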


\begin{proof}
We prove the assertion by contradiction. Therefore we suppose that
\begin{itemize}
\item if $e_i=0$, then $i\not\in\{1,n\}$ and $e_{i-1}e_{i+1}>0$;
\item if $e_i=\pm 1$, one of the following holds:
	\begin{enumerate}
	\item there is a $j\in\{i-1, i+1\}\cap\{1,\ldots,n\}$ such that $e_ie_j<0$, or 
	\item we have $e_ie_j\geqslant 4$ for all $j\in\{i-1,i+1\}\cap \{1,\ldots,n\}$. 
	\end{enumerate}
\end{itemize}
and we conclude that the boundary of a regular neighborhood of the plumbing is neither homeomorphic to $S^3$ nor to $S^2\times S^1$.

The fundamental group of the boundary is a cyclic group, whose order
is the absolute value of the determinant of the bilinear form on
$H_2$ (the order is infinite when this value is zero). This determinant is
$$f(e_1,\ldots,e_n) = \det \left(\begin{array}{ccccc} 
e_1 & 1 & 0 & \ldots & 0 \\
1 & e_2 & 1 & \ddots & \vdots \\
0 & 1 & e_3 & \ddots & 0 \\
\vdots & \ddots & \ddots & \ddots & 1 \\
\phantom{\Big|}\!\! 0 & \ldots & 0 & 1 & e_n
\end{array}\right).$$
We have the following equalities
\begin{eqnarray}
f(\emptyset) & = & 1 \\
f(e_1) & = & e_1 \\
f(e_1,\ldots,e_n) & = & e_1f(e_2,\ldots,e_n)-f(e_3,\ldots,e_n)
\label{first:eqn} \\
f(\ldots,e_{i-1},0,e_{i+1},\ldots) & = & -f(\ldots,e_{i-1}+e_{i+1},\ldots), \\
f(0,e_2,e_3,\ldots ) & = & -f(e_3,\ldots), \\
f(\ldots,e_{i-1},1,e_{i+1},\ldots) & = & f(\ldots,e_{i-1}-1,e_{i+1}-1,\ldots), \\
f(1,e_2,\ldots) & = & f(e_2-1,\ldots), \\
f(\ldots,e_{i-1},1,1,e_{i+2},\ldots) & = & -f(\ldots,
e_{i-1}+e_{i+2}-1, \ldots). \label{last:eqn}
\end{eqnarray}
We prove now by induction on $n$ that $|f(e_1,\ldots,e_n)|\geqslant 2$. This implies that the boundary of the plumbing is neither $S^3$ nor $S^2\times S^1$.

If $n=1$, we have
$|f(e_1)|=|e_1|\geqslant 2$ by our hypothesis above. Suppose now $n>1$. If $|e_i|\geqslant 2$
for all $i$, equation (\ref{first:eqn}) gives
$|f(e_1,\ldots,e_{i+1})|>|f(e_1,\ldots,e_i)|$ for all $i$, and we are
done.

If there is a $e_i=0$, then $f(\ldots,
e_{i-1},0,e_{i+1},\ldots) = -f(\ldots,e_{i-1}+e_{i+1},\ldots)$. By hypothesis $e_{i-1}e_{i+1}>0$, hence $|e_{i-1}+e_{i+1}|\geqslant 2$ and the shorter sequence
$(\ldots,e_{i-1}+e_{i+1},\ldots)$ is easily seen to still satisfy our induction hypothesis
(note that $e_{i-1}, e_{i+1}$, and $e_{i-1}+e_{i+1}$ all
have the same sign and thus $|e_{i-1}+e_{i+1}|\geqslant |e_{i-1}|+|e_{i+1}|$). Therefore we conclude.

We may now suppose that $e_i\neq 0$ for all $i$. Hence $e_i=\pm 1$ for
some $i$, say $e_i=1$. We consider the case $i=1$. We have $f(1,e_2,\ldots) =
f(e_2-1,\ldots)$.  By our hypothesis we have either $e_2<0$ or $e_2\geqslant 4$. In the first case, the shorter sequence $(e_2-1,\ldots)$ still satisfies the induction hypothesis. In the second case, it also does, except if $(e_1,e_2,e_3, e_4,\ldots )= (1,4,1, e_4,\ldots)$ and $e_4\geqslant 4$. The new sequence is $(3,1,e_4,\ldots)$, which may in turn be shortened to $(2,e_4-1,\ldots)$. Again, we are done except when $(\ldots, e_4, e_5, e_6,\ldots) = (\ldots, 4, 1, e_6,\ldots)$ with $e_6\geqslant 4$. By repeating this argument we eventually end up with a sequence $(2,\ldots,2,e_{2k}-1,\ldots)$ with $e_{2k-1}>4$, or $(2,\ldots,2)$, or $(2,\ldots,2,3)$. Each of these satisfies our hypothesis, so we are done.

Consider the case $i\not\in\{1,n\}$. One of the following holds.
\begin{enumerate}
\item we have $e_{i-1}<0$ (up to reversing the sequence), or 
\item we have $e_{i-1}, e_{i+1}\geqslant 4$.
\end{enumerate}
We have $f(\ldots,e_{i-1},1,e_{i+1},\ldots) =
f(\ldots,e_{i-1}-1,e_{i+1}-1,\ldots)$. Suppose (1) holds. The new sequence satisfies our hypothesis except if one of the following holds:
\begin{itemize}
\item $e_{i+1} = 1$,
\item $(\ldots, e_{i+1}, e_{i+2}, e_{i+3},\ldots) = (\ldots,4,1,e_{i+3},\ldots )$ with $e_{i+3}\geqslant 4$.
\end{itemize}

If $e_{i+1}=1$, we
have $(\ldots, e_{i-1},1,1,e_{i+2},\ldots )$ with $e_{i+2}<0$. Equation (\ref{last:eqn}) gives
$f(\ldots,e_{i-1},1,1,e_{i+2},\ldots) = -f(\ldots, e_{i-1}+e_{i+2}-1,
\ldots)$ and the new sequence fullfills
the hypothesis. 

If the second case holds, we repeat our argument as above and end up with a shorter sequence of type $(\ldots, e_{i-1}-1,2,\ldots,2,e_h -1, \ldots)$ with $e_h>4$, or $(\ldots, e_{i-1}-1,2,\ldots,2)$, or $(\ldots,e_{i-1}-1,2,\ldots,2,3)$. Each such satisfies the hypothesis.

Suppose (2) holds. The new sequence fulfills the hypothesis, except if one of the following holds:
\begin{itemize}
\item $(\ldots, e_{i+1}, e_{i+2}, e_{i+3},\ldots ) = (\ldots, 4,1,e_{i+3},\ldots)$ with $e_{i+3}\geqslant 4$, or
\item $(\ldots, e_{i-3}, e_{i-2}, e_{i-1},\ldots) = (\ldots, e_{i-3},1,4,\ldots)$ with $e_{i-3}\geqslant 4$.
\end{itemize}
Both cases may hold. For each such we proceed as above.
\end{proof}

\section{Proof of the theorem} \label{proof:section}
Finally, we prove here Theorem \ref{main:teo}. We start with a lemma.

\begin{figure}
\begin{center}
\includegraphics[width = 12.5 cm]{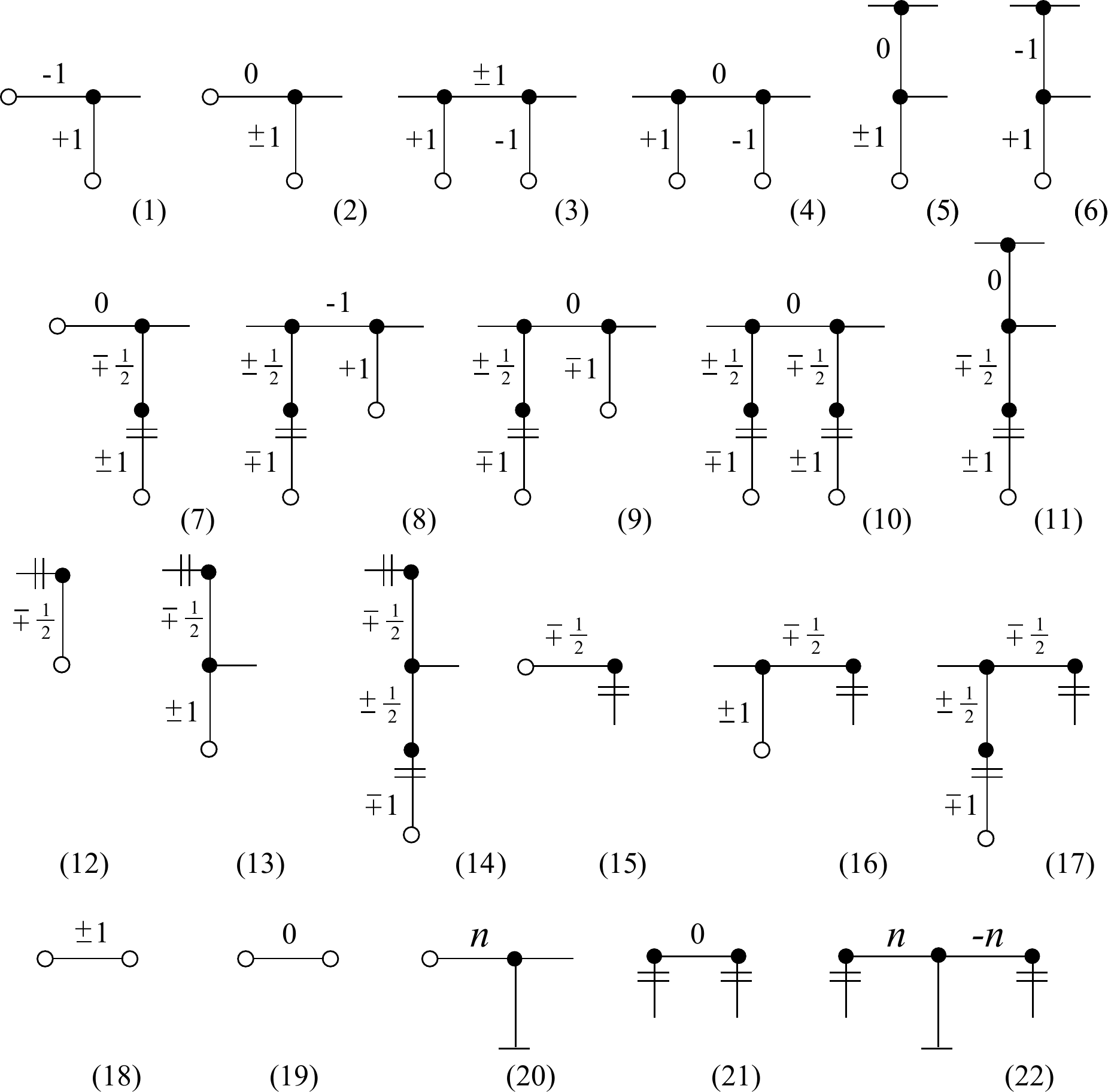}
\nota{A decorated tree with levels contains either a non-nice flat vertex, a reducible bad branch, or one one of these portions.}
\label{local:fig}
\end{center}
\end{figure}

\begin{lemma} \label{contains:lemma}
Let $T$ be a decorated tree with levels. One of the following holds:
\begin{itemize}
\item the tree contains a flat vertex which is not nice;
\item the tree contains a reducible bad branch;
\item the tree contains a portion as in Fig.~\ref{local:fig}, possibly after applying some moves as in Fig.~\ref{move_leaf:fig}.
\end{itemize}
\end{lemma}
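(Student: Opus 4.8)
The plan is to prove this by a finite combinatorial case analysis on the decorated tree with levels $T$, anchored at a deepest ``interesting'' vertex. First I would dispose of the degenerate case in which $T$ has no vertex of type \includegraphics[width = 0.6 cm]{4.pdf} or \includegraphics[width = 0.6 cm]{5.pdf}: then all vertices are $1$-valent, so $T$ is a single edge joining two flat/fat vertices, and one checks directly that such a $T$ either has a flat vertex (which, being adjacent to a non-$3$-valent vertex, is non-nice), or — after a move of Fig.~\ref{move_leaf:fig} — is one of the portions of Fig.~\ref{local:fig} (a sphere with a gleam).

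In the general case I would pick a vertex $v$ of type \includegraphics[width = 0.6 cm]{4.pdf} or \includegraphics[width = 0.6 cm]{5.pdf} of \emph{maximal level} among all such vertices. By axiom~(2) of the level function, $v$ has exactly one neighbour at strictly higher level, and this neighbour generates the subtree $S_v$; by maximality $S_v$ contains no vertex of type \includegraphics[width = 0.6 cm]{4.pdf} or \includegraphics[width = 0.6 cm]{5.pdf}, so, being a connected tree all of whose vertices are $1$-valent, $S_v$ is a single flat or fat vertex. Thus $S_v$ is a flat leaf, a fat leaf (with incident gleam $\pm1$ by Proposition~\ref{leaf:prop}, normalizable by Fig.~\ref{move_leaf:fig}), or — when $v$ has type \includegraphics[width = 0.6 cm]{5.pdf} — it exhibits $v$, together with the vertex $v'$ directly below it, as part of a fruit or of a bad branch with some torsion $q$. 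I would then split into cases according to: the type of $v$; whether the terminal vertex is flat or fat; the type, level, and multiplicity of the neighbour(s) of $v$ at level $\leqslant l(v)$ (root vertex, type-\includegraphics[width = 0.6 cm]{4.pdf}, type-\includegraphics[width = 0.6 cm]{5.pdf}; whether $v'$ is itself adjacent to another $3$-valent vertex as in Fig.~\ref{not_nice:fig}; whether a second branch hangs from the same base); and, in the bad-branch cases, the value of $q$. In each of the finitely many resulting local pictures one of the three conclusions of the lemma holds: a flat terminal vertex sitting in the pattern of Fig.~\ref{not_nice:fig} gives a non-nice flat vertex; a bad branch with $q=0$ not a lone flat vertex, or $q=\pm1$ not a lone fat vertex, is a reducible bad branch; and otherwise, after possibly flipping the $\pm1$-gleam of a fat leaf via Fig.~\ref{move_leaf:fig} and applying the harmless graph moves of Fig.~\ref{mosse_innocue:fig}, the picture is one of the portions of Fig.~\ref{local:fig} — this last class includes the non-reducible bad branches ($|q|\geqslant 2$), which are precisely the configurations to be dealt with afterwards by the plumbing-line estimate of Lemma~\ref{plumbing:lemma}.

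The main difficulty I anticipate is not any single step but the exhaustiveness and bookkeeping of the case split: one must simultaneously keep track of the three attaching configurations of Fig.~\ref{Sv:fig}, the two orientations of the type-\includegraphics[width = 0.6 cm]{5.pdf} vertex, the interaction of $S_v$ with a possible second subtree hanging at the same base, and the position of $v$ relative to the root, and then verify that no configuration escapes the list consisting of Fig.~\ref{local:fig} together with the two named exceptional situations. One also has to check that each normalization is genuinely realized by a move of Fig.~\ref{move_leaf:fig} or Fig.~\ref{mosse_innocue:fig} and that it preserves the structure of a decorated tree with levels.
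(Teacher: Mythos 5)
Your approach has a genuine gap: you cannot establish the third alternative by a purely local case analysis anchored at a deepest type-\includegraphics[width = 0.6 cm]{4.pdf}/\includegraphics[width = 0.6 cm]{5.pdf} vertex. The twenty-two portions of Fig.~\ref{local:fig} all specify particular gleams (gleams $0$, $\pm 1$, $\pm 1/2$, and so on), and these numerical constraints cannot be deduced by looking only at the star of a single vertex of maximal level. Your argument correctly identifies that $S_v$ must be a single leaf and, when fat, that its gleam is $\pm 1$ by Proposition~\ref{leaf:prop}; but the gleam on the edge from $v$ to its lower neighbour, and the gleams along the chain of type-\includegraphics[width = 0.6 cm]{4.pdf} and type-\includegraphics[width = 0.6 cm]{5.pdf} vertices running down to the root, are not constrained by any local axiom. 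Without a further input, nothing prevents the branch from being a long line of $3$-valent vertices carrying fruits and leaves whose remaining gleams are all of large absolute value, matching none of the portions in Fig.~\ref{local:fig}.

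The missing ingredient is precisely the global constraint that $\partial N(X)$ is $S^3$ or $S^2\times S^1$ (Proposition~\ref{4:consequence:prop}), and the device the paper uses to translate this constraint into a gleam bound is Lemma~\ref{plumbing:lemma}. You mention this lemma at the end of your proposal as something to be invoked ``afterwards,'' but it has to be invoked \emph{inside} the proof of the present lemma: the paper isolates a whole branch $Z$ (a line of $3$-valent vertices with fruits and leaves hanging from it, capped by certain end-configurations), converts $Z$ into a plumbing line $(e_1,\ldots,e_n)$ by the substitutions of Fig.~\ref{ripeto:fig}, and only then applies Lemma~\ref{plumbing:lemma} to force a small $|e_i|$ somewhere. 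The portions of Fig.~\ref{local:fig} are exactly the preimages of those short subsequences under the substitution. Your case split by ``type of $v$, flat/fat leaf, orientation, $q$'' cannot reproduce this because it never reads the long-range structure of the branch, which is what the plumbing-line determinant estimate controls.

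A second, subsidiary problem is that your initial reduction is correct but does not by itself select a branch: the paper needs to choose, among all \emph{good} branches (not just among vertices), one of maximal level, and if there is none to take $Z=T$; this choice is what guarantees that all the $G_i$ hanging from the selected line are fruits or leaves rather than further branches. Your ``deepest vertex'' argument produces a single leaf, which is too small an object to feed into the plumbing-line machinery.
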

\begin{proof}
We suppose that every flat vertex is nice and that there are no reducible branches in $T$. We deduce that $T$ contains a portion as in Fig.~\ref{local:fig}.
We start by claiming that $T$ contains a portion $Z$ as in Fig.~\ref{branch:fig}-(1), such that:
\begin{figure}
\begin{center}
\includegraphics[width =10 cm]{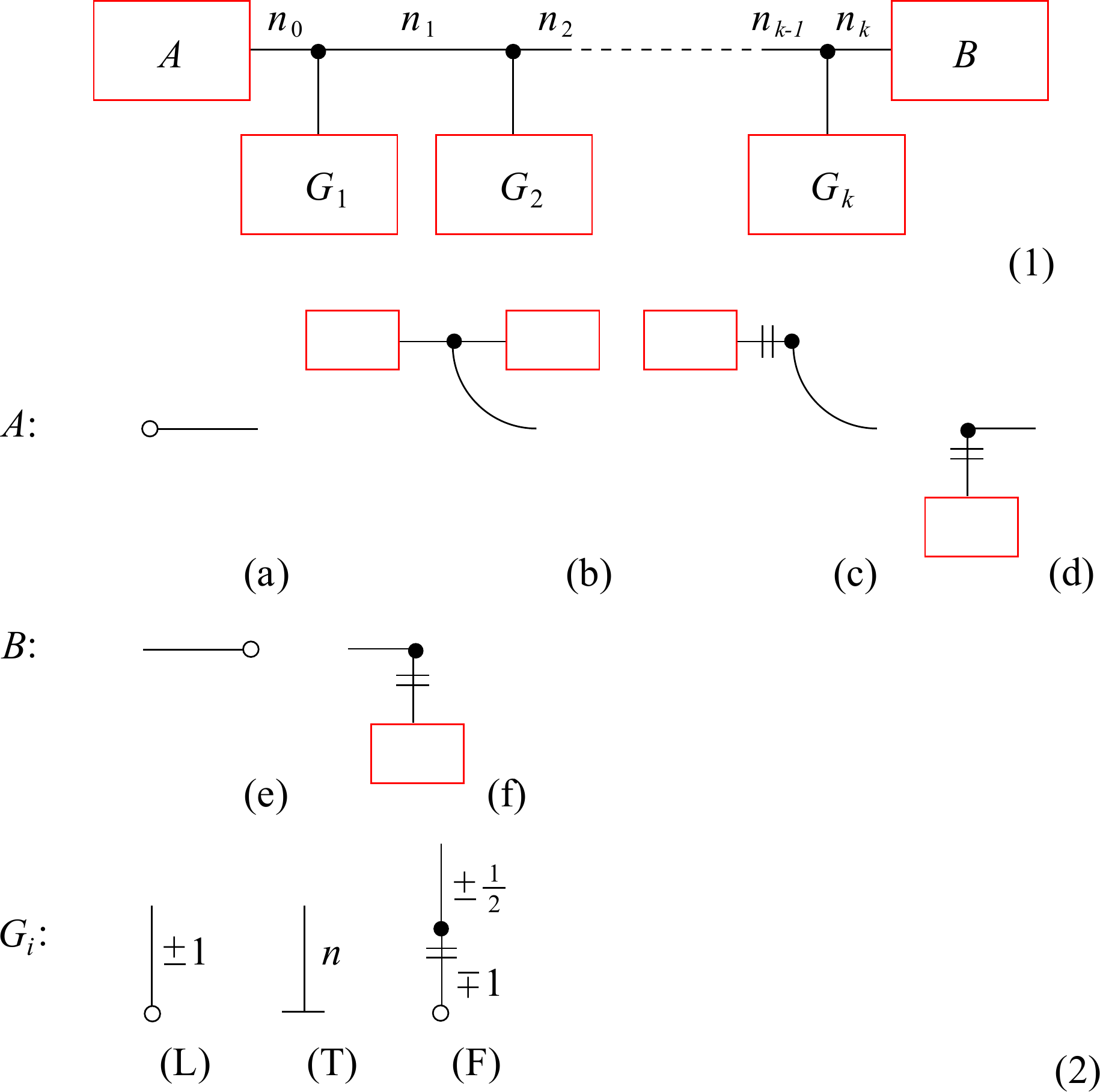}
\nota{The tree $T$ contains a portion $Z$ as in (1), where each $G_i$ is either a fruit or a leaf, and $A, B$ is of one
  of the types shown in (2). (When A is of type (a) or (d) the portion $Z$ is actually the whole tree $Z=T$.)}
\label{branch:fig}
\end{center}
\end{figure}
\begin{enumerate}
\item every $G_i$ is either a leaf or a fruit, see Fig.~\ref{branch:fig}-(2);
\item the portion $A$ is one of those (a), (b), (c), (d) shown in Fig.~\ref{branch:fig}-(2);
\item the portion $B$ is one of those (e), (f) shown in Fig.~\ref{branch:fig}-(2). 
\end{enumerate}
Let the \emph{level} of a branch $S_v$ be the level $l(v)$ of its base vertex $v$.
If there is no branch at all in $T$, then the whole tree $T$ is as in Fig.~\ref{branch:fig}-(1) (with $A$ of type (a) and B of type (e)) and we may take $Z=T$. Otherwise, consider a branch having the highest level among branches. The branch is as in Fig.~\ref{branch:fig}-(1) with B of type (e) and $A$ either of type (b), (c), or \includegraphics[width = 1 cm]{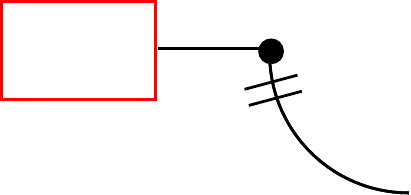}. We are done, except when the latter case holds, \emph{i.e.}~when the branch is bad. 

To avoid bad branches, we take $Z$ as a branch having the highest level among good branches. (Again, if there are no good branches, take $Z=T$.) The portion $Z$ is as required. 

\begin{figure}
\begin{center}
\includegraphics[width = 7 cm]{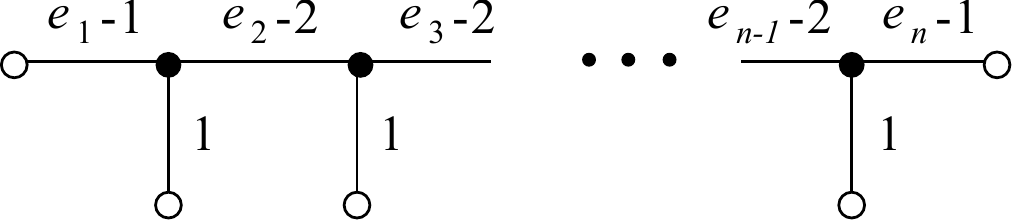}
\nota{A tree with one level is the perturbation of the plumbing as in Fig.~\ref{plumbing_line:fig}.}
\label{plumbing_line2:fig}
\end{center}
\end{figure}

We now construct a plumbing line from $Z$. Actually, we construct a tree with levels as in Fig.~\ref{plumbing_line2:fig}, which in turn may be transformed into a plumbing line as in Fig.~\ref{plumbing_line:fig} via the (inverse of the) move that perturbs double points, see Fig.~\ref{perturb:fig}. 

The tree with levels is constructed by substituting the pieces A and B as prescribed by Fig.~\ref{ripeto:fig}-(b,c,d,f), and each flat leaf and fruit as in Fig.~\ref{ripeto:fig}-(T,F).

\begin{figure}
\begin{center}
\includegraphics[width = 12.5 cm]{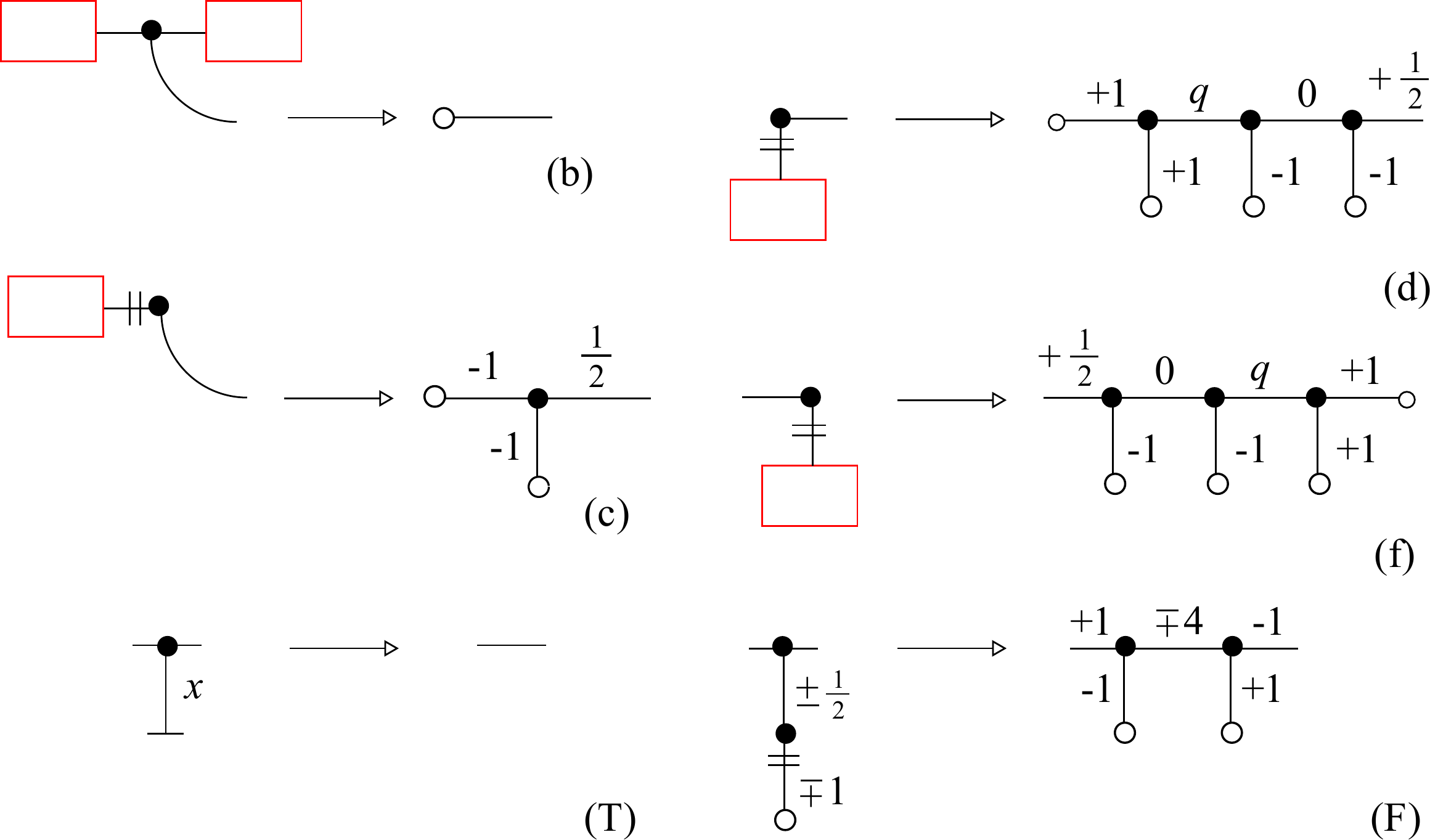}
\nota{We use Fig.~\ref{stacca:fig} (b), Fig.~\ref{stacca_generalized:fig}-(2) (c), Fig.~\ref{bad_branch_move1:fig} (d, f), Fig.~\ref{sum_ass:fig}-(3) (T), and Fig.~\ref{move_fruit:fig} (F). 
}
\label{ripeto:fig}
\end{center}
\end{figure}

If $A$ is of type (d) or $B$ is of type (f), it is a bad branch with some torsion $q$. By hypothesis, it is not reducible. In other words:
\begin{itemize}
\item if $q=0$, the bad branch consists of a single flat vertex;
\item if $q=\pm 1$, the bad branch consists of a single fat vertex.
\end{itemize}
Since every flat vertex is nice, the first case is excluded. Therefore $q\neq 0$. 

We end up with a decorated tree with levels as in Fig.~\ref{plumbing_line2:fig}, which determines a plumbing line as in Fig.~\ref{plumbing_line:fig}, with some integers $e_1,\ldots, e_n$. Now we apply Lemma \ref{plumbing:lemma}. The sequence $(e_1,\ldots, e_n)$ contains one of the following subsequences:
\begin{enumerate}
\item[(i)] $(0)$;
\item[(ii)] $(\pm 1)$;
\item[(iii)] $(0,e_2,\ldots)$; 
\item[(iv)] $(\ldots,e_{n-1},0)$;
\item[(v)] $(\ldots, e_{i-1},0,e_{i+1},\ldots)$ with $e_{i-1}e_{i+1}\leqslant 0$;
\item[(vi)] $(1,e_2,\ldots)$ with $e_2\geqslant 0$ not equal to 4; 
\item[(vii)] $(\ldots,e_{n-1}, 1)$ with $e_{n-1}\geqslant 0$ not equal to 4;
\item[(viii)] $(-1,e_2,\ldots)$ with $e_2\leqslant 0$ not equal to $-4$;
\item[(ix)] $(\ldots,e_{n-1}, -1)$ with $e_{n-1}\leqslant 0$ not equal to $-4$; 
\item[(x)] $(\ldots,e_{i-1},1,e_{i+1},\ldots)$ with $e_{i-1}\geqslant 0$, $e_{i+1}\geqslant 0$, not both equal to 4;
\item[(xi)] $(\ldots,e_{i-1},-1,e_{i+1},\ldots)$ with $e_{i-1}\leqslant 0$, $e_{i+1}\leqslant 0$, not both equal to -4.
\end{enumerate}
In the first two cases (i) and (ii) the sequence has only one element. The subsequence identifies a portion of $Z$. We now show that this portion is one of those listed in Fig.~\ref{local:fig}. To preserve clarity, we first suppose that $Z$ does not contain flat leaves. The portions $A$, $B$, $G_i$ of $Z$ contribute to the plumbing line $(e_1,\ldots, e_n)$ as follows, see Fig.~\ref{ripeto:fig}:
\begin{itemize}
\item portions of type $A$-(a) and $A$-(b) contribute in the same way;
\item a portion of type $A$-(c) contributes with $(-2, \ldots)$;
\item a portion of type $A$-(d) contributes with $(2,q,-2,\ldots)$;
\item a portion of type $B$-(f) contributes with $(\ldots,-2,q,2)$;
\item a fruit contibutes with an integer $\pm 4$.
\end{itemize}
Recall that $q$ is always non-zero. We consider first the case $k=0$, \emph{i.e.}~there is no $G_i$. The portion $Z$ thus consists of the pieces $A$ and $B$ glued together. The various possibilities are shown in Fig.~\ref{AB:fig}. We analyse each separately:

\begin{figure}
\begin{center}
\includegraphics[width = 12 cm]{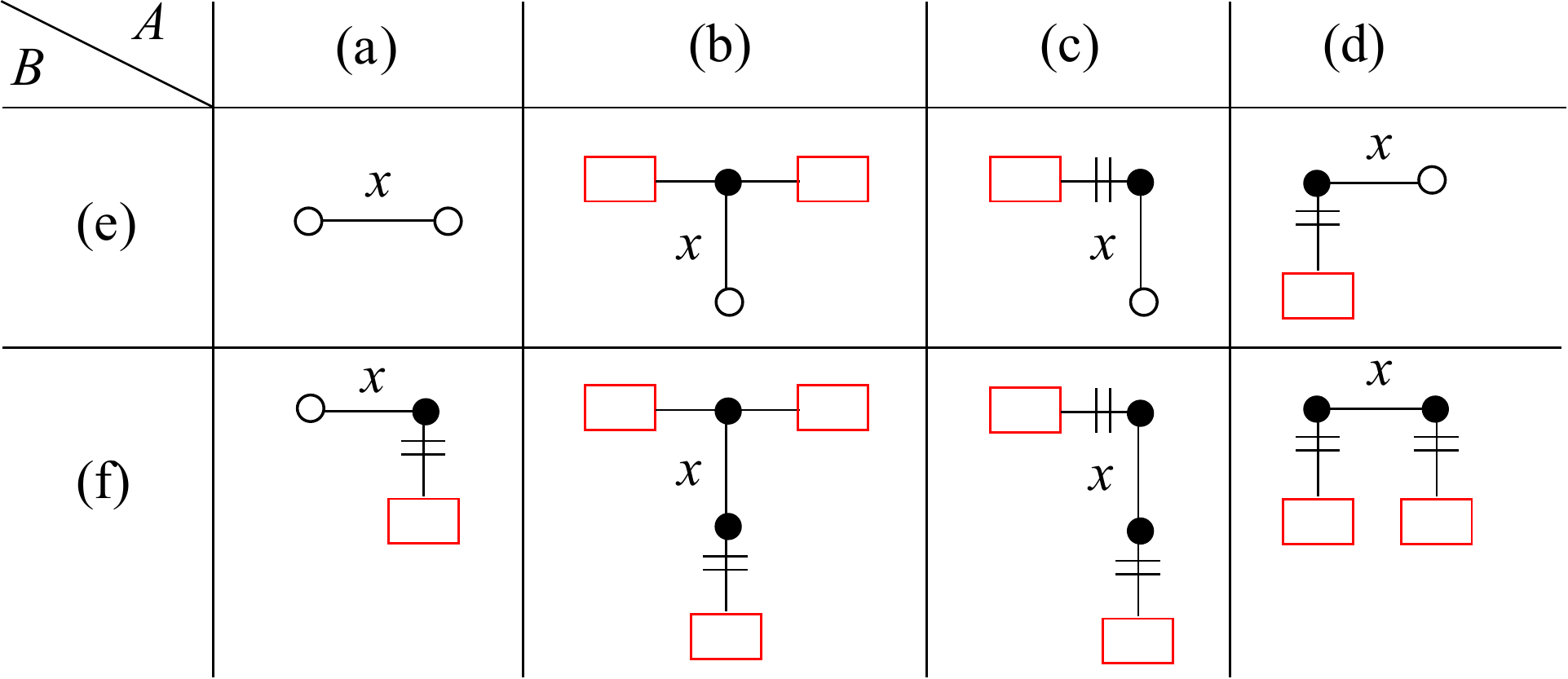}
\nota{When $k=0$, the portion $Z$ consists of $A$ and $B$ glued together and thus looks like one of the pictures listed here.}
\label{AB:fig}
\end{center}
\end{figure}

\begin{itemize}
\item[(ae)] the sequence consists of a single number $(x)$. By hypothesis, $|x|\leqslant 1$ which leads to (18) or (19);
\item[(be)] the portion $Z$ would be a leaf and not a branch: excluded;
\item[(ce)] the sequence is $(-2,x-1/2)$. Therefore $x=\pm 1/2$ which leads to (12);
\item[(de)] the sequence is $(2,q,-2,x-1/2)$ with $q\neq 0$. Therefore $x=\pm 1/2$ which leads to (15);
\item[(af)] like (de);
\item[(bf)] the sequence is $(x-1/2, -2, q, 2)$ with $q\neq 0$. As above, we get $x=\pm 1/2$. If $q= \pm 1$, the bad branch consists of a single vertex, and hence $Z$ is a fruit and not a branch: excluded. Therefore $|q|\geqslant 2$. However, the moves contained in the proof of Lemma \ref{plumbing:lemma} show that this sequence does not give $S^3$ or $S^2\times S^1$: excluded;
\item[(cf)] the sequence is $(-2,x-1,-2,q,2)$ with $q\neq 0$. Therefore $x=0$ and again this sequence does not give $S^3$ or $S^2\times S^1$;
\item[(df)] the sequence is $(2,q,-2,x-1,-2,q',2)$ with $q,q'\neq 0$. Therefore $x=0$ which leads to (21).
\end{itemize}

We turn to the case $k>0$. We consider first the portion formed by $A$ and $G_1$. It is as in Fig.~\ref{AG:fig}. We use implicitly Fig.~\ref{move_leaf:fig}-(1) at various points. We analyze each case separately:

\begin{figure}
\begin{center}
\includegraphics[width = 12 cm]{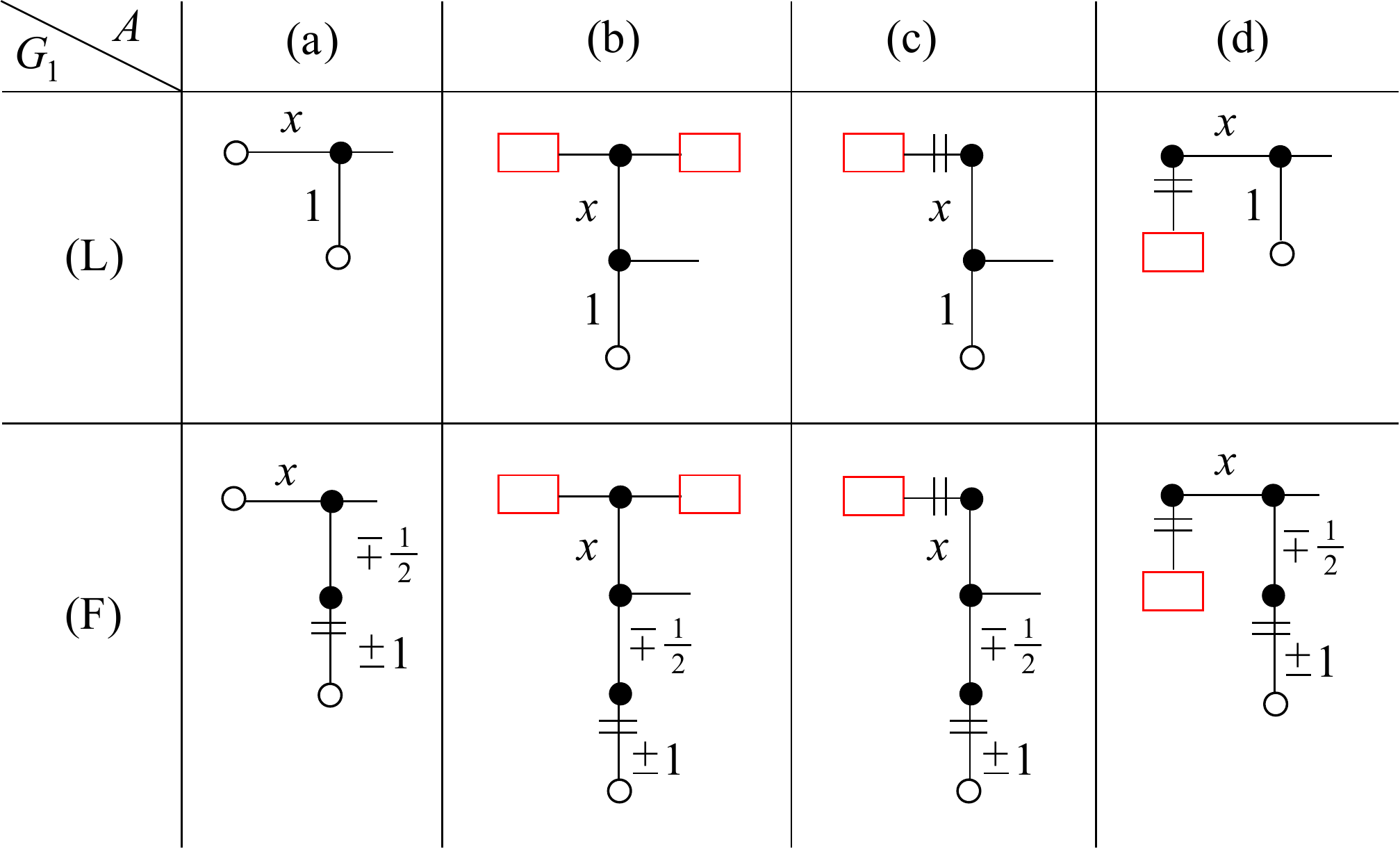}
\nota{Portions obtained as the union of $A$ and $G_1$.}
\label{AG:fig}
\end{center}
\end{figure}

\begin{itemize}
\item[(La)] the sequence starts as $(x+1,\ldots )$. If $|x+1| \leqslant 1$ we get either (1) or (2);
\item[(Lb)] the sequence starts as $(x+1,\ldots )$. If $|x+1|\leqslant 1$ we get either (5) or (6);
\item[(Lc)] the sequence starts as $(-2,x+1/2, \ldots)$. If $x+1/2 \in \{-1,0\}$ we get (13);
\item[(Ld)] the sequence starts as $(2,q,-2,x+1/2,\ldots)$. If $x+1/2 \in \{-1,0\}$ we get (16);
\item[(Fa)] the sequence starts as $(x, \pm 4, \ldots)$. If $x=0$ we get (7);
\item[(Fb)] the sequence starts as $(x, \pm 4, \ldots)$. If $x=0$ we get (11);
\item[(Fc)] the sequence starts as $(-2,x-1/2, \pm 4, \ldots)$. Two configurations both lead to (14): they are $(-2,x-1/2,-4,\ldots)$ with $x-1/2 = -1$ and $(-2,x-1/2,4)$ with $x-1/2 = 0$;
\item[(Fd)] the sequence starts as $(2,q,-2,x-1/2, \pm 4)$; we get two configurations exactly as before, which lead to (17).
\end{itemize}

The portion formed by $G_k$ and $B$ is treated analogously. We turn to a portion involving $G_i$ and $G_{i+1}$ as in Fig.~\ref{GG:fig}. We analyze each case:

\begin{figure}
\begin{center}
\includegraphics[width = 10 cm]{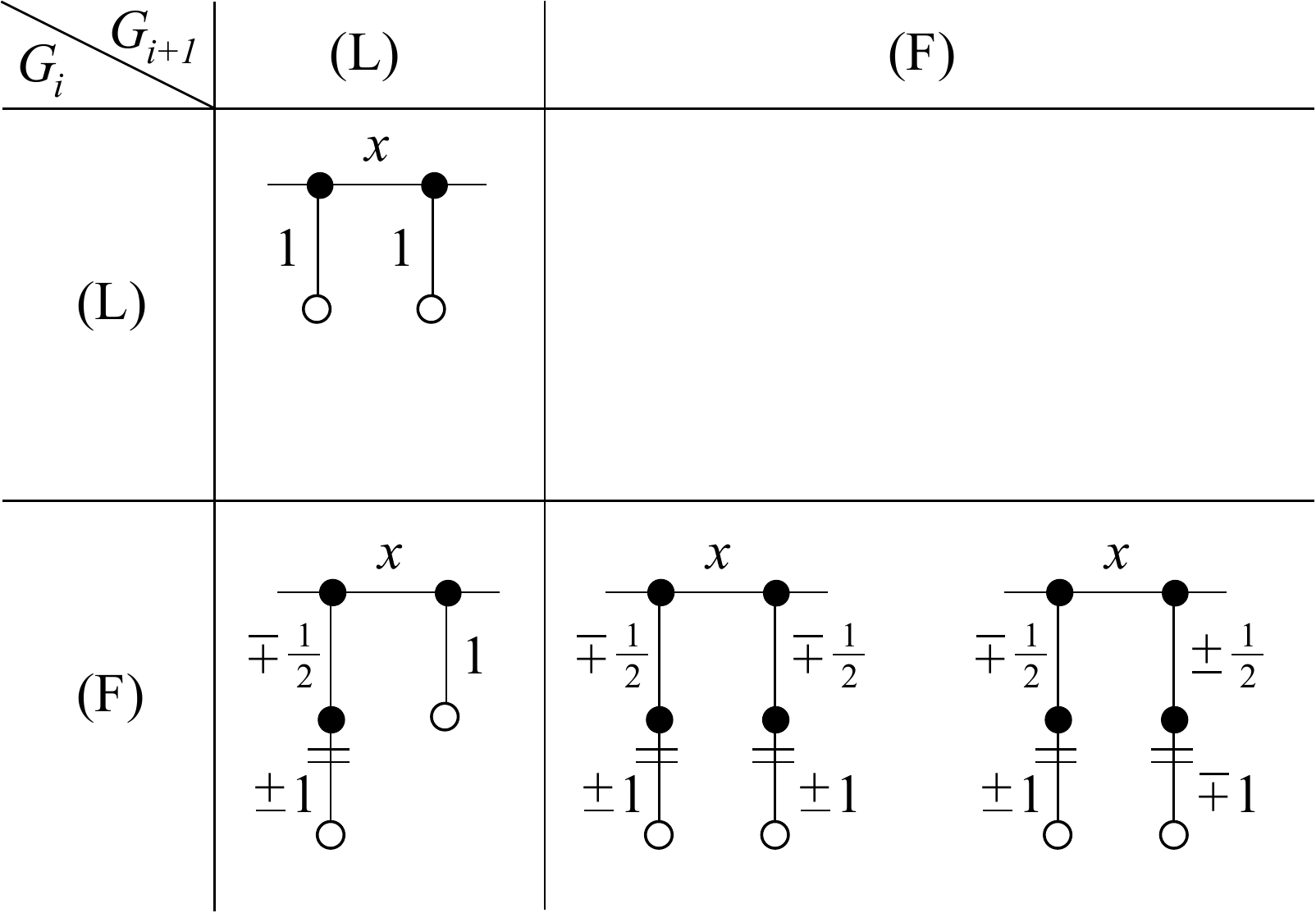}
\nota{Portions obtained as the union of $G_i$ and $G_{i+1}$.}
\label{GG:fig}
\end{center}
\end{figure}

\begin{itemize}
\item[(LL)] the sequence contains $(\ldots, x+2, \ldots)$. If $|x+2|\leqslant 1$ we get (3) or (4);
\item[(FL)] the sequence contains $(\ldots,\pm 4, x+1, \ldots)$. If $x+1 = 0$ we get (8); if $(\ldots, \pm 4, x+1, \ldots)$ equals $(\ldots, 4, 1, \ldots)$ or $(\ldots, -4, -1, \ldots)$ we get (9);
\item[(FF)] the sequence contains $(\ldots, \pm 4, x , \pm 4,\ldots)$ or $(\ldots, \pm 4, x, \mp 4,\ldots)$. In the second case, if $x=0$ we get (10).
\end{itemize}

We are left to consider the presence of flat leaves. These do not contribute to the plumbing line $(e_1,\dots, e_n)$: we therefore conclude that the branch contains a portion of those already listed, plus maybe some additional flat leaves. 

In all the portions found, such leaves may be slid away by using the move in Fig.~\ref{move_leaf:fig}-(2), except when the branch is very small: this happens in cases (12), (15), (18), (19), and (21). In all but the last case, the branch contains a portion of type (20). In the last case, it contains a portion of type (22). 
\end{proof}

Neumann and Weintraub \cite{Neu} used Lemma \ref{easy:plumbing:lemma} to simplify the plumbing line, via a move that eliminates the sphere with small Euler number. Here we do the same. As the following shows, all the portions listed in Fig.~\ref{local:fig} may be simplified.

\begin{figure}
\begin{center}
\includegraphics[width = 12.5 cm]{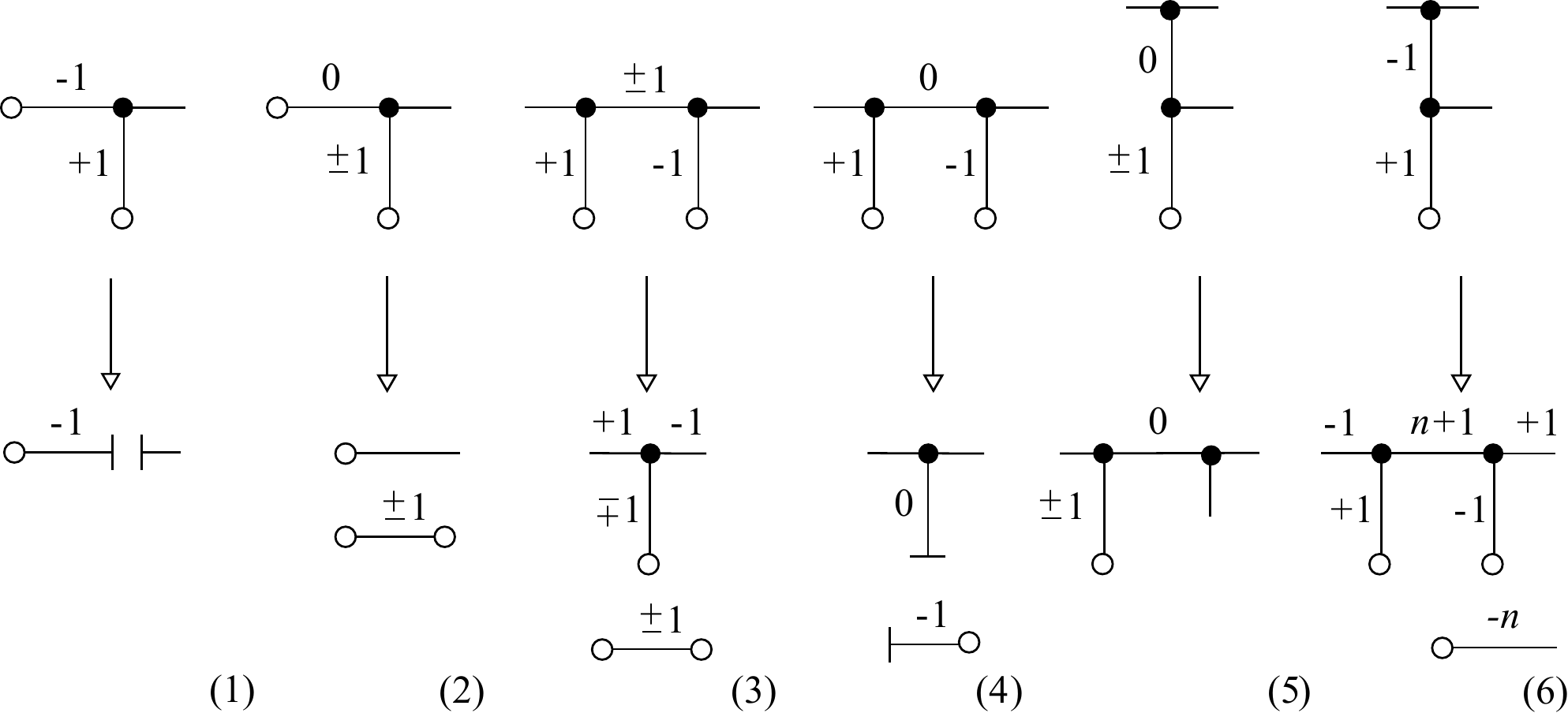}
\nota{Each of these moves transforms a shadow (described via a decorated tree with levels) of a block $(M,L)$ into a shadow of a block $(M',L')$. In (5) we have $(M,L)\isom(M',L')$. In (1) and (4) the block $(M,L)$ is an assembling of $(M',L')$. In (2), (3), (6) the block $(M,L)$ is a connected sum of $(M',L')$.}
\label{simplify:fig}
\end{center}
\end{figure}

\begin{figure}
\begin{center}
\includegraphics[width = 11.5 cm]{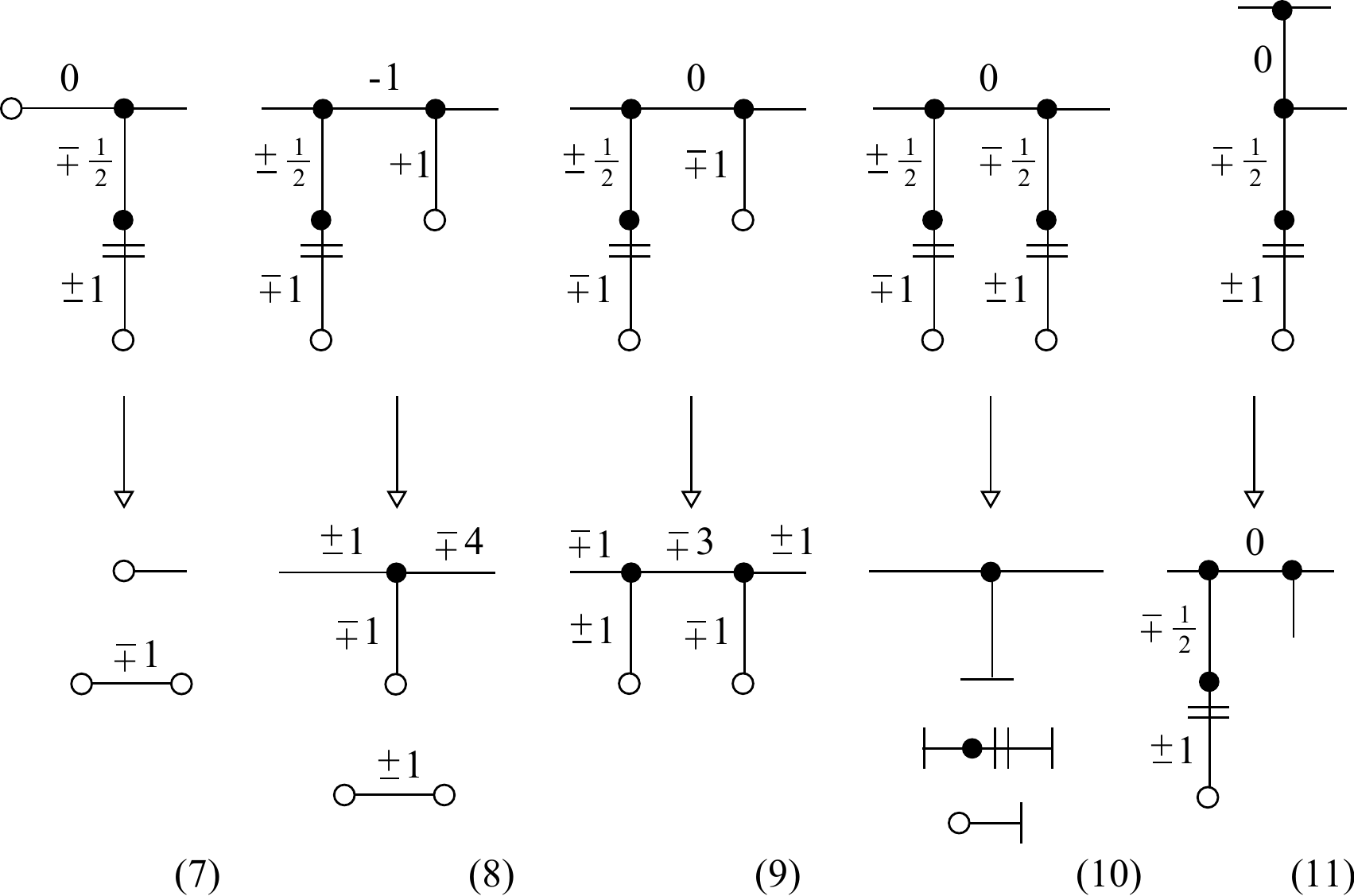}
\nota{Each of these moves transforms a shadow (described via a decorated tree with levels) of a block $(M,L)$ into a shadow of a block $(M',L')$. In (9) and (11) we have $(M,L)\isom (M',L')$. In (7) and (8)the block $(M,L)$ is a connected sum of $(M',L')$. In (10) it is an assembling of $(M,L)$.}
\label{simplify2:fig}
\end{center}
\end{figure}

\begin{figure}
\begin{center}
\includegraphics[width = 12.5 cm]{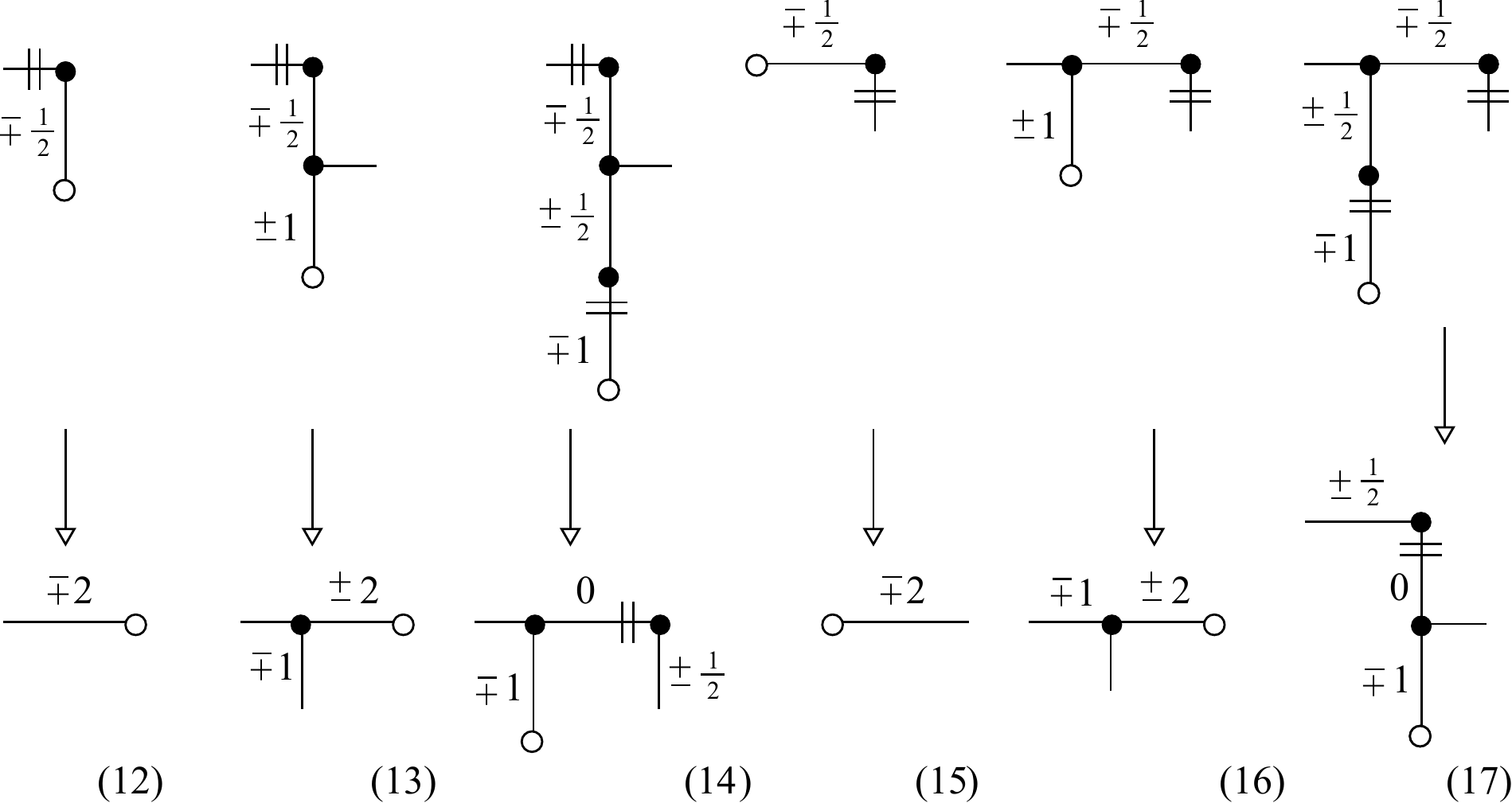}
\nota{Each of these moves transforms a shadow (described via a decorated tree with levels) of a block $(M,L)$ into a shadow of the same block.}
\label{simplify3:fig}
\end{center}
\end{figure}

\begin{figure}
\begin{center}
\includegraphics[width = 9 cm]{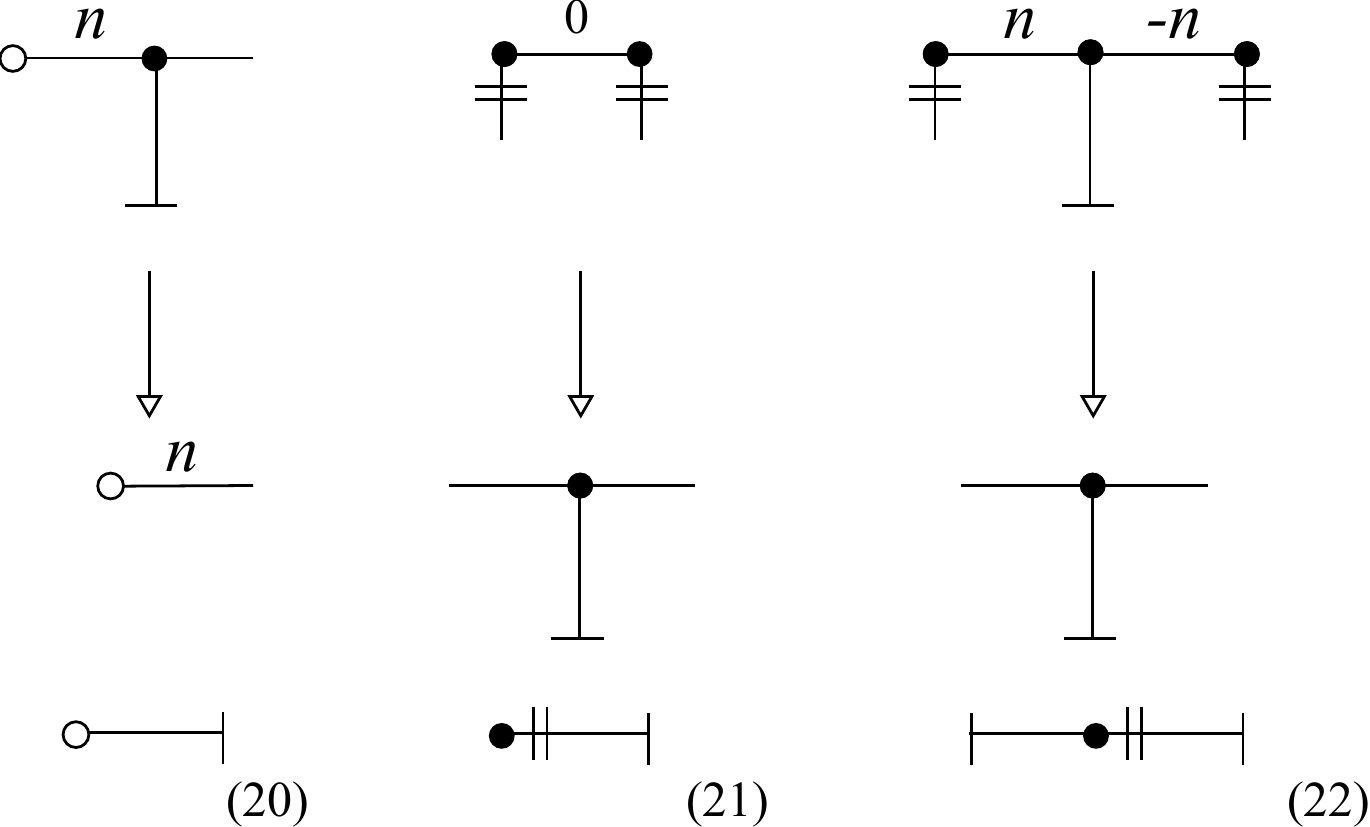}
\nota{Each of these moves transforms a shadow (described via a decorated tree with levels) of a block $(M,L)$ into a shadow of a block $(M',L')$. In (21) and (22) the block $(M,L)$ is an assembling of $(M',L')$. In (20) it is a connected sum of $(M',L')$.}
\label{simplify4:fig}
\end{center}
\end{figure}

\begin{prop} \label{simplify:prop}
Let $T$ be a decorated tree with levels encoding a shadow $X$ of a block $(M,L)$. Each of the moves in Figg.~\ref{simplify:fig}, \ref{simplify2:fig}, \ref{simplify3:fig}, and \ref{simplify4:fig} transforms $T$ into a new tree $T'$ with levels encoding a shadow $X'$ of some block $(M',L')$. The block $(M,L)$ is homeomorphic to $(M',L')$, or obtained from it via one assembling or connected sum.
\end{prop}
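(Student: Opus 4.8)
The plan is to verify the $22$ moves of Figs.~\ref{simplify:fig}--\ref{simplify4:fig} one at a time, in each case exhibiting the move as a composition of operations already known to relate shadows: the Turaev and vertex--free moves of Fig.~\ref{thickening:fig}, the ``innocuous'' moves of Fig.~\ref{mosse_innocue:fig}, the connected sum/assembling/filling moves of Fig.~\ref{sum_ass:fig}, the leaf/fruit/branch manipulations of Section~\ref{leaves:section}, and above all Proposition~\ref{disc:prop}, which turns a vertical (resp.\ horizontal) compressing disc of a torus $T=\pi^{-1}(\gamma)\subset\partial N(X\cup\partial M)$ into an assembling (resp.\ connected sum) move on $X$. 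Recall from the proof of Lemma~\ref{contains:lemma} that each portion listed in Fig.~\ref{local:fig} is produced from a small--modulus entry of a plumbing line; the moves of Figs.~\ref{simplify:fig}--\ref{simplify4:fig} are precisely the ``shadow translations'' of the corresponding plumbing simplifications used by Neumann--Weintraub~\cite{Neu}. So the proposition is the companion of Lemma~\ref{contains:lemma}: having located a reducible portion, we now remove it.

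First I would record the bookkeeping matching the figure captions, so that afterwards only one thing needs to be checked per move. A Turaev move of Fig.~\ref{thickening:fig} or an innocuous move of Fig.~\ref{mosse_innocue:fig} leaves $(M,L)$ unchanged; a filling (Fig.~\ref{sum_ass:fig}-(3)) attaches a $3$-- and a $4$--handle and is an assembling with $D^3\times S^1$; a vertical compressing disc yields an assembling and a horizontal one a connected sum, by Proposition~\ref{disc:prop}. Hence for each of the ``same block'' moves (those of Fig.~\ref{simplify3:fig} and moves (5), (9), (11)) it suffices to realize the indicated graph move as a sequence of moves from Fig.~\ref{thickening:fig} plus Fig.~\ref{mosse_innocue:fig}, and for the remaining moves it suffices to exhibit one vertical or horizontal compressing disc of the appropriate torus, or a flat--leaf filling.

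The heart of the argument is a uniform mechanism, already implicit in Lemma~\ref{contains:lemma}. Given a portion $P$ of Fig.~\ref{local:fig} inside a decorated tree with levels, I would convert $P$ --- using the \emph{reversible} substitutions of Fig.~\ref{ripeto:fig} together with Figs.~\ref{stacca:fig}, \ref{stacca_generalized:fig}, \ref{bad_branch_move1:fig}, \ref{move_fruit:fig} and \ref{move_leaf:fig} --- into the standard one--level tree of Fig.~\ref{plumbing_line2:fig}, hence (via the inverse of the double--point perturbation of Fig.~\ref{perturb:fig}) into a plumbing line $(e_1,\dots,e_n)$ as in Fig.~\ref{plumbing_line:fig}, in which the distinguished entry of $P$ is an $e_i$ with $|e_i|\leqslant 1$. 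If $e_i=0$, the sphere of Euler number $0$ has a neighbourhood $S^2\times D^2$ and the corresponding torus is compressible along a fibre of $\pi$: this is a vertical disc, so Proposition~\ref{disc:prop} applied to $X$ itself (not to the auxiliary plumbing line) gives $X'$ with $(M,L)$ an assembling of $(M',L')$, and undoing the substitutions of Fig.~\ref{ripeto:fig} yields exactly the corresponding move of Figs.~\ref{simplify:fig}--\ref{simplify4:fig}. If $e_i=\pm1$ the same sphere carries a $\pm1$--framed $2$--handle and gives a horizontal compressing disc, so Proposition~\ref{disc:prop} gives a connected sum; when the $\pm1$ sits at an end of the line one finishes with Fig.~\ref{thickening:fig}-(1),(2) as in the proof of Proposition~\ref{fruit:prop}. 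The portions consisting only of flat leaves, i.e.\ the small cases (20)--(22), are treated directly: a flat leaf contributes nothing to the plumbing line, and the block it carries is read off Fig.~\ref{blocks:fig} as a piece of a graph manifold generated by $\calS_0$, so it is split off by a filling (Fig.~\ref{sum_ass:fig}-(3)) or an assembling. Throughout one checks that $T'$ is again a decorated tree with levels: no constituent step (Turaev moves, sum/assembling/filling, double--point perturbations, leaf/fruit moves) creates a vertex, and the level function on $T'$ is the one already described for each constituent cut and substitution (Proposition~\ref{cut:prop} and the figures of Section~\ref{leaves:section}), extended by the identity over the unchanged part of the tree.

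The main obstacle is, as in Lemma~\ref{contains:lemma}, purely the length and care of the case analysis rather than any new idea. One must keep track of the $\pm1$ gleams forced on fat leaves and fruits (Propositions~\ref{leaf:prop} and~\ref{fruit:prop}), of the nonzero torsion $q$ of the bad branches occurring in portions of type (d)/(f), of the several portions that first require a move of Fig.~\ref{move_leaf:fig} or Fig.~\ref{elimina5:fig}, and of the fact that in cases (12), (15), (18), (19), (21) the branch becomes too small to slide flat leaves away by Fig.~\ref{move_leaf:fig}-(2), so those leaves must instead be absorbed into an $\calS_0$--summand (this is why these cases are phrased in terms of portions of type (20) or (22)). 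Once this organisation is in place, verifying in each of the $22$ cases that the distinguished torus is compressible of the claimed type is a direct application of the determinant and continued--fraction identities of Lemma~\ref{plumbing:lemma}, and reading off whether the resulting change of block is a homeomorphism, an assembling, or a connected sum is immediate from the bookkeeping fixed in the first step.
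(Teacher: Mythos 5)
Your plan is genuinely different from what the paper does, and it contains a real gap that I don't think you can patch without essentially falling back to the paper's approach.

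The paper's proof of Proposition~\ref{simplify:prop} is a direct, case-by-case verification: it identifies each of the $22$ moves explicitly as a short composition of already-established moves (Fig.~\ref{thickening:fig}, Fig.~\ref{mosse_innocue:fig}, Fig.~\ref{sum_ass:fig}, Fig.~\ref{move_leaf:fig}, Fig.~\ref{flat:fig}). For the ``same block'' moves this is a Turaev move of Fig.~\ref{thickening:fig}; for the others it is an inverse of a sum or assembling move, possibly preceded by a Turaev move. Only move~(6) requires an actual compressing-disc argument, and there the paper is careful about a point you gloss over: after cutting the branch as in Fig.~\ref{stacca_generalized:fig}, the new shadow bounds $S^3$; the torus over $\gamma$ has a known vertical disc \emph{on the cut side}, so its second compressing disc is horizontal and lies \emph{on the other side}, which is untouched by the cut and hence still present in the original $T$. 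That sidedness argument is exactly what you would need --- but do not supply --- in your proposed uniform mechanism.

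The gap is this. Your ``uniform mechanism'' converts a portion $P\subset T$ into a plumbing line by applying the substitutions of Fig.~\ref{ripeto:fig} (cuts as in Fig.~\ref{stacca:fig}/\ref{stacca_generalized:fig}, the fruit and bad-branch rewrites, fillings of flat leaves). These are not reversible in the sense you need: they split the tree, or change the block, or both, so the $0$- or $(\pm 1)$-gleamed sphere you locate in the plumbing line is a region of an \emph{auxiliary} shadow, not of $X$. Saying ``apply Proposition~\ref{disc:prop} to $X$ itself'' presupposes exactly what needs to be proved, namely that the vertical or horizontal compressing disc exists in $\partial N(X\cup\partial M)$. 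In the paper this transfer is done only once (move~(6)) and hinges on the disc living on the side of the cut that survives in $T$; nothing in your sketch ensures this in general, and in several cases (e.g.\ the fruit substitutions or the bad-branch rewrites, which alter the block's topology) the relevant sphere does not live in $X$ at all. The role of the plumbing line in the paper is confined to Lemma~\ref{contains:lemma}, where it is used only to \emph{locate} a small-gleam portion, not to produce a compressing disc.

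A secondary, but real, omission: you do not actually check that the $22$ graph moves preserve the structure of decorated tree with levels, nor do you pin down, move by move, which of the three relations (homeomorphism, assembling, connected sum) holds between $(M,L)$ and $(M',L')$ --- the statement demands this. The paper's proof makes both automatic because each move is written as a composition of specific moves whose effect on the level function and on the block is already known. If you want a genuinely different proof, you would need to formalise the transfer of compressing discs across the Fig.~\ref{ripeto:fig} substitutions (one direction per case, tracking which side of each cut the disc lies on), and that case analysis is no shorter than the paper's.
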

\begin{proof}
Move (1) is the inverse of Fig.~\ref{sum_ass:fig}-(2), with one ($-1$)-gleamed \includegraphics[width = 0.6 cm]{1.pdf} attached on the left and some moves from Fig.~\ref{mosse_innocue:fig}. Move (2) is the inverse of Fig.~\ref{sum_ass:fig}-(1). Move (3) is Fig.~\ref{thickening:fig}-(2) followed by the inverse of Fig.~\ref{sum_ass:fig}-(1). Move (4) is Fig.~\ref{thickening:fig}-(1) followed by (1). Move (5) is Fig.~\ref{thickening:fig}-(1). 

\begin{figure}
\begin{center}
\includegraphics[width = 12 cm]{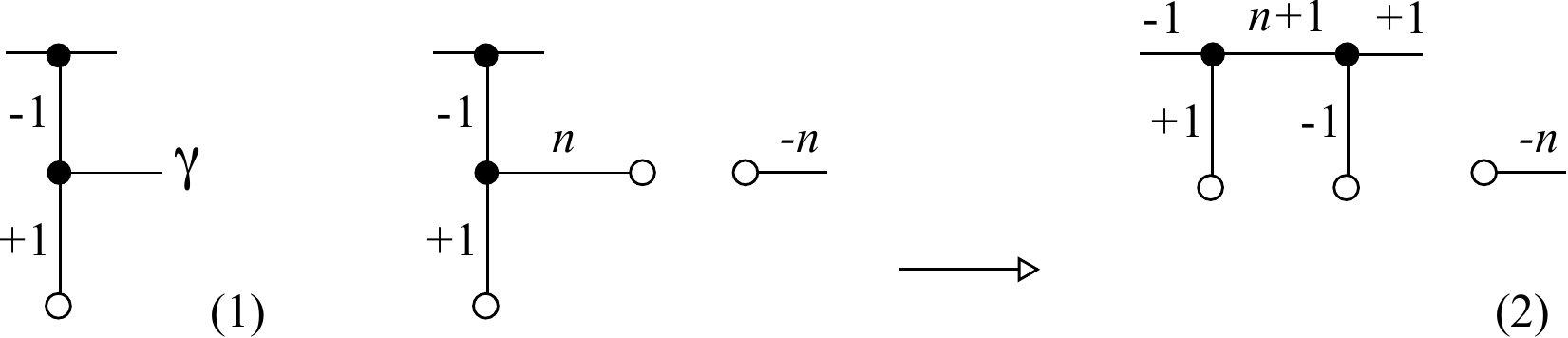}
\nota{An intermediate step for Fig.~\ref{simplify:fig}-(6)}
\label{simplify_proof:fig}
\end{center}
\end{figure}

In move (6), consider the simple closed curve $\gamma$ determined by the edge in Fig.~\ref{simplify_proof:fig}-(1). If we cut the branch as in Fig.~\ref{stacca_generalized:fig} we get a tree with levels of a shadow $X'$ with $\partial N(X') = S^3$. The curve $\gamma$ bounds on the left of this tree a portion equal to the one in Fig.~\ref{simplify:fig}-(1)-top. It is easy to see that the torus over $\gamma$ has a vertical disc over that portion (on the left). Since we are in $S^3$, the torus over $\gamma$ bounds (on the right) another disc which intersects this vertical disc in a point: that is, it is horizontal. Therefore $\gamma$ bounds a horizontal disc on the right. It does so also in the original tree $T$. Since $\gamma$ bounds a horizontal disc we can perform the move in Fig.~\ref{hv:fig}-(2).  The result is as in Fig.~\ref{simplify_proof:fig}-(2)-left. It now suffices to apply Fig.~\ref{thickening:fig}-(2) and we are done. 

Move (7) is the inverse of Fig.~\ref{sum_ass:fig}-(1) and Fig.~\ref{thickening:fig}-(4). Move (8) is the composition of Fig.~\ref{thickening:fig}-(2), Fig.~\ref{thickening:fig}-(3), Fig.~\ref{move_leaf:fig}-(1), and (2). Move (9) is Fig.~\ref{thickening:fig}-(1-3-2). Concerning (10), apply Fig.~\ref{thickening:fig}-(1-6), then (1) and Fig.~\ref{flat:fig}-(4). Move (11) is again Fig.~\ref{thickening:fig}-(1).

Move (12) is Fig.~\ref{thickening:fig}-(4). Move (13) is Fig.~\ref{thickening:fig}-(3). Move (14) is Fig.~\ref{thickening:fig}-(6). Moves (15), (16), and (17) are similar.

Concerning move (20), note that removing a flat vertex corresponds to filling by Fig.~\ref{sum_ass:fig}-(3). The inverse operation is drilling along the curve $\gamma$ determined by the flat vertex. The curve $\gamma$ is null-homotopic since it is contained in a disc. Therefore drilling corresponds to making a connected sum with $S^2\times D^2$, whence move (20). Move (21) is Fig.~\ref{thickening:fig}-(5). The resulting M\"obius strip determines a vertical disc and thus can be deassembled by Proposition \ref{no:36:prop}. Move (22) is a mixure of (20) and (21): we first remove the flat vertex and fill, then perform (21) and drill back the curve, which is now homotopic to the core of the M\"obius strip.
\end{proof}

We finally prove the difficult part of Theorem \ref{main:teo}. We actually prove a more general version, which includes blocks with boundary.
\begin{teo}
Let $X$ be a shadow without vertices of some block $(M,L)$. We have $M = M'\#_h \matCP^2$ for some integer $h$ and some graph manifold $M'$ generated by $\calS_0$. 
\end{teo}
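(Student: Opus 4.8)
\medskip

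\noindent\textbf{Plan of proof.} The plan is to run an induction on the number of vertices of a decorated graph describing the vertex-free shadow $X$, driven entirely by the reduction machinery of the previous sections. First I would fix the right inductive statement. Let $\calC$ be the class of blocks $(M,L)$ such that $M=M'\#_h\matCP^2$ for some integer $h$ and some graph manifold $M'$ generated by $\calS_0$ (allowing $M'$ to be disconnected and to have boundary, the notion being extended trivially). Using Proposition \ref{G:prop} one checks at once that $\calC$ is closed under connected sum and under assembling, including self-connected-sums and self-assemblings of a single possibly disconnected block: a connected sum is absorbed into the exponent $h$ together with the graph-manifold factor, and an assembling keeps the graph-manifold factor a graph manifold. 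One also records that $S^4$, $\matCP^2$, $\overline\matCP^2$, both $S^2$-bundles over $S^2$, every block in $\calS_0$, and every $\#_h(S^3\times S^1)$ lie in $\calC$ (the two bundles over $S^2$ because $S^2\times S^2$ and $\matCP^2\#\overline\matCP^2$ are each an assembling of two copies of $N_1=D^2\times S^2$; the last family by Proposition \ref{smaller:prop}). With this set-up the theorem is: every block admitting a shadow without vertices lies in $\calC$.

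Then I would carry out the induction. Given $X$ with $n$ vertices, Corollary \ref{flat:cor} (which rests on Corollaries \ref{very:simple:cor} and \ref{leveled:cor}) lets me split off an assembled factor which is a graph manifold generated by $\calS_0$, hence in $\calC$, and reduce to the case where $X$ is encoded by a decorated tree $T$ with levels, all of whose flat vertices are nice, and with no more than $n$ vertices; since $\calC$ is closed under assembling it suffices to treat this case. Now apply Lemma \ref{contains:lemma}: as every flat vertex is nice, $T$ either contains a reducible bad branch, or (after finitely many neutral moves of Fig.~\ref{move_leaf:fig}) one of the portions of Fig.~\ref{local:fig}. In the first case Proposition \ref{reducible:prop}, in the second case Proposition \ref{simplify:prop}, produces a new decorated tree $T'$ with levels having strictly fewer vertices, encoding a shadow of a block $(M',L')$ from which $(M,L)$ is recovered either as $(M',L')$ itself, or as an assembling of $(M',L')$, or as a connected sum of $(M',L')$. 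Inspecting the moves of Figures \ref{bad_branch_move2:fig}, \ref{simplify:fig}, \ref{simplify2:fig}, \ref{simplify3:fig}, \ref{simplify4:fig} (and Propositions \ref{reducible:prop}, \ref{(4):prop}), every closed summand that is ever detached along a connected sum is one of $S^4$, $\matCP^2$, $\overline\matCP^2$, or an $S^2$-bundle over $S^2$ — hence in $\calC$. Applying the inductive hypothesis to $(M',L')$, component by component if it is disconnected, together with the closure properties of $\calC$, then gives $(M,L)\in\calC$. The base case is the empty tree, where $M$ is a $1$-handlebody with $3$- and $4$-handles added, so $M\isom\#_h(S^3\times S^1)\in\calC$; since every nonempty tree with levels and nice flat vertices admits such a strict simplification, the induction terminates.

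The hard part will be the bookkeeping concealed in the phrase "inspecting the moves". One must check, move by move through Figures \ref{simplify:fig}--\ref{simplify4:fig} and through Propositions \ref{reducible:prop} and \ref{(4):prop}, two things: (a) that each move really does strictly decrease the number of vertices of the decorated tree after at most finitely many neutral moves, so that the induction is well-founded and does not loop when Corollary \ref{flat:cor} is re-invoked after a simplification recreates a non-nice flat vertex; and (b) that the only closed pieces ever detached along a connected sum are $S^4$, $\matCP^2$, $\overline\matCP^2$, or an $S^2$-bundle over $S^2$ — never, say, a manifold carrying an $E_8$ summand. Both verifications are routine given the explicit form of the moves, but they are exactly the crux: point (b) is precisely the reason that the final list consists only of graph manifolds generated by $\calS_0$ together with copies of $\matCP^2$, and hence the reason Corollary \ref{finite:cor} holds.

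\medskip

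\noindent We now give the details.
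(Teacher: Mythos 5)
Your proposal follows essentially the same route as the paper: reduce to decorated trees with levels via Corollaries \ref{very:simple:cor}, \ref{leveled:cor}, \ref{flat:cor}; invoke Lemma \ref{contains:lemma} to find a simplifiable portion (or a reducible bad branch); simplify via Propositions \ref{reducible:prop} and \ref{simplify:prop}; and close the induction using that the target class is stable under connected sum and assembling. Your packaging of this into a class $\calC$ with explicit closure properties is a clean way of organizing what the paper leaves implicit, and point (b) of your ``hard part'' --- that the only closed summands ever detached are $S^4$, $\matCP^2$, $\overline\matCP^2$, or an $S^2$-bundle over $S^2$ --- is exactly the content that makes the final classification go through.

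One step in your proposal as written is false, and the caveat you offer does not repair it. You assert that Proposition \ref{simplify:prop} ``produces a new decorated tree $T'$ with levels having strictly fewer vertices,'' and your proposed fix in (a) is that the vertex count decreases ``after at most finitely many neutral moves.'' Neither holds for the moves in Fig.~\ref{simplify:fig}-(5), (6) and Fig.~\ref{simplify2:fig}-(11): these moves leave the number of non-flat vertices unchanged, and they are not neutral --- they genuinely change the decorated tree (and indeed the shadow). The paper's termination argument is different: it observes that those three moves strictly decrease the level of some vertex while fixing the levels of all others, so the pair (number of non-flat vertices, multiset of levels) decreases lexicographically. Your induction is well-founded only once this secondary measure is introduced. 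A second small inaccuracy: a tree with levels cannot be empty (the root has $k\geqslant 2$ vertices), so the base of the induction is not the empty shadow but the two-vertex trees giving portions (18) and (19) of Fig.~\ref{local:fig}, i.e.\ a sphere with gleam $0$ or $\pm1$, hence $S^4$ or $\pm\matCP^2$.
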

\begin{proof}
By Corollary \ref{leveled:cor}, we may suppose that $(M,L)$ has a shadow encoded by a decorated tree $T$ with levels. 

We prove our theorem by induction on the number of vertices of $T$. By Corollary \ref{flat:cor} we may suppose that every flat vertex in $T$ is nice. We may also suppose that every bad branch is non-reducible (otherwise we may simplify it by Proposition \ref{reducible:prop} and decrease the number of vertices). We can now apply Proposition \ref{contains:lemma} to ensure that the tree contains one of the 22 portions listed in Fig.~\ref{local:fig}. If the portion is (18) or (19), the shadow $X$ is a sphere with gleam $\pm 1$ or $0$, and $M$ is respectively $\pm \matCP^2$ or $S^4$. Otherwise, the portion may be simplified by Proposition \ref{simplify:prop} and we conclude by our induction hypothesis.

More precisely, in all cases except (5), (6), and (11) the number of non-flat vertices decreases. In case (5), (6), (11) the number of non-flat vertices may remain unchanged: however, there 
can be only finitely many such moves, since they strictly decrease the levels of some vertices (and leave the levels of the other vertices unchanged).
\end{proof}

\end{document}